\definecolor{darkgreen}{rgb}{0,0.5,0}
\newcommand\cyr{%
  \renewcommand\rmdefault{cmr}%
  \renewcommand\sfdefault{wncyss}%
  \renewcommand\encodingdefault{OT2}%
  \normalfont\selectfont}
\DeclareTextFontCommand{\textcyr}{\cyr}
\definecolor{red}{rgb}{0.9,0,0}
\definecolor{purple}{rgb}{0.8,0,0.6}
\numberwithin{equation}{section}
\newtheorem{thm}{Theorem}[section]
\newtheorem{prop}[thm]{Proposition}
\newtheorem{prob}[thm]{Problem}
\newtheorem{lemma}[thm]{Lemma}
\newtheorem{cor}[thm]{Corollary}
\newtheorem{ques}[thm]{Question}
\theoremstyle{definition}
\newtheorem{defn}[thm]{Definition}
\theoremstyle{remark}
\newtheorem{rk}[thm]{Remark}
\newtheorem{ex}[thm]{Example}
\newcommand\Q{\mathbb{Q}}
\newcommand\C{\mathbb{C}}
\newcommand\F{\mathbb{F}}
\newcommand\A{\mathbb{A}}
\newcommand\Z{\mathbb{Z}}
\newcommand\R{\mathbb{R}}
\newcommand\Gal{\mathop{\rm Gal}\nolimits}
\newcommand\Spec{\mathop{\rm Spec}\nolimits}
\renewcommand\O{\mathcal{O}}
\newcommand\J{\mathcal{J}}
\newcommand\Char{\mathop{\rm char}\nolimits}
\newcommand\supp{\mathop{\rm supp}\nolimits}
\newcommand\tors{\mathop{\rm tors}\nolimits}
\newcommand\isom{\cong}
\newcommand{\disc}{\operatorname{disc}}
\newcommand{\Div}{\operatorname{Div}}
\newcommand{\Pic}{\operatorname{Pic}}
\newcommand{\id}{\operatorname{id}}
\renewcommand{\div}{\operatorname{div}}
\newcommand{\GL}{\operatorname{GL}}
\newcommand\nr{\mathop{\rm nr}\nolimits}
\newcommand{\To}{\longrightarrow}
\newcommand{\BP}{{\mathbb P}}
\newcommand{\eps}{\varepsilon}
\newcommand{\calC}{\mathcal{C}}
\newcommand{\calE}{\mathcal{E}}
\newcommand{\calD}{\mathcal{D}}
\newcommand{\calJ}{\mathcal{J}}
\newcommand{\calK}{\mathcal{K}}
\newcommand{\KS}{\operatorname{KS}}
\newcommand{\frk}{\mathfrak{k}}
\newcommand{\pr}{\operatorname{pr}}
\newcommand{\surj}{\twoheadrightarrow}
\newcommand{\Mult}{\operatorname{\sf M}}
\newcommand{\std}{{\text{\rm std}}}
\newcommand{\const}{\text{\rm const.}}
\newcommand{\sm}{{\text{\rm sm}}}
\begin{document}

\title{Canonical Heights on Genus Two Jacobians}

\author{J. Steffen M\"uller}
\address{Institut f\"ur Mathematik,
          Carl von Ossietzky Universit\"at Oldenburg,
          26111 Oldenburg, Germany}
\email{jan.steffen.mueller@uni-oldenburg.de }

\author{Michael Stoll}
\address{Mathematisches Institut,
          Universit\"at Bayreuth,
          95440 Bayreuth, Germany.}
\email{Michael.Stoll@uni-bayreuth.de}

\date{August 2, 2016}

\dedicatory{{\normalsize Scale New Heights!} \\[2mm]
            {\footnotesize\rm (Motto of International (now Jacobs) University Bremen, \\
                     the place where the first author started his PhD under
                     the supervision of the second author)}
           }


\begin{abstract} \setlength{\parskip}{1ex} \setlength{\parindent}{0mm}
  Let $K$ be a number field and let $C/K$ be a curve of genus~$2$ with
  Jacobian variety~$J$. In this paper, we study the canonical height
  $\hat{h} \colon J(K) \to \R$. More specifically, we consider the following
  two problems, which are important in applications:
  \begin{enumerate}[(1)]
    \item for a given $P \in J(K)$, compute $\hat{h}(P)$ efficiently;
    \item for a given bound $B > 0$, find all $P \in J(K)$ with $\hat{h}(P) \le B$.
  \end{enumerate}
  We develop an algorithm running in polynomial time (and fast in practice)
  to deal with the first problem. Regarding the second problem, we show how
  one can tweak the naive height~$h$ that is usually used to obtain
  significantly improved bounds for the difference $h - \hat{h}$,
  which allows a much faster enumeration of the desired set of points.

  Our approach is to use the standard decomposition of $h(P) - \hat{h}(P)$
  as a sum of local `height correction functions'. We study these functions
  carefully, which leads to efficient ways of computing them and to essentially
  optimal bounds. To get our polynomial-time algorithm, we have to avoid
  the factorization step needed to find the finite set of places where the
  correction might be nonzero. The main innovation at this point is to
  replace factorization into primes by factorization into coprimes.

  Most of our results are valid for more general fields with a set of absolute
  values satisfying the product formula.
\end{abstract}

\maketitle


\vfill\pagebreak

\tableofcontents

\vfill\pagebreak


\section{Introduction}\label{intro}

Let $K$ be a global field and let $C/K$ be a
curve of genus~$2$ with Jacobian variety~$J$. There is a map $\kappa \colon J \to \BP^3$ that
corresponds to the class of twice the theta divisor on~$J$; it identifies a point
on~$J$ with its negative, and its image is the Kummer surface~$\KS$ of~$J$.
Explicit versions of~$\kappa$ can be found in the book~\cite{CasselsFlynn} by
Cassels and Flynn for $C$ given in the form $y^2 = f(x)$ and in the
paper~\cite{MuellerKummer} by the first author for general~$C$ (also in characteristic~2).
Thus $\kappa$ gives rise to a height function $h \colon J(K) \to \R$, which we call
the \emph{naive height} on~$J$. It is defined by
\[ h(P) = \sum_{v \in M_K} \log \max\{|\kappa_1(P)|_v, |\kappa_2(P)|_v,
                                      |\kappa_3(P)|_v, |\kappa_4(P)|_v\}\, ,
\]
where $M_K$ is the set of places of~$K$,
$\kappa(P) = (\kappa_1(P) : \kappa_2(P) : \kappa_3(P) : \kappa_4(P))$,
and $|{\cdot}|_v$ is the $v$-adic absolute value, normalized so that the product
formula
\[ \prod_{v \in M_K} |x|_v = 1 \qquad \text{for all $x \in K^\times$} \]
holds.

By general theory~\cite{HindrySilverman}*{Chapter~B} the limit
\[ \hat{h}(P) = \lim_{n \to \infty} \frac{h(nP)}{n^2} \]
exists; it is called the \emph{canonical height} (or \emph{N\'eron-Tate height}) of $P \in J(K)$.
The difference $h - \hat{h}$ is bounded.
The canonical height induces a positive definite quadratic form on $J(K)/J(K)_{\tors}$
(and on the $\R$-vector space $J(K)\otimes_\Z \R$).

In this paper, we tackle the following two problems:

\begin{prob}\label{prob-comp}
Find an efficient algorithm for the computation of $\hat{h}(P)$ for a given point $P \in J(K)$.
\end{prob}

\begin{prob}\label{prob-enum}
  Find an efficient algorithm for the enumeration of all $P \in J(K)$ which satisfy
    $\hat{h}(P) \le B$, where $B$ is a given real number.
\end{prob}

These problems are important because such algorithms are needed if we want to saturate
a given finite-index subgroup of~$J(K)$ (see the discussion at the end of
Section~\ref{S:enum}). This, in turn, is necessary for the computation of generators of $J(K)$.
Such generators are required, for instance, to carry out the method described in
\cite{BMSST08} for the computation of all integral points on a hyperelliptic curve over~$\Q$.
Furthermore, the regulator of $J(K)$ appearing in the conjecture of Birch and
Swinnerton-Dyer is the Gram determinant of a set of generators of
$J(K)/J(K)_{\tors}$ with respect to the canonical height.
So Problem~\ref{prob-comp} and Problem~\ref{prob-enum} are also important in the context
of gathering numerical evidence for this conjecture as in~\cite{FLSSSW}.

It is a classical fact, going back to work by N\'eron~\cite{Neron}, that $\hat{h}(P)$ and
the difference $h(P) - \hat{h}(P)$ can be decomposed into a finite sum of local terms.
In our situation, this can be done explicitly as follows.
The duplication map $P \mapsto 2P$ on~$J$ induces a morphism $\delta \colon \KS \to \KS$,
given by homogeneous polynomials $(\delta_1, \delta_2, \delta_3, \delta_4)$ of degree~4;
explicit equations can again be found in~\cite{CasselsFlynn} and~\cite{MuellerKummer}.
For a point $Q \in J(K_v)$, where $K_v$ is the completion of $K$ at
a place $v\in M_K$, such that $\kappa(Q) = (x_1 : x_2 : x_3 : x_4) \in \KS(K_v)$,
we set
\[ \tilde\eps_v(Q) = -\log \max\{|\delta_j(x_1,x_2,x_3,x_4)|_v : 1 \le j \le 4\}
                      + 4 \log \max\{|x_j|_v : 1 \le j \le 4\}\, .
\]
Note that this does not depend on the scaling of the coordinates.
We can then write $\hat{h}(P)$ in the following form (compare Lemma~\ref{L:telescope}):
\[ \hat{h}(P) = h(P) - \sum_{v \in M_K} \sum_{n=0}^{\infty} 4^{-(n+1)} \tilde\eps_v(2^n P) \]
We set, for $Q \in J(K_v)$ as above,
\begin{equation} \label{E:tildemu}
  \tilde\mu_v(Q) = \sum_{n=0}^{\infty} 4^{-(n+1)} \tilde\eps_v(2^n Q) \,,
\end{equation}
  and we deduce the decomposition
\begin{equation}\label{E:decomp}
  h(P) - \hat{h}(P) = \sum_{v \in M_K} \tilde\mu_v(P)\,,
\end{equation}
which is valid for all points $P \in J(K)$.
In addition, $\tilde\eps_v = \tilde\mu_v = 0$ for all but finitely many~$v$ (the exceptions
are among the places of bad reduction, the places where the given equation of $C$ is
not integral and the archimedean places).
The maps $\tilde\eps_v \colon J(K_v) \to \R$ are continuous maps (with respect to the $v$-adic
topology) with compact domains, so they are bounded. Therefore $\tilde\mu_v$ is also
bounded.

Let us first discuss Problem~\ref{prob-comp}.
Because of equation~\eqref{E:decomp}, it suffices to compute $h(P)$ (which is easy) and $\sum_{v \in M_K}
\tilde\mu_v(P)$ in order to compute $\hat{h}(P)$ for a point $P \in J(K)$.
Building on earlier work of Flynn and Smart~\cite{FlynnSmart}, the second author introduced an algorithm for the
computation of $\tilde\mu_v(P)$ in~\cite{StollH2}.
One of the main problems with this approach is that we need integer factorization to compute the sum
$\tilde{\mu}^{\text{f}}(P) := \sum_v \tilde\mu_v(P)$, where $v$ runs through the finite
primes $v$ such that $\tilde\mu_v(P) \ne 0$, because we need to find these primes, or at
least a finite set of primes containing them.

We use an idea which was already exploited in~\cite{MuellerStollEll} to obtain a
polynomial-time algorithm for the computation of the canonical height of a point on an
elliptic curves (in fact we first used this technique in genus~2 and only later realized
that it also works, and is actually easier to implement, for elliptic curves).
When $v$ is non-archimedean, then there is a constant $c_v >0$ such that the function
\[
  \mu_v := \tilde\mu_v/c_v
\]
maps $J(K_v)$ to $\Q$.
More precisely, $\tilde{\mu}^{\text{f}}(P)$ is a sum of rational multiples of logarithms of
positive integers.
As in~\cite{MuellerStollEll}, we find a bound on the denominator of~$\mu_v$ that depends
only on the valuation of the discriminant; this allows us to devise
an algorithm that computes $\tilde{\mu}^{\text{f}}(P)$ in quasi-linear time.
We can compute $\tilde\mu_v(P)$ for archimedean $v$ essentially from the definition of
$\tilde\mu_v$.
This leads to a factorization-free algorithm that computes $\hat{h}(P)$ in polynomial
time: 

\begin{thm}\label{T:IntroAlgo}
  Let $J$ be the Jacobian of a curve of genus~$2$ defined over~$\Q$, and let $P \in J(\Q)$.
  There is an algorithm that computes $\hat{h}(P)$ in time quasi-linear in the size of
  the coordinates of~$P$ and the coefficients of the given equation of $C$,
  and quasi-quadratic in the desired number of digits of precision.
\end{thm}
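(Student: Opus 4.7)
The plan is to assemble the algorithm from the decomposition \eqref{E:decomp} in three parts and then bound the running time of each; the whole point of the statement is that no integer factorization is used, so the complexity is governed only by arithmetic on integers and rationals whose sizes we can control.

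First, I would compute the naive height $h(P)$ directly from its definition. Given $P \in J(\Q)$ with coordinates $\kappa(P)=(x_1:x_2:x_3:x_4)$ scaled to be coprime integers, we have $h(P) = \log \max_j |x_j|$, so this step is essentially free once we have clearing denominators and one gcd; both take quasi-linear time in the input size.

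Next I would handle the finite part $\tilde{\mu}^{\text{f}}(P) = \sum_{v \text{ finite}} \tilde\mu_v(P)$. The strategy, as advertised after \eqref{E:decomp}, is to write $\mu_v = \tilde\mu_v/c_v$ with $c_v = \log p_v$ so that $\mu_v$ takes rational values whose denominators are bounded in terms of $\ord_v(\disc)$ only. To avoid factoring $\disc$, I would form a \emph{coprime basis} (in the sense of Bach--Driscoll--Shallit) for the finite set of integers naturally produced by the coordinates, their duplicates, and the discriminant: this gives pairwise coprime integers $m_1,\dots,m_r$ such that each relevant prime divides exactly one $m_i$, and this basis can be built by a sequence of gcd and exact-division operations in quasi-linear time, with no prime factorization. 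For each $m_i$, I apply the local algorithm from \cite{StollH2} (suitably adapted) to determine the contribution coming from the unique prime represented by $m_i$; using the a priori bound on the denominator of $\mu_v$, the number of doublings we need to iterate in the defining series \eqref{E:tildemu} is bounded by a function of $\ord(\disc)$, so each local contribution is computed in quasi-linear time. Summing yields $\tilde{\mu}^{\text{f}}(P)$ as a rational combination of the $\log m_i$.

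Then I would deal with the archimedean contribution $\tilde\mu_\infty(P)$. Because $\tilde\eps_\infty$ is continuous on the compact space $\KS(\R)$ and $\tilde\mu_\infty(Q) = \sum_{n\ge 0} 4^{-(n+1)}\tilde\eps_\infty(2^n Q)$, truncating at $N$ terms incurs an error of order $4^{-N}$. To reach $D$ digits of precision I therefore need $N = O(D)$ doublings on the Kummer surface, each carried out with working precision $O(D)$ (since the exponentially contracting factor $4^{-(n+1)}$ absorbs the growth of the intermediate sizes). Using schoolbook-free multiplication of $D$-digit reals, each of the $O(D)$ doublings costs $\tilde O(D)$ bit operations, giving a total of $\tilde O(D^2)$ time for the archimedean step, i.e.\ quasi-quadratic in the precision.

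Finally, combining $h(P) - \tilde{\mu}^{\text{f}}(P) - \tilde\mu_\infty(P)$ via \eqref{E:decomp} produces $\hat{h}(P)$ to the required precision; adding the three running times yields the claimed quasi-linear dependence on the input size and quasi-quadratic dependence on the precision. The main obstacle, and the principal novelty, is the coprime-basis step: one must verify that the coprime basis one builds is rich enough that each prime of bad reduction appears as (a divisor of) exactly one basis element, and that the local computation applied to $m_i$ returns the correct $\mu_v$-value without needing to know whether $m_i$ is prime. The rest is careful complexity bookkeeping once the a priori denominator bound on $\mu_v$ is in hand.
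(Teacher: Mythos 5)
Your overall plan coincides with the paper's: write $\hat h = h - \tilde\mu_\infty - \tilde\mu^{\mathrm{f}}$, compute the archimedean correction by truncating the defining series, and compute the finite part without factoring by combining a coprime factorization with an a priori bound on the denominator of $\mu_p$. However, the two points you defer are exactly the ones the proof must supply, and as stated they are gaps. First, the coprime-basis step: a coprime basis element $q_i$ need not be a prime power, so there is no ``unique prime represented by $m_i$,'' and one cannot simply run the local algorithm of \cite{StollH2} ``adapted'' to a composite modulus. The paper's resolution (Proposition~\ref{P:nofact}) is quantitative: working modulo $D^{m+1}g_0$ one has $v_p(g_n)=\eps_p(2^nP)$ for every $p\mid g_0$ and $n\le m$, and since $g_n=\prod_i q_i^{e_{i,n}}$ this forces $\eps_p(2^nP)=v_p(q_i)\,e_{i,n}$ for all $p\mid q_i$; hence $\mu_p(P)/v_p(q_i)$ is one and the same rational number $\mu_i$ for all such $p$, its denominator is at most $BM$ by the component-group bound (Proposition~\ref{P:MBound}, Lemma~\ref{L:tam_bound} — itself a nontrivial input from Part~II, not just a remark), and it is recovered as the unique fraction of bounded denominator in an interval of length $1/(B^2M^2)$ (the truncation bound of Lemma~\ref{L:fast algo 2} with $m\ll\log\log D$, not merely ``a function of $\ord(\disc)$''). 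Then $\sum_p\mu_p(P)\log p=\sum_i\mu_i\log q_i$. Without this identity and the denominator bound, the coprime trick does not by itself produce the correct local contributions.

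Second, the archimedean analysis is not correct as justified: the factor $4^{-(n+1)}$ does not ``absorb'' the growth of intermediate sizes, because each evaluation of $\delta$ can lose relative precision through cancellation. The paper bounds this loss per doubling by $\tilde\theta\ll s(F)=1+\log_+\|F\|_\infty+\log_+|\Delta|^{-1}$ (using the explicit upper and lower bounds on $\tilde\eps$ from \cite{StollH1}), so the required starting precision is of order $(d+\log s(F))\,s(F)$ rather than $O(d)$, and the number of terms is $N\ll d+\log s(F)$. This is what yields the stated complexity, which is quasi-quadratic in $d$ \emph{and} quasi-linear in the coefficient size; your $\tilde O(D^2)$ bound suppresses the dependence on $\|F\|_\infty$ that the theorem explicitly claims to control. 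With these two verifications supplied (and the trivial computation of $h(P)$ as you describe), the assembly into Theorem~\ref{T:PolyAlgo} is indeed just bookkeeping.
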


See Theorem~\ref{T:PolyAlgo} for a precise statement. We expect a similar result
to be true for any number field~$K$ in place of~$\Q$.

We now move on to Problem~\ref{prob-enum}.
If we have an upper bound $\beta$ for $h - \hat{h}$, then the set of all
points $P \in J(K)$ such that $h(P) \le B + \beta$ contains the set $\{P \in J(K)\,:\,
\hat{h}(P) \le B\}$.
Since the naive height $h$ is a logarithmic height, $\beta$ contributes exponentially to
the size of the box we need to search for the enumeration. Therefore it is crucial to keep $\beta$ as small as
possible.

We write $\tilde\beta_v = \max\{\tilde\mu_v(Q) : Q \in J(K_v)\}$, and we obtain the bound
\[ h(P) - \hat{h}(P) \le \sum_{v \in M_K} \tilde\beta_v\]
from~\eqref{E:decomp}.
If we write $\tilde\gamma_v = \max\{\tilde\eps_v(Q) : Q \in J(K_v)\}$, then
clearly $\tilde{\gamma}_v/4 \le \tilde{\beta}_v \le \tilde{\gamma}_v/3$.
In~\cite{StollH1}, it is shown that for curves
given in the form $y^2 = f(x)$, where $f$ has $v$-adically integral coefficients,
we have
\[
 \tilde{\gamma}_v \le - \log |2^4 \disc(f)|_v  = -\log |2^{-4} \Delta|_v\,,
 \]
with $\disc(f)$ denoting the discriminant of~$f$ considered as a polynomial of degree~6
and $\Delta$ denoting the discriminant of the given equation of $C$.
When $v$ is non-archimedean and the normalized additive valuation of $\Delta$ is~1, then
we can take $\tilde\gamma_v = \tilde\beta_v = 0$~\cite{StollH2}.

The results of the present paper improve on this; they are based on a careful study of the functions
$\tilde\mu_v$.
It turns out that when $v$ is non-archimedean, the set of points where $\mu_v$ (or
equivalently, $\tilde\mu_v$) vanishes forms a group.
Moreover, the function $\mu_v$ factors through the component group of the N\'eron model of $J$ when the
given model of $C/K_v$, which we assume to have $v$-integral coefficients in the
following, has rational singularities; see Theorem~\ref{T:epsfac}.
If the minimal proper regular model of $C$ is semistable, then we can use results of Zhang and
Heinz to give explicit formulas for $\mu_v$
in terms of the resistance function on the reduction graph of $C$ (which is essentially the dual graph of the special
fiber of the minimal proper regular model, suitably metrized).
We use this to find simple explicit formulas for~$\mu_v$ that apply in the most frequent
cases of bad reduction, namely nodal or cuspidal reduction.
These explicit formulas give us the optimal bounds for $\tilde\mu_v$ in these cases.
By reducing to the semistable case and tracking how $\mu_v$ changes as we change the
Weierstrass equation of $C$, we deduce the general upper bound
\begin{equation}\label{upper-bd}
  \tilde\beta_v \le -\frac{1}{4}\log |\Delta|_v
\end{equation}
for non-archimedean $v$; see Theorem~\ref{T:UpperBd}.

When $v$ is archimedean, we also get a new bound for $\tilde\mu_v$ by iterating the bound
obtained by the second author in~\cite{StollH1}, leading to vast improvements for
$\tilde\beta_v$.
Combining the archimedean and non-archimedean bounds, we find a nearly optimal bound
$\beta$ for $h - \hat{h}$.

To get even smaller search spaces for the enumeration, we make use of the observation
that we can replace
the naive height $h$ by any function $h'$ such that $|h' - h|$ is bounded.
Using the results on nearly optimal bounds for $\mu_v$ and such a modified
naive height~$h'$ (which is also better suited than $h$ for the enumeration process itself)
we get a much smaller bound on the difference $h' - \hat{h}$ than what was previously possible.
This makes the enumeration feasible in many cases that were completely out of
reach so far.

As an example, we compute explicit generators for the Mordell-Weil group of the Jacobian of the curve
\begin{align}
  C \colon y^2 &= 82342800 x^6 - 470135160 x^5 + 52485681 x^4 \label{record-curve} \\
              &\qquad{} + 2396040466 x^3 + 567207969 x^2 - 985905640 x +
  247747600\nonumber
\end{align}
over $\Q$, conditional on the Generalized Riemann Hypothesis (which is needed to show that the rank
is~22). See Proposition~\ref{P:record}.
This curve has at least~642 rational points, which is the current record for the largest
number of known rational points on a curve of genus~2, see~\cite{record}.

The paper is divided into four parts.
In Part~I, we first generalize the usual notion of the naive height on projective space
and clarify the relation between these generalized naive heights and suitable canonical heights, all in
Section~\ref{S:genhts}.
We then introduce local height correction functions $\eps$ and $\mu$ ($=\mu_v$ in the
notation introduced above) on the Jacobian of a
genus~2 curve over a non-archimedean local field in Section~\ref{genhts}.
This is followed in Section~\ref{S:lambda_Kummer} by a study of certain canonical local heights constructed in terms of $\mu$.
We close Part~I by introducing and investigating the notion of stably minimal Weierstrass
models of curves of genus~2 in Section~\ref{simpmod} and recalling some well-known results
on Igusa invariants in Section~\ref{igusa}.

Part~II is in some sense the central part of the present paper.
Here we study the local height correction function $\mu$ over a non-archimedean local
field.
Using Picard functors, we show in Section~\ref{kermu} that $\mu$ factors through the component group of the N\'eron
model of the Jacobian when the given model of the curve has rational singularities.
We then relate $\mu$ to the reduction graph of $C$ in Section~\ref{nerong2}.
Building on this, the following sections contain simple explicit formulas for $\mu$ when the reduction
of the curve is nodal (Section~\ref{formulas}), respectively cuspidal (Section~\ref{formulas2}).
A simple argument then gives the improved general upper bound~\eqref{upper-bd} for $\mu$, see Section~\ref{UpperBeta}.

In Part~III we describe our factorization-free algorithm for the computation of
$\hat{h}(P)$ for $P \in J(K)$, where $K$ is a global field.
We start in Section~\ref{algo1} by showing how to compute $\mu_v(P)$ for non-archimedean $v$,
using a bound on its denominator.
The following section deals with archimedean places, before we finally combine these
results  in Section~\ref{S:cch} into an algorithm for the computation of $\hat{h}(P)$
that runs in polynomial time; this proves Theorem~\ref{T:IntroAlgo}. Some examples are
discussed in Section~\ref{Ex:canht}.

In the final Part~IV we turn to Problem~\ref{prob-enum}.
Section~\ref{S:htdiffarch} contains two methods for bounding $\tilde\mu_v$ for
archimedean $v$.
In the following Section~\ref{S:varnaive} we describe a modified naive height $h'$
such that the bound on the difference $h'-\hat{h}$ becomes small.
We use this, the results of Section~\ref{S:htdiffarch}, and our nearly optimal bounds
for the non-archimedean height correction functions from Part~II to give an efficient algorithm for the
enumeration of the set of rational points with bounded canonical height in
Section~\ref{S:enum}.
In the final Section~\ref{Ex:chnaive} we compute generators of the Mordell-Weil group of
the record curve~\eqref{record-curve}.

\subsection*{Acknowledgments}
We would like to thank David Holmes for suggesting the strategy of the proof of
Proposition~\ref{P:mu_vanishes}, Elliot Wells for pointing out an
inaccuracy in the complexity analysis in Propositions~\ref{P:mu_approx}
and~\ref{P:nofact}, and the anonymous referee for some useful remarks and suggestions.


\vfill\pagebreak

\section*{\large Part~I: Generalities on Heights and Genus Two Jacobians}

\section{Generalized naive heights} \label{S:genhts}

Let $K$ be a field with a set~$M_K$ of places~$v$ and associated absolute values~$|{\cdot}|_v$
satisfying the product formula
\[ \prod_{v \in M_K} |x|_v = 1 \qquad \text{for all $x \in K^\times$ .} \]
We write $K_v$ for the completion of~$K$ at~$v$. For a tuple
$x = (x_1, \ldots, x_m) \in K_v^m$ we set $\|x\|_v = \max\{|x_1|_v, \ldots, |x_m|_v\}$.

In the following we will introduce some flexibility into our notion of height on
projective spaces. (This is similar to the framework of `admissible families'
in~\cite{Zarhin}.)

\begin{defn} \strut
  \begin{enumerate}[(1)]\addtolength{\itemsep}{2mm}
    \item Let $v \in M_K$. A \emph{local height function} on~$\BP^m$ at~$v$
          is a map $h_v \colon K_v^{m+1} \setminus \{0\} \to \R$ such that
          \begin{enumerate}[(i)]
            \item $h_v(\lambda x) = \log |\lambda|_v + h_v(x)$
                  for all $x \in K_v^{m+1} \setminus \{0\}$ and all $\lambda \in K_v^\times$, and
            \item $\bigl|h_v(x) - \log \|x\|_v\bigr|$ is bounded.
          \end{enumerate}
    \item A function $h \colon \BP^m(K) \to \R$ is a  \emph{height} on~$\BP^m$ over~$K$ if
          there are local height functions~$h_v$  such that for all $x \in \BP^m(K)$ we have
          \[ h\bigl((x_1 : x_2 : \ldots : x_{m+1})\bigr)
              = \sum_{v \in M_K} h_v(x_1, x_2, \ldots, x_{m+1})
          \]
          and $h_v(x) = \log \|x\|_v$ for all but finitely many places~$v$.
  \end{enumerate}
\end{defn}

Note that property~(i) of local height functions together with the product formula
imply that $h$ is invariant under scaling of the coordinates and hence is well-defined.

One example of such a height is the standard height $h_\std$, which we obtain by
setting $h_v(x) = \log \|x\|_v$ for all~$v$. We then have the following simple fact.

\begin{lemma}
  Let $h$ be any height on~$\BP^m$ over~$K$ and let $h_\std$ be the standard height.
  Then there is a constant $c = c(h)$ such that
  \[ |h(P) - h_\std(P)| \le c \qquad \text{for all $P \in \BP^m(K)$.} \]
\end{lemma}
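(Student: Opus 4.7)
The plan is to unpack both definitions and observe that the difference is a finite sum of locally bounded terms.

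First I would fix a point $P \in \BP^m(K)$ and choose any representative $x = (x_1, \ldots, x_{m+1}) \in K^{m+1} \setminus \{0\}$. By the definition of a height, there exist local height functions $h_v$ such that
\[ h(P) - h_\std(P) = \sum_{v \in M_K} \bigl(h_v(x) - \log \|x\|_v\bigr), \]
and by the second condition in the definition of a height, the set
\[ S = \{v \in M_K : h_v \ne \log \|\cdot\|_v\} \]
is finite. Hence all but finitely many summands above vanish, and the sum reduces to $\sum_{v \in S} \bigl(h_v(x) - \log \|x\|_v\bigr)$.

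Next I would invoke property~(ii) of local height functions: for each $v \in S$, there is a constant $c_v \ge 0$ (depending only on $h_v$, not on the argument) such that $|h_v(y) - \log \|y\|_v| \le c_v$ for every $y \in K_v^{m+1} \setminus \{0\}$. Applying this with $y = x \in K^{m+1} \setminus \{0\} \subset K_v^{m+1} \setminus \{0\}$ and summing over $v \in S$ gives
\[ |h(P) - h_\std(P)| \le \sum_{v \in S} c_v =: c, \]
a constant independent of $P$. I would then note that the value is indeed well-defined on $\BP^m(K)$: under rescaling $x \mapsto \lambda x$ with $\lambda \in K^\times$, property~(i) together with the product formula shows that each of $h$ and $h_\std$ is unchanged, so the bound does not depend on the choice of representative.

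There is no real obstacle here; the only mild subtlety is to observe that the local boundedness in~(ii) is uniform in the argument (not just pointwise), which is what lets the finitely many per-place constants be summed into a single global constant $c = c(h)$.
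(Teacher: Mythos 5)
Your proof is correct and follows exactly the argument the paper gives (which is stated in one line): the difference $h(P)-h_\std(P)$ is a sum over the finitely many places where $h_v \ne \log\|\cdot\|_v$, and property~(ii) bounds each such term uniformly, so the per-place constants sum to a single $c = c(h)$. Your added remark about well-definedness under rescaling is a harmless elaboration of what the paper already notes after the definition.
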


\begin{proof}
  This follows from property~(ii) of local height functions
  and the requirement that $h_v(x) = \log \|x\|_v$ for all but finitely many~$v$.
\end{proof}

\begin{ex} \label{Ex:nonstdht}
  Other examples of heights can be obtained in the following way.
  For each place~$v$, fix a linear form
  $l_v(x_1, \ldots, x_{m+1}) = a_{v,1} x_1 + \ldots + a_{v,m+1} x_{m+1}$
  with $a_{v,1}, \ldots, a_{v,m+1} \in K_v$ and $a_{v,m+1} \neq 0$, such that
  $l_v(x) = x_{m+1}$ for all but finitely many~$v$. Then
  \[ h\bigl((x_1 : \ldots : x_m : x_{m+1})\bigr)
      = \sum_{v \in M_K} \log \max\{|x_1|_v, \ldots, |x_m|_v, |l_v(x_1, \ldots, x_{m+1})|_v\}
  \]
  is a height on~$\BP^m$.

  More generally, we could consider a family of automorphisms $A_v$ of~$K_v^{m+1}$
  with $A_v$ equal to the identity for all but finitely many~$v$, and take
  \[ h(x) = \sum_{v \in M_K} \log \max \|A_v(x)\|_v \,. \]
\end{ex}

Now consider a projective variety $V \subset \BP^m_{K}$ and an
endomorphism $\varphi \colon V \to V$
of degree~$d$ (i.e., given by homogeneous polynomials of degree~$d$).
Then by general theory (see, e.g.,~\cite{HindrySilverman}*{Thm.~B.2.5})
$|h_\std(\varphi(P)) - d h_\std(P)|$ is bounded on~$V(K)$.
We write $\varphi^{\circ n}$ for the $n$-fold iteration of~$\varphi$.
Then the \emph{canonical height}
\[ \hat{h}(P) = \lim_{n \to \infty} d^{-n} h_\std(\varphi^{\circ n}(P)) \]
exists (and satisfies $\hat{h}(\varphi(P)) = d \hat{h}(P)$)~\cite{HindrySilverman}*{Thm.~B.4.1}.
Let $h$ be any height on~$\BP^m$. Since $|h - h_\std|$ is bounded, we can replace $h_\std$
by~$h$ in the definition of~$\hat{h}$ without changing the result.
We can then play the usual telescoping series trick in our more general setting.

\begin{lemma} \label{L:telescope}
  Let $\varphi\bigl((x_1 : \ldots : x_{m+1})\bigr) = \bigl(\varphi_1(x) : \ldots : \varphi_{m+1}(x)\bigr)$
  with homogeneous polynomials $\varphi_j \in K[x_1, \ldots, x_{m+1}]$ of degree~$d$. We have
  \[ \hat{h}(P) = h(P) - \sum_{v \in M_K} \tilde{\mu}_v(P) \,, \]
  where
  \[ \tilde{\mu}_v(P) = \sum_{n=0}^\infty d^{-(n+1)} \tilde{\eps}_v(\varphi^{\circ n}(P)) \]
  and, when $P = (x_1 : \ldots : x_{m+1})$ and $x = (x_1, \ldots, x_{m+1})$,
  \[ \tilde{\eps}_v(P) = d h_v(x) - h_v\bigl(\varphi_1(x), \ldots, \varphi_{m+1}(x)\bigr) \,. \]
\end{lemma}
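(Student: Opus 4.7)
The plan is to establish the single-step identity
\begin{equation*}
  \sum_{v \in M_K} \tilde{\eps}_v(P) = d\, h(P) - h(\varphi(P))
\end{equation*}
and then apply the usual telescoping argument. First I would check that $\tilde{\eps}_v$ as defined is invariant under rescaling the lift $x$ of~$P$: if $x$ is replaced by $\lambda x$, property~(i) of local height functions contributes an extra $d\log|\lambda|_v$ to $dh_v(x)$, while the homogeneity of degree~$d$ of the $\varphi_j$ gives $\varphi(\lambda x) = \lambda^d \varphi(x)$ and so contributes the same $d\log|\lambda|_v$ to $h_v(\varphi(x))$. Hence the two contributions cancel and $\tilde{\eps}_v$ descends to a function on~$V(K_v)$. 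Summing the defining expression over all~$v$ using the decompositions $h(P) = \sum_v h_v(x)$ and $h(\varphi(P)) = \sum_v h_v(\varphi(x))$ then produces the single-step identity.

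Next I would apply this identity to the iterates $\varphi^{\circ n}(P)$, divide by $d^{n+1}$, and sum over $n = 0, 1, \ldots, N-1$ to obtain the partial telescoping
\begin{equation*}
  h(P) - d^{-N} h(\varphi^{\circ N}(P)) = \sum_{n=0}^{N-1} d^{-(n+1)} \sum_{v \in M_K} \tilde{\eps}_v(\varphi^{\circ n}(P)).
\end{equation*}
As $N \to \infty$, the left-hand side tends to $h(P) - \hat{h}(P)$ by the definition of the canonical height (with $h$ in place of $h_\std$, as justified in the preceding paragraph of the excerpt since $|h - h_\std|$ is bounded).

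The remaining task, and the main technical point, is to interchange the sums over $v$ and over $n$ on the right so that the limit equals $\sum_v \tilde{\mu}_v(P)$. For a single place~$v$, the function $\tilde{\eps}_v$ is continuous on the projective variety $V(K_v)$, which is compact once $K_v$ is locally compact, so $\tilde{\eps}_v$ is bounded there and the geometric series defining $\tilde{\mu}_v(P)$ converges absolutely. To justify the swap itself I would observe that for all but finitely many $v$ one has $h_v = \log \|\cdot\|_v$, the coefficients of the $\varphi_j$ are $v$-integral, and $\varphi$ reduces modulo~$v$ to a morphism on the reduction of~$V$; at any such~$v$, $\tilde{\eps}_v$ vanishes identically on $V(K_v)$, in particular on the orbit of~$P$. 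Thus the double sum is concentrated on a fixed finite set of places and the interchange reduces to a finite rearrangement combined with absolute convergence at each~$v$. The main obstacle I anticipate is being precise about this last step in the generality of an arbitrary field with a product formula; but under the mild assumption that $K_v$ is locally compact and that $V$ and $\varphi$ admit integral models at almost all~$v$, the argument goes through and yields the stated formula.
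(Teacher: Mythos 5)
Your proof is correct and follows essentially the same route as the paper's: check that $\tilde{\eps}_v$ is independent of the chosen lift, decompose $d\,h - h\circ\varphi$ into local terms, telescope, and interchange the sums over $n$ and $v$. You are in fact more explicit than the paper about why the interchange is legitimate (vanishing of $\tilde{\eps}_v$ at all but finitely many places plus boundedness of $\tilde{\eps}_v$ at the remaining ones), a point the paper's proof performs silently.
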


\begin{proof}
  Note that $\tilde{\eps}_v$ is well-defined: scaling $x$ by~$\lambda$ adds $|\lambda|_v$
  to~$h_v(x)$ and $d |\lambda|_v$ to $h_v(\varphi_1(x), \ldots, \varphi_{m+1}(x))$.
  Let $x$ be projective coordinates for~$P$ and
  write $x^{(n)}$ for the result of applying $(\varphi_1, \ldots, \varphi_{m+1})$
  $n$~times to~$x = x^{(0)}$. Then
  \begin{align*}
    \hat{h}(P) &= \lim_{n \to \infty} d^{-n} h(\varphi^{\circ n}(P)) \\
               &= h(P) + \sum_{n=0}^\infty d^{-(n+1)} \bigl(h(\varphi^{\circ (n+1)}(P))
                                                              - d h(\varphi^{\circ n}(P))\bigr) \\
               &= h(P) + \sum_{n=0}^\infty d^{-(n+1)}
                            \sum_{v \in M_K} \bigl(h_v(x^{(n+1)}) - d h_v(x^{(n)})\bigr) \\
               &= h(P) - \sum_{v \in M_K} \sum_{n=0}^\infty
                               d^{-(n+1)} \tilde{\eps}_v(\varphi^{\circ n}(P)) \\
               &= h(P) - \sum_{v \in M_K} \tilde{\mu}_v(P) \,. \qedhere
  \end{align*}
\end{proof}

We call the functions $\tilde{\mu}_v \colon \BP^m(K_v) \to \R$ \emph{local height
correction functions}.

Note that when $K_v$ is a discretely valued field such that
$|x|_v = \exp(-c_v v(x))$ for $x \in K^\times$ with a constant $c_v > 0$ (and where
we abuse notation and write $v \colon K_v^\times \surj \Z$ also for the normalized
additive valuation associated to the place~$v$) and $h = h_\std$, then we have
\[ \tilde{\mu}_v(P) = c_v \mu_v(P) \qquad\text{and}\qquad
   \tilde{\eps}_v(P) = c_v \eps_v(P) \,,
\]
where
\[ \mu_v(P) = \sum_{n=0}^\infty d^{-(n+1)} \eps_v(P) \]
and
\[ \eps_v(P) = \min \bigl\{v(\varphi_1(x)), \ldots, v(\varphi_{m+1}(x))\bigr\}
                 - d \min \{v(x_1), \ldots, v(x_{m+1})\} \,,
\]
if $x = (x_1, \ldots, x_{m+1})$ are homogeneous coordinates for~$P$.
This is the situation that we will study in some detail in Part~II of this paper,
for the special case when $V \subset \BP^3$ is the Kummer surface associated to
a curve of genus~$2$ and its Jacobian~$J$ and $\varphi$ is the duplication map
(then $d = 4$).

To deal with Problem~\ref{prob-comp}, we work with the standard height~$h_\std$.
We use our detailed
results on the local height correction functions to deduce a bound on the denominator
of~$\mu_v$ (its values are rational) in terms of the valuation of the discriminant
of the curve. This is the key ingredient that leads to our new factorization-free
and fast algorithm for computing~$\hat{h}$, see Part~III.

To deal with Problem~\ref{prob-enum}, we use the flexibility in choosing the
(naive) height~$h$ and modify the standard height in such a way that the sum
$\sum_{v \in M_K} \sup \tilde{\mu}_v(J(K_v))$ that bounds the difference $h - \hat{h}$
is as small as we can make it. The local height functions we use are as in
Example~\ref{Ex:nonstdht} above, with $l_v(x_1,x_2,x_3,x_4) = x_4/s_v$ for
certain $s_v \in K_v^\times$ in most cases.
Every height function of this type has the property
that for any point \hbox{$P = (x_1 : x_2 : x_3 : x_4) \in \BP^3(K)$}
different from~$(0 : 0 : 0 : 1)$ we have
\[ 0 \le h_\std\bigl((x_1 : x_2 : x_3)\bigr) \le h(P) \,. \]
This is relevant, since we can fairly easily enumerate all points $P$ as above
that are on the Kummer surface and satisfy $h_\std\bigl((x_1 : x_2 : x_3)\bigr) \le B$,
see Part~IV.
Refinements of the standard height constructed using Arakelov theory were also used by
Holmes~\cite{Holmes14} to give an `in principle' algorithm for the enumeration of
points of bounded canonical height on Jacobians of hyperelliptic curves over global
fields.


\section{Local height correction functions for genus 2 Jacobians} \label{genhts}

Until further notice, we let
$k$ be a non-archimedean local field with additive valuation~$v$,
normalized to be surjective onto $\Z$. Let $\O$ denote the valuation
ring of $k$ with residue class field $\frk$ and let $\pi$ be a
uniformizing element of $\O$. We consider a smooth projective
curve $C$ of genus~2 over $k$, given by a Weierstrass equation
\begin{equation}\label{ceq}
  Y^2 + H(X,Z) Y = F(X,Z)
\end{equation}
in weighted projective space $\BP_k(1,3,1)$, with weights 1, 3 and~1 assigned to the
variables $X$, $Y$ and~$Z$, respectively. Here
\[ F(X,Z) = f_0 Z^6 + f_1 X Z^5 + f_2 X^2 Z^4 + f_3 X^3 Z^3 + f_4 X^4 Z^2 + f_5 X^5 Z + f_6 X^6 \]
and
\[ H(X,Z) = h_0 Z^3 + h_1 X Z^2 + h_2 X^2 Z + h_3 X^3 \]
are binary forms of degrees~6 and~3, respectively, such that the discriminant
$\Delta(F, H)$ of the Weierstrass equation~\eqref{ceq} is nonzero.
In characteristic different from~2, this discriminant is defined as
\[ \Delta(F,H) = 2^{-12} \disc(4 F + H^2) \in \Z[h_0,\ldots,h_3, f_0,\ldots,f_6] \,, \]
and in general, we define it by the generic polynomial given by this formula.
The curve defined by the equation is smooth if and only if $\Delta(F,H) \neq 0$.

For the remainder of this section we assume that $F, H \in \O[X,Z]$,
so that equation~\eqref{ceq} defines an \emph{integral Weierstrass model} $\calC$ of the curve
in the terminology of Section~\ref{simpmod} below.
The discriminant of this model is then defined to be $\Delta(\calC) := \Delta(F,H)$.
We may assume that $C$ is given by such an integral equation if $k$ is the completion
at a non-archimedean place of a
number field $K$ and $C$ is obtained by base change from $K$, since we can choose a
globally integral Weierstrass equation for the curve.
But also in general, we can always assume that $C$ is given by an integral equation after applying
a transformation defined over $k$, since we know from Corollary~\ref{C:lambdatau} in the next section
how the local height correction function~$\mu$ defined in Definition~\ref{defepsmu}
below behaves under such transformations.

We now generalize the definition of $\eps$ given in~\cite{StollH2} to our
more general setting (\cite{StollH2} works with Weierstrass equations that have $H = 0$).
As in the introduction,
let $J$ denote the Jacobian of~$C$ and let $\KS$ be its Kummer surface, constructed
explicitly together with an explicit embedding into~$\BP^3$ in~\cite{CasselsFlynn}
in the case $H = 0$ and in~\cite{MuellerKummer} in the general case.
Also let $\kappa \colon J \to \BP^3$ denote the composition of the quotient map from $J$
to~$\KS$
with this embedding; it maps the origin $O \in J(k)$ to the point $(0:0:0:1)$.
A quadruple $x = (x_1,x_2,x_3,x_4) \in k^4$ is called a set of {\em Kummer coordinates}
on~$\KS$
if $x$ is a set of projective coordinates for a point in $\KS(k)$; we denote the set of sets of Kummer
coordinates on~$\KS$ by~$\KS_\A$ (this is the set of $k$-rational points on the pointed affine cone
over~$\KS$).
For $x \in \KS_\A$ we write
$v(x) = \min\{v(x_1),\ldots,v(x_4)\}$, and we say that $x$ is {\em normalized} if $v(x)=0$.
If $P \in J(k)$, we say that $x \in \KS_\A$ is a set of \emph{Kummer coordinates for~$P$}
if $\kappa(P) = (x_1 : x_2 : x_3 : x_4)$.

We let $\delta$ denote the duplication map on~$\KS$, which is given
by homogeneous polynomials $\delta_1, \ldots, \delta_4 \in \O[x_1, \ldots, x_4]$
of degree~$4$ such that $\delta(0,0,0,1) = (0,0,0,1)$.
We recall that there is a symmetric matrix $B = (B_{ij})_{1 \le i,j \le 4}$, where the
$B_{ij} \in \O[x_1, \ldots, x_4,\, y_1, \ldots, y_4]$ are bi-homogeneous of degree~2
in $x_1, \ldots, x_4$ and $y_1, \ldots, y_4$ each and have the following properties,
see~\cite{CasselsFlynn}*{Chapter~3} and~\cite{MuellerKummer}.
\begin{enumerate}[(i)]
  \item \label{propI}
        Let  $x, y \in \KS_\A$ be Kummer coordinates for $P, Q \in J(k)$. Then there are
        Kummer coordinates $w, z \in \KS_\A$ for $P+Q$ and~$P-Q$,
        respectively, such that
        \[ w \ast z \colonequals (w_i z_j + n_{ij} w_j z_i)_{1 \le i,j \le 4} = B(x, y) \]
        and hence $v(w) + v(z) = v\bigl(B(x,y)\bigr)$;
        here $n_{ij} = 1$ if $i \neq j$ and $n_{ij} = 0$ if $i = j$.
  \item If $x \in \KS_\A$, then $B(x, x) = \delta(x) * (0,0,0,1)$.
\end{enumerate}

We specialize the notions introduced in Section~\ref{S:genhts} to our situation:
we consider the Kummer surface $\KS \subset \BP^3$ with the duplication map~$\delta$
of degree $d = 4$. We use the standard local height on~$\BP^3$.
\begin{defn}\label{defepsmu}
  Let $x\in \KS_\A$ be a set of Kummer coordinates on $\KS$. Then we set
  \[ \eps(x) = v(\delta(x)) - 4 v(x) \in \Z \qquad\text{and}\qquad
      \mu(x) = \sum^{\infty}_{n=0} \frac{1}{4^{n+1}} \eps(\delta^{\circ n}(x)) \,,
  \]
\end{defn}
where $\delta^{\circ n}$ denotes the $n$-fold composition $\delta\circ\ldots\circ\delta$.

Because $\delta$ is given by homogeneous polynomials of degree~4, $\eps(x)$ does not
depend on the scaling of~$x$, so it makes sense to define $\eps(P) = \eps(x)$
for points $P \in \KS(k)$, where $x \in \KS_{\A}$ is any set of Kummer coordinates for~$P$,
and to define $\eps(P) = \eps(\kappa(P))$ for points $P \in J(k)$. We likewise
extend the definition of~$\mu$. Then we have
\[ \mu(2P) - 4 \mu(P) = -\eps(P) \qquad \text{for all $P \in J(k)$.} \]
Note that our assumption $F, H \in \O[X,Z]$ implies that $\eps \ge 0$.
If $k$ is a local field (as we assume here), then $\KS(k)$ is compact in the $v$-adic
topology, and $\eps$ is continuous, so $\eps$ is bounded.

\begin{rk} \label{R:eps_DVF}
  More generally,
  if $k$ is a field with a discrete valuation and not of characteristic~2, then the
  arguments in~\cite{StollH1} show that when $H = 0$, $\eps \le v(2^4 \disc(F))$,
  so $\eps$ is bounded also for these more general fields.

  If $k$ is any field with a discrete valuation, then one can still conclude that
  $\eps$ is bounded, by making use of the fact that the duplication map is well-defined
  on~$\KS$, which implies that the ideal generated by the~$\delta_j$ and the polynomial~$\delta_0$
  defining~$\KS$ contains a power of the irrelevant ideal. So for some $N > 0$, one can
  express every $x_j^N$ as a linear combination of $\delta_0(x), \ldots, \delta_4(x)$ with
  coefficients that are homogeneous polynomials of degree~$N-4$ with coefficients in~$k$.
  The negative of the minimum of the valuations of these coefficients then gives a bound for~$\eps$.
\end{rk}

\begin{rk}\label{R:cv}
  If $k$ is the completion of a global field at a place~$v$, then for $\alpha \in k^\times$,
  $v(\alpha)/\log \|\alpha\|_v = -c_v$ is a negative constant. So for $P \in J(k)$ we have
  $\eps(P) = c_v \tilde{\eps}_v(P)$ and $\mu(P) = c_v \tilde{\mu}_v(P)$,
  where $\tilde{\eps}_v$ and $\tilde{\mu}_v$ are as defined in the introduction.
\end{rk}

We will also have occasion to use the following function. Let $x,y \in \KS_\A$ and define
\begin{equation} \label{E:epsB}
  \eps(x,y) = v(B(x,y)) - 2 v(x) - 2 v(y) .
\end{equation}
In the same way as for $\eps(x)$ above, we can extend this to points in~$\KS(k)$ and~$J(k)$.

\begin{lemma}\label{lemtech}
  Let $x,y,w,z \in \KS_\A$ be Kummer coordinates satisfying $w \ast z = B(x,y)$. Then we have
  \[ \delta(w) \ast \delta(z) = B(\delta(x),\delta(y)) \,. \]
\end{lemma}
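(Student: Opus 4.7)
The plan is to use property~(i) to interpret the hypothesis $w \ast z = B(x,y)$ geometrically, apply the doubling map~$\delta$, and then reduce the desired identity to a scalar equality, which is pinned down by a single specialization.

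Let $P, Q \in J(\bar{k})$ be (any) lifts of $x, y$ under $\kappa$ (well-defined up to sign, which will not matter). By property~(i), there are specific Kummer coordinates $w_0, z_0$ for $P+Q, P-Q$ satisfying $w_0 \ast z_0 = B(x,y)$. Any pair $(w, z) \in \KS_\A^2$ with $w \ast z = B(x,y)$ is obtained from $(w_0, z_0)$ by a rescaling $(w_0, z_0) \mapsto (c w_0, c^{-1} z_0)$ for some $c \in \bar{k}^\times$ or by the swap $w \leftrightarrow z$: this is because $u \ast v$ records (up to halving the diagonal) the symmetric tensor product of the two vectors $u, v$, and the symmetric tensor decomposition of a rank-$\le 2$ symmetric matrix is unique up to these ambiguities. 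Both operations preserve $\delta(w) \ast \delta(z)$ ($\delta$ is homogeneous of degree~$4$; $\ast$ is symmetric), so we may assume $(w, z) = (w_0, z_0)$.

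Then $\delta(w), \delta(z)$ are Kummer coordinates for $2(P+Q), 2(P-Q)$, while $\delta(x), \delta(y)$ are Kummer coordinates for $2P, 2Q$. Applying property~(i) to $(\delta(x), \delta(y))$ yields Kummer coordinates $w^\ast, z^\ast$ for $2P+2Q, 2P-2Q$ with $w^\ast \ast z^\ast = B(\delta(x), \delta(y))$. Uniqueness of Kummer coordinates up to a scalar gives $\delta(w) = \lambda w^\ast$ and $\delta(z) = \nu z^\ast$ for some $\lambda, \nu \in \bar{k}^\times$, and hence
\[ \delta(w) \ast \delta(z) = \lambda\nu \cdot B(\delta(x), \delta(y)). \]
To show $\lambda \nu = 1$, specialize to $y = x$, $w = \delta(x)$, $z = (0,0,0,1)$: the hypothesis $w \ast z = B(x,x)$ follows from property~(ii), and then $\delta(w) \ast \delta(z) = \delta^{\circ 2}(x) \ast (0,0,0,1) = B(\delta(x), \delta(x))$ by property~(ii) applied to $\delta(x)$, so $\lambda \nu = 1$ on the diagonal $\{P = Q\}$. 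The scalar $\lambda \nu$ is a regular $\A^1$-valued function on the complete connected variety $J \times J$ (as a consequence of the way it is constructed), so it is constant, and the diagonal value forces $\lambda \nu \equiv 1$.

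The main obstacle is justifying the two geometric steps rigorously: the essentially unique factorization of $B(x,y)$ as $w \ast z$ (used in the first paragraph), and the claim that $\lambda\nu$ extends to a regular function on all of $J \times J$ (used at the end). Both can be handled by careful algebro-geometric arguments, but perhaps the cleanest alternative is to bypass them entirely by verifying the lemma as a universal polynomial identity modulo the Kummer surface equation and the hypothesis $w \ast z = B(x,y)$, using the explicit formulas for $\delta$ and $B$ in~\cite{CasselsFlynn} and~\cite{MuellerKummer}; this reduces the whole statement to a (lengthy but finite) polynomial verification.
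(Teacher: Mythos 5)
Your strategy is sound, and it is essentially the intended one: the paper gives no independent argument for this lemma, stating only that the proof of \cite{StollH2}*{Lemma~3.2} carries over verbatim, and that proof is of the same structural kind (both sides represent the same unordered pair of points, hence agree up to a factor, which is then pinned down by a normalization), rather than a coefficient-by-coefficient check. So you are not on a genuinely different route; the question is only whether the two steps you flag can be closed, and they can.

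For the first step, note that you do not need the full classification of decompositions, only that $\delta(w)\ast\delta(z)$ is independent of the choice of $(w,z)$ with $w\ast z=B(x,y)$. Your rank-$\le 2$ argument gives this when $w,z$ are linearly independent and $\Char(k)\neq 2$; when $w$ and $z$ are proportional (which happens exactly when $2P=O$ or $2Q=O$) every decomposition has the form $(aw,bw)$ with $ab$ fixed, and invariance again follows from $\delta$ being homogeneous of degree~$4$; in characteristic~$2$ the symmetric-tensor description must be replaced by a direct computation, but the same conclusion holds. For the second step, the assertion that $\lambda\nu$ is regular on $J\times J$ needs an argument: work over a Zariski-open cover of $J\times J$, choose regular lifts $\tilde x,\tilde y,\tilde w,\tilde z$ of $\kappa(P),\kappa(Q),\kappa(P+Q),\kappa(P-Q)$ to $\A^4\setminus\{0\}$ (sections over the standard charts of $\BP^3$); since $B(\tilde x,\tilde y)$ and $\tilde w\ast\tilde z$ are pointwise proportional and nowhere zero (property~(i) forces $B(x,y)\neq 0$ for any Kummer coordinates $x,y$), their ratio is a nowhere-vanishing regular function, and after rescaling $\tilde z$ by it the hypothesis holds identically; then $\lambda\nu$ is locally a quotient of regular functions with locally non-vanishing denominator, hence regular, and properness and connectedness of $J\times J$ give constancy, with your diagonal evaluation (which is correct and uses $\delta(0,0,0,1)=(0,0,0,1)$) forcing the value~$1$. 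Two small points worth making explicit: $\delta(v)\neq 0$ for $v\in\KS_\A$ because $C$ is smooth, so $\delta$ is a morphism on~$\KS$ (needed when you say $\delta(w),\delta(z)$ are again Kummer coordinates), and the possible interchange between $(\delta(w),\delta(z))$ and $(w^\ast,z^\ast)$ is harmless because $\ast$ is symmetric. Your fallback of verifying the statement as a polynomial identity modulo the Kummer equation is also legitimate, just much less illuminating.
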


\begin{proof}
  The proof carries over verbatim from the proof of~\cite{StollH2}*{Lemma~3.2}.
\end{proof}

We deduce the following:

\begin{lemma}\label{epsxy}
  Let $x,y,w,z \in \KS_\A$ be Kummer coordinates satisfying $w \ast z = B(x,y)$. Then we have
  \[ \eps\bigl(\delta(x),\delta(y)\bigr) + 2 \eps(x) + 2 \eps(y) = \eps(w) + \eps(z) + 4
  \eps(x,y)\,. \]
\end{lemma}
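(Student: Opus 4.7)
The plan is a direct computation, feeding into the definitions two multiplicativity-of-valuations relations coming from property~(\ref{propI}). At the ``undoubled'' level, the hypothesis $w\ast z=B(x,y)$ gives $v(w)+v(z)=v(B(x,y))$; at the ``doubled'' level, Lemma~\ref{lemtech} gives $\delta(w)\ast\delta(z)=B(\delta(x),\delta(y))$, so reapplying property~(\ref{propI}) (with $\delta(x),\delta(y),\delta(w),\delta(z)$ in the roles of $x,y,w,z$) yields $v(\delta(w))+v(\delta(z))=v(B(\delta(x),\delta(y)))$.

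With these two identities in hand, I would expand both sides of the claimed equation using $\eps(x)=v(\delta(x))-4v(x)$ and $\eps(x,y)=v(B(x,y))-2v(x)-2v(y)$. On the left, the $\pm 2v(\delta(x))$ and $\pm 2v(\delta(y))$ contributions cancel, leaving $v(B(\delta(x),\delta(y)))-8v(x)-8v(y)$, which by the doubled-level identity equals $v(\delta(w))+v(\delta(z))-8v(x)-8v(y)$. On the right, the expansion produces a $-4(v(w)+v(z))+4v(B(x,y))$ block that cancels by the undoubled-level identity, and what remains is likewise $v(\delta(w))+v(\delta(z))-8v(x)-8v(y)$. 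So both sides agree termwise.

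The only point to be careful about is that the equality $v(w)+v(z)=v(B(x,y))$ is applied to the $(w,z)$ given by hypothesis, while property~(\ref{propI}) as stated furnishes this equality only for a specific compatible pair $(w_0,z_0)$ of Kummer coordinates representing $P+Q$ and $P-Q$. This is harmless: Kummer coordinates of a given point of $J(k)$ are unique up to scaling, so any $(w,z)$ with $w\ast z=w_0\ast z_0$ and $w,z$ Kummer coordinates for $P\pm Q$ must differ from $(w_0,z_0)$ by a dual scaling $(w,z)\mapsto(\lambda w,\lambda^{-1} z)$ (possibly composed with the swap $w\leftrightarrow z$ exchanging sum and difference), and $v(w)+v(z)$ is invariant under both operations. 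I do not expect any deeper obstacle; the lemma is essentially a packaging of the multiplicativity built into the $\ast$-structure.
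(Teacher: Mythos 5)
Your proof is correct and follows essentially the same route as the paper: it combines Lemma~\ref{lemtech} with the valuation identity $v(w)+v(z)=v(B(x,y))$ at both the undoubled and doubled levels and then unwinds the definitions of $\eps(\cdot)$ and $\eps(\cdot,\cdot)$, exactly as the paper does (the paper just phrases the bookkeeping as subtracting four times one relation from the other). Your closing worry is harmless but also unnecessary: over a non-archimedean field one has $v(w\ast z)=v(w)+v(z)$ for arbitrary tuples $w,z$, so the hypothesis $w\ast z=B(x,y)$ already gives the needed equality without invoking uniqueness of Kummer coordinates up to scaling.
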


\begin{proof}
  Using Lemma~\ref{lemtech}, relation~\eqref{E:epsB}, and property~\eqref{propI} above
  for $\delta(w)$, $\delta(z)$, $\delta(x)$ and~$\delta(y)$, we obtain
  \[ v\bigl(\delta(w)\bigr) + v\bigl(\delta(z)\bigr) = v\bigl(B(\delta(x), \delta(y))\bigr)
        = \eps\bigl(\delta(x), \delta(y)\bigr) + 2 v\bigl(\delta(x)\bigr) + 2
        v\bigl(\delta(y)\bigr) \,.
  \]
  Subtracting four times the corresponding relation for $w$, $z$, $x$ and~$y$, we get
  \[ \eps(w) + \eps(z) = \eps\bigl(\delta(x), \delta(y)\bigr) - 4 \eps(x, y) + 2 \eps(x) +
  2 \eps(y) \,, \]
  which is the claim.
\end{proof}

We state a few general facts on the functions $\eps$ and~$\mu$.

\begin{lemma} \label{L:mu eps rel}
  For points $P, Q \in J(k)$, we have the relation
  \[ \mu(P+Q) + \mu(P-Q) - 2 \mu(P) - 2 \mu(Q) = -\eps(P, Q) \,. \]
\end{lemma}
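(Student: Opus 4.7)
My plan is to express the left-hand side as a telescoping series, using Lemma~\ref{epsxy} to evaluate each summand.

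Unfolding the definition of $\mu$ (Definition~\ref{defepsmu}) termwise gives
\[
  \mu(P+Q) + \mu(P-Q) - 2\mu(P) - 2\mu(Q)
    = \sum_{n=0}^{\infty} 4^{-(n+1)} E_n,
\]
where
\[
  E_n := \eps(2^nP + 2^nQ) + \eps(2^nP - 2^nQ) - 2\eps(2^nP) - 2\eps(2^nQ).
\]
For each~$n$ I would choose Kummer coordinates $x, y \in \KS_\A$ for $2^nP$ and~$2^nQ$, together with $w, z \in \KS_\A$ for $2^nP+2^nQ$ and $2^nP-2^nQ$ satisfying $w\ast z = B(x,y)$ as in property~\eqref{propI}. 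Applying Lemma~\ref{epsxy} to this configuration and rearranging yields
\[
  E_n = \eps(2^{n+1}P,\, 2^{n+1}Q) - 4\,\eps(2^nP,\, 2^nQ).
\]

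Substituting this into the series and setting $b_n := 4^{-n}\eps(2^nP,\, 2^nQ)$, the sum collapses to a telescoping one:
\[
  \sum_{n=0}^{\infty} 4^{-(n+1)} E_n = \sum_{n=0}^{\infty}(b_{n+1} - b_n) = \lim_{N\to\infty} b_N - b_0.
\]
To finish, I would argue that $b_N \to 0$. Since the polynomials~$B_{ij}$ cannot all vanish simultaneously on a pair of nonzero Kummer coordinates (the argument for this is exactly parallel to the one sketched in Remark~\ref{R:eps_DVF} for~$\eps$: a power of the irrelevant ideal is contained in the ideal generated by the $B_{ij}$ together with the polynomial cutting out~$\KS$), the function $(P,Q) \mapsto \eps(P,Q)$ is well-defined and bounded on the compact space $J(k) \times J(k)$. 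Hence $b_N \to 0$ and the sum equals $-b_0 = -\eps(P,Q)$, as required.

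The only delicate point is the boundedness of $\eps(\cdot,\cdot)$ needed for the convergence of the telescoping tail; the rest is purely formal manipulation built on Lemma~\ref{epsxy}.
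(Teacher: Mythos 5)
Your proof is correct and is essentially the paper's own argument: unfold each $\mu$ into its series of $\eps$-terms, apply Lemma~\ref{epsxy} at level~$n$ (the paper fixes one quadruple $x,y,w,z$ and iterates $\delta$ via Lemma~\ref{lemtech}, while you re-choose coordinates at each level, which amounts to the same thing since $\eps$ and $\eps(\cdot,\cdot)$ are scale-invariant), and telescope. Your explicit justification that the tail $4^{-N}\eps(2^N P, 2^N Q)$ vanishes, via boundedness of $\eps(\cdot,\cdot)$ on the compact domain, is a point the paper leaves implicit and is fine as stated.
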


\begin{proof}
  Let $x$ and~$y$ be Kummer coordinates for $P$ and~$Q$, respectively; then $w$ and~$z$
  as in Lemma~\ref{epsxy} are Kummer coordinates for $P+Q$ and~$P-Q$ (in some order).
  The claim now follows from the formula in Lemma~\ref{epsxy}:
  \begin{align*}
    \mu(P+Q) &+ \mu(P-Q) - 2 \mu(P) - 2 \mu(Q) \\
      &= \sum_{n=0}^\infty 4^{-n-1}
          \bigl(\eps(2^n P + 2^n Q) + \eps(2^n P - 2^n Q) - 2 \eps(2^n P) - 2 \eps(2^n Q)\bigr) \\
      &= \sum_{n=0}^\infty 4^{-n-1}
          \bigl(\eps(\delta^{\circ n}(w)) + \eps(\delta^{\circ n}(z))
                  - 2 \eps(\delta^{\circ n}(x)) - 2 \eps(\delta^{\circ n}(y))\bigr) \\
      &= \sum_{n=0}^\infty 4^{-n-1}
          \bigl(\eps(\delta^{\circ(n+1)}(x),\, \delta^{\circ(n+1)}(y))
                - 4 \eps(\delta^{\circ n}(x),\, \delta^{\circ n}(y))\bigr) \\
      &= -\eps(x, y) = -\eps(P, Q) \,. \qedhere
  \end{align*}
\end{proof}

\begin{lemma} \label{L:mu=0}
  If $P \in J(k)$ satisfies $\mu(P) = 0$, then $\mu(P+Q) = \mu(Q)$ for all $Q \in J(k)$.
\end{lemma}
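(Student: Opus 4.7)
The plan is to fix $Q \in J(k)$, set $a_n := \mu(nP + Q)$ for $n \in \Z$, and show that the entire sequence $(a_n)_{n \in \Z}$ is constant; evaluating at $n=0,1$ then gives $\mu(P+Q) = \mu(Q)$.

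First I would record two positivity facts. Since we are assuming $F, H \in \O[X,Z]$, the polynomials $\delta_j$ lie in $\O[x_1,\dots,x_4]$, so $\eps \ge 0$ (as noted right after Definition~\ref{defepsmu}) and consequently $\mu \ge 0$. Similarly, because the biquadratic forms $B_{ij}$ lie in $\O[x_1,\dots,x_4,y_1,\dots,y_4]$, the definition $\eps(x,y) = v(B(x,y)) - 2v(x) - 2v(y)$ in~\eqref{E:epsB} shows $\eps(\cdot,\cdot) \ge 0$ on $\KS_\A \times \KS_\A$. I would also note that $\mu(-R) = \mu(R)$, since $\kappa$ identifies $R$ with $-R$, so $\mu$ factors through $\KS(k)$.

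Next I would apply Lemma~\ref{L:mu eps rel} with the two arguments taken to be $P$ and $nP+Q$. Using the hypothesis $\mu(P)=0$ and the evenness $\mu\bigl(-(n-1)P-Q\bigr) = \mu\bigl((n-1)P+Q\bigr)$, this gives
\[
  a_{n+1} + a_{n-1} - 2 a_n \;=\; -\eps(P,\, nP+Q) \;\le\; 0
  \qquad\text{for every } n \in \Z.
\]
Thus $(a_n)$ is a concave sequence on $\Z$, and from the first step we know $a_n \ge 0$ for all $n$.

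Finally I would invoke the elementary fact that a concave sequence on $\Z$ bounded below is constant: the forward differences $b_n = a_{n+1}-a_n$ are non-increasing, so if $b_N>0$ for some $N$ then $a_{N-k} \le a_N - k b_N \to -\infty$, contradicting $a_n \ge 0$; symmetrically $b_N<0$ forces $a_{N+k}\to -\infty$. Hence $b_n\equiv 0$, so $(a_n)$ is constant and in particular $a_1 = a_0$, which is the desired identity $\mu(P+Q)=\mu(Q)$. The one step that requires genuine insight (rather than bookkeeping) is the concavity-plus-positivity argument; everything else is a direct application of Lemma~\ref{L:mu eps rel} together with the integrality of the chosen Weierstrass model.
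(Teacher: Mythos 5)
Your proof is correct, and it replaces the paper's key step by a genuinely different and more elementary argument. Both proofs start identically: apply Lemma~\ref{L:mu eps rel} with $\mu(P)=0$ to get $a_{n+1}-2a_n+a_{n-1}=-\eps(P,nP+Q)$ for $a_n=\mu(nP+Q)$ (your use of evenness of $\mu$ to handle $\mu(P-(nP+Q))$ is fine, and is avoidable by swapping the roles of the two arguments). The difference is how one kills the right-hand side. The paper sums the second differences over a symmetric window, uses that $\mu$ is \emph{bounded} (which rests on compactness of $\KS(k)$ and continuity of $\eps$, i.e.\ on $k$ being a local field), and then argues that, since the multiples of $P$ accumulate at the origin and $\eps$ is locally constant, every value $\eps(P,Q+nP)$ recurs infinitely often, so a single nonzero value would make the telescoped sum unbounded; it then solves the recurrence $a_{n+1}-2a_n+a_{n-1}=0$ among bounded sequences. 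You instead observe $\eps(\cdot,\cdot)\ge 0$ and $\mu\ge 0$ (both direct consequences of the integrality of the model), so that $(a_n)$ is a concave sequence on $\Z$ bounded below, hence constant. Your route needs only the lower bound $\mu\ge 0$ rather than two-sided boundedness, and dispenses entirely with the topological input (accumulation of multiples, local constancy of $\eps$); in particular it would carry over to any discretely valued field with an integral model, provided the series defining $\mu$ converges, in the spirit of Remark~\ref{R:eps_DVF}. What the paper's argument makes explicit along the way — that $\eps(P,Q+nP)=0$ for all $n$ — you recover anyway, since constancy of $(a_n)$ forces all second differences to vanish.
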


\begin{proof}
  We apply Lemma~\ref{L:mu eps rel} with $P$ and~$Q$ replaced by $Q+nP$ and~$P$,
  respectively, where $n \in \Z$. Taking into account
  that $\mu(P) = 0$ and writing $a_n$ for~$\mu(Q + nP)$, this gives
  \[ a_{n+1} - 2 a_n + a_{n-1} = -\eps(P, Q+nP) \,. \]
  As $k$ is a non-archimedean local field, the multiples of~$P$ accumulate at the origin
  $O \in J(k)$. Recall that $\eps$ is locally constant. This implies that every value
  $\eps(P, Q+nP)$ occurs for infinitely many $n \in \Z$, since
  $Q+(n+N)P$ will be close to~$Q+nP$ for suitably chosen~$N$. We have for any $m > 0$
  \[ a_{m+1} - a_{m} - a_{-m} + a_{-m-1}
       = \sum_{n=-m}^{m} (a_{n+1} - 2 a_n + a_{n-1})
       = - \sum_{n=-m}^{m} \eps(P, Q+nP) \,.
  \]
  Since $\mu$ is bounded, the left hand side is bounded independently of~$m$. We also know
  that $\eps(P, Q+nP) \ge 0$. But if $\eps(P, Q+nP)$ were nonzero for some~$n$, then
  by the discussion above, the right hand side would be unbounded as $m \to \infty$.
  Therefore it follows that $\eps(P, Q+nP) = 0$ for all $n \in \Z$.
  This in turn implies $a_{n+1} - 2 a_n + a_{n-1} = 0$ for all $n \in \Z$.
  The only bounded solutions of this recurrence are constant sequences. In particular, we have
  \[ \mu(P + Q) = a_1 = a_0 = \mu(Q) \,. \qedhere \]
\end{proof}

\begin{prop} \label{C:finite quotient}
  The subset $U = \{P \in J(k) : \mu(P) = 0\}$ is a subgroup of finite index in~$J(k)$.
  The functions $P \mapsto \eps(P)$ and $P \mapsto \mu(P)$
  factor through the quotient $J(k)/U$.
\end{prop}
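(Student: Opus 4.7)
The plan is to first verify that $U$ is a subgroup, then establish openness (which gives finite index via compactness of $J(k)$), and finally deduce the factorization statements.

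For the subgroup property, closure under addition is immediate from Lemma~\ref{L:mu=0}: if $P_1, P_2 \in U$, then $\mu(P_1 + P_2) = \mu(P_2) = 0$. Closure under negation follows because $-P$ shares Kummer coordinates with $P$, so $\eps(-P) = \eps(P)$ and inductively $\eps(2^n(-P)) = \eps(2^n P)$, hence $\mu(-P) = \mu(P)$. Finally, $\delta(0,0,0,1) = (0,0,0,1)$ gives $\eps(O) = 0$, hence $\mu(O) = 0$, so $O \in U$.

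For openness, I would use that $\eps \colon J(k) \to \Z$ is continuous and integer-valued, hence locally constant, so that $\eps^{-1}(0)$ is an open neighborhood of~$O$. Invoking the standard fact that $J(k)$ admits a neighborhood basis of open subgroups at the identity (from the formal group filtration of an abelian variety over a non-archimedean local field), I pick an open subgroup $W \subset \eps^{-1}(0)$. For any $P \in W$ all multiples $2^n P$ remain in~$W$, so $\eps(2^n P) = 0$ for all $n \ge 0$, giving $\mu(P) = 0$; thus $W \subset U$. Since $U$ is a subgroup containing an open neighborhood of the identity, $U$ is open, and since $J(k)$ is compact, $U$ has finite index.

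It remains to show that $\mu$ and $\eps$ factor through $J(k)/U$. For $\mu$, this is Lemma~\ref{L:mu=0} directly. For $\eps$, I would first remark that $U$ is stable under doubling: from $\mu(2P) = 4\mu(P) - \eps(P)$ together with $\mu \ge 0$ and $\eps \ge 0$ (the latter using our integrality assumption $F, H \in \O[X,Z]$), a point $P \in U$ forces both $\mu(2P) = 0$ and $\eps(P) = 0$. Then for arbitrary $Q \in J(k)$ and $P \in U$, applying Lemma~\ref{L:mu=0} twice gives
\[ \eps(P + Q) = 4\mu(P+Q) - \mu(2P + 2Q) = 4\mu(Q) - \mu(2Q) = \eps(Q) \,, \]
so $\eps$ is constant on cosets of $U$.

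The main obstacle is the openness step, which depends on the existence of an open subgroup neighborhood basis at $O \in J(k)$; this is a standard input from the theory of abelian varieties over local fields but is the one place where we leave the formal manipulations of $\eps$ and $\mu$.
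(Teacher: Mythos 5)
Your proposal is correct and follows essentially the same route as the paper: Lemma~\ref{L:mu=0} gives the subgroup property and the factorization of~$\mu$, a small open subgroup neighborhood of the origin contained in $\eps^{-1}(0)$ (and stable under doubling) shows $U$ is open and hence of finite index by compactness of~$J(k)$, and the identity $\eps(P) = 4\mu(P) - \mu(2P)$ transfers the factorization to~$\eps$. Your extra details (negation via $\kappa(-P)=\kappa(P)$, the explicit two-fold application of Lemma~\ref{L:mu=0} for~$\eps$) are just expansions of steps the paper leaves implicit.
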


\begin{proof}
  Lemma~\ref{L:mu=0} shows that $U$ is a subgroup.
  We have $\eps(P) = 0$ for $P \in J(k)$ sufficiently close to the origin. So taking a
  sufficiently small subgroup neighborhood~$U'$ of the origin in~$J(k)$, we see that
  $\eps(2^n P) = 0$ for all $P \in U'$ and all $n \ge 0$. This implies that $\mu = 0$
  on~$U'$, so $U \supset U'$. Because $k$ is a local field, $U'$ and therefore also~$U$
  have finite index in~$J(k)$. By Lemma~\ref{L:mu=0} again,
  $\mu$ factors through $J(k)/U$, and since $\eps(P) = 4 \mu(P) - \mu(2P)$, the same
  is true for~$\eps$.
\end{proof}

We will now show that we actually have
\[ U = \{P \in J(k) : \eps(P) = 0\} \]
(the inclusion `$\subset$' is clear from the definition and Proposition~\ref{C:finite quotient}.)
This is equivalent to the implication $\eps(x) = 0 \Longrightarrow \eps(\delta(x)) = 0$
and generalizes~\cite{StollH2}*{Thm.~4.1}.
For this we first provide a characteristic~2 analogue of~\cite{StollH2}*{Prop.~3.1(1)}.

We temporarily let $k$ denote an arbitrary field.
Let $C_{F,H}$ be a (not necessarily smooth) curve in the weighted projective plane
with respective weights 1, 3, 1 assigned to the variables $X,Y,Z$ that is given by an equation
\begin{equation}\label{E:cfheq}
Y^2 + H(X,Z) Y = F(X,Z),
\end{equation}
where $F, H \in k[X,Z]$
are binary forms of respective degrees~6 and~3. Let $\KS_{F,H}$ denote
the subscheme of $\BP^3$ given by the vanishing of the equation defining the
Kummer surface of $C_{F,H}$ if $C_{F,H}$ is nonsingular. Then the construction
of $\delta=(\delta_1,\delta_2,\delta_3,\delta_4)$ still makes sense
in this context, but we may now have $\delta_i(x)=0$ for all $1 \le i \le4$
(which we abbreviate by $\delta(x) = 0$)
for a set~$x$ of Kummer coordinates on~$\KS_{F,H}$.
We generalize Proposition~3.1 in~\cite{StollH2} (which assumes $H = 0$) to the case considered here.

Note that two equations~\eqref{E:cfheq} for $C_{F,H}$ are related by a transformation
$\tau$ acting on an affine point $(\xi,\eta)$ by
\begin{equation}\label{tau}
  \tau(\xi,\eta) = \left(\frac{a\xi+b}{c\xi+d}, \frac{e\eta + U(\xi,1)}{(c\xi+d)^3} \right),
\end{equation}
where $A = \left(\begin{smallmatrix} a & b \\ c & d \end{smallmatrix}\right)\in \GL_2(k)$,
$e \in k^\times$ and $U \in k[X,Z]$ is homogeneous of degree~3.
The transformation $\tau$ also acts on the forms $F$ and $H$ by
\begin{align*}
  \tau^*F(X,Z) &= (ad-bc)^{-6} \left(e^2 F^A + (e H^A - U^A)\, U^A\right)\\
  \tau^*H(X,Z) &= (ad-bc)^{-3} \left(e H^A - 2 U^A\right)\,,
\end{align*}
where we write
\[ S^A = S(dX-bZ, -cX+aZ) \]
for a binary form $S \in k[X,Z]$.

\begin{lemma}\label{delBvan}
  Let $x\in \KS_{F,H}(k)$. If $\delta(\delta(x))=0$, then we already have $\delta(x)=0$.
\end{lemma}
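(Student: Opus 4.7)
My plan is to view the lemma as an algebraic identity in the coefficients of $F$ and~$H$ and the coordinates $x_{1}, \ldots, x_{4}$, and then prove it by reducing to the case $H = 0$ in characteristic different from~$2$, combined with a descent argument to handle characteristic~$2$.

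The conclusion $\delta(x)=0$ on $\{x \in \KS_{F,H} : \delta(\delta(x))=0\}$ amounts to saying that each $\delta_{i}(x)$ lies in the radical of the ideal generated by the defining equation of $\KS_{F,H}$ and the four polynomials $\delta_{j}(\delta(x))$ inside $\Z[f_{\bullet}, h_{\bullet}][x_{1}, \ldots, x_{4}]$. To establish this containment, I would first work in characteristic different from~$2$ and apply the transformation~$\tau$ of~\eqref{tau} with $A = I$, $e = 1$, and $U = H/2$: the displayed formulas give $\tau^{*}H = H - 2(H/2) = 0$ and $\tau^{*}F = F + H^{2}/4$, so $\tau$ identifies $C_{F,H}$ with the Weierstrass curve $C_{F + H^{2}/4,\,0}$, which is covered by~\cite{StollH2}*{Prop.~3.1(1)}. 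Functoriality of the Kummer and duplication polynomials under~$\tau$ (which acts as a linear change of coordinates on $\BP^{3}$) transports the hypothesis and conclusion across this identification, giving the lemma in this case.

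For characteristic~$2$, the transformation trick is unavailable. I would instead descend the polynomial identity witnessed over $\Z[1/2]$ by the previous step down to $\Z$, provided its coefficients are $2$-integral; this is inherent in the duplication polynomials and biquadratic forms of~\cite{MuellerKummer}, which are constructed to have good reduction in characteristic~$2$. The main obstacle is precisely this final $2$-integrality verification. A cleaner self-contained alternative is to argue directly with property~(ii) and Lemma~\ref{lemtech}: setting $y = \delta(x)$, property~(ii) gives $B(\delta(x),\delta(x)) = \delta(\delta(x)) \ast (0,0,0,1)$, so the hypothesis forces $B(\delta(x),\delta(x)) = 0$ as a $4 \times 4$ matrix; Lemma~\ref{lemtech}, applied with $y = x$, $w = \delta(x)$, $z = (0,0,0,1)$ (so that $w \ast z = B(x,x)$ by property~(ii) again), then links this vanishing back to the vanishing of each coordinate $\delta_{i}(x)$ modulo the defining polynomial of $\KS_{F,H}$, completing the argument in all characteristics.
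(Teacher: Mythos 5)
Your characteristic~$\neq 2$ step (apply the transformation~\eqref{tau} with $A=I$, $e=1$, $U=H/2$ to reach $H=0$ and quote \cite{StollH2}*{Prop.~3.1(1)}) is exactly what the paper does. The problem is everything you propose for characteristic~$2$, which is where the actual content of the lemma lies. The ``descent'' from $\Z[1/2]$ does not work: the lemma is not a polynomial identity but an inclusion of vanishing loci, i.e.\ a radical membership $\delta_i(x)\in\sqrt{(\,K,\ \delta_1(\delta(x)),\dots,\delta_4(\delta(x))\,)}$, and knowing this over every field of characteristic $\neq 2$ only produces Nullstellensatz certificates with powers of~$2$ in the denominators. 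The integrality of the duplication polynomials and of~$B$ (the good-reduction property from~\cite{MuellerKummer}) says nothing about the $2$-integrality of those certificate coefficients, which you yourself flag as the unverified obstacle. Indeed no such soft argument can exist, because the classification of degenerate pairs $(F,H)$ is genuinely different in characteristic~$2$; the paper's proof instead normalizes the degenerate curves under the transformations~\eqref{tau} into the thirteen representatives of Table~\ref{T:reps} and verifies $\delta(\delta(x))=0\Rightarrow\delta(x)=0$ case by case by explicit elementary computation.

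Your ``cleaner self-contained alternative'' is circular. By property~(ii), $B(y,y)=\delta(y)\ast(0,0,0,1)$, and this matrix has entries $\delta_1(y),\dots,\delta_4(y)$ (in the fourth row/column) and zeros elsewhere; hence $B(\delta(x),\delta(x))=0$ is \emph{equivalent} to the hypothesis $\delta(\delta(x))=0$ and carries no new information. Applying Lemma~\ref{lemtech} with $y=x$, $w=\delta(x)$, $z=(0,0,0,1)$ only reproduces this same identity, since $\delta(0,0,0,1)=(0,0,0,1)$. The decisive claim that this ``links back to the vanishing of each $\delta_i(x)$'' is asserted without any argument, and it is precisely the statement to be proved; if such a formal manipulation sufficed, the lemma would hold with a one-line proof for every degenerate $(F,H)$ with no inspection of the singular models, whereas its truth genuinely depends on the structure of the degenerate Kummer quartics, which is why the paper must examine the normal forms individually.
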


\begin{proof}
  If $k$ has characteristic different from~2, we can apply a transformation so that
  the new Weierstrass equation will have $H = 0$; the statement is then~\cite{StollH2}*{Prop.~3.1(1)}.
  So from now on, $k$ has characteristic~2.
  We may assume without loss of generality that $k$ is algebraically closed.
  If the given curve is smooth, then the result is obvious, because the situation
  described in the statement can never occur. If it is not smooth,
  we can act on $F$ and $H$ using transformations of the form~\eqref{tau}, so it is enough
  to consider only one representative of each orbit under such transformations.
  This is analogous to the strategy in the proof of~\cite{StollH2}*{Prop.~3.1}. We can, for
  example, pick the representatives listed in Table~\ref{T:reps}.

  \begin{table}
    \begin{tabular}{|c|c|c|c|} \hline
      type & $H$           & $F$             & conditions          \\ \hline
        1  & 0             & 0               &                     \\
        2  & $Z^3$         & 0               &                     \\
        3  & $Z^3$         & $aXZ^5$         & $a\ne 0$            \\
        4  & $XZ^2$        & $aXZ^5$         & $a\ne 0$            \\
        5  & $XZ^2$        & $bX^3Z^3$       & $b\ne 0$            \\
        6  & $Z^3$         & $aXZ^5+bX^3Z^3$ & $ab \ne0$           \\
        7  & $XZ^2$        & 0               &                     \\
        8  & $X Z (X + Z)$ & 0               &                     \\
        9  & $X Z (X + Z)$ & $bX^3Z^3$       & $b(b+1)\ne 0$       \\
       10  & $X Z (X + Z)$ & $aXZ^5+bX^3Z^3$ & $a(a+b)(a+b+1)\ne0$ \\
       11  & $XZ^2$        & $aXZ^5+bX^3Z^3$ & $ab \ne0$           \\
       12  & 0             & $XZ^5$          &                     \\
       13  & 0             & $X^3Z^3$        &                     \\ \hline
    \end{tabular}
    \medskip
    \caption{Representatives in characteristic~2}
    \label{T:reps}
  \end{table}
  For these representatives, elementary methods as in the
  proof of~\cite{StollH2}*{Prop.~3.1} can be used to check that $\delta(x) = 0$ indeed
  follows from  $\delta(\delta(x))= 0$.
\end{proof}

We can use the above to analyze the group $U$.

\begin{thm}\label{T:mu_U}
  Suppose that~$k$ is a non-archimedean local field and that $J$ is the Jacobian of a
  smooth projective
  curve of genus~2, given by a Weierstrass equation~\eqref{ceq} with integral
  coefficients.
  Then the set $\{P \in J(k) : \eps(P) = 0\}$ equals the subgroup~$U$
  in Proposition~\ref{C:finite quotient}. In particular, $U$ is a subgroup of finite index in $J(k)$
  and $\eps$ and~$\mu$ factor through the quotient $J(k)/U$. Moreover we have that
  $\eps(-P) = \eps(P)$ and $U$ contains the kernel of reduction $J(k)^1$ with respect to the given model of
$J$,  i.e., the subgroup of points whose image in~$\KS(\frk)$ equals that of~$O$.
\end{thm}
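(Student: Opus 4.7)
The theorem has four assertions; the finite-index property of~$U$ and the factorization of~$\eps, \mu$ through $J(k)/U$ are inherited from Proposition~\ref{C:finite quotient} once the equality $U = \{P \in J(k) : \eps(P) = 0\}$ is known. So the plan focuses on (a) this equality, (b) the symmetry $\eps(-P) = \eps(P)$, and (c) the inclusion $J(k)^1 \subseteq U$.

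For~(a), the inclusion $U \subseteq \{\eps = 0\}$ is immediate: the telescoping identity $\eps(P) = 4\mu(P) - \mu(2P)$ (direct from Definition~\ref{defepsmu}) together with $\mu \ge 0$ (which holds because $\eps \ge 0$ under the integrality assumption $F, H \in \O[X,Z]$) forces $0 \le \eps(P) = -\mu(2P) \le 0$ whenever $\mu(P) = 0$. The reverse inclusion reduces to the key implication
\[
  \eps(x) = 0 \quad\Longrightarrow\quad \eps(\delta(x)) = 0 \qquad \text{for all $x \in \KS_\A$,}
\]
because iterating gives $\eps(\delta^{\circ n}(x)) = 0$ for every $n \ge 0$, whence $\mu(x) = 0$ term by term. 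To prove the implication I would scale $x$ so that $v(x) = 0$. Since the $\delta_j$ have $\O$-integral coefficients, reduction modulo~$\pi$ commutes with $\delta$, so the reduction of $\delta(x)$ is exactly $\delta(\bar x)$ computed from the reductions $\bar F, \bar H$. The hypothesis $\eps(x) = 0$ then reads $\delta(\bar x) \neq 0$ as a vector in $\frk^4$; this makes $\delta(x)$ itself normalized, and the desired conclusion $\eps(\delta(x)) = 0$ becomes $\delta(\delta(\bar x)) \neq 0$. This is exactly the contrapositive of Lemma~\ref{delBvan} applied over~$\frk$ to the (possibly singular) curve $C_{\bar F, \bar H}$.

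Claim~(b) is immediate: $\eps$ depends only on a set of Kummer coordinates of the input, and $\kappa(P) = \kappa(-P)$ by construction of~$\kappa$, so $P$ and $-P$ share any such set. For~(c), a point $P \in J(k)^1$ has normalized Kummer coordinates $x$ reducing to $\bar x = (0,0,0,1)$. Using the normalization $\delta(0,0,0,1) = (0,0,0,1)$ that is built into the paper's choice of~$\delta$, we obtain $\delta(\bar x) = (0,0,0,1) \neq 0$, so $\eps(P) = 0$, and then $P \in U$ by~(a).

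The main obstacle is the key implication in~(a), but Lemma~\ref{delBvan} already encapsulates the difficulty: the characteristic~$2$ case is subtle and is handled there by exhausting a short list of normal forms under the transformations~\eqref{tau}. Once that lemma is invoked, everything else is a matter of combining the telescoping identity for~$\eps$ and~$\mu$ with the normalization $\delta(0,0,0,1) = (0,0,0,1)$.
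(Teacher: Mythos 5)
Your proposal is correct and follows essentially the same route as the paper: the key step in both is the implication $\eps(x)=0\Rightarrow\eps(\delta(x))=0$, obtained by normalizing $x$, noting that reduction commutes with the integral duplication polynomials, and invoking Lemma~\ref{delBvan} over~$\frk$, after which the remaining claims (symmetry of~$\eps$, the kernel of reduction, finite index and factorization) follow from Proposition~\ref{C:finite quotient} and $\delta(0,0,0,1)=(0,0,0,1)$ exactly as you describe.
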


\begin{proof}
  The statement in Lemma~\ref{delBvan} implies $\eps(P) = 0 \Longrightarrow \eps(2P) = 0$
  for points $P \in J(k)$, since $\eps(P) = 0$ is equivalent to
  $\delta(\tilde{x}) \neq 0$
  if $x$ are normalized Kummer coordinates for~$P$, with
  reduction~$\tilde{x}$.
  This shows that $\eps(P) = 0$ implies $\mu(P) = 0$ (and conversely), so
  $\{P \in J(k) : \eps(P) = 0\} = \{P \in J(k) : \mu(P) = 0\} = U$.
  The remaining statements now are immediate from Proposition~\ref{C:finite quotient},
  taking into account that for $P$ in the kernel of reduction, we trivially have $\eps(P) = 0$.
\end{proof}

An algorithm for the computation of $\mu(P)$ which is based on Theorem~\ref{T:mu_U}
(for $H=0$) is given in~\cite{StollH2}*{\S6}.
Using the relation in Lemma~\ref{L:mu eps rel}, we obtain the following alternative
procedure for computing~$\mu(P)$.
\begin{enumerate}[1.]
  \item Let $x$ be normalized Kummer coordinates for~$P$. \\
        Set $y_0 = (0,0,0,1)$ and $y_1 = x$.
  \item For $n = 1, 2, \ldots$, do the following.
        \begin{enumerate}[a.]
          \item Using pseudo-addition (see~\cite{FlynnSmart}*{\S4}),
                compute normalized Kummer coordinates $y_{n+1}$
                for~$nP$ from $x$, $y_{n-1}$ and~$y_n$; record $\eps(P, nP)$, which is the
                shift in valuation occurring when normalizing~$y_{n+1}$.
          \item If $\eps(P, nP) = 0$, check whether $v(\delta(y_n)) = 0$
                (by Theorem~\ref{T:mu_U}, this is equivalent to
                $nP \in U$). If yes, let $N=n$ and exit the
                loop.
        \end{enumerate}
  \item Return
        \[ \mu(P) = \frac{1}{2N} \sum_{n=1}^{N-1} \eps(P, nP) \,. \]
\end{enumerate}
To see that this works, note that by Lemma~\ref{L:mu eps rel} we have
\[ \mu\bigl((n+1)P\bigr) - 2\mu(nP) + \mu\bigl((n-1)P\bigr) = 2\mu(P) - \eps(P, nP) \,. \]
The sequence $\bigl(\mu(nP)\bigr)_{n \in \Z}$ is periodic with period~$N$, where
$N$ is the smallest positive integer~$n$ such that $nP \in U$ (which exists according
to Theorem~\ref{T:mu_U}). Taking the sum over one period gives
\[ 2 N \mu(P) = \sum_{n=0}^{N-1} \eps(P, nP) = \sum_{n=1}^{N-1} \eps(P, nP) \,. \]

From the periodicity we can also deduce the possible denominators of $\mu(P)$.
As $\eps$ has integral values, we see that $\mu(P) \in \frac{1}{2N}\Z$ if $N$ is a
period of $\bigl(\mu(nP)\bigr)_{n \in \Z}$.
In fact, we can show a little bit more.

\begin{cor} \label{C:Denominator}
  Let $P \in J(k)$ and $N = \min\{n \in \Z_{>0} : \mu(nP) = 0\}$. Then
  \begin{align*}
      \mu(P) \in \frac{1}{N} \Z &\quad\text{if $N$ is odd, and} \\
      \mu(P) \in \frac{1}{2N} \Z &\quad\text{if $N$ is even.}
  \end{align*}
\end{cor}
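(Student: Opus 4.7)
The plan is to build on the identity
\[
2N\,\mu(P) = \sum_{n=1}^{N-1} \eps(P, nP)
\]
derived in the discussion immediately preceding the corollary (using Lemma~\ref{L:mu=0} to see that $N$ is a period of $(\mu(nP))_{n \in \Z}$ and Lemma~\ref{L:mu eps rel} to sum over one period). Since each $\eps(P, nP)$ is a non-negative integer, this already gives $\mu(P) \in \frac{1}{2N}\Z$, which is precisely the claim when $N$ is even. The real content of the corollary is therefore to extract an extra factor of~$2$ on the right-hand side when $N$ is odd, and I plan to do this by exhibiting a pairing symmetry $n \leftrightarrow N-n$.

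The symmetry rests on two observations that I would establish in turn. First, $\eps(\cdot, \cdot)$ factors through the Kummer surface in each argument: by definition $\eps(P, Q) = v(B(x,y)) - 2v(x) - 2v(y)$ with $x, y$ Kummer coordinates for $P, Q$, and since $\kappa(-Q) = \kappa(Q)$ the same $y$ also serves as Kummer coordinates for $-Q$, giving $\eps(P, Q) = \eps(P, -Q)$. Second, the sequence $(\eps(P, nP))_{n \in \Z}$ has period~$N$: the recurrence
\[
\mu((n+1)P) - 2\mu(nP) + \mu((n-1)P) = 2\mu(P) - \eps(P, nP)
\]
from Lemma~\ref{L:mu eps rel} transfers the period of $(\mu(nP))_n$ to $(\eps(P, nP))_n$. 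Combining the two yields
\[
\eps(P, nP) = \eps(P, -nP) = \eps(P, (N-n)P)
\]
for every $n \in \Z$.

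When $N$ is odd, the involution $n \mapsto N - n$ on $\{1, 2, \ldots, N-1\}$ has no fixed point, so the set decomposes into $(N-1)/2$ two-element orbits, and the symmetry above gives
\[
\sum_{n=1}^{N-1} \eps(P, nP) = 2 \sum_{n=1}^{(N-1)/2} \eps(P, nP) \in 2\Z\,,
\]
so $2N\mu(P)$ is even and hence $\mu(P) \in \frac{1}{N}\Z$. When $N$ is even, the index $n = N/2$ is fixed by the involution and its contribution $\eps(P, (N/2)P)$ need not be even, which is exactly why the bound $\frac{1}{2N}\Z$ cannot be sharpened by this pairing.

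The only step that requires any real thought is spotting the identity $\eps(P, nP) = \eps(P, (N-n)P)$; once this symmetry is in place, the rest is bookkeeping. I do not foresee a serious obstacle beyond verifying that both ingredients (invariance under $Q \mapsto -Q$ and $N$-periodicity of $\eps(P, nP)$) follow cleanly from results already in hand.
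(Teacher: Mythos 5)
Your proposal is correct and follows essentially the same route as the paper: the paper's proof likewise observes that $(\eps(P,nP))_{n\in\Z}$ is $N$-periodic and symmetric and then pairs $n$ with $N-n$ to extract the extra factor of $2$ when $N$ is odd. You simply spell out the two ingredients (invariance of $\eps(P,\cdot)$ under negation via the Kummer coordinates, and transfer of periodicity through the recurrence) that the paper leaves implicit.
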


\begin{proof}
  The sequence $\bigl(\eps(P,nP)\bigr)_{n \in \Z}$ has period~$N$ and is symmetric.
  So if $N$ is odd, we actually have
  \[ \mu(P) = \frac{1}{2N} \sum_{n=1}^{N-1} \eps(P, nP)
            = \frac{1}{N} \sum_{n=1}^{(N-1)/2} \eps(P, nP) \in \frac{1}{N}\Z \,. \qedhere
  \]
\end{proof}
Analyzing the possible denominators of $\mu(P)$ will play a key role in
Section~\ref{algo1}, where we discuss another algorithm for the computation of
$\mu(P)$.


\section{Canonical local heights on Kummer coordinates}
\label{S:lambda_Kummer}

We now define a notion of canonical local height for Kummer coordinates.
We keep the notation of the previous section.

\begin{defn}\label{lhg2gen}
  Let $x\in \KS_\A$ be a set of Kummer coordinates on $\KS$.
  The {\em canonical local height of $x$} is given by
  \[ \hat{\lambda}(x) = -v(x) - \mu(x)\,. \]
\end{defn}

\begin{rk}\label{}
  We can also define the canonical local height on an archimedean local field in an
  analogous way.
  Then, if $K$ is a global field and $x$ is a set of Kummer coordinates for a point
  $J(K)$, we have
  \[
    \hat{h}(P) = \sum_{v \in M_K}\frac{1}{c_v}\hat{\lambda}_v(x)\, ,
  \]
  where $c_v$ is the constant introduced in Remark~\ref{R:cv} for a non-archimedean place
  $v$ and $c_v = [K_v:\R]^{-1}$ if $v$ is archimedean.
\end{rk}

The canonical local height $\hat{\lambda}$ on Kummer coordinates has
somewhat nicer properties
than the canonical local height defined (for instance
in~\cite{FlynnSmart} or, more generally, in~\cite{HindrySilverman}*{\S B.9})
with respect to a divisor on $J$.

\begin{prop}\label{lhg2props}
  Let $x,y,z,w\in \KS_\A$. Then the following hold:
  \begin{enumerate}[\upshape (i)]
    \item $\hat{\lambda}(\delta(x)) = 4\hat{\lambda}(x)$.
    \item If $w \ast z = B(x,y)$, then
          $\hat{\lambda}(z) + \hat{\lambda}(w) = 2 \hat{\lambda}(x) + 2 \hat{\lambda}(y)$.
    \item $\hat{\lambda}(x) = -\lim_{n\to\infty}4^{-n} v\bigl(\delta^{\circ n}(x)\bigr)$.
    \item If $k'/k$ is a finite extension of ramification index~$e$ and $\hat{\lambda'}$
          is the canonical local height over~$k'$, then we have
          $\hat{\lambda'}(x) = e \cdot \hat{\lambda}(x)$.
  \end{enumerate}
\end{prop}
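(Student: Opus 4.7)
My strategy is to prove each part by a short manipulation starting from the definition $\hat{\lambda}(x) = -v(x) - \mu(x)$ and invoking the functional equations for $\eps$ and $\mu$ established in Section~\ref{genhts}.

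For (i), the two key identities are $v(\delta(x)) = \eps(x) + 4v(x)$ (immediate from Definition~\ref{defepsmu}) and the doubling recursion $\mu(2P) - 4\mu(P) = -\eps(P)$ noted just after that definition. Substituting both into $\hat{\lambda}(\delta(x)) = -v(\delta(x)) - \mu(\delta(x))$ makes the two $\eps(x)$ contributions cancel, leaving $4\hat{\lambda}(x)$. For (ii), I would expand
\[ \hat{\lambda}(w) + \hat{\lambda}(z) = -\bigl(v(w)+v(z)\bigr) - \bigl(\mu(w)+\mu(z)\bigr), \]
use property~\eqref{propI} of Section~\ref{genhts} to rewrite $v(w)+v(z) = v(B(x,y)) = \eps(x,y) + 2v(x) + 2v(y)$, and use Lemma~\ref{L:mu eps rel} to rewrite $\mu(w)+\mu(z) = 2\mu(x) + 2\mu(y) - \eps(x,y)$. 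Adding these, the two $\eps(x,y)$ contributions cancel and the remainder is exactly $2\hat{\lambda}(x) + 2\hat{\lambda}(y)$.

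For (iii), iterating (i) gives $4^n \hat{\lambda}(x) = \hat{\lambda}(\delta^{\circ n}(x)) = -v(\delta^{\circ n}(x)) - \mu(\delta^{\circ n}(x))$; dividing by $4^n$ and sending $n \to \infty$ suffices, provided $\mu$ is bounded. This boundedness is a consequence of the compactness of $\KS(k)$ and continuity of $\eps$ noted just before Remark~\ref{R:eps_DVF}, which forces the $4^{-n}\mu$ term to vanish in the limit. For (iv), let $v'$ be the normalized additive valuation on $k'$; then $v'|_k = e\cdot v$, and since the defining formulas for $\eps$ and $\mu$ are $\Z$-linear in valuations, the corresponding functions $\eps', \mu'$ over $k'$ satisfy $\eps'(x) = e\eps(x)$ and hence $\mu'(x) = e\mu(x)$ on Kummer coordinates that lie in $\KS_\A(k) \subset \KS_\A(k')$. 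Therefore $\hat{\lambda'}(x) = -v'(x) - \mu'(x) = e\hat{\lambda}(x)$.

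None of the four parts presents a real obstacle; each is essentially bookkeeping on top of one or two identities already in Section~\ref{genhts}. Part (ii) is the most delicate, since it requires coordinating the valuation shift hidden in the biquadratic form $B(x,y)$ with the quadratic-form identity of Lemma~\ref{L:mu eps rel}; but once both ingredients are in hand the matching cancellation of the $\eps(x,y)$ terms is immediate.
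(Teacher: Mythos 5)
Your proposal is correct and follows essentially the same route as the paper: (i) and (ii) by cancelling the $\eps(x)$ resp.\ $\eps(x,y)$ terms using $v(\delta(x)) = 4v(x) + \eps(x)$, $\mu(\delta(x)) = 4\mu(x) - \eps(x)$, property~\eqref{propI} and Lemma~\ref{L:mu eps rel}; (iii) by iterating (i) and using boundedness of $\mu$; (iv) directly from the definition. The only difference is that you spell out the valuation-scaling argument in (iv), which the paper dismisses as obvious.
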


\begin{proof} \strut
  \begin{enumerate}[\upshape (i)]
    \item This follows easily from the two relations
          \[ v\bigl(\delta(x)\bigr) = 4 v(x) + \eps(x) \quad\text{and}\quad
              \mu\bigl(\delta(x)\bigr) = 4 \mu(x) - \eps(x) \,.
          \]
    \item This is similar, using Lemma~\ref{L:mu eps rel}
          and $\eps(x,y) = v(w) + v(z) - 2 v(x) - 2 v(y)$.
    \item This follows from~(i) and the fact that $\mu(x)$ is a bounded function, implying
          \[\hat{\lambda}(x) = 4^{-n} \hat{\lambda}\bigl(\delta^{\circ n}(x)\bigr)
                              = -4^{-n} v\bigl(\delta^{\circ n}(x)\bigr) + O(4^{-n})\,.
          \]
    \item This is obvious from the definition of $\hat{\lambda}$. \qedhere
  \end{enumerate}
\end{proof}

The canonical local height on Kummer coordinates also behaves well under isogenies.

\begin{prop}\label{lhg2isog}
  Let $C$ and~$C'$ be two curves of genus~$2$ over~$k$ given by Weierstrass
  equations, with associated Jacobians
  $J$ and~$J'$, Kummer Surfaces $\KS$ and~$\KS'$ and sets of sets of Kummer coordinates
  $\KS_\A$ and $\KS'_\A$, respectively.
  Let $\alpha \colon J \to J'$ be an isogeny defined over~$k$.
  Then $\alpha$ induces a map $\alpha \colon \KS \to \KS'$; let $d$ denote its degree.
  We also get a well-defined induced map
  $\alpha \colon \KS_\A \to \KS'_{\A}$ if we
  fix $a \in k^\times$ and require $\alpha(0,0,0,1)=(0,0,0,a)$. Then we have
  \[ \hat{\lambda}\bigl(\alpha(x)\bigr) = d \hat\lambda(x) - v(a) \]
  for all $x\in \KS_\A$.
\end{prop}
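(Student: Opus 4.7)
My strategy is to use Proposition~\ref{lhg2props}(iii), which writes $\hat\lambda$ as $-\lim_n 4^{-n} v(\delta^{\circ n}(\cdot))$, combined with the equivariance of $\alpha$ with the duplication maps coming from $\alpha \circ [2] = [2] \circ \alpha$ on the Jacobians. This equivariance descends to a morphism equality $\delta' \circ \alpha = \alpha \circ \delta \colon \KS \to \KS'$, which I would first lift to a precise scalar relation on affine cones. The two compositions $\delta' \circ \alpha$ and $\alpha \circ \delta$ are given by tuples of homogeneous polynomials of the same total degree inducing the same morphism of Kummer surfaces, and hence differ by a global scalar. Evaluating at $(0,0,0,1)$, using $\delta(0,0,0,1) = \delta'(0,0,0,1) = (0,0,0,1)$, the normalization $\alpha(0,0,0,1) = (0,0,0,a)$, and the degree-$4$ homogeneity of $\delta'$, I find $(\delta' \circ \alpha)(0,0,0,1) = (0,0,0,a^4)$ while $(\alpha \circ \delta)(0,0,0,1) = (0,0,0,a)$, so
\[
  \delta'(\alpha(x)) = a^3 \alpha(\delta(x)) \qquad \text{for all } x \in \KS_\A.
\]

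The next step is to iterate and take valuations. By induction on $n$, using again that $\delta'$ is homogeneous of degree $4$ and the geometric identity $3(1 + 4 + \cdots + 4^{n-1}) = 4^n - 1$, one obtains
\[
  (\delta')^{\circ n}(\alpha(x)) = a^{4^n - 1}\, \alpha(\delta^{\circ n}(x)).
\]
Writing $\delta^{\circ n}(x) = \pi^{v_n} \tilde{x}_n$ with $\tilde{x}_n$ normalized and $v_n = v(\delta^{\circ n}(x))$, and using the degree-$d$ homogeneity of $\alpha$, taking $v$-adic valuations gives
\[
  v\bigl((\delta')^{\circ n}(\alpha(x))\bigr) = (4^n - 1)\, v(a) + d\, v_n + v(\alpha(\tilde{x}_n)).
\]

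Finally I would divide by $4^n$ and pass to the limit $n \to \infty$. Proposition~\ref{lhg2props}(iii) applied on $\KS'_\A$ turns the left-hand side into $-\hat\lambda(\alpha(x))$, while applied on $\KS_\A$ it turns $4^{-n} v_n$ into $-\hat\lambda(x)$. The term needing care is $4^{-n} v(\alpha(\tilde{x}_n))$; I would show it tends to $0$ by a compactness argument: the set of normalized Kummer coordinates on $\KS$ is compact in the $v$-adic topology, $\alpha$ is continuous and never vanishes there (being a morphism of projective varieties with image in $\KS'$), so $v(\alpha(\cdot))$ takes only finitely many values on this set, hence $v(\alpha(\tilde{x}_n))$ stays bounded independently of $n$. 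Combining these limits yields the desired identity $\hat\lambda(\alpha(x)) = d \hat\lambda(x) - v(a)$. The main obstacles are pinning down the scalar $a^3$ in the first step (one must check that the two morphisms really differ by a global scalar on the affine cone, not just pointwise) and controlling the $v(\alpha(\tilde{x}_n))$ error term; the rest is routine bookkeeping.
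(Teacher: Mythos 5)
Your proposal is correct and follows essentially the same route as the paper: both rest on Proposition~\ref{lhg2props}(iii) together with the observation that $\delta'\circ\alpha$ and $\alpha\circ\delta$ induce the same morphism $\KS\to\KS'$ and hence agree on the cone up to a scalar that is pinned down by evaluating at $(0,0,0,1)$. The only cosmetic differences are that the paper first reduces to $a=1$ and compares the $n$-fold iterates directly, while you track the factor $a^{4^n-1}$ by induction and make the boundedness of $v(\alpha(\tilde{x}_n))$ explicit via compactness (which the paper subsumes in its remark that $v(\alpha(x))-d\,v(x)=O(1)$).
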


\begin{proof}
  All assertions except for the last one are obvious.
  By the definition of $\hat{\lambda}$, we can reduce to the case $a = 1$.
  Using part~(iii) of Proposition~\ref{lhg2props} it is then enough to show
  that
  \[ v\bigl(\delta^{\circ n}(\alpha(x))\bigr) = d v(\delta^{\circ n}(x)) + O(1)\,. \]
  However, we have $v(\alpha(x)) - dv(x) = O(1)$ by assumption, so it suffices to show
  that
  \begin{equation}\label{checkisog}
    v\bigl(\delta^{\circ n}(\alpha(x))\bigr) = v\bigl(\alpha(\delta^{\circ n}(x))\bigr)\,.
  \end{equation}
  But since $\alpha \colon J \to J'$ is an isogeny,
  $\delta^{\circ n}(\alpha(x))$ and $\alpha(\delta^{\circ n}(x))$ represent the same point
  on~$\KS'$,
  hence they are projectively equal. Because they also have the same degree, the factor of
  proportionality is independent of~$x$. It therefore suffices to check~\eqref{checkisog} for a
  single $x$; we take $x=(0,0,0,1) \in \KS_\A$. Because we have $\delta(x) = x$ and, by
  assumption, $\alpha(x) = x'$, where $x'=(0,0,0,1) \in \KS'_\A(k)$, we find
  \[ \delta^{\circ n}(\alpha(x)) = x' \quad\text{and}\quad
     \alpha(\delta^{\circ n}(x)) = x'\,,
  \]
  thereby proving~\eqref{checkisog} and hence the proposition.
\end{proof}

\begin{rk}
    Canonical local heights with similar functorial properties were constructed by
    Zarhin~\cite{Zarhin} on total spaces of line bundles (without the zero section).
    See also~\cite{BombieriGubler} for an approach to canonical local heights using
    rigidified metrized line bundles.
\end{rk}

The preceding proposition is particularly useful for analyzing the
behavior of the canonical local height under a change of
Weierstrass equation of the curve.

Recall that two Weierstrass equations for $C$ are related by a transformation $\tau$ as
in~\eqref{tau}, specified by a triple $(A,e,U)$,
where $A = \left(\begin{smallmatrix} a & b \\ c & d \end{smallmatrix}\right)\in \GL_2(k)$,
$e \in k^\times$ and
\[ U = u_0 Z^3 + u_1 X Z^2 + u_2 X^2 Z + u_3 X^3 \in k[X,Z] \]
is homogeneous of degree~3.
Such a transformation induces a map on $\KS_\A$ as follows:
Let $x = (x_1,x_2,x_3,x_4) \in \KS_\A$.
Then $\tau(x)$ is given by the following quadruple:
\begin{align*}
  (ad-bc)^{-1}\Big(& d^2 x_1 + c d x_2 + c^2 x_3, \\
                    & 2 b d x_1 + (ad + bc) x_2 + 2 a cx_3, \\
                    & b^2 x_1 + a b x_2 + a^2 x_3, \\
                    & (ad - bc)^{-2} (e^2 x_4 + l_1 x_1 + l_2 x_2 + l_3 x_3)\Big)\,,
\end{align*}
where $l_1$, $l_2$, $l_3$ do not depend on~$x$.
More precisely, we can write
\[
    l_i = l_{i,1} + l_{i,2} + l_{i,3}\,,
\]
where \begin{align*}
 l_{i,1}&=\frac{e^2}{(ad-bc)^4}l'_{i,1}\quad\text{  with  }l'_{i,1}\in\Z[f_0,\ldots,f_6,a,b,c,d],\\
 l_{i,2}&=\frac{e}{(ad-bc)^4}l'_{i,2}\quad\text{  with  }l'_{i,2}\in\Z[h_0,\ldots,h_3,u_0,\ldots,u_3,a,b,c,d],\\
 l_{i,3}&=\frac{1}{(ad-bc)^4}l'_{i,3}\quad\text{  with  }l'_{i,3}\in\Z[u_0,\ldots,u_3,a,b,c,d]\\
\end{align*}for $i=1,2,3$.
All of the $l'_{i,j}$ are homogeneous of degree $8$ in $a,b,c,d$ and homogeneous in the other variables.

So we see that $\tau$ acts on~$k^4$ as a linear map~$\tau'$ whose determinant has valuation
\[
    v(\tau) \colonequals v(\det(\tau')) = 2 v(e) - 3 v(ad - bc)\,.
\]
In this situation, Proposition~\ref{lhg2isog} implies:
\begin{cor}\label{C:lambdatau}
  Let $\tau=([a,b,c,d],e,U)$ be a transformation~\eqref{tau} between two
  Weierstrass equations $\calC$ and $\calC'$ of a smooth projective
  curve~$C/k$ of genus~2 and let $\KS$ be the model of the Kummer surface associated
  to~$\calC$. Then we have
  \[ \hat{\lambda}(\tau(x)) = \hat\lambda(x) - v(\tau) \]
  for all $x\in \KS_\A$. In particular,
  \[ \mu(x) = \mu(\tau(x)) + v(\tau(x)) - v(x) - v(\tau) \,. \]
\end{cor}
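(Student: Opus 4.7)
The plan is to recognize that $\tau$ induces an isomorphism of Jacobians $J \to J'$ and then apply Proposition~\ref{lhg2isog} with degree $d=1$. The only data we need is the scaling factor $a$ with $\tau(0,0,0,1) = (0,0,0,a)$, together with the computation $v(a) = v(\tau)$.

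First I would evaluate $\tau$ at the origin $(0,0,0,1) \in \KS_\A$ using the explicit formulas for the induced map on Kummer coordinates given just before the statement. The first three coordinates of $\tau(x)$ are linear in $x_1, x_2, x_3$ only, so they vanish at $(0,0,0,1)$. In the fourth coordinate $(ad-bc)^{-1}(ad-bc)^{-2}(e^2 x_4 + l_1 x_1 + l_2 x_2 + l_3 x_3)$, the $l_i x_i$ terms drop out, leaving $(ad-bc)^{-3} e^2$. Hence $\tau(0,0,0,1) = (0,0,0,\, e^2(ad-bc)^{-3})$, and its fourth coordinate has valuation exactly $2 v(e) - 3 v(ad-bc) = v(\tau)$.

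Now Proposition~\ref{lhg2isog}, applied to the isomorphism $\alpha = \tau$ (degree $d = 1$) and scaling $a = e^2(ad-bc)^{-3}$, gives directly
\[ \hat\lambda(\tau(x)) = \hat\lambda(x) - v(\tau) \]
for every $x \in \KS_\A$, which is the first assertion. For the second assertion, I would simply unfold the definition $\hat\lambda(y) = -v(y) - \mu(y)$ from Definition~\ref{lhg2gen} on both sides: expanding
\[ -v(\tau(x)) - \mu(\tau(x)) = -v(x) - \mu(x) - v(\tau) \]
and solving for $\mu(x)$ yields $\mu(x) = \mu(\tau(x)) + v(\tau(x)) - v(x) - v(\tau)$.

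The only nontrivial step is the computation of $a$; everything else is a formal consequence of Proposition~\ref{lhg2isog}. That computation in turn rests on the explicit shape of the induced map $\tau$ on $\KS_\A$ recalled above, and in particular on the observation that the $l_i$ multiply the first three Kummer coordinates and so do not contribute when evaluating at the base point. No additional input is needed.
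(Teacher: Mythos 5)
Your proof is correct and follows the same route as the paper: the corollary is stated there as an immediate consequence of Proposition~\ref{lhg2isog} applied to the change of coordinates, with $v(\tau)=2v(e)-3v(ad-bc)$ playing the role of $v(a)$. Your explicit evaluation $\tau(0,0,0,1)=(0,0,0,e^2(ad-bc)^{-3})$ just makes precise the step the paper leaves implicit (a slight notational clash between the matrix entry $a$ and the scaling factor $a$ aside), and the unfolding of Definition~\ref{lhg2gen} for the second identity is exactly what is intended.
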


This can be used to construct a canonical local height which does not depend on
the choice of Weierstrass equation.
\begin{defn}
  Let $C/k$ be a smooth projective curve of genus~2 given by a Weierstrass equation~\eqref{ceq}
  with discriminant $\Delta$ and let $\KS$ be the associated Kummer surface. We call the function
  \[ \tilde{\lambda} \colon \KS_\A \To \R\,, \qquad
    x \longmapsto \hat{\lambda}(x)+\frac{1}{10}v(\Delta)
  \]
  the \emph{normalized canonical local height on $\KS_\A$}.
\end{defn}

\begin{cor}\label{normlhg2}
  The normalized canonical local height is independent of the given Weierstrass equation of~$C$,
  in the following sense: if $W$ and~$W'$ are two Weierstrass equations for~$C$,
  with associated sets of sets of Kummer coordinates $\KS_\A$ and~$\KS'_\A$ and canonical
  local heights $\tilde{\lambda}$ and $\tilde{\lambda}'$, respectively,
  and $\tau$ is a transformation~\eqref{tau} between them, then for all $x \in \KS_\A$
  we have $\tilde{\lambda}'(\tau(x)) = \tilde{\lambda}(x)$.
\end{cor}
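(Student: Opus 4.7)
The plan is to reduce the claim to a transformation law for the discriminant under change of Weierstrass equation.

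First, I will unfold the definitions. Writing $\Delta$ and $\Delta'$ for the discriminants of $W$ and $W'$ respectively, and $\hat\lambda, \hat\lambda'$ for the corresponding (unnormalised) canonical local heights on $\KS_\A$ and $\KS'_\A$, the desired identity $\tilde\lambda'(\tau(x)) = \tilde\lambda(x)$ unpacks to
\[
  \hat\lambda'(\tau(x)) + \tfrac{1}{10} v(\Delta') = \hat\lambda(x) + \tfrac{1}{10} v(\Delta).
\]
Applying Corollary~\ref{C:lambdatau} gives $\hat\lambda'(\tau(x)) = \hat\lambda(x) - v(\tau)$ with $v(\tau) = 2v(e) - 3v(ad-bc)$, so after cancelling $\hat\lambda(x)$ the whole statement reduces to
\[
  v(\Delta') - v(\Delta) = 10\, v(\tau) = 20\, v(e) - 30\, v(ad - bc),
\]
which will follow if I can establish the polynomial identity $\Delta' = e^{20}(ad - bc)^{-30}\, \Delta$.

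The main step is therefore to prove this transformation law for the discriminant. My approach exploits that, by definition, $\Delta = 2^{-12}\disc(4F + H^2)$, and that $G := 4F + H^2$ is a binary sextic on which $\GL_2$ acts in the standard way. Using the explicit formulas for $\tau^*F$ and $\tau^*H$ recalled just before Corollary~\ref{C:lambdatau}, a short direct calculation shows that the $U$-dependent contributions cancel, giving
\[
  4\, \tau^*F + (\tau^*H)^2 = (ad - bc)^{-6}\, e^2 \cdot G^A,
\]
where $G^A(X,Z) = G(dX - bZ,\, -cX + aZ)$. I will then invoke the two standard covariance properties of the discriminant of a binary form of degree $d$, specialised to $d = 6$: it is homogeneous of degree $2d - 2 = 10$ in the coefficients (so $\disc(\lambda P) = \lambda^{10}\disc(P)$), and satisfies $\disc(P^A) = \det(A)^{d(d-1)}\disc(P) = (ad-bc)^{30}\disc(P)$. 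Substituting yields
\[
  \disc(4\, \tau^*F + (\tau^*H)^2) = \bigl((ad-bc)^{-6}\, e^2\bigr)^{10} (ad-bc)^{30}\, \disc(G) = e^{20}\, (ad-bc)^{-30}\, \disc(G),
\]
and dividing by $2^{12}$ gives $\Delta' = e^{20}(ad-bc)^{-30}\, \Delta$.

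The one subtle point, which I expect to be the only real obstacle, is characteristic $2$: there $\Delta$ is defined not directly as $2^{-12}\disc(4F + H^2)$ but via the \emph{generic} polynomial with that formula. The remedy is to carry the derivation out in $\Z[f_0,\ldots,f_6,h_0,\ldots,h_3, a,b,c,d,e, u_0,\ldots,u_3, (ad-bc)^{-1}]$, where the computation above is legitimate and produces the identity $\Delta' = e^{20}(ad-bc)^{-30}\,\Delta$ as a polynomial identity. This identity then specialises to every base ring, in particular to fields of residue characteristic $2$, which completes the argument.
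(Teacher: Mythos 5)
Your proposal is correct and follows the same reduction as the paper: unfold the definition, apply Corollary~\ref{C:lambdatau}, and reduce everything to the relation $v(\Delta') = v(\Delta) + 10\,v(\tau)$ (equation~\eqref{E:disc_change}). The only difference is that the paper simply cites Liu for this discriminant transformation law, whereas you prove it directly; your verification is sound --- the $U$-dependent terms do cancel in $4\,\tau^*F + (\tau^*H)^2 = (ad-bc)^{-6}e^2\,(4F+H^2)^A$, the covariance exponents $10$ and $30$ for the degree-$6$ discriminant are correct, and your remark that the identity should be established for the generic polynomial over $\Z$ (so that it specializes to residue characteristic~$2$, where $\Delta$ is defined by the generic formula rather than by $2^{-12}\disc(4F+H^2)$) addresses the one genuine subtlety, since the identity can be checked after inverting~$2$ and then descends because the generic coefficient ring is torsion-free.
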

\begin{proof}
  Let $\Delta$ and~$\Delta'$ be the respective discriminants of $W$ and~$W'$.
  By \cite{Liu2}*{\S2}, we have
  \begin{equation}\label{E:disc_change}
      v(\Delta') = v(\Delta) + 10v(\tau)\,,
  \end{equation}
  so, using Corollary~\ref{C:lambdatau},
  \[ \tilde{\lambda}'(\tau(x)) = \hat{\lambda}'(\tau(x)) + \frac{1}{10} v(\Delta')
                               = \hat{\lambda}(x) - v(\tau) + \frac{1}{10} v(\Delta')
                               = \hat{\lambda}(x) + \frac{1}{10} v(\Delta)
                               = \tilde{\lambda}(x) \,. \qedhere
  \]
\end{proof}

We will not need the normalized canonical local height in the remainder of this paper.


\section{Stably minimal Weierstrass models} \label{simpmod}

In this section, $k$ continues to denote a non-archimedean local field with
valuation ring~$\O$ and residue field~$\frk$. We build on results established by Liu~\cite{Liu2}
in the more general context of hyperelliptic curves of arbitrary genus.

Recall that an equation of the form~\eqref{ceq} defining
a curve~$C$ over~$k$ of genus~2 is an \emph{integral Weierstrass model} of~$C$
if the polynomials $F$ and~$H$ have coefficients in~$\O$. (Note that this is
slightly different from the notion of an `integral equation' as defined
in~\cite{Liu2}*{D\'efinition~2}, but the difference is irrelevant for our purposes,
since any minimal Weierstrass model is actually given by an integral equation,
see~\cite{Liu2}*{Remarque~4}.) It is a \emph{minimal
Weierstrass model} of~$C$ if it is integral and the valuation of its discriminant
is minimal among all integral Weierstrass models of~$C$ \cite{Liu2}*{D\'efinition~3}.
We introduce the following variant of this notion.

\begin{defn} \label{D:stablyminimal}
  An integral Weierstrass model of a smooth projective curve~$C$ over~$k$ of genus~2 is
  \emph{stably minimal} if it is a minimal Weierstrass model for $C$ over~$k'$ for
  every finite field extension $k'$ of~$k$.
\end{defn}

Stably minimal Weierstrass models can be characterized in terms of the multiplicities
of the points on the special fiber, where the multiplicity is defined as follows:

\begin{defn} \label{multip}
  Only for this definition let $k$ be an arbitrary field,
  and let $C_{F,H}$ be a curve in $\BP_k(1,3,1)$
  given by an equation of the form~\eqref{ceq} over~$k$; we assume that $C_{F,H}$
  is reduced.
  The {\em multiplicity $m(P,C_{F,H})$} of a geometric point $P \in C_{F,H}(\bar{k})$
  is defined as follows:
  \begin{itemize}
    \item If $P$ is a singular point of type $A_n$ (relative to the embedding of
      $C_{F,H}$ into $\BP_k(1,3,1)$), then $m(P,C_{F,H})=n+1$.
    \item If $P$ is fixed by the involution $\iota(X:Y:Z) = (X:-Y-H(X,Z):Z)$ and is nonsingular,
          then $m(P,C_{F,H})=1$.
    \item Otherwise $m(P,C_{F,H})=0$.
  \end{itemize}
\end{defn}

Singularities of type $A_n$ were defined by Arnold over the complex numbers, and hence for
arbitrary fields of characteristic zero, see for instance~\cite{BPV}*{\S II.8}.
For the case of positive characteristic, see~\cite{GreuelKroening}.
Note that if the characteristic of~$k$ is not~$2$, then
$\pi(P)$ is a root of multiplicity~$m(P,C_{F,H})$ of $F^2 + 4 H$, where
$\pi \colon C_{F,H} \to \BP^1$ sends $(X:Y:Z)$ to~$(X:Z)$.

We will use this notion in the context of points on the special fiber of a
Weierstrass model of a curve of genus~$2$ over a complete local field.
In this context, Definition~\ref{multip} is equivalent
to~\cite{Liu2}*{D\'efinition~9} when the curve is reduced, see~\cite{Liu2}*{Remarque~8}.

An algorithm that computes the multiplicity was given by Liu~\cite{Liu2}*{\S6.1}.
Liu defines further multiplicities $\lambda_r(P)$~\cite{Liu2}*{D\'efinition~10}
for points on the special fiber of an integral Weierstrass model (and $r \ge 1$)
that allow to characterize when such a model is minimal.
We note here that $\lambda_r(P)$ gives the value of $\lambda(P) = \lambda_1(P)$
after making a field extension of ramification index~$r$. Also, Lemme~7(e)
of~\cite{Liu2} states for $r$ sufficiently large that $\lambda_r(P) = m(P)$
if the special fiber is reduced and implies that $\lambda_r(P) \ge r$
if the special fiber is non-reduced. In the reduced case, we also have $\lambda(P) \le m(P)$.

Setting $\lambda = \lambda_1$, Corollaire~2 in~\cite{Liu2} states (for $g = 2$)
that the model is minimal if and only if $\lambda(P) \le 3$ and $\lambda'(P) \le 4$ (and is the
unique minimal Weierstrass model up to $\O$-isomorphism, if and only if in
addition $\lambda'(P) \le 3$) for
all $\frk$-points~$P$ on the special fiber, where $\lambda'(P)$ is a number satisfying
$\lambda'(P) \le 2 \lceil \lambda(P)/2 \rceil$, see~\cite{Liu2}*{Lemme~9(c)}.

\begin{lemma} \label{L:charstabmin}
  An integral Weierstrass model of a smooth projective curve~$C$ over~$k$ of genus~2 is stably minimal
  if and only if its special fiber is reduced and the multiplicity of every geometric
  point on the special fiber is at most~$3$.

  If the special fiber is reduced and all multiplicities are at most~$2$, then the
  model is the unique minimal Weierstrass model of~$C$ over any finite extension~$k'$
  of~$k$, up to isomorphism over the valuation ring of~$k'$.
\end{lemma}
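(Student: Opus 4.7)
The plan is to apply Liu's minimality criterion recalled just before the lemma: an integral Weierstrass model is minimal precisely when $\lambda(P) \le 3$ and $\lambda'(P) \le 4$ at every $\frk$-point~$P$ on the special fiber, and it is the unique minimal model (up to $\O$-isomorphism) when additionally $\lambda'(P) \le 3$. The key observation is that passing to a finite extension~$k'/k$ of ramification index~$r$ replaces $\lambda(P)$ by~$\lambda_r(P)$, while the geometric multiplicity~$m(P)$ of a point on the special fiber is unchanged, since it is an invariant of the singularity type of the curve over~$\bar{k}$.

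For the forward direction, I would argue by contraposition. If the special fiber of the given model is non-reduced at some geometric point~$P$, then the cited Lemme~7(e) of~\cite{Liu2} gives $\lambda_r(P) \ge r$ for all $r \ge 1$, so choosing $r \ge 4$ produces an extension over which the model fails Liu's criterion, hence is not minimal; this contradicts stable minimality. So the special fiber must be reduced. In the reduced case, the same Lemme~7(e) gives $\lambda_r(P) = m(P)$ once $r$ is large enough. Since the base-changed model is still minimal by hypothesis, Liu's criterion yields $m(P) = \lambda_r(P) \le 3$ for every geometric point~$P$.

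For the reverse direction, assume the special fiber is reduced with $m(P) \le 3$ for every geometric point. Fix any finite extension $k'/k$ and consider the base-changed model; its special fiber is still reduced and has the same geometric multiplicities. The inequality $\lambda(P) \le m(P)$ recalled in the excerpt, applied over~$k'$, gives $\lambda_{k'}(P) \le m(P) \le 3$ at every $\frk'$-point~$P$. The bound $\lambda'(P) \le 2\lceil \lambda(P)/2 \rceil$ from \cite{Liu2}*{Lemme~9(c)} then yields $\lambda'_{k'}(P) \le 4$, so Liu's criterion is satisfied and the model is minimal over~$k'$. As this holds for every finite extension, the model is stably minimal.

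For the final assertion, under the stronger hypothesis $m(P) \le 2$, the same argument gives $\lambda_{k'}(P) \le 2$ and hence $\lambda'_{k'}(P) \le 2\lceil 2/2 \rceil = 2 \le 3$ over any finite extension~$k'$, so Liu's uniqueness criterion applies. The only real subtlety, which I expect to be the main point to verify carefully, is the compatibility of the invariants $\lambda_r$ and $m$ under base change and the validity of the inequality $\lambda \le m$ over arbitrary finite extensions; these all follow by applying the cited lemmas of~\cite{Liu2} to the base-changed model, once one observes that the notions of multiplicity and of $\lambda_r$ are preserved under ramified extensions in the expected way.
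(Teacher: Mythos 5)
Your proposal is correct and follows essentially the same route as the paper's proof: both directions rest on Liu's criterion $\lambda(P)\le 3$, $\lambda'(P)\le 4$, the inequality $\lambda(P)\le m(P)$ in the reduced case, Lemme~7(e) of~\cite{Liu2} to force $\lambda_r(P)>3$ over a sufficiently ramified extension when the hypotheses fail, and the bound $\lambda'(P)\le 2\lceil\lambda(P)/2\rceil$ for the uniqueness statement. The only point handled slightly more explicitly in the paper is that in the reduced case with $m(P)\ge 4$ one may also need an unramified extension so that $P$ becomes rational over the residue field before invoking $\lambda_r(P)=m(P)$, but this is exactly the compatibility you flag and it causes no difficulty.
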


\begin{proof}
  First note that the multiplicity of a point is a geometric property; it does not
  change when we replace $k$ by a finite extension. If the special fiber of an integral
  Weierstrass model has the given properties, then it follows from Liu's results
  mentioned above that $\lambda(P) \le m(P) \le 3$ and therefore $\lambda'(P) \le 4$
  for all points~$P$ on the special fiber, even after replacing~$k$ by a finite extension.
  It follows that the model is stably minimal.

  If $m(P) \le 2$ for all~$P$, then $\lambda(P) \le 2$ and $\lambda'(P) \le 2$,
  so by Liu's results, the model is the unique minimal Weierstrass model of~$C$ over~$k'$.

  Conversely, assume that the special fiber does not have the given properties.
  Then either the special fiber is non-reduced, or else there is a point~$P$ on the
  special fiber of multiplicity $m(P) \ge 4$. If the special fiber is non-reduced,
  then after replacing~$k$ by a sufficiently ramified extension~$k'$, there is a point~$P$
  on the special fiber such that $\lambda(P) > 3$ over~$k'$ (ramification index~$4$ is sufficient).
  If the special fiber is reduced and there is a (geometric) point~$P$ on the special
  fiber with $m(P) > 3$, then again after replacing~$k$ by a sufficiently large finite
  extension~$k'$ (such that $P$ is defined over the residue field and the ramification
  index is at least~$m(P)$), we have $\lambda(P) = m(P) > 3$ over~$k'$.
  Liu's results then show that the model is not minimal over~$k'$.
\end{proof}

\begin{lemma} \label{L:allminstab}
  If $C$ is a smooth projective curve over~$k$ of genus~2, then there is a finite extension~$k'$ of~$k$
  such that
  \begin{enumerate}[\upshape (i)]
    \item the minimal proper regular model of~$C$ over the valuation ring of~$k'$
          has semistable reduction, and
    \item each minimal Weierstrass model of~$C$ over~$k'$ is already stably minimal.
  \end{enumerate}
\end{lemma}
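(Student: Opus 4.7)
The proof handles the two assertions in turn. For (i), the Deligne--Mumford semistable reduction theorem yields a finite extension $k_1/k$ over which the minimal proper regular model of $C$ over the valuation ring of $k_1$ is semistable, and this property is preserved under further finite extensions. I therefore take $k_1$ as the starting point for (ii).

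The strategy for (ii) is to first produce a single stably minimal Weierstrass model of $C$ over some finite extension $k' \supseteq k_1$, and then deduce that every minimal Weierstrass model of $C$ over $k'$ must be stably minimal. By Lemma~\ref{L:charstabmin}, producing a stably minimal model amounts to producing an integral Weierstrass equation whose special fiber is reduced with all geometric points of multiplicity at most~$3$. To construct such a model, enlarge $k_1$ to a finite extension $k'$ over which the branch points of the hyperelliptic cover $C \to \BP^1$ are all rational. The multiplicities of the singular points of the special fiber of the associated Weierstrass model are controlled by the sizes of clusters of (reductions of) these branch points on $\BP^1(\bar{\frk})$. Whenever a cluster of size $\ge 4$ occurs, one may apply a transformation of the form~\eqref{tau} over $k'$, possibly after a further ramified extension to provide additional depth, that splits the cluster. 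Iterating this terminates, since the cluster data is finite, and yields an integral Weierstrass equation with the desired properties; this is essentially Liu's algorithm from~\cite{Liu2}, and is the main technical content of the argument, requiring particular care in residue characteristic~$2$, where the branch locus is cut out by $H = 0$ and the relation between clusters and singularities of the reduction is subtler.

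Now let $\mathcal{W}_0$ be such a stably minimal model over $k'$, and let $\mathcal{W}_1$ be any minimal Weierstrass model of $C$ over $k'$. Then $v(\Delta(\mathcal{W}_0)) = v(\Delta(\mathcal{W}_1))$, since both are minimal. For any finite extension $k''/k'$ of ramification index~$e$, valuations on $k'$ scale by $e$ when passing to the normalized valuation on $k''$, so $\mathcal{W}_{0,k''}$ and $\mathcal{W}_{1,k''}$ have discriminants of equal valuation over $k''$. Since $\mathcal{W}_0$ is stably minimal, $\mathcal{W}_{0,k''}$ is minimal over $k''$, and therefore $\mathcal{W}_{1,k''}$ is minimal over $k''$ as well. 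Hence $\mathcal{W}_1$ is stably minimal, which completes the proof.
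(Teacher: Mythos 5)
Your overall architecture agrees with the paper's: part (i) via Deligne--Mumford, and for part (ii) the reduction to producing a \emph{single} stably minimal model, followed by the observation that any other minimal model has the same discriminant valuation over $k'$ and hence over every $k''$, so it is minimal over $k''$ as well. That last deduction is exactly the paper's argument and is correct. Where you diverge is in how the stably minimal model is produced. The paper does not run a minimization algorithm on the equation at all: it first arranges (after an unramified extension giving $C(k') \neq \emptyset$) that every minimal Weierstrass model is dominated by the semistable model $\calC^{\min}$, so that minimal Weierstrass models correspond to components of the special fiber of $\calC^{\min}$; it then goes through the two semistable configurations $[I_{m_1-m_2-m_3}]$ and $[I_{m_1}-I_{m_2}-l]$ of~\cite{NamiUeno} and identifies, in each case, a component whose associated Weierstrass model has reduced special fiber with all multiplicities $\le 3$, whence Lemma~\ref{L:charstabmin} applies. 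This is uniform in the residue characteristic.

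Your construction, by contrast, is where the gap sits. The cluster-splitting argument is fine in principle when the residue characteristic is odd (it amounts to choosing a centroid of the tree spanned by the six branch points, so that no direction carries more than three of them), but you have only sketched it, and in residue characteristic~2 you explicitly flag the difficulty and then do not resolve it. This is not a cosmetic issue: in residue characteristic~2 the reduction of $4F + H^2$ is $\tilde{H}^2$, so the cluster picture of the roots of $4F+H^2$ no longer controls the multiplicities of Definition~\ref{multip}, and even the reducedness of the special fiber (which Lemma~\ref{L:charstabmin} requires, and which fails exactly when $\tilde H = 0$ and $\tilde F$ is a square) needs a separate argument involving the choice of $U$ in the transformation~\eqref{tau}, not just of the matrix $A$. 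Since the lemma is used later in the paper without any restriction on the residue characteristic (e.g.\ in the proof of Theorem~\ref{T:UpperBd}), the characteristic-2 case cannot be waved away. Either carry out Liu's minimization in residue characteristic~2 in detail, or replace the construction by the paper's component-counting argument on the special fiber of $\calC^{\min}$, which sidesteps the problem.
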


\begin{proof}
  That there is a finite extension with the first property is a special case of the
  semistable reduction theorem~\cite{DeligneMumford1969}. After a further
  unramified extension, we can assume that all geometric components
  of the special fiber
  of the minimal proper regular model (which all have multiplicity~1)
  are defined over the residue field
  and that at least one component has a smooth point defined over the residue field.
  This implies by Hensel's Lemma that $C(k') \neq \emptyset$. It then follows
  from~\cite{Liu2}*{Corollaire~5} that every minimal Weierstrass model of~$C$ over~$k'$
  is dominated by the minimal proper regular model. Since the latter has reduced
  special fiber, the same is true for each minimal Weierstrass model.

  Now assume that there exists a stably minimal Weierstrass model of~$C$ over~$k'$.
  Then every minimal Weierstrass model of~$C$ over~$k'$ must already be
  stably minimal, since both models must have the same valuation of the
  discriminant, and the discriminant of the stably minimal model remains
  minimal over any finite field extension of~$k'$. So it is enough to
  show that a stably minimal model exists.

  We now consider the various possibilities for the special fiber of the minimal
  proper regular model. The possible configurations are shown in Figures
  \ref{picc1}, \ref{picc2}, \ref{picc3} and~\ref{picss}
  (on pages \pageref{picc1}, \pageref{picc2},
  \pageref{picc3} and~\pageref{picss}).
  If the reduction type is $[I_{m_1-m_2-m_3}]$ in the notation of~\cite{NamiUeno},
  then the Weierstrass model whose special fiber contains the component(s) that
  are not ($-2$)-curves has the property that all points on the special fiber
  have multiplicity at most~$2$; this is then the unique minimal Weierstrass
  model, and it is stably minimal by Lemma~\ref{L:charstabmin}.
  It remains to consider reduction type $[I_{m_1} - I_{m_2} - l]$.
  We see that the Weierstrass models that correspond to components in the chain
  linking the two polygons and also those coming from the component of one of the polygons
  that is connected to the chain satisfy the conditions of Lemma~\ref{L:charstabmin}
  and are thus stably minimal.
  On the other hand, Weierstrass models whose special fiber does not
  correspond to a component in the chain or to one of its neighbors have a point in the
  special fiber whose multiplicity is at least~$4$ and so cannot be stably minimal.
\end{proof}


\section{Igusa invariants} \label{igusa}

In this section we describe how we can easily distinguish between different types of
reduction using certain invariants of genus~2 curves introduced by
Igusa in~\cite{Igusa}. The results of this section are essentially due
to Liu~\cite{Liustable}; see also~\cite{Mestre}.

Let $k$ be an arbitrary field of characteristic not equal to~2 and consider the invariants
$J_2, J_4, J_6, J_8, J_{10}$ defined in~\cite{Igusa}, commonly called {\em Igusa invariants}.
Then $J_{2i}(F)$ is an invariant of degree $2i$ of binary sextics, and if
\[ F(X,Z) = f_0 Z^6 + f_1 X Z^5 + f_2 X^2 Z^4 + f_3 X^3 Z^3 + f_4 X^4 Z^2 + f_5 X^5 Z + f_6 X^6 \]
is a binary sextic, then $J_{2i}(F) \in \Z[\frac{1}{2},f_0,\ldots,f_6]$.
For example, $J_{10}(F) = 2^{-12} \disc(F)$. It is shown in~\cite{Igusa} that the invariants
$J_2, J_4, J_6, J_{10}$ generate the even degree part of the ring of invariants of binary sextics.

Now let $F$ and~$H$ be the generic binary forms over~$\Z$ of degrees 6 and~3, respectively,
  with coefficients $f_0, \ldots, f_6$ and $h_0, \ldots, h_3$ as before.
  It turns out that
  $J_{2i}\left(4F + H^2\right)$ is an element of~$\Z[f_0, \ldots, f_6, h_0, \ldots, h_3]$.

\begin{defn}
  Let $k$ be an arbitrary field and let $H,\, F \in k[X,Z]$ be binary forms of respective degrees~3
  and~6 over~$k$.
  Let $C_{F,H}$ be the curve given by the equation
  $Y^2 + H(X,Z) Y = F(X,Z)$
  in the weighted projective plane $\BP_k(1,3,1)$.
  For $1 \le i \le 5$ we define the {\em Igusa invariant $J_{2i}(C_{F,H})$ of
  $C_{F,H}$} as
  \[ J_{2i}(C_{F,H}) = J_{2i}\left(4F + H^2\right)\,. \]
  Following Liu~\cite{Liustable}, we also define two additional invariants, namely
  \[ I_4(C_{F,H}) = J_2(C_{F,H})^2 - 24 J_4(C_{F,H}) \]
  and
  \begin{align*}
    I_{12}(C_{F,H}) &= -8 J_4(C_{F,H})^3 + 9 J_2(C_{F,H}) J_4(C_{F,H}) J_6(C_{F,H})\\
                    & \qquad {} - 27 J_6(C_{F,H})^2 - J_2(C_{F,H})^2 J_8(C_{F,H})\,.
  \end{align*}
\end{defn}

The following is a consequence of~\cite{Liustable}*{Thm.~1}.
\begin{prop}\label{P:IgusaInv}
Let $k$ be a field and let $C_{F,H}/k$ be the curve given by the equation
  \[
    Y^2+H(X,Z)Y=F(X,Z)
  \]
in $\BP_k(1,3,1)$, where $H,\,F\in k[X,Z]$ are binary forms
of degree~3 and~6, respectively. For $1 \le i \le 5$ and
$j\in\{4,12\}$ we set $J_{2i}=J_{2i}(C_{F,H})$ and $I_j=I_j(C_{F,H})$.
  \begin{enumerate}[\upshape(i)]
  \item $C_{F,H}$ is smooth $\iff J_{10}\ne 0$.
  \item $C_{F,H}$ has a unique node and no point of higher multiplicity \\
        $\iff J_{10}=0$ and $I_{12}\ne 0$.
  \item $C_{F,H}$ has exactly two nodes \\
        $\iff J_{10}=I_{12}=0$, $I_4\ne0$, and $J_4 \neq 0$ or $J_6 \neq 0$.
  \item $C_{F,H}$ has three nodes $\iff J_{10}=I_{12}=J_4 = J_6=0$ and $I_4\ne 0$.
  \item $C_{F,H}$ has a cusp $\iff J_{10}=I_{12}=I_4=0$ and $ J_{2i}\ne0$ for some $i\le 4$.
  \item $C_{F,H}$ is non-reduced or has a point of multiplicity at least~4 $\iff J_{2i}=0$ for all $i$.
  \end{enumerate}
\end{prop}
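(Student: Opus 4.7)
The plan is to derive the proposition from Liu's Théorème~1 in~\cite{Liustable}, which gives precisely this singularity classification in terms of the vanishing of Igusa--Clebsch invariants, and to supplement his result with a direct check in characteristic~$2$.

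First I would treat the characteristic $\ne 2$ case. The key translation is that, after completing the square in~$Y$, the equation $Y^2 + H(X,Z)Y = F(X,Z)$ becomes $(2Y + H)^2 = 4F + H^2$, so the geometric singular points of $C_{F,H}$ correspond bijectively, with the same multiplicities, to the multiple roots of the binary sextic $G = 4F + H^2$: a geometric root of multiplicity~$m$ of~$G$ yields a singularity of type $A_{m-1}$ in the sense of Definition~\ref{multip}. Since by definition $J_{2i}(C_{F,H}) = J_{2i}(G)$, and since $I_4, I_{12}$ are explicit polynomials in the $J_{2i}$, the whole proposition becomes a statement about vanishing loci of invariants of a binary sextic as a function of its root-multiplicity pattern. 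Case~(i) is then immediate from $J_{10} = 2^{-12}\disc(G)$. Cases~(ii)--(vi) correspond respectively to the root-multiplicity patterns $(2,1,1,1,1)$; $(2,2,1,1)$; $(2,2,2)$; a triple root present but no root of multiplicity $\ge 4$; and the presence of a root of multiplicity~$\ge 4$ (including the totally degenerate $G = 0$). Liu's theorem establishes the listed vanishing conditions on $J_2, J_4, J_6, J_8, I_4, I_{12}$ by evaluating the invariants on convenient normal forms of~$G$ for each pattern.

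For characteristic~$2$, where completing the square fails, I would argue as follows. The Igusa invariants and $I_4, I_{12}$ are polynomials in $f_0,\ldots,f_6,h_0,\ldots,h_3$ with coefficients in~$\Z$, and the singularity conditions appearing in (i)--(vi) cut out constructible subschemes of $\Spec \Z[f_0,\ldots,h_3]$. Having proved each equivalence on the generic fiber via Liu, it suffices to verify it on the characteristic~$2$ fiber. This can be done case by case: apply transformations of the form~\eqref{tau} to bring $(F,H)$ into a convenient normal form (such as those appearing in Table~\ref{T:reps}), then compute the invariants and determine the singularity type directly from the defining equation. The main obstacle is precisely this characteristic~$2$ verification, since the identity $G = H^2$ makes the sextic $G$ always a square, so the usual link between multiplicities of roots of~$G$ and singularities of~$C_{F,H}$ breaks down; one must instead read the singularities off the Artin--Schreier structure of the equation, and the invariants have to be tracked with the higher-order dependence on~$F$ that gets lost when $G$ collapses to~$H^2$.
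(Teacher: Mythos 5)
Your proposal is correct and pivots on the same result the paper invokes: Proposition~\ref{P:IgusaInv} is given in the paper with no argument beyond the citation of Th\'eor\`eme~1 of~\cite{Liustable}, so your characteristic $\ne 2$ reduction (completing the square, so that the geometric singularities of $C_{F,H}$ correspond, with multiplicities, to the multiple roots of $4F+H^2$ --- exactly the observation recorded after Definition~\ref{multip}) together with that citation already reproduces the paper's proof. Where you diverge is the characteristic~$2$ supplement, which you flag as the main obstacle: this extra work is not needed, because Liu's theorem is stated and proved over an arbitrary base field, characteristic~$2$ included; that is precisely the point of working with the model $Y^2+H(X,Z)Y=F(X,Z)$ and of defining $J_{2i}(C_{F,H})=J_{2i}(4F+H^2)$ via the universal polynomials in $\Z[f_0,\ldots,f_6,h_0,\ldots,h_3]$, as set up in Section~\ref{igusa}. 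Your fallback plan (bring $(F,H)$ to normal forms as in Table~\ref{T:reps} and evaluate the invariants directly) would work, since vanishing of the invariants is preserved under the transformations~\eqref{tau}, and it buys independence from Liu's characteristic-$2$ analysis; but it costs a sizable case-by-case computation that the cited theorem already contains, and, as you yourself note, the constructibility/specialization remark gives nothing for free, so in your route the characteristic-$2$ verification really would have to be carried out by hand.
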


When $C$ is a curve of genus~2 over a non-archimedean local field, then
Igusa invariants can also be used to obtain information on the reduction type of $C$,
see~\cite{Liustable}*{Thm.~1, Prop.~2}.

\begin{prop}\label{P:IgusaSemistable}
Let $k$ be a non-archimedean local field with normalized additive valuation
$v \colon k^\times \surj \Z$ and valuation ring~$\O$,
and let $C/k$ be a smooth projective genus~2 curve, given by a minimal Weierstrass
model with reduced special fiber.
Suppose that the minimal proper regular model $\calC^{\min}$ of $C$ over $\Spec \O$ is semistable
and has reduction type $\calK$ in the notation of~\cite{NamiUeno}.
We set $J_{2i}=J_{2i}(C)$ for $i \in \{1,\ldots,5\}$ and $I_4=I_4(C)$,
$I_{12} = I_{12}(C)$.
\begin{enumerate}[\upshape (i)]
  \item If $\calK = [I_{m-0-0}]$,
        where $m>0$, then $m = v(J_{10})$.
  \item If $\calK = [I_{m_1-m_2-0}]$, where $0<m_1 \le m_2$, then
        \begin{align*}
          m_1 = \min\left\{v(I_{12}), \tfrac{1}{2}v(J_{10})\right\} \quad\text{and}\quad
          m_2 = v(J_{10}) - m_1\,.
        \end{align*}
  \item If $\calK =  [I_{m_1-m_2-m_3}]$, where $0<m_1 \le m_2\le m_3$, then
        \begin{align*}
          m_1 &= \min\left\{v(J_4), \tfrac{1}{3} v(J_{10}),\tfrac{1}{2} v(I_{12}) \right\},\\
          m_2 &= \min\left\{v(I_{12})-m_1, \tfrac{1}{2}(v(J_{10})-m_1) \right\} \quad\text{and} \\
          m_3 &= v(J_{10}) - m_1 - m_2\,.
        \end{align*}
  \item If $\calK = [I_0-I_0-l]$, then $l = \tfrac{1}{12} v(J_{10})$.
  \item If $\calK = [I_{m_1}-I_0-l]$, where $m_1 >0$, then
        \begin{align*}
          l = \tfrac{1}{12} v(I_{12}) \quad\text{and}\quad
          m_1 = v(J_{10}) - v(I_{12})\,.
        \end{align*}
  \item If $\calK = [I_{m_1}-I_{m_2}-l]$, where $m_2\ge m_1>0$ and $l>0$, then
        \begin{align*}
          l &= \tfrac{1}{4}v(I_{4})\,, \\
          m_1 &= \min\left\{v(I_{12})-3v(I_4)\,,
                            \tfrac{1}{2}(v(J_{10}) - 3 v(I_{4}))\right\} \quad\text{and} \\
          m_2 &= v(J_{10}) - 3 v(I_4) - m_1\,.
        \end{align*}
\end{enumerate}
\end{prop}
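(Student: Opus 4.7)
The plan is to derive each of the six formulas by combining Liu's explicit calculation of the valuations of the Igusa invariants with respect to the stable reduction data, as recorded in \cite{Liustable}*{Thm.~1 and Prop.~2}, and then inverting the resulting linear relations. Before doing so, I would verify that our hypotheses put us in a position where Liu's formulas apply directly. Since the given Weierstrass model is minimal with reduced special fiber and $C$ is semistable, Lemma~\ref{L:charstabmin} guarantees stable minimality; in particular, any valuation computed from the coefficients of the given equation coincides with the valuation computed from the stably minimal model that Liu works with. After an unramified extension of the base field, which affects none of the quoted valuations, we may also assume that every component of the special fiber of the minimal proper regular model $\calC^{\min}$ is defined over the residue field, so that Liu's formulas are literally applicable.

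Next, I would treat the six reduction types one at a time. For the polygon-only cases (i)--(iii), Liu's work gives $v(J_{10}) = m_1 + m_2 + m_3$, together with explicit expressions for $v(J_4)$ and $v(I_{12})$ as (essentially) elementary symmetric functions of the $m_i$, capped by $\min$-operations that record which monomial dominates. Inverting these relations by first extracting the minimum, then the second elementary symmetric term, and finally using $v(J_{10})$ to recover the last parameter yields (i), (ii) and~(iii). For the chain-containing types (iv)--(vi), Liu's formulas exhibit $l$ as a rational multiple of $v(I_4)$ or $v(J_{10})$ coming from the length of the chain of rational components, and then the polygon contributions are extracted from $v(I_{12})$ and $v(J_{10})$ after removing the chain contribution; this gives (iv), (v) and~(vi).

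The main obstacle will be the correct interpretation of the $\min$-expressions appearing in (ii), (iii), and~(vi). Liu's raw formulas express the valuation of each invariant as the valuation of a sum of monomials in the parameters, and different branches of the $\min$ correspond to different monomials dominating the $v$-adic size of that sum. I would verify in each subregion determined by the ordering $m_1 \le m_2 \le m_3$ (and by the positivity conditions on $l$ and the $m_i$) that the formula as stated selects the correct branch, and would use the degenerate cases as consistency checks: for example, specializing (iii) to $m_3 = 0$ must reproduce~(ii), specializing (ii) to $m_2 = 0$ must reproduce~(i), and specializing (vi) to $m_1 = 0$ must reproduce~(v), which in turn degenerates to~(iv) when $m_1 = 0$. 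These compatibility checks both pin down the correct branch of each $\min$ and confirm the numerical coefficients appearing in the formulas.

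Finally, I would remark that no case analysis on the characteristic of $k$ is needed beyond the standing assumption $\Char(k) \neq 2$ implicit in the definition of the Igusa invariants, since Liu's formulas are established in this generality; all six formulas then hold verbatim as stated.
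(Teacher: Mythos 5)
Your proposal is correct and follows essentially the same route as the paper, which states this proposition as a direct consequence of Liu's Th\'eor\`eme~1 and Proposition~2 in~\cite{Liustable} (after noting that the hypotheses place one in the stably minimal situation where those formulas apply), with the inversion of the $\min$-relations and the degeneration checks being exactly the intended bookkeeping. The only inaccuracy is your closing remark about $\Char(k) \neq 2$: the proposition imposes no such restriction, and none is needed, since $J_{2i}(4F+H^2)$ lies in $\Z[f_0,\ldots,f_6,h_0,\ldots,h_3]$ and Liu's results are valid in every residue characteristic, including~2.
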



\vfill\pagebreak

\section*{\large Part~II: Study of Local Height Correction Functions}

In Part~II of the paper, $k$ will always denote a non-archimedean local field with
residue field~$\frk$, valuation ring~$\O$ and normalized additive valuation
$v \colon k^\times \surj \Z$.
We let $C$ be a curve of genus~2 over $k$, given by an integral Weierstrass model
$\calC$, which we consider as a subscheme of the weighted projective plane
$\BP_S(1,3,1)$, where $S = \Spec(\O)$.
In the following five sections we find explicit formulas and bounds for the local
height correction function $\mu$ for the most frequent cases of bad reduction and use
these to deduce a general bound on $\mu$.
We denote the minimal proper regular model of $C$ over $S$ by $\calC^{\min}$.
Let $J$ be the Jacobian of~$C$; we denote its N\'eron model over~$S$ by~$\calJ$.
We write $\calC_v$, $\calC^{\min}_v$ and $\calJ_v$ for the respective special fibers of
$\calC$, $\calC^{\min}$ and $\calJ$.


\section{The `kernel' of $\mu$} \label{kermu}

By Theorem~\ref{T:mu_U}, the set
\[ U=\{P \in J(k) : \eps(P) = 0\} \]
is a group and the local height correction function~$\mu$ factors
through the quotient~$J(k)/U$.
In this section we relate $U$ to the N\'eron model of $J$ when $\calC$ has rational singularities.
See~\cite{Artin} for a brief account of the theory of rational singularities on arithmetic surfaces.

For the remainder of this section we assume that $\calC/S$ is normal and reduced.
We let $\calJ^0$ denote the fiberwise-connected component of the identity of~$\calJ$.
Then $\calJ^0$ has generic fiber $\calJ_{k} \isom J$ and special fiber the connected
component of the identity $\calJ^0_v$ of~$\calJ_v$.
If $\calC'\to\calC$ is a desingularization of $\calC$, then the identity
components $\Pic^0_{\calC'/S}$ and $\Pic^0_{\calC/S}$ of the respective relative Picard functors of
$\calC'$ and $\calC$ can both be represented by separated
schemes, see~\cite{BLR}*{Thm.~9.7.1}.
There are canonical $S$-group scheme morphisms
\begin{equation}\label{PiC to J}
        \xymatrix{
        \Pic^0_{\calC/S}\ar[r]&\Pic^0_{\calC'/S}\ar[r]^{\sim}&\calJ^0\,;}
\end{equation}
the latter map is an isomorphism by~\cite{BLR}*{Thm.~9.4.2}.
Let $\alpha \colon \Pic^0_{\calC/S}\to\calJ^0$ denote the composition of the
morphisms from~\eqref{PiC to J}; note that $\alpha$ does not depend on the choice of the
desingularization~$\calC'$.
We will show that if $P \in J(k)$ has reduction on~$\calJ$
in the image of~$\alpha$, then $\eps(P) = \mu(P) = 0$.
The idea is to first show that this is true for points in the image of a certain open subscheme;
we then prove that this suffices for the general case.

Let $\calC_\sm$ be the smooth locus of $\calC$.
Following~\cite{BLR}*{\S9.3}, we define an $S$-subscheme~$W$
of the symmetric square~$\calC_\sm^{(2)}$ of~$\calC_\sm$
consisting of the points $w \in \calC_\sm^{(2)}$
that satisfy the following conditions:
\begin{itemize}
  \item $H^1(\calC,\,\O_{\calC}(D_w))=0$, where $D$ is the universal Cartier divisor
        $D \subset \calC \times_S\, \calC^{(2)}_\sm$
        induced by the canonical map $\calC^{(2)}_\sm \to \Div^2_{\calC/S}$.
  \item If $w =\{w_1,w_2\}$ with $w_1,w_2$ geometric points on the special fiber of~$\calC$,
        then the hyperelliptic involution~$\iota$ maps the component containing~$w_1$ to the
        component containing~$w_2$.
\end{itemize}

Then $W$ has the following properties:
\begin{enumerate}[\upshape (i)]
        \item $W$ is an open subscheme of $\calC_\sm^{(2)}$.
        \item There is a strict $S$-birational group law on $W$,
              induced by the group law on $\Pic_{\calC/S}$.
        \item $\Pic^0_{\calC/S}$ is the $S$-group scheme associated with this
              strict $S$-birational group law.
\end{enumerate}
For (ii) and (iii) see the discussion preceding~\cite{BLR}*{Thm.~9.3.7}.

Let $\Pic^{[2]}_{\calC/S}$ be the open subfunctor of $\Pic_{\calC/S}$ whose elements have total degree~2.
Let  $\rho \colon W \to \Pic^{[2]}_{\calC/S}$ be the canonical map induced by~$D$;
by~\cite{BLR}*{Lemma~9.3.5} it is an open immersion.
Replacing $S$ by the spectrum of the valuation ring of a finite unramified
extension of~$k$, if necessary,
we can find a section $x_0 \in \BP^1_{S}(S)$ such that its pullback~$D_0$
under the covering map
$\calC \to \BP^1_S$ is horizontal and does not intersect the singular locus of~$\calC$.
We denote by~$c_0 $  the class of~$D_0$ in $\Pic^{[2]}_{\calC/S}$.
Let $w = \{P_1, P_2\} \in W$;
using the condition on the action of~$\iota$ on the components $P_1$ and~$P_2$
lie on, we find that
\[
\rho_0(w) \colonequals \rho(w) - c_0 \in \Pic^0_{\calC/S}\,.
\]
In fact, $\rho_0$
defines an open immersion $\rho_0 \colon W \to \Pic^0_{\calC/S}$, see~\cite{BLR}*{Lemma~9.3.6}.

\begin{lemma}\label{L:mu_vanishes1}
  Suppose that the residue characteristic of~$k$ is not~2.
  Let $P \in J(k)$ such that the reduction of~$P$ on~$\calJ_v$ is in~$\alpha(\rho_0(W))$.
  Then $\eps(P) = 0$.
\end{lemma}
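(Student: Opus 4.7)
The plan is to reduce, via Theorem~\ref{T:mu_U}, to showing $\eps(P)=0$ for a very concrete lift of~$\bar P$. Since $\eps$ is unchanged by unramified base extension and by translation of~$P$ by elements of~$J(k)^1 \subset U$ (Lemma~\ref{L:mu=0}), we may enlarge~$k$ by unramified extensions and modify~$P$ within its residue class. After such an extension, the hypothesis produces $\bar w \in W(\frk)$ with $\alpha(\rho_0(\bar w)) = \bar P$. Because~$W$ is smooth over~$S$ (it is open in the smooth $S$-scheme $\calC_\sm^{(2)}$), Hensel's Lemma lifts~$\bar w$ to $w \in W(\O)$, and we may replace~$P$ by $\alpha(\rho_0(w))$. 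A further unramified extension lets us write $w = \{P_1, P_2\}$ with $P_1, P_2 \in \calC_\sm(\O)$, so that~$P$ is represented by an honest effective divisor whose support reduces to smooth points of~$\calC_v$ lying on components swapped by the hyperelliptic involution.

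The explicit polynomial formulas for Kummer coordinates (see~\cite{CasselsFlynn}*{Ch.~3} for $H=0$ and~\cite{MuellerKummer} in general) expressed in the coordinates of $P_1, P_2$ and the Weierstrass coefficients then produce a set of integral Kummer coordinates $x \in \KS_\A$ for~$P$, i.e.,~$x \in \O^4$. Reducing these formulas modulo~$\pi$ computes the Kummer-surface coordinates of the class of $\bar P_1 + \bar P_2 - \bar D_0$ on the generalized Jacobian of~$\calC_v$; the conditions imposed by~$W$ — smoothness of the~$\bar P_i$ and the correct pair of components — guarantee that this reduction is a projectively nonzero quadruple, so $v(x) = 0$.

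The remaining and decisive point is $\bar\delta(\bar x) \ne 0$, which would give $\eps(P) = v(\delta(x)) - 4v(x) = 0$. Here the residue-characteristic hypothesis enters crucially: because $\Char \frk \ne 2$, the doubling map $[2] \colon \calJ^0 \to \calJ^0$ is \'etale, so it sends~$\bar P$ to a point~$2\bar P$ again representable in the effective form treated above (after translating the reference divisor~$c_0$ if necessary). Applying the previous paragraph to~$2P$ produces integral Kummer coordinates $y \in \O^4$ for~$2P$ with $v(y) = 0$; since $y$ and $\delta(x)$ are projectively equal on the generic fiber and both lie in~$\O^4$, they differ by a factor in~$\O^\times$, forcing $v(\delta(x)) = 0$.

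The main obstacle is this last step. One must know that~$2\bar P$ again admits a representation of the same effective type; this is where the \'etaleness of~$[2]$ — available only because $\Char \frk \ne 2$ — is needed, and it presumably requires a short argument about the openness of~$\alpha(\rho_0(W))$ in~$\calJ^0_v$ combined with varying~$c_0$. A more elementary alternative would avoid étaleness and check the non-vanishing of~$\bar\delta$ directly, in the spirit of Lemma~\ref{delBvan}, by a finite case analysis on the smooth-locus configurations $\bar P_1 + \bar P_2$; the approach sketched above trades this case work for the \'etaleness of the doubling map.
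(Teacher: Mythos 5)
Your reduction steps (unramified base change, replacing $P$ within its residue class using Theorem~\ref{T:mu_U} and Lemma~\ref{L:mu=0}, Hensel-lifting a point of $W(\frk)$, and producing normalized integral Kummer coordinates $x$ with $v(x)=0$) are fine, but the decisive step fails. From the existence of integral Kummer coordinates $y$ for $2P$ with $v(y)=0$ you cannot conclude $v(\delta(x))=0$: \emph{every} point of $\KS(k)$ admits normalized coordinates, so this hypothesis carries no information. Projective equality only gives $y=\lambda\,\delta(x)$ with $\lambda\in k^\times$, hence $v(\lambda)=-v(\delta(x))$, and nothing rules out $v(\delta(x))>0$ with $\lambda$ a negative power of $\pi$ --- indeed $v(\delta(x))>0$ is exactly what $\eps(P)>0$ means, so the deduction is circular: to know the proportionality factor is a unit you would already need to know $\delta(x)$ is primitive. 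The \'etaleness of $[2]$ on $\calJ^0_v$ does not repair this, and there is the further unaddressed point that $2\bar P$ need not lie in the open (non-group) subset $\alpha(\rho_0(W))$, which varying $c_0$ does not obviously fix.

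What is actually needed is the non-vanishing of $\bar\delta(\bar x)$ itself, and the paper gets this from a different source: for residue characteristic $\ne 2$ one may take $H=0$ and use the explicit model $\calJ_F\subset\BP^{15}$ of Cassels--Flynn, its fiberwise-connected smooth part $\calJ_F^0$, and the criterion of \cite{MWsieve}*{Prop.~5.10} that $\eps(P)=0$ if and only if $P$ reduces into $\calJ^0_{F,v}(\frk)$; the morphism $\psi\colon W\to\calJ_F^0$ and the commutativity (via \cite{LiuBook}*{Prop.~3.3.11}) of the square relating $\psi$, $\varphi\colon\calJ_F^0\to\calJ$, $\rho_0$ and $\alpha$ then transfer ``reduction in $\alpha(\rho_0(W))$'' into ``reduction in $\calJ^0_{F,v}$'', which is the genuine input your argument lacks. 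Your alternative suggestion --- checking $\bar\delta(\bar x)\ne 0$ directly by a case analysis as in Lemma~\ref{delBvan} --- is essentially how the paper handles residue characteristic $2$ (Lemma~\ref{L:mu_vanishes2} with Table~\ref{condmult}), but you only mention it without carrying it out, so as written the proof has a genuine gap.
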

\begin{proof}
  We may assume  that $\calC \colon Y^2=F(X,Z)$.
  Let $J_F$ denote the model of~$J$ in~$\BP^{15}$ constructed in
  \cite{CasselsFlynn}*{Chapter~2}
  and let $\calJ_F/S$ denote the model it defines over~$S$.
  Following~\cite{MWsieve}*{\S5}, we denote by~$\calJ_F^0$ the
  fiberwise-connected component of the identity of the smooth locus of~$\calJ_F$, so
  that the generic fiber is~$\calJ_F$ and
  the special fiber~$\calJ^0_{F,v}$ is the connected component of the identity of the smooth locus of
  the special fiber~$\calJ_{F,v}$.
  We have a morphism $\psi \colon \calC_\sm^{(2)}\to\calJ^0_F$, defined using the
  expressions for the coordinates on $J_F$ in~\cite{CasselsFlynn}*{Chapter~2},
  see the proof of~\cite{MWsieve}*{Lemma~5.7}.
  We also denote the restriction of this morphism to~$W$ by~$\psi$.

  The N\'eron mapping property yields a natural map $\varphi \colon \calJ^0_F\to\calJ$.
  In general, its image can be a proper subset of $\calJ^0$.
  Nevertheless, the following diagram of $S$-scheme morphisms is commutative
  by~\cite{LiuBook}*{Prop.~3.3.11}, since $W$ is reduced, $\calJ^0$ is separated and
  the diagram is commutative when restricted to generic fibers:
  \begin{equation}\label{diag1}
  \xymatrix{
  W\ar[d]_{\rho_0}\ar[r]^{\psi}&\calJ^0_F\ar[d]^{\varphi}\\
  \Pic^0_{\calC/S}\ar[r]^{\alpha}&\calJ^0
  }
  \end{equation}
  It follows from~\cite{MWsieve}*{Prop.~5.10} that a point $P \in J(k)$ satisfies $\eps(P)=0$ if and only if $P$
  reduces to~$\calJ^0_{F,v}(\frk)$.
  So if $P$ has reduction in~$\alpha(\rho_0(W))$, then the commutativity of the diagram~\eqref{diag1} shows that
  $\eps(P)=0$.
\end{proof}

If the residue characteristic is~2, then no explicit analogue of the group scheme $\calJ_{F}$ is known.
Instead, we have to work with explicit expressions to prove a result analogous
to Lemma~\ref{L:mu_vanishes1}.

Let $\tilde{F}$ and~$\tilde{H}$ be the reductions of $F$ and~$H$, respectively.
In analogy with~\cite{MWsieve}*{Definition~5.1}, we define the subscheme
$\tilde{\calD}$ of $\A^3_\frk\times\A^4_\frk\times\A^5_\frk$
consisting of all triples
\[ (A, B, C) = \bigl((a_0, a_1, a_2),\,(b_0,b_1,b_2,b_3),\,(c_0,c_1,c_2,c_3,c_4)\bigr)
                 \in \A^3_\frk\times\A^4_\frk\times\A^5_\frk
\]
such that
\[
  AC = \tilde{F} - B^2 - B\tilde{H}\,,
\]
where
\begin{eqnarray*}
    A &=& a_0Z^2 + a_1 XZ + a_2 X^2,\\
    B &=& b_0Z^3+ b_1 XZ^2 + b_2 X^2Z + b_3 X^3,\\
    C &=& c_0Z^4 + c_1 XZ^2 + c_2 X^2Z^2 + c_3X^3Z + c_4X^4.
\end{eqnarray*}
Moreover, we set $\calD \colonequals (\pi_2 \times \id)\bigl(\pr_{12}(\tilde{\calD})\bigr)$,
where $\pr_{12}$ is the projection onto the first two factors
and $\pi_2$ is the canonical map $\A^3_\frk\setminus\{(0,0,0) \} \to \BP^2_\frk$.

Note that if the curve~$\calC_v $ defined by
$Y^2+\tilde{H}(X,Z)Y=\tilde{F}(X,Z)$
in~$\BP_{\frk}(1,3,1)$ is nonsingular, then $\calD(\frk)$ is in bijective
correspondence with the possible Mumford representations of effective
divisors of degree~2 on~$\calC_v$.

In general, this correspondence still holds for the subset~$\calD'$
of all $({A}, {B}) \in \calD$
such that ${A}$ does not vanish at the image in~$\BP^1$ of a singular point
of~$\calC_v$, and those effective divisors with support in the smooth locus
of~$\calC_v$. More precisely, we get a map
$\zeta \colon \calD' \to C^{(2)}_v$
such that if $\zeta(({A}, {B})) = \{\tilde{P_1}, \tilde{P_2}\}$,
then there are representatives $(X_i, \, Y_i, Z_i)$ of $\tilde{P_i}$ ($i=1,\,2$)
satisfying
\begin{enumerate}[\upshape (i)]
    \item $A(X,Z)= (Z_1 X -X_1 Z)(Z_2 X- X_2 Z)$;
    \item $Y_i = B(X_i, Z_i)$ for $i=1,2$.
\end{enumerate}

If $\calC_v$ is nonsingular, and $(A,B) \in \calD$, then we can compose the natural
surjection $\calD \to \mathrm{Jac}(\calC_v)\setminus \{O\}$ with the quotient map
$\mathrm{Jac}(\calC_v) \to \KS_{\tilde{F}, \tilde{H}}$.
In the general case one can also define a surjection
$\omega \colon \calD \to \KS_{\tilde{F},\tilde{H}}\setminus\{(0:0:0:1)\}$
with the following property:
If $P = [(P_1)-(P_2)] \in J(k)$ is such that the reductions $\tilde{P_1}$ and~$\tilde{P_2}$
are both smooth points on~$\calC_v$, and if $(A,B) \in \calD'$ is such that
$\zeta((A,B)) = \{\tilde{P_1},\tilde{\iota(P_2)}\}$, then the reduction of~$\kappa(P)$
on~$\KS_{\tilde{F},\tilde{H}}$ is~$\omega((A,B))$.
The image of a pair $(A,B) \in \calD$ under~$\omega$ is of the form $(a_0:-a_1:a_2:x_4)$.

\begin{lemma}\label{L:mu_vanishes2}
  Suppose that the residue characteristic of~$k$ is~2.
  Let $P \in J(k)$ such that the reduction of~$P$ on~$\calJ$ is
  in~$\alpha(\rho_0(W))$.
  Then $\eps(P) = 0$.
\end{lemma}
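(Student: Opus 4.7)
The plan is to follow the strategy of Lemma~\ref{L:mu_vanishes1}, substituting the scheme $\calD$ and the surjection $\omega \colon \calD \to \KS_{\tilde F, \tilde H} \setminus \{(0:0:0:1)\}$ for the Flynn model $\calJ_F$ and \cite{MWsieve}*{Prop.~5.10}, neither of which is available in residue characteristic~$2$, while exploiting the explicit description of the reduction of $\kappa(P)$ via $\omega$ recalled just above the statement.

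First, after passing to a finite unramified extension $k'/k$ with valuation ring $\O'$ (which by Proposition~\ref{lhg2props}(iv) does not alter the value of $\eps$), I would replace $P$ by a point $Q$ with the same reduction on $\calJ$ that arises from $W(\O')$: writing the reduction $\bar P \in \calJ_v(\frk)$ as $\alpha(\rho_0(\bar w))$ for some $\bar w \in W(\bar\frk)$ and using that $W \subset \calC_\sm^{(2)}$ is smooth over $S$, Hensel's lemma lifts $\bar w$ to some $w = \{w_1, w_2\} \in W(\O')$. Then $Q := (\alpha \circ \rho_0)(w)_{k'} \in J(k')$ differs from $P$ by an element of the kernel of reduction on $\calJ$, which is contained in the kernel of reduction on $\KS$ and hence in $U$ by Theorem~\ref{T:mu_U}, so $\eps(Q) = \eps(P)$.

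Next, using the linear equivalence $D_0 \sim (T) + (\iota(T)) \sim K_C$, I have $Q = [(w_1) - (\iota(w_2))]$ in $J(k')$, and the Mumford representation of the effective degree-$2$ divisor $(w_1) + (w_2)$ provides a pair $(A, B) \in \O'[X, Z]$ lying in $\calD(\O')$. Since $\tilde w_1$ and $\tilde w_2$ are smooth points of $\calC_v$, the reduction $(\tilde A, \tilde B)$ lies in $\calD'(\frk')$ with $\zeta((\tilde A, \tilde B)) = \{\tilde w_1, \tilde w_2\}$. The defining property of $\omega$ recalled just before the statement then shows that the reduction of $\kappa(Q)$ on $\KS_{\tilde F, \tilde H}(\bar\frk)$ is $\omega((\tilde A, \tilde B))$; by construction of $\omega$ this is of the form $(\tilde a_0 : -\tilde a_1 : \tilde a_2 : \tilde x_4)$ with $(\tilde a_0, \tilde a_1, \tilde a_2) \ne (0,0,0)$, so in particular it is different from $(0:0:0:1)$.

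The main obstacle is the final step: to conclude $\eps(Q) = 0$, it is not enough that the reduction of $\kappa(Q)$ differs from $(0:0:0:1)$; one needs that the four duplication polynomials $\delta_1, \ldots, \delta_4$ of \cite{MuellerKummer} do not \emph{all} vanish at $\omega((\tilde A, \tilde B))$. I plan to handle this by verifying $\delta \ne 0$ on all of $\omega(\calD)$ directly, exploiting that $\omega$ is compatible (up to scaling on the affine cone) with doubling on the generalized Jacobian of $\calC_v$: doubling a class represented by $(A, B) \in \calD$ yields a class represented by some $(A', B') \in \calD$, and projectively $\delta(\omega((A, B))) = \omega((A', B')) \ne (0:0:0:1)$, which rules out simultaneous vanishing of the $\delta_i$ and finishes the proof. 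Making this rigorous requires a direct computation with the characteristic~$2$ duplication formulas of \cite{MuellerKummer}, in the spirit of the proof of \cite{MWsieve}*{Prop.~5.10}, and this is the part of the argument I expect to take the most care.
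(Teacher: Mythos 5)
Your first two steps are sound (and the Hensel-lifting argument to replace $P$ by a point coming from $W(\O')$ is a reasonable way to make precise the passage from ``the reduction of $P$ lies in $\alpha(\rho_0(W))$'' to ``the reduction of $\kappa(P)$ is $\omega((\tilde A,\tilde B))$ with $(\tilde A,\tilde B)\in\calD'$ and $\zeta((\tilde A,\tilde B))\in W$''). The gap is in your final step, which is where the actual content of the lemma lies. The claim that $\delta\neq 0$ on all of $\omega(\calD)$ is false: $\omega$ surjects onto $\KS_{\tilde F,\tilde H}\setminus\{(0:0:0:1)\}$, and whenever the special fibre is degenerate enough that $\eps$ is nonzero somewhere (e.g.\ nodal reduction with $v(\Delta)\ge 2$), there are points of $\KS_{\tilde F,\tilde H}$ distinct from $(0:0:0:1)$ at which all four duplication polynomials vanish --- this is exactly the phenomenon the whole theory of $\eps$ and $\mu$ is about, and Table~\ref{condmult} records a nonempty vanishing locus for each degenerate normal form. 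Moreover, the doubling-compatibility argument you propose is circular: the projective identity $\delta(\omega((A,B)))=\omega((A',B'))$ only makes sense (and only holds) when $\delta(\omega((A,B)))\neq 0$, which is precisely what you are trying to prove; if $\delta$ vanishes identically at the point, no contradiction arises, the ``projective image'' is simply undefined.

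What is actually needed is to exploit the two extra constraints that points arising from $W$ satisfy, and this is how the paper argues. First, $(\tilde A,\tilde B)\in\calD'$ means $\tilde A$ does not vanish at the image in $\BP^1$ of any singular point of $\calC_v$; since the first three Kummer coordinates of $\omega((\tilde A,\tilde B))$ are (up to sign) the coefficients of $\tilde A$, this forces $x_1\neq 0$, and $x_3\neq 0$ when $(0,0)$ is singular. Second, in the cases where $\calC_v$ is reducible (types 2, 7, 8 of Table~\ref{T:reps}), the condition built into the definition of $W$ --- that $\iota$ maps the component containing $\tilde P_1$ to the component containing $\tilde P_2$ --- forces $x_4\neq 0$. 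One then checks, for each of the thirteen normal forms, that these nonvanishing conditions are incompatible with the explicit conditions equivalent to $\delta(x)=0$ listed in Table~\ref{condmult}. Without using the $W$-condition on components (which your final step discards by passing to all of $\calD$), the statement cannot be recovered: restricting to $\calD'$ alone does not rule out $\delta(x)=0$ in the reducible cases, where the vanishing condition is just $x_4=0$.
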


\begin{proof}
  \begin{table}
    \begin{tabular}{|c|c|c|c|c|c|} \hline
      type   & condition & additional    & $m(\infty)$ & $m(0)$ & $m(1)$ \\ \hline
      1      & $x_4 = 0$ &               &   &   &   \\
      2      & $x_4 = 0$ &               & 6 &   &   \\
      3      & $x_4 = 0$ & $x_1 = 0$     & 5 &   &   \\
      4      & $x_4 = 0$ & $x_1 = 0$     & 4 &   &   \\
      5      & $x_1 x_3 = x_4 = 0$ &     & 3 & 2 &   \\
      6      & $x_1 = x_4 = 0$ &         & 3 &   &   \\
      7      & $x_4 = 0$ &               & 4 & 2 &   \\
      8      & $x_4 = 0$ &               & 2 & 2 & 2 \\
      9      & $x_1 x_3 = x_4 = 0$ &     & 2 & 2 &   \\
      10     & $x_1 = x_4 = 0$ &         & 2 &   &   \\
      11     & $x_1 = x_4 = 0$ &         & 3 &   &   \\
      12     & $x_4 = 0$ & $x_1 = 0$     & 5 &   &   \\
      13     & $x_4 = 0$ & $x_1 x_3 = 0$ & 3 & 3 &   \\ \hline
    \end{tabular}
    \medskip

    \caption{Conditions for the vanishing of $\delta(x)$}
    \label{condmult}
  \end{table}

  Let $(A,B) \in D'_{\tilde{F}, \tilde{H}}$ such that
  $\zeta((A,B)) = \{\tilde{P_1},\tilde{P_2}\} \in W$.
  By the discussion preceding the lemma, it suffices to
  show that we have $\delta(x) \ne 0$ for $x = \omega((A,B))$.

  Changing the given model, if necessary, we can assume that $\tilde{H}$ and $\tilde{F}$ are
  as in the list of representatives 1--13 in Table~\ref{T:reps}.
  Table~\ref{condmult} contains conditions on $x$ which are equivalent
  to the vanishing of $\delta(x)$ for each representative and
  additional conditions which a point $x=(x_1:x_2:x_3:x_4)\in\BP^3$ satisfying
  $\delta(x) = 0$ must satisfy in order to lie on $\KS_{\tilde{F},\tilde{H}}$.
  Finally, we have listed the
  multiplicities $m(\infty)$, $m(0)$, $m(1)$
  that $\calC_v$ has at the points with $(X:Z)=(1:0)$, $(X:Z)=(0:1)$ and
  $(X:Z)=(1:1)$, respectively, in case the multiplicities there are greater than~1.
  Note that we do not have to treat type~1, as $\calC_v$ is assumed to be
  reduced.

  Since ${A}(X,Z)$ does not vanish at the image in~$\BP^1$ of a singular point, we get
  ${x_1} \ne 0$ and, if $(0,0)$ is a singular point, also ${x_3} \ne 0$.
  Using Table~\ref{condmult}, this already implies that $\delta(x) \ne 0$
  whenever $\calC_v$ is irreducible.
  In the reducible cases 2, 7 and~8, $\calC_v$ has two irreducible components, and
  one checks easily that ${x_4}$ does not vanish
  because, by definition of $W$, $\iota$ maps the component containing $\tilde{P_1}$ to the
  component containing $\tilde{P_2}$.
  Hence $\delta(x) \neq 0$ by Table~\ref{condmult}.
\end{proof}

The next proposition follows from Lemmas~\ref{L:mu_vanishes1} and~\ref{L:mu_vanishes2}.
\begin{prop} \label{P:mu_vanishes}
  Let $\alpha \colon \Pic^0_{\calC/S} \to \calJ^0$ be the canonical homomorphism.
  If the reduction of~$P \in J(k)$ on~$\calJ_{v}$
  is in the image of~$\alpha$, then $\eps(P) = \mu(P) = 0$.
\end{prop}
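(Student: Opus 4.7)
The plan is to reduce an arbitrary $P \in J(k)$ whose reduction lies in the image of~$\alpha_v$ to the situation already handled by Lemmas~\ref{L:mu_vanishes1} and~\ref{L:mu_vanishes2}, where the reduction is constrained to the smaller locus $\alpha(\rho_0(W))$. The device is to split $P = Q + (P-Q)$ in such a way that both $Q$ and $P-Q$ reduce into $\alpha(\rho_0(W))$, apply the appropriate lemma to each, and then combine the two vanishings by means of the functional equations already established for~$\mu$.

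First I would observe that both the value of~$\mu$ and the hypothesis on the reduction of~$P$ are invariant under a finite unramified base change $k'/k$: Proposition~\ref{lhg2props}(iv) with ramification index $e=1$ gives $\hat\lambda' = \hat\lambda$, hence $\mu$ is unchanged, while the image of~$\alpha$ on the special fiber only grows when the residue field is enlarged. So, without loss of generality, we may assume that $\frk$ is as large as we need.

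The geometric core of the argument is to produce a suitable translation point on the special fiber. Because $\rho_0$ is an open immersion whose image is dense in $\Pic^0_{\calC/S}$, the composition $\alpha\circ\rho_0$ is dominant onto the image subgroup scheme $\alpha_v\bigl(\Pic^0_{\calC_v/\frk}\bigr)$ of~$\calJ^0_v$; by Chevalley's theorem, the constructible set $(\alpha\circ\rho_0)(W_v)$ therefore contains a dense open subset~$V_v$ of $\alpha_v\bigl(\Pic^0_{\calC_v/\frk}\bigr)$. Given a point $\tilde P \in \alpha_v\bigl(\Pic^0_{\calC_v/\frk}\bigr)(\frk)$, the two sets $V_v$ and $\tilde P - V_v$ are both dense open in the irreducible variety $\alpha_v\bigl(\Pic^0_{\calC_v/\frk}\bigr)$, and so after a further finite unramified extension of~$k$ if necessary, their intersection contains a $\frk$-rational point~$\tilde Q$.

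Finally, Hensel's lemma applied to the smooth $S$-group scheme~$\calJ^0$ lifts $\tilde Q$ to some $Q \in J(k)$; both $Q$ and $P-Q$ then have reductions in $V_v \subset \alpha(\rho_0(W))$. Lemma~\ref{L:mu_vanishes1} (if $\Char\frk \neq 2$) or Lemma~\ref{L:mu_vanishes2} (if $\Char\frk = 2$) then gives $\eps(Q) = \eps(P-Q) = 0$, which by Theorem~\ref{T:mu_U} is equivalent to $\mu(Q) = \mu(P-Q) = 0$. Applying Lemma~\ref{L:mu=0} at~$Q$ yields $\mu(P) = \mu((P-Q)+Q) = \mu(P-Q) = 0$, and invoking Theorem~\ref{T:mu_U} once more gives $\eps(P) = 0$ as well. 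The main obstacle is the density step, namely checking that $W_v$ is non-empty and that $(\alpha\circ\rho_0)(W_v)$ is truly dense in $\alpha_v\bigl(\Pic^0_{\calC_v/\frk}\bigr)$, so that the translate $\tilde Q$ can be produced; once that is in hand, the rest is a clean combination of the two lemmas with the general identities for~$\mu$.
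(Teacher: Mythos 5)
Your proposal is correct and shares the overall skeleton of the paper's proof (reduce to points whose reduction lies in $\alpha(\rho_0(W))$, invoke Lemmas~\ref{L:mu_vanishes1} and~\ref{L:mu_vanishes2} there, and then exploit the group structure of $U$), but it justifies the crucial splitting step differently. The paper simply cites \cite{BLR}*{Lemma~5.1.4} and the discussion following \cite{BLR}*{Lemma~5.2.4}: for the strict $S$-birational group law on~$W$, every $T$-point of $\Pic^0_{\calC/S}$ is, after an \'etale cover, a sum of exactly two points of $\rho_0(W)$, and then Theorem~\ref{T:mu_U} finishes the argument. You instead re-derive the needed ``sum of two'' decomposition for the particular reduction $\tilde P$ by hand: Chevalley's theorem gives a dense open $V_v$ inside the image of $\alpha\circ\rho_0$ on the special fiber, the translates $V_v$ and $\tilde P - V_v$ meet in a point rational over a finite extension of the finite residue field, and Hensel's lemma on the smooth scheme $\calJ^0$ lifts it; you then combine the two vanishings via Lemma~\ref{L:mu=0} and the unramified invariance of~$\mu$ (steps the paper leaves implicit in ``Using this and Theorem~\ref{T:mu_U}''). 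The one point you rightly flag, and should be precise about, is the density input: what you need is that $\rho_0(W)$ is \emph{fiberwise} dense (i.e.\ $S$-dense, so in particular $\rho_0(W_v)$ is dense in $\Pic^0_{\calC_v/\frk}$), not merely dense in the total space; this is exactly what the strictness of the $S$-birational group law in the BLR construction guarantees, so your argument rests on the same foundational input as the paper's citation, just packaged more concretely. Your route is somewhat more elementary and self-contained at that step, at the cost of using the finiteness of the residue field (closed points and unramified extensions), whereas the cited BLR statement applies uniformly; both yield the proposition.
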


\begin{proof}
  If $T$ is an $S$-scheme and $x\in \Pic^0_{\calC/S}(T)$, then
  by property (ii) and~(iii) of~$W$, there is an \'etale cover~$T'/T$ and
  $w_1,\ldots,w_n\in W(T')$ such that
  \[
    x = \rho_0(w_1) + \ldots + \rho_0(w_n)\,,
  \]
  where the sum is taken with respect to the group law on~$\Pic^0_{\calC/S}$.
  In fact we can take $n = 2$; this follows from~\cite{BLR}*{Lemma~5.1.4} and the discussion
  following~\cite{BLR}*{Lemma~5.2.4}.
  Using this and Theorem~\ref{T:mu_U}, it suffices to show that $\eps(P) = 0$ when the
  reduction of~$P$ on~$\calJ_{v}$ is in~$\alpha(\rho_0(W))$. Hence the result follows from
  Lemmas~\ref{L:mu_vanishes1} and~\ref{L:mu_vanishes2}.
\end{proof}

Let $J_0(k)$ denote the subgroup of~$J(k)$ consisting of points whose
image on the special fiber of $\J$ is in $\J^0(\frk)$.
By~\cite{BoschLiu}*{Lemma~2.1} the group~$\Phi(\frk)$ of $\frk$-rational points
in the component group~$\Phi$ of~$J$ satisfies
\[ \Phi(\frk)\cong J(k)/J_0(k)\,. \]
We can now give a criterion for when $\eps$ and~$\mu$ factor through~$\Phi(\frk)$.

\begin{thm}\label{T:epsfac}
  Let $C$ be a smooth projective curve of genus~2 defined
  over a non-archimedean local field $k$, given by an integral Weierstrass model~$\calC$
  with  rational singularities.
  Then $\eps$ and $\mu$ factor through~$\Phi(\frk)$.
\end{thm}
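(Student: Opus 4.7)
The plan is to reduce the factorization claim to the vanishing statement in Proposition~\ref{P:mu_vanishes}, and then exploit the rational singularities hypothesis to show that the image of $\alpha$ on the special fiber is all of $\calJ^0_v$. Once we know $J_0(k)\subseteq U$, where $U=\{P\in J(k):\mu(P)=0\}$ is the subgroup from Theorem~\ref{T:mu_U}, the induced quotient map $J(k)/J_0(k)\to J(k)/U$ automatically factors $\mu$ (and hence $\eps = 4\mu - \mu\circ[2]$) through $J(k)/J_0(k)\cong \Phi(\frk)$.

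Concretely, I would proceed in three steps. First, recall from~\eqref{PiC to J} that $\alpha$ factors as
\[
\Pic^0_{\calC/S}\longrightarrow \Pic^0_{\calC'/S}\xrightarrow{\sim}\calJ^0,
\]
where $\calC'\to\calC$ is any desingularization and the second map is an isomorphism by \cite{BLR}*{Thm.~9.4.2}. Second, under the rational singularities hypothesis one has $\pi_*\O_{\calC'}=\O_{\calC}$ and $R^1\pi_*\O_{\calC'}=0$; I would invoke (or quickly derive from) Artin's results \cite{Artin} to conclude that pullback along $\pi$ induces an isomorphism $\Pic^0_{\calC/S}\xrightarrow{\sim}\Pic^0_{\calC'/S}$, so that $\alpha$ itself is an isomorphism of $S$-group schemes. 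Third, passing to the special fiber, $\alpha_v$ is an isomorphism and in particular surjective on $\frk$-points, so the image of $\alpha$ inside $\calJ^0(\frk)$ coincides with all of $\calJ^0_v(\frk)$.

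Now the identification $\Phi(\frk)\cong J(k)/J_0(k)$ from \cite{BoschLiu}*{Lemma~2.1} tells us that any $P\in J_0(k)$ reduces to some point of $\calJ^0_v(\frk)$, which by the previous paragraph lies in the image of~$\alpha$; Proposition~\ref{P:mu_vanishes} then gives $\eps(P)=\mu(P)=0$, i.e.\ $J_0(k)\subseteq U$. Combining this inclusion with Theorem~\ref{T:mu_U} (which says $\mu$, and thus $\eps$, factor through $J(k)/U$) yields the desired factorization through $J(k)/J_0(k)\cong \Phi(\frk)$.

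The step I expect to require the most care is identifying the map $\Pic^0_{\calC/S}\to \Pic^0_{\calC'/S}$ as an isomorphism under the rational singularities hypothesis: one must be a little careful because these are relative Picard functors (so statements about $\Pic$ of the geometric fibers alone are not enough), but standard arguments using the vanishing of $R^1\pi_*\O_{\calC'}$ together with the Leray spectral sequence and the functorial description of $\Pic^0_{\calC/S}$ in \cite{BLR}*{Ch.~8--9} make this routine. Everything else in the argument is a formal consequence of previously established results.
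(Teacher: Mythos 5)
Your proposal is correct and follows essentially the same route as the paper: rational singularities force $\alpha\colon\Pic^0_{\calC/S}\to\calJ^0$ to be an isomorphism (the paper simply cites \cite{BLR}*{Thm.~9.7.1} for this, where you sketch the derivation from $R^1\pi_*\O_{\calC'}=0$), whence $J_0(k)$ lands in the image of $\alpha$, Proposition~\ref{P:mu_vanishes} gives $J_0(k)\subseteq U$, and Theorem~\ref{T:mu_U} together with $\Phi(\frk)\cong J(k)/J_0(k)$ finishes the argument.
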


\begin{proof}
  First note that if $\calC$ has rational singularities, then $\calC$ is
  normal and reduced.
  Moreover, according to~\cite{BLR}*{Thm.~9.7.1}, the homomorphism~$\alpha$
  is an isomorphism if and only if $\calC$ has rational singularities.
  This implies that the image of~$\alpha$, restricted to the generic fiber, is~$J_0(k)$.
  By Proposition~\ref{P:mu_vanishes}, we have $\eps(P) = \mu(P) = 0$ for $P$ in the
  image of~$\alpha$.
  Theorem~\ref{T:mu_U} implies that $\mu$ and $\eps$ factor
  through~$\Phi(\frk)$.
\end{proof}

\begin{rk} \label{R:comp_ell}
  A non-minimal Weierstrass model cannot have rational singularities.
  Moreover, there are minimal (even stably minimal) Weierstrass models
  of curves of genus~$2$ that have non-rational singularities.
  See Example~\ref{CountEx} for a stably minimal Weierstrass model having
  $\mu(P) \ne 0$ for some points $P \in J_0(k)$.

  This behavior cannot occur for elliptic curves; here $\mu$ always factors through $\Phi(\frk)$,
  provided the given Weierstrass model is minimal, see~\cite{SilvermanHeights}. This
  is crucial for the usual algorithms to compute canonical heights on elliptic curves.
  Note that a Weierstrass model of an elliptic curve is minimal if and only if it has
  rational singularities by~\cite{Conrad}*{Corollary~8.4}.
\end{rk}


\section{N\'eron functions and reduction graphs} \label{nerong2}

Our next goal is to derive a formula for~$\mu(P)$ in the case when
the minimal proper regular model of $C$ is semistable and
$\mu$ factors through $\Phi(\frk)$.
To this end, we need the notion of N\'eron functions.
The following result is due to N\'eron; see~\cite{LangFund}*{\S11.1}.

\begin{prop}\label{NF}
  Let $A$ be an abelian variety defined over a local field $k$.
  Then we can associate to any divisor $D \in \Div_A(\bar{k})$
  a function $\lambda_D \colon A(\bar{k}) \setminus \supp(D) \to \R$ such that the following
  conditions are satisfied, where we write $\lambda \equiv \lambda' \bmod \const$
  to indicate that the functions $\lambda$ and~$\lambda'$ differ by a constant.
  \begin{enumerate}[\upshape (1)]
  \item If $D,E\in\Div_A(\bar{k})$, then
        $\lambda_{D+E} \equiv \lambda_{D} + \lambda_{E} \bmod \const$
  \item If $D=\div(f)\in\Div_A(\bar{k})$ is principal, then
        $\lambda_{D} \equiv \bar{v} \circ f \bmod \const$,
        where $\bar{v}$ is the extension of~$v$ to~$\bar{k}$.
  \item If $D \in \Div_A(\bar{k})$ and $T_P \colon A \to A$
        is the translation map by a point $P \in A({\bar{k}})$,
        then we have $\lambda_{T_P^*D} \equiv \lambda_{D} \circ T_P \bmod \const$
  \end{enumerate}
  Also, $\lambda_D$ is uniquely determined up to adding a constant.
\end{prop}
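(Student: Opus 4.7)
This is Néron's classical existence theorem for local height functions on abelian varieties; I would follow the strategy of Lang's Chapter~11. First, using the linearity axiom~(1), reduce to constructing $\lambda_D$ on a convenient spanning set of $\Div_A(\bar{k})$ modulo linear equivalence. Every divisor class on~$A$ is a sum of a symmetric class (where $[-1]^*D \sim D$) and an antisymmetric class (where $[-1]^*D \sim -D$), so it suffices to handle these two cases. The theorem of the cube yields $[2]^*D \sim 4D$ for symmetric $D$ and $[2]^*D \sim 2D$ for antisymmetric $D$; pick a rational function $f_D$ realizing the corresponding linear equivalence, so that $\div(f_D) = [2]^*D - dD$ with $d = 4$ or $d = 2$.

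Next I would build $\lambda_D$ by a Tate-style telescoping limit. Fix any function $\lambda^0_D \colon A(\bar{k}) \setminus \supp(D) \to \R$ that is locally of the form $\bar{v} \circ g$ for a local equation $g$ of $D$; two such choices differ by a function that extends continuously across $\supp(D)$. Then $\lambda^0_D \circ [2] - d \lambda^0_D - \bar{v} \circ f_D$ extends continuously across $\supp(D) \cup [2]^{-1}\supp(D)$ and, as $A(k')$ is compact for every finite extension $k'/k$, is bounded. One then defines
\[ \lambda_D(P) = \lim_{n \to \infty} d^{-n} \lambda^0_D([2^n] P) \,, \]
adjusted by the telescoping contribution coming from $f_D$; the bound just established guarantees convergence at geometric rate~$d^{-n}$.

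To verify the axioms: linearity~(1) is built into the construction, since we defined $\lambda_D$ linearly on the symmetric/antisymmetric generators. For~(2), if $D = \div(f)$ is principal then the telescoping sum collapses to $\bar{v} \circ f$ up to a constant. For the translation property~(3), the theorem of the square gives $T_P^*D + T_{-P}^*D \sim 2 D$ for symmetric $D$; combining this with~(1) and~(2) and iterating the $[2]$-functional equation shows that $\lambda_{T_P^*D} - \lambda_D \circ T_P$ is a bounded solution of the homogeneous equation $\eta \circ [2] = d \eta$. Uniqueness up to an additive constant rests on the same observation: any bounded $\eta$ on $A(\bar{k})$ with $\eta \circ [2] = d \eta$ and $d > 1$ satisfies $|\eta(P)| \le d^{-n} \sup |\eta| \to 0$, hence $\eta \equiv 0$; applied to the difference of two candidate $\lambda_D$'s (which is bounded and satisfies the homogeneous functional equation modulo a constant), this pins $\lambda_D$ down up to a constant.

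The main obstacle is the boundedness analysis underlying the convergence, namely showing that the cocycle $\lambda^0_D \circ [2] - d \lambda^0_D - \bar{v} \circ f_D$ extends to a bounded function on $A(\bar{k})$. This is the place where the non-archimedean nature of $k$, together with compactness of $A(k')$ for finite extensions $k'/k$, really enters the argument. An alternative that sidesteps the limit construction is to extend $D$ to a Cartier divisor $\mathcal{D}$ on the Néron model of $A$ and to define $\lambda_D(P)$ via the intersection multiplicity of $\mathcal{D}$ with the section of the Néron model extending $P$; this intrinsic definition automatically satisfies (1)--(3), at the cost of first developing the requisite intersection theory on arithmetic surfaces.
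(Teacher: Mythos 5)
The paper does not actually prove this proposition: it is quoted as N\'eron's theorem with a pointer to Lang's \emph{Fundamentals of Diophantine Geometry}, \S 11.1, so the only thing to measure your sketch against is the classical argument you are reconstructing. Your outline does follow that classical route (symmetric/antisymmetric splitting via the theorem of the cube, a Tate-style telescoping limit, boundedness arguments for uniqueness), but three steps are not correct as written. First, boundedness of the cocycle $\lambda^0_D \circ [2] - d\,\lambda^0_D - \bar{v}\circ f_D$ cannot be deduced from compactness of $A(k')$ for finite extensions $k'/k$: the function lives on $A(\bar{k})$, which is not compact, and one needs a bound uniform in $k'$. In the standard treatment this is exactly what the theory of Weil functions supplies --- the cocycle is a Weil function attached to the zero divisor, and such functions are bounded because they are assembled from finitely many explicit local representations over a finite affine cover, not by a compactness argument.

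Second, your verification of the translation property~(3) fails as stated: since $[2]\circ T_P = T_{2P}\circ[2]$, the function $\lambda_D\circ T_P$ satisfies no clean functional equation with respect to $[2]$, so $\lambda_{T_P^*D} - \lambda_D\circ T_P$ is not a solution of $\eta\circ[2] = d\,\eta$. Lang derives~(3) by a different mechanism (the theorem of the square and the quadratic dependence of $D \mapsto \lambda_D$ on the divisor class, combined with uniqueness), and some argument of that kind --- or the N\'eron-model/intersection-theoretic definition you mention at the end, which gives~(3) for free --- is genuinely needed. Third, your uniqueness argument quietly uses $\lambda_D\circ[2] \equiv d\,\lambda_D + \bar{v}\circ f_D \bmod \const$ for an \emph{arbitrary} family satisfying (1)--(3); but (1)--(3) contain no compatibility with pullback by $[2]$ (only translations), so this is not available for a second candidate. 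The standard uniqueness proof instead uses~(3) directly: the difference $\eta$ of two candidates is bounded on $A(\bar{k})$ and satisfies $\eta\circ T_P - \eta \equiv \const$ for every $P$, and boundedness forces these constants to vanish, whence $\eta$ is translation invariant and hence constant. With these repairs your sketch becomes the standard proof that the paper cites.
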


We call a function $\lambda_{D}$ as in Proposition~\ref{NF} a {\em N\'eron function associated with $D$}.

We can use local heights on Kummer coordinates to construct N\'eron functions on
the Jacobian $J$ of our genus~2 curve $C$.
If $P_0 \in C(\bar{k})$, then we have an embedding $C_{\bar{k}} \to J_{\bar{k}}$
(defined over $\bar{k}$) that maps 
$P \in C(\bar{k})$ to the divisor class $[(P)-(P_0)] \in \Pic^0_C(\bar{k}) = J(\bar{k})$.
Its image is the theta divisor~$\Theta_{P_0}$.
We set $\Theta_{P_0}^\pm = \Theta_{P_0} + \Theta_{\iota(P_0)}$; then $\Theta_{P_0}^\pm$ is
symmetric and in the linear equivalence class of~$2\Theta$ (where $\Theta$ is a theta
divisor coming from taking a Weierstrass point as base-point).
For the following, fix a point $\infty \in C(\bar{k})$ at infinity.
For $i \in \{1,\ldots,4\}$, we set
\[ D_i = \Theta^\pm_{\infty} + \div\left(\frac{\kappa_i}{\kappa_1}\right) \]
and we define a function
$\hat{\lambda}_i \colon J(k)\setminus \supp(D_i) \to \R$
by
\[ \hat{\lambda}_i(P) = \hat{\lambda}\left(\frac{\kappa(P)}{\kappa_i(P)}\right)\,. \]

\begin{lemma}\label{L:Uchida}
  Let $\infty \in C(\bar{k})$ be a point at infinity as above and let $i \in \{1,\ldots,4\}$.
  Then $D_i$ is defined over~$k$ and the function~$\hat{\lambda}_i$ is a N\'eron function
  associated with~$D_i$.
\end{lemma}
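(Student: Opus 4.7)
The plan is first to verify that $D_i$ is defined over~$k$, which is essentially formal: because $\kappa$ is defined over~$k$, the rational function $\kappa_i/\kappa_1$ on~$J$ has its divisor defined over~$k$; and the set $\{\infty,\iota(\infty)\}$ of points at infinity on~$C$ is $\Gal(\bar{k}/k)$-stable (it is cut out on~$C$ by the $k$-rational equation $Z=0$ and permuted by the $k$-morphism~$\iota$), so $\Theta_\infty^\pm = \Theta_\infty + \Theta_{\iota(\infty)}$ is Galois-stable and hence defined over~$k$.

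For the N\'eron function property, I would start from the identity
\[
  \hat{\lambda}_i(P) = v(\kappa_i(P)) - v(\kappa(P)) - \mu(P),
\]
an immediate consequence of $\hat{\lambda} = -v - \mu$ and the scaling relations $v(cx) = v(c) + v(x)$, $\mu(cx) = \mu(x)$. Since $\mu$ is continuous and bounded on~$J(k)$ by Proposition~\ref{C:finite quotient} and Theorem~\ref{T:mu_U}, and $v(\kappa_i(P)) - v(\kappa(P)) = v(\kappa_i(P)/\kappa_{j_0}(P))$ for any index~$j_0$ attaining $v(\kappa(P))$, this identity already shows that $\hat{\lambda}_i$ is continuous on $J(k) \setminus \supp(D_i)$ and has the correct logarithmic singular behavior along~$\supp(D_i)$, where one uses the convention implicit in~\cite{CasselsFlynn} and~\cite{MuellerKummer} that $\kappa_1$, as a section of $\O(2\Theta)$, has zero divisor $\Theta_\infty^\pm$, so that $D_i = \div(\kappa_i)$. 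Next, applying $\hat{\lambda}(\delta(y)) = 4\hat{\lambda}(y)$ from Proposition~\ref{lhg2props}(i) to $y = \kappa(P)/\kappa_i(P)$, together with the homogeneity $\delta(cy) = c^4 \delta(y)$ and the scaling law $\hat{\lambda}(cx) = \hat{\lambda}(x) - v(c)$, yields the distribution relation
\[
  \hat{\lambda}_i(2P) - 4\hat{\lambda}_i(P) = v\bigl(\delta_i(\kappa(P))\bigr) - 4 v(\kappa_i(P)) = v(g_i(P)),
\]
where $g_i = \delta_i(\kappa)/\kappa_i^4$ is a rational function on~$J$ with $\div(g_i) = [2]^* D_i - 4 D_i$ (since $\delta_i(\kappa)$ vanishes precisely where $\kappa_i \circ [2]$ does). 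Combined with continuity and the logarithmic singularity above, this suffices by the standard characterization of N\'eron functions (cf.~\cite{LangFund}*{\S11.1} and~\cite{HindrySilverman}*{\S B.9}) to identify $\hat{\lambda}_i$ as a N\'eron function associated with~$D_i$.

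The main obstacle is the bookkeeping that makes the divisor of~$\kappa_i$ come out correctly: one has to confirm the identification $\div(\kappa_1) = \Theta_\infty^\pm$ under the chosen normalization of~$\kappa$ (which sends $O$ to $(0\!:\!0\!:\!0\!:\!1)$ and interacts with the explicit duplication polynomials $\delta_j$ as specified in~\cite{CasselsFlynn} and~\cite{MuellerKummer}), so that the singularity locus of~$\hat{\lambda}_i$ really matches $\supp(D_i)$ and so that~$g_i$ has divisor exactly $[2]^* D_i - 4 D_i$. Once this is in hand, the three defining properties of a N\'eron function drop out cleanly from Proposition~\ref{lhg2props} together with the boundedness of~$\mu$.
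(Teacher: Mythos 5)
Your argument for the first assertion (Galois-stability of $\{\infty,\iota(\infty)\}$ plus $k$-rationality of $\div(\kappa_i/\kappa_1)$) is the same as the paper's. For the second assertion the paper does not argue at all: it simply cites Uchida \cite{UchiCano}*{Thm.~5.3}. You instead give a direct proof by the classical ``functional equation'' characterization: you show $\hat{\lambda}_i$ is a Weil function for $D_i$ (via the identity $\hat{\lambda}_i(P) = v(\kappa_i(P)) - v(\kappa(P)) - \mu(P)$ and boundedness of~$\mu$) and satisfies the exact duplication relation $\hat{\lambda}_i(2P) - 4\hat{\lambda}_i(P) = v(g_i(P))$ with $\div(g_i) = [2]^*D_i - 4D_i$, and then conclude by uniqueness (the difference from a genuine N\'eron function is a bounded continuous $h$ with $h(2P) = 4h(P) + \mathrm{const}$, hence constant). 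This computation is correct --- it follows from Proposition~\ref{lhg2props}(i) and the scaling law $\hat{\lambda}(cx) = \hat{\lambda}(x) - v(c)$ exactly as you say --- and it is in essence a reproof of Uchida's theorem. What it buys is self-containedness; what it costs is that the two pieces of ``bookkeeping'' you flag are the actual mathematical content being outsourced: you must verify (a) that the zero divisor of the section $\kappa_1$ of $\O(2\Theta)$ under the explicit embedding of \cite{CasselsFlynn}/\cite{MuellerKummer} is $\Theta_\infty^\pm$ (so that $D_i$ is the zero divisor of $\kappa_i$ and the singular locus of $\hat{\lambda}_i$ matches $\supp D_i$), and (b) that $\delta_i\circ\kappa$ cuts out $[2]^*D_i$, i.e.\ agrees with $\kappa_i\circ[2]$ up to a nowhere-vanishing factor. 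Note also that Proposition~\ref{NF} as stated lists only additivity, principal divisors and translation; to know that the N\'eron function itself satisfies $\lambda_{D_i}\circ[2] \equiv 4\lambda_{D_i} + v\circ g_i \bmod \const$ you additionally need functoriality of N\'eron functions under isogenies (or the uniqueness statement for Weil functions with an exact duplication relation), which is indeed in \cite{LangFund}*{\S 11.1} as you cite. With those points supplied, your route is a valid alternative to the citation.
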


\begin{proof}
  If $\infty \notin C(k)$, then we have $\infty \in C(k')$ for some
  quadratic extension $k'$ of~$k$ and the nontrivial element of the Galois group~$\Gal(k'/k)$
  maps $\infty$ to~$\iota(\infty)$, proving the first assertion.
  For a proof of the second assertion, see \cite{UchiCano}*{Thm.~5.3}.
\end{proof}

\begin{defn} \label{D:redgraph}
  Assume that $C$ has semistable reduction over~$k$.
  Let $C' = \calC^{\min}_{v,\bar{\frk}}$ denote the special fiber of
  the minimal proper regular model~$\calC^{\min}$ of~$C$, considered over the algebraic
  closure of the residue field~$\frk$.
  The \emph{reduction graph} $R(C)$ of~$C$
  is a graph with vertex set the set of irreducible components of~$C'$;
  two vertices $\Gamma_1$ and~$\Gamma_2$ are connected by $n$~edges, where $n$ is the
  number of intersection points of $\Gamma_1$ and~$\Gamma_2$ if $\Gamma_1 \neq \Gamma_2$,
  and $n$ is the number of nodes of~$\Gamma_1$ if $\Gamma_1 = \Gamma_2$.
  The Galois group of~$\frk$ acts on~$R(C)$ in a natural way.

  We consider $R(C)$ as a metric graph by giving each edge length~$1$. For two
  vertices $\Gamma_1$ and~$\Gamma_2$, we define $r(\Gamma_1, \Gamma_2)$ as the resistance
  between the vertices, when $R(C)$ is considered as an electric network with unit
  resistance along every edge.
\end{defn}

\begin{rk} \label{R:comp r}
  We can compute $r(\Gamma_1, \Gamma_2)$ as follows. Order the vertices of~$R(C)$
  in some way and let $M$ be the intersection matrix with respect
  to this ordering. Since all components of the special fiber have multiplicity one,
  the kernel of~$M$ is spanned by the `all-ones' vector and the image of~$M$ consists
  of the vectors whose entries sum to zero.
  Let $v$ be the vector with entries zero except that the entry
  corresponding to~$\Gamma_1$ is~$1$ and the entry corresponding to~$\Gamma_2$ is~$-1$.
  Then there is a vector $g$ with rational entries such that $M g = v$, and
  \[ r(\Gamma_1, \Gamma_2) = -g \cdot v \]
  is, up to sign, the standard inner product of the two vectors. (Note that $g$ is not unique,
  but adding a vector in the kernel of~$M$ to it will not change the result.)
  See for instance~\cite{Cinkir}*{Lemma~6.1}.

  Note that the linear map given by~$M$ on the space of functions on the vertices
  can be interpreted as the discrete Laplace operator on the graph~$R(C)$. It is then
  easy to see that $g$, viewed as a function on the vertices, is piecewise linear
  along sequences of edges not containing $\Gamma_1$, $\Gamma_2$ or a vertex of degree
  at least~3. This makes it quite easy to find~$g$ and to compute $r(\Gamma_1, \Gamma_2)$.
\end{rk}

The reduction graph is unchanged when we replace $k$ by an unramified extension.
If we base-change to a ramified extension~$k'$ of~$k$ with ramification index~$e$,
then the new reduction graph is obtained by subdividing the edges of~$R(C)$
into $e$ new edges. We can give these new edges length~$1/e$; then the underlying
metric space remains the same. In particular, $r(\Gamma_1, \Gamma_2)$ does not depend
on~$k'$. This allows us to replace $k$ by a finite extension if necessary.
The scaling of the length corresponds to extending the valuation $v \colon k^\times \surj \Z$
to $\bar{k}^\times \to \Q$ instead of considering the normalized valuation on~$k'$.
All notions defined in terms of the valuation (for example, intersection numbers)
are then scaled accordingly.

\begin{prop}\label{P:muresist}
  We assume that $\calC^{\min}$ is semistable. Let
  $P = [(P_1)-(P_2)] \in J(k)$, with $P_1, P_2 \in C(k)$ mapping to components
  $\Gamma_1$ and~$\Gamma_2$, respectively, of the special fiber of~$\calC^{\min}$.
  We make the following further assumptions.
  \begin{enumerate}[\upshape (i)]
    \item \label{Pmr:1}
          If $Q_1, Q_2 \in C(k)$ map to $\Gamma_1$ and~$\Gamma_2$, respectively,
          then $\mu(P) = \mu([(Q_1)-(Q_2)])$.
    \item \label{Pmr:2}
          There is a constant $\mu_1 \in \Q$ such that $\mu([(Q_1)-(Q'_1)]) = \mu_1$ for all
          $Q_1, Q'_1 \in C(k)$ mapping to~$\Gamma_1$ such that the images
          of $Q_1$ and~$Q'_1$ on the special fiber of~$\calC^{\min}$ are distinct.
    \item \label{Pmr:3}
          There is a constant $\mu_2 \in \Q$ such that $\mu([(Q_2)-(Q'_2)]) = \mu_2$ for all
          $Q_2, Q'_2 \in C(k)$ mapping to~$\Gamma_2$ such that the images
          of $Q_2$ and~$Q'_2$ on the special fiber of~$\calC^{\min}$ are distinct.
  \end{enumerate}
  Then we have
  \[ \mu(P) = r(\Gamma_1, \Gamma_2) + \frac{\mu_1 + \mu_2}{2}\, . \]
\end{prop}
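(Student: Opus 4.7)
The plan is to combine two ideas: express $\mu(P)$ in terms of N\'eron functions on~$J$ so that the Zhang--Heinz explicit description of local N\'eron pairings on Jacobians of semistable curves becomes applicable, and then use the three compatibility assumptions to cleanly isolate the ``component-level'' part of $\mu(P)$ from the ``within-component'' corrections.

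The first step is to identify $\mu$ with (minus) a N\'eron function on~$J$ relative to the Kummer embedding. By Lemma~\ref{L:Uchida}, each $\hat{\lambda}_i(Q) = \hat{\lambda}(\kappa(Q)/\kappa_i(Q))$ is a N\'eron function for the divisor $D_i = \Theta^\pm_\infty + \div(\kappa_i/\kappa_1)$, and by Definition~\ref{lhg2gen} one has $\hat{\lambda}_i(P) = v(\kappa_i(P)) - v(\kappa(P)) - \mu(P)$, so $\mu(P)$ is extracted from the N\'eron function value by a purely combinatorial correction in the valuations of Kummer coordinates. This gives $\mu(P)$ the interpretation of the local piece of a canonical N\'eron pairing on~$J$ associated with a divisor in the class of~$2\Theta$.

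Next, I would invoke the Zhang--Heinz theory for curves of semistable reduction: for a degree-zero divisor class $[(R_1)-(R_2)]$ on~$J$, the associated canonical local pairing with itself splits as an arithmetic intersection contribution on~$\calC^{\min}$, depending only on the reductions of $R_1$ and~$R_2$ on the special fiber, plus a graph-theoretic term which for $(R_1)-(R_2)$ contributes exactly $r(\Gamma_1,\Gamma_2)$ when $R_i$ reduces to~$\Gamma_i$. Assumption~(i) allows us to replace $(P_1,P_2)$ by any $(Q_1,Q_2)$ reducing to the same pair of components and in sufficiently general position, so the computation may be performed for a convenient representative. Assumptions~(ii) and~(iii) then identify the two within-component contributions as the single constants $\mu_1$ and~$\mu_2$. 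The factor of~$1/2$ in $(\mu_1+\mu_2)/2$ arises because each $\mu_j$ is itself a $\mu$-value of a difference $[(Q_j)-(Q_j')]$ of two points on~$\Gamma_j$, i.e.\ a ``self-pairing'' on~$\Gamma_j$, which enters the bilinear extension to the pairing of $(P_1)-(P_2)$ with itself with weight~$1/2$.

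The main obstacle is bookkeeping: matching the normalization of the Zhang--Heinz formula to our convention $\hat{\lambda}=-v-\mu$, and verifying that the resulting constants and the factor~$1/2$ come out with the correct sign. A useful consistency check, which I would use to calibrate constants, is the degenerate case $\Gamma_1=\Gamma_2$: then $r(\Gamma_1,\Gamma_2)=0$ and assumptions~(ii), (iii) force $\mu_1=\mu_2$, so the proposed formula collapses to $\mu(P)=\mu_1$, matching assumption~(ii) applied to~$P$ directly.
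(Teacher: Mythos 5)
Your proposal is correct and follows essentially the same route as the paper's proof: relate $\mu$ to the N\'eron function $\hat{\lambda}_1$ via Lemma~\ref{L:Uchida}, apply the Heinz/Zhang decomposition of the admissible pairing into intersection and Green's-function terms for the semistable model, use assumption~(i) to move to general position so the intersection numbers vanish, and use (ii), (iii) to produce the self-pairing relations whose sum yields $g_\nu(\Gamma_1-\Gamma_2,\Gamma_1-\Gamma_2)=r(\Gamma_1,\Gamma_2)$ with the factor $\tfrac{1}{2}$ exactly as you describe. The only detail you omit is the coordinate change needed when $P_1$ or~$P_2$ reduces to the image of a point at infinity, which the paper handles with a transformation $\tau$ of valuation zero.
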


\begin{proof}
  By the discussion preceding the statement of the theorem, we can assume that $k$
  is sufficiently large for $C(k)$ to contain all points we might be interested in.

  Let $P_0 \in C(k)$. The embedding with respect to $P_0$ is obtained from the
  `difference map' $\psi \colon C \times C \to J$
  that sends a pair of points $(P_1, P_2)$ to $[(P_1)-(P_2)]$ by specializing the second
  argument to~$P_0$. One easily checks that
  \[ \psi^* \Theta_{P_0} = \Delta_C + (\{\iota(P_0)\} \times C) + (C \times \{P_0\})\,, \]
  where $\Delta_C$ denotes the diagonal and $\iota$ is the hyperelliptic involution on~$C$.
  We then have
  \[ \psi^* \Theta_{P_0}^\pm = 2 \Delta_C + \pr_1^* D_0 + \pr_2^* D_0\,, \]
  where $D_0 = (P_0) + (\iota(P_0))$. By the results in~\cite{Heinz}
  this implies that, taking $\lambda_0$ to be
  a N\'eron function associated to~$\Theta_{P_0}^\pm$,
  \[ \lambda_0\bigl([(P_1)-(P_2)]\bigr)
    = 2 \langle P_1, P_2 \rangle + \langle P_1 + P_2, P_0 + \iota(P_0) \rangle + c
  \]
  for all points $P_1, P_2 \in C(k^{\nr})$ with $P_1 \neq P_2$ and
  $\{P_1, P_2\} \cap \{P_0, \iota(P_0)\} = \emptyset$,
  where $\langle \cdot, \cdot \rangle$ is the pairing in~\cite{Heinz}*{Thm.~4.4} and $c
  \in \R$ is a constant.

  If $\calC^{\min}$ has semistable reduction,
  then, by \cite{Heinz}*{Remark~4.6}, the pairing $\langle \cdot, \cdot \rangle$
  coincides with Zhang's admissible pairing $(\cdot, \cdot)_a$ defined in~\cite{Zhang}
  in terms of harmonic analysis on the reduction graph~$R(C)$.
  In these terms, we have for $Q, Q' \in C(k^{\nr})$:
  \[ \langle Q, Q' \rangle = (Q, Q')_a
                           = i(\overline{Q}, \overline{Q}') + g_\nu(\Gamma, \Gamma')\,,
  \]
  where $i(\overline{Q}, \overline{Q}')$ is the intersection multiplicity
  of the sections $\overline{Q}, \overline{Q}'\in \calC^{\min}(\O^{\nr})$
  induced by $Q$ and~$Q'$, respectively,
  and $g_\nu(\Gamma, \Gamma')$ is the Green's function associated to a certain measure~$\nu$
  on~$R(C)$, with $\Gamma$ and~$\Gamma'$ being the respective components of the special
  fiber of~$\calC^{\min}$ that $Q$ and~$Q'$ reduce to. See~\cite{Zhang}*{\S4}.
  We extend $g_\nu$ to a bilinear map on the free abelian group generated by the vertices of~$R(C)$.

  Lemma~\ref{L:Uchida} gives, for $P_0 = \infty$ and $P = [(P_1)-(P_2)]$ with normalized
  Kummer coordinates $x(P)=(x_1(P),\ldots,x_4(P))$,
  \begin{align*}
    \mu(P) &= v(x_1(P)) - \hat{\lambda}_1(P) \\
           &= v(x_1(P)) - 2 i(\overline{{P}}_1, \overline{{P}}_2)
                        - i(\overline{{P}}_1 + \overline{{P}}_2, \overline{P}_0 + \overline{\iota(P_0)}) \\
           &\hphantom{{}= v(x_1(P))}{}
                        - 2 g_\nu(\Gamma_1, \Gamma_2) - g_\nu(\Gamma_1+\Gamma_2,
                        \Gamma_0+\Gamma'_0) - c \,,
  \end{align*}
  where $\Gamma_1$ and~$\Gamma_2$ are the respective components that $P_1$ and~$P_2$
  reduce to, and $\Gamma_0$ and~$\Gamma'_0$ are the respective components
  that $P_0$ and $\iota(P_0)$ reduce to.
  We assume for a moment that the images of $P_1$ and~$P_2$ on the special fiber of the
  original model~$\calC$ are distinct from the images of the points at infinity.
  By assumption~\eqref{Pmr:1}, $\mu(P)$ is unchanged
  when we replace the points $P_1$ and~$P_2$ by other points still mapping to $\Gamma_1$
  and~$\Gamma_2$, respectively.
  We can therefore assume that the images of $P_1$ and~$P_2$ on the special
  fiber of~$\calC^{\min}$ are distinct from each other and also from the images
  of $P_0$ and~$\iota(P_0)$. This implies that $v(x_1(P)) = 0$ and that
  the intersection numbers in the formula above are zero. We can choose further points
  $Q_1$ and~$Q_2$ that also reduce to $\Gamma_1$ and~$\Gamma_2$ with reductions on the
  special fiber of~$\calC$ distinct from those of $P_0$ and~$\iota(P_0)$ and such that
  $P_1$, $P_2$, $Q_1$ and~$Q_2$ all reduce to distinct points on the special fiber of~$\calC^{\min}$.
  Using assumptions \eqref{Pmr:2} and~\eqref{Pmr:3}, we obtain the following relations.
  \begin{align*}
    -\tfrac{1}{2} \mu_1
       = -\tfrac{1}{2}\mu\bigl([(P_1)-(Q_1)]\bigr)
      &= g_\nu(\Gamma_1, \Gamma_1) + g_\nu(\Gamma_1, \Gamma_0+\Gamma'_0) + \tfrac{1}{2} c \\
    \mu(P) = \mu\bigl([(P_1)-(P_2)]\bigr)
      &= -2 g_\nu(\Gamma_1, \Gamma_2) - g_\nu(\Gamma_1+\Gamma_2, \Gamma_0+\Gamma'_0) - c \\
    -\tfrac{1}{2} \mu_2
       = -\tfrac{1}{2}\mu\bigl([(P_2)-(Q_2)]\bigr)
      &= g_\nu(\Gamma_2, \Gamma_2) + g_\nu(\Gamma_2, \Gamma_0+\Gamma'_0) + \tfrac{1}{2} c
  \end{align*}
  Adding them together gives
  \[ \mu(P) - \tfrac{1}{2}(\mu_1 + \mu_2)
      = g_\nu(\Gamma_1 - \Gamma_2, \Gamma_1 - \Gamma_2) = r(\Gamma_1, \Gamma_2)\,,
  \]
  as desired. See~\cite{Zhang}*{\S3} for the last equality.

  If our assumption that the images of $P_1$ and~$P_2$ on the special fiber of the
  original model~$\calC$ are distinct from the images of the points at infinity is not satisfied,
  then we choose another point $P_0$ for which the assumption is satisfied. We can then
  perform a change of coordinates~$\tau$ over~$\O$ that moves $P_0$ to infinity and apply the
  result above. By Corollary~\ref{C:lambdatau} (note that $v(\tau) = 0$ in this case) and the
  fact that $v(\tau(x)) = v(x)$, $\mu(P)$ is unchanged by~$\tau$.
\end{proof}

\begin{rk}
  We see from the proof that for two points $Q, Q'$ both having image on a component~$\Gamma$,
  but with distinct reductions that are also distinct from those of $P_0$ and~$\iota(P_0)$,
  we always have
  \[ \mu([(Q)-(Q')]) = -2g_\nu(\Gamma, \Gamma) - 2g_\nu(\Gamma, \Gamma_0+\Gamma'_0) - c \,. \]
  So the assumption that this value does not depend on the choice of $Q$ and~$Q'$ is not
  really necessary.
\end{rk}

\begin{thm} \label{muresist}
  Let $C$ be a smooth projective curve of genus~2 defined
  over a non-archimedean local field $k$, given by an integral Weierstrass model.
  Let $J$ be the Jacobian of~$C$ and $\calJ$ its N\'eron model over~$S = \Spec \O$.
  Assume that the minimal proper regular model~$\calC^{\min}$ of~$C$ over~$S$
  is semistable and that $\mu$ factors through the component group~$\Phi(\frk)$
  of~$\calJ$.
  Let $P \in J(k)$ be such that its image in~$\Phi(\frk)$ is $[\Gamma_1 - \Gamma_2]$,
  where $\Gamma_1$ and~$\Gamma_2$ are components of the special fiber of~$\calC^{\min}$.
  Then we have
  \[ \mu(P) = r(\Gamma_1, \Gamma_2)\,. \]
\end{thm}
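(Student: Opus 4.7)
The strategy is to deduce the formula from Proposition~\ref{P:muresist} by exhibiting a convenient representative of~$P$ and showing that the two correction constants $\mu_1, \mu_2$ appearing there must vanish as a consequence of the factorization hypothesis.

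First I would reduce to the case that $P = [(P_1)-(P_2)]$ for points $P_1, P_2 \in C(k)$ whose reductions are smooth points on $\Gamma_1, \Gamma_2$ respectively. Replacing $k$ by a sufficiently large unramified extension (which changes neither the $\mu$-values on $J(k)$ nor the reduction graph, hence leaves $r(\Gamma_1,\Gamma_2)$ unchanged), Hensel's lemma applied to the smooth locus of $\calC^{\min}$ yields such lifts. Setting $P' := [(P_1)-(P_2)] \in J(k)$, its image in $\Phi(\frk)$ under the isomorphism $J(k)/J_0(k) \cong \Phi(\frk)$ is $[\Gamma_1 - \Gamma_2]$, so $P$ and $P'$ have the same image in the component group. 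Since $\mu$ factors through $\Phi(\frk)$ by hypothesis, $\mu(P) = \mu(P')$, and it suffices to compute $\mu(P')$.

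Next I would verify the three hypotheses of Proposition~\ref{P:muresist} for $P'$, all as immediate consequences of the factoring-through-$\Phi(\frk)$ property. Assumption~(i) holds because any alternative pair $Q_1, Q_2 \in C(k)$ reducing to $\Gamma_1, \Gamma_2$ produces $[(Q_1) - (Q_2)]$ with image $[\Gamma_1 - \Gamma_2]$ in $\Phi(\frk)$, the same as $P'$, forcing equality of $\mu$-values. For~(ii) and~(iii): if $Q_1, Q_1'$ both reduce to distinct smooth points of $\Gamma_1$, then $[(Q_1) - (Q_1')]$ has trivial image $[\Gamma_1 - \Gamma_1] = 0$ in $\Phi(\frk)$, so $\mu$ vanishes there, yielding $\mu_1 = 0$; symmetrically $\mu_2 = 0$. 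Proposition~\ref{P:muresist} then gives
\[
  \mu(P) \;=\; \mu(P') \;=\; r(\Gamma_1, \Gamma_2) + \tfrac{1}{2}(\mu_1 + \mu_2) \;=\; r(\Gamma_1, \Gamma_2),
\]
as claimed.

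The main technical issue to handle carefully is the unramified base-change step: one must confirm that the hypothesis ``$\mu$ factors through $\Phi(\frk)$'' transports to the enlarged residue field (which is routine, since the minimal proper regular model, the N\'eron model, and the formation of $\mu$ all commute with unramified extension, and the component group only gains $\Gal$-equivariant points), and that the identification of the image of $[(P_1)-(P_2)]$ in $\Phi(\frk)$ as $[\Gamma_1 - \Gamma_2]$ genuinely holds, which relies on the sections $P_i$ extending to $\calC^{\min}_{\sm}$ meeting only the component $\Gamma_i$. Once these bookkeeping points are settled, the proof is essentially a direct translation of the factorization hypothesis into the input required by Proposition~\ref{P:muresist}.
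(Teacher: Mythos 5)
Your proposal is correct and follows essentially the same route as the paper: the paper's proof likewise observes that the factorization of $\mu$ through $\Phi(\frk)$ forces hypotheses (i)--(iii) of Proposition~\ref{P:muresist} to hold with $\mu_1 = \mu_2 = 0$, and then applies that proposition. The extra bookkeeping you carry out (choosing a representative $[(P_1)-(P_2)]$ after an unramified extension and checking its class in $\Phi(\frk)$) is handled implicitly in the paper, inside the proof of Proposition~\ref{P:muresist} via the preliminary base-change remark, so it is a more explicit version of the same argument rather than a different one.
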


\begin{proof}
  Since $\mu$ factors through~$\Phi(\frk)$, it follows that $\mu([(P_1)-(P_2)])$
  vanishes when $P_1$ and~$P_2$ map to the same component on the special fiber
  of~$\calC^{\min}$ and in general depends only on the components $P_1$ and~$P_2$
  map to. This shows that assumptions \eqref{Pmr:1} to~\eqref{Pmr:3} in
  Proposition~\ref{P:muresist} are satisfied with $\mu_1 = \mu_2 = 0$. The claim follows.
\end{proof}


\section{Formulas and bounds for $\mu(P)$ in the nodal reduction case} \label{formulas}

In this section and the next, we will deduce explicit formulas for~$\mu(P)$
when we have a stably minimal Weierstrass model~$\calC$.
Recall that $\calC^{\min}$ denotes the minimal proper regular model of~$\calC$.
In the following, when we speak of components, points, and so on, of the special
fiber of $\calC$ or~$\calC^{\min}$, we always mean \emph{geometric} components, points, and so on.

In this section we shall use Theorem~\ref{muresist} and Remark~\ref{R:comp r} to find
explicit formulas for $\mu(P)$ whenever $C/k$ has nodal reduction, i.e.,
the special fiber $\calC_v$ of $\calC$ is reduced and all multiplicities are at most~$2$.
In this case $\calC$ is semistable and therefore it has rational singularities.
Let $\Delta = \Delta(\calC)$ denote the discriminant of $\calC$; we assume that
there is at least one node, so that $v(\Delta) >0$.

Since there are at most three nodes in the special fiber of $\calC$,
we have to consider three different cases.

First suppose that there is a unique node in the special fiber of $\calC$ and set $m=v(\Delta)$.
In the notation of Namikawa and Ueno~\cite{NamiUeno} this is reduction type $[I_{m-0-0}]$.
If $m=1$, then $\calC$ is regular over $S$.
In general, there is a unique component, which we denote by~$A$, of genus~$1$ in the special fiber of $\mathcal{C}^{\min}$.
As in the case of multiplicative reduction of elliptic curves (see for example~\cite{ATAEC}),
the singular point on the special fiber is replaced by a string of $m-1$ components
of $\mathcal{C}^{\min}$, all of genus~$0$ and multiplicity~$1$.
We choose one of the two components intersecting $A$ and call it $B_1$ and number the other components $B_2,\ldots,B_{m-1}$ consecutively as in Figure~\ref{picc1}.

Using~\cite{BLR}*{Thm.~9.6.1}, it is easy to see that the geometric component group
$\Phi(\bar{\frk})$ of the N\'eron model is generated by $[B_1-A]$ and is
isomorphic to $\Z/m\Z$. We have $[B_j-A]=j\cdot[B_1-A]$ in $\Phi(\bar{\frk})$.

We set $B_0 \colonequals B_m \colonequals A$. Then we have the following result.

\begin{figure}
  \begin{center}
    \includegraphics[width=\textwidth]{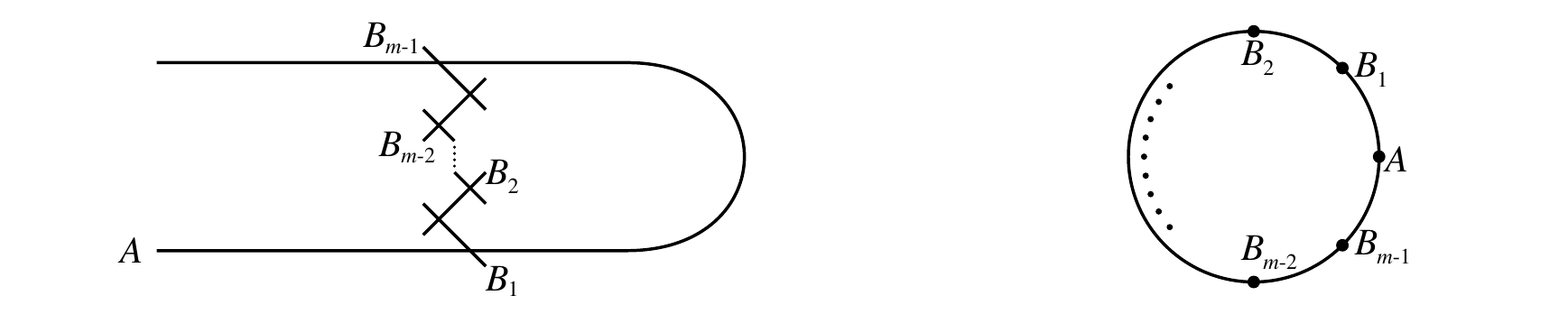}
    \caption{The special fiber of reduction type $[I_{m-0-0}]$ and its reduction graph}\label{picc1}
  \end{center}
\end{figure}

\begin{prop}\label{muchi}
  Suppose that there is a unique node in the special fiber of $\calC$;
  let $m$ and the notation for the components of the special fiber of~$\calC^{\min}$
be as above. If $P \in J(k)$ maps to $[B_i-A]$ in the component group, then we have
\[ \mu(P)=\frac{i (m - i)}{m}\, . \]
\end{prop}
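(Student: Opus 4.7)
The proof strategy is to apply Theorem~\ref{muresist} to reduce the computation of $\mu(P)$ to a resistance calculation on the reduction graph $R(C)$, then exploit the simple cyclic structure of this graph in the $[I_{m-0-0}]$ case.

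First I would verify that the hypotheses of Theorem~\ref{muresist} are satisfied. Since $C$ has nodal reduction, the special fiber of $\calC$ is reduced with only ordinary double points as singularities; in particular $\calC$ has rational singularities. By Theorem~\ref{T:epsfac}, this guarantees that $\mu$ factors through the component group $\Phi(\bar{\frk})$. The hypothesis that $\calC^{\min}$ is semistable is automatic for nodal reduction, so Theorem~\ref{muresist} applies and yields
\[
  \mu(P) = r(B_i, A),
\]
with the resistance computed on the reduction graph $R(C)$.

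Next I would identify $R(C)$. By the description preceding the proposition (and Figure~\ref{picc1}), the special fiber of $\calC^{\min}$ consists of the genus-one component $A$ together with the chain $B_1, B_2, \ldots, B_{m-1}$ of smooth rational components, where $B_1$ and $B_{m-1}$ each meet $A$, and $B_j$ meets $B_{j+1}$ for $1 \le j \le m-2$. Thus $R(C)$ is the cycle graph $C_m$ with vertices $A = B_0, B_1, \ldots, B_{m-1}$ and edges joining $B_j$ to $B_{j+1 \bmod m}$, each of length one.

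Finally I would compute the resistance $r(B_i, A)$ in this cycle. Between two vertices of $C_m$ at combinatorial distance $i$, the network consists of exactly two parallel paths, of lengths $i$ and $m-i$, each edge contributing unit resistance. The parallel-resistance formula then gives
\[
  r(B_i, A) = \frac{i\,(m-i)}{i + (m-i)} = \frac{i(m-i)}{m},
\]
which is the claimed value. (Alternatively, one could verify this via the method of Remark~\ref{R:comp r} by solving $Mg = v$ explicitly on the cycle; the piecewise-linear structure of $g$ along the two arcs from $A$ to $B_i$ makes the calculation transparent.) The only real content is the reduction to a resistance computation; once Theorem~\ref{muresist} is in hand, the remaining step is elementary network theory, so I do not anticipate any serious obstacle.
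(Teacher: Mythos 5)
Your proposal is correct and follows essentially the same route as the paper: semistability/rational singularities of the nodal model give (via Theorem~\ref{T:epsfac}) that $\mu$ factors through the component group, Theorem~\ref{muresist} reduces the claim to $\mu(P)=r(B_i,A)$ on the cycle reduction graph, and the resistance $i(m-i)/m$ is then computed by elementary network theory. The only cosmetic difference is that the paper evaluates the resistance by writing down the explicit piecewise-linear solution $g$ of $Mg=v$ from Remark~\ref{R:comp r}, whereas you invoke the parallel-resistance formula for the two arcs of the cycle — the same computation in different clothing.
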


\begin{proof}
  Since the given model is semistable, we can use Theorem~\ref{muresist} and Remark~\ref{R:comp r}.
  One choice of $g$ as in Remark~\ref{R:comp r} is given by
  \[ g(B_j) = \begin{cases}
                -\dfrac{j (m - i)}{m} & \text{if $0 \le j \le i$,} \\[2mm]
                -\dfrac{i (m - j)}{m} & \text{if $i \le j \le m$.}
              \end{cases}
  \]
  Then
  \[ \mu(P) = r(B_i, A)
            = -\bigl(g(B_i) - g(A)\bigr)
            = \frac{i (m - i)}{m}\,,
  \]
  as claimed.
\end{proof}

\begin{rk}
Proposition~\ref{muchi} resembles the formula for the canonical local height on an
elliptic curve with split multiplicative reduction given, for instance, in~\cite{SilvermanHeights}.
\end{rk}

\begin{figure}
  \begin{center}
    \includegraphics[width=\textwidth]{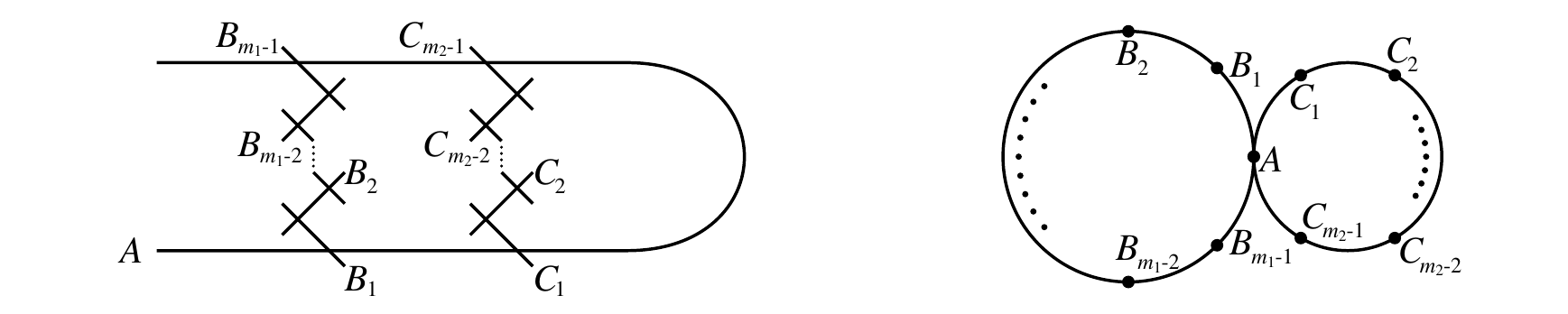}
    \caption{The special fiber of reduction type $[I_{m_1-m_2-0}]$ and its reduction graph}\label{picc2}
  \end{center}
\end{figure}

Now suppose that there are precisely two nodes in the special fiber of~$\calC$.
The reduction type is $[I_{m_1-m_2-0}]$ in the notation of~\cite{NamiUeno}, where $m_1,m_2\ge 1$
and $m_1+m_2=v(\Delta)$.
The special fiber of $\mathcal{C}^{\min}$ is
obtained by blowing up the two singular points of the special fiber of $\calC$
repeatedly and replacing them with a chain of $m_1-1$ and $m_2-1$ curves of
genus~0, respectively.
We call these components $B_1,\ldots,B_{m_1-1},C_1,\ldots,C_{m_2-1}$, numbered
as in Figure~\ref{picc2}, where $A$ contains all images of points reducing to a
nonsingular point and we pick components $B_1$ and $C_1$ intersecting $A$ as in
the case of a unique node.
The component group $\Phi(\bar{\frk})$ is isomorphic to $\Z/m_1\Z\times\Z/m_2\Z$
and is generated by $[B_1-A]$ and $[C_1-A]$; this follows again
using~\cite{BLR}*{Thm.~9.6.1}.
If we have $m_1=1$ or $m_2=1$, then the corresponding singular point on the special
fiber of $\calC$  is regular and is therefore not blown up.

We set $B_0 \colonequals B_{m_1} \colonequals C_0 \colonequals C_{m_2} \colonequals A$.
Then every element of the component
group has a representative of the form $[B_i - C_j]$ with $0 \le i \le m_1$ and
$0 \le j \le m_2$. The following result expresses $\mu(P)$ in terms of this representative.

\begin{prop}\label{muchi2}
  Suppose that there are exactly two nodes in the special fiber of $\calC$;
  let $m_1$ and $m_2$ and the notation for the components of the special fiber of~$\calC^{\min}$
be as above. If $P \in J(k)$ maps to $[B_i-C_j]$ in the component group, then we have
\[ \mu(P) = \frac{i (m_1 - i)}{m_1} + \frac{j (m_2 - j)}{m_2}\,. \]\
\end{prop}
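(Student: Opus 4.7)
The plan is to follow the template of Proposition~\ref{muchi} essentially verbatim, using Theorem~\ref{muresist} to reduce the computation of $\mu(P)$ to an electric-network calculation on the reduction graph $R(C)$, and then exploiting the fact that the two loops of $R(C)$ meet only at~$A$.

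First I would verify the hypotheses of Theorem~\ref{muresist}. Since $\calC_v$ is reduced with all multiplicities $\le 2$, the model $\calC$ is semistable and hence has rational singularities; by Theorem~\ref{T:epsfac}, $\mu$ factors through $\Phi(\frk)$. Blowing up the two nodes produces $\calC^{\min}$, which is still semistable. So Theorem~\ref{muresist} applies and gives $\mu(P) = r(B_i, C_j)$, with the convention $B_0 = B_{m_1} = C_0 = C_{m_2} = A$.

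Next I would describe the reduction graph $R(C)$ explicitly: its vertices are $A, B_1, \ldots, B_{m_1-1}, C_1, \ldots, C_{m_2-1}$; the edges form a cycle of length $m_1$ passing through $A, B_1, \ldots, B_{m_1-1}$ and, meeting it only at~$A$, a cycle of length $m_2$ passing through $A, C_1, \ldots, C_{m_2-1}$. Thus $R(C)$ is a wedge of two circles at~$A$, and $A$ is a cut vertex separating any $B_i$ from any $C_j$ (the boundary cases $i \in \{0, m_1\}$ or $j \in \{0, m_2\}$ are trivial, since then $B_i = A$ or $C_j = A$, and the formula reduces to one already established in Proposition~\ref{muchi}).

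Because $A$ is a cut vertex, the network splits and resistances add in series:
\[ r(B_i, C_j) = r(B_i, A) + r(A, C_j). \]
Each summand is the resistance between two vertices of a single cycle; combining the two arcs of lengths $i$ and $m_1 - i$ (respectively $j$ and $m_2 - j$) in parallel gives
\[ r(B_i, A) = \frac{i(m_1 - i)}{m_1}, \qquad r(A, C_j) = \frac{j(m_2 - j)}{m_2}, \]
exactly as in Proposition~\ref{muchi} (this can also be read off from the explicit vector $g$ written down there, applied to each loop separately). Adding yields the claimed formula. The only real step is recognising the shape of $R(C)$ and the cut-vertex property, both of which are immediate from the description of $\calC^{\min}$ summarised in Figure~\ref{picc2}; after that the argument is a direct application of series/parallel reduction, so no genuine obstacle arises.
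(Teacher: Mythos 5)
Your proposal is correct and follows essentially the same route as the paper, which simply invokes Theorem~\ref{muresist} and carries out the resistance computation on the reduction graph ``along the same lines as in the proof of Proposition~\ref{muchi}''. Your cut-vertex/series-parallel reduction is just a clean way of organizing that computation (equivalently, one writes down the explicit vector $g$ on each loop as in Proposition~\ref{muchi}), so there is no substantive difference.
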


\begin{proof}
This is an easy computation along the same lines as in the proof of Proposition~\ref{muchi}.
\end{proof}

The final case that we have to consider is the case of
three nodes in the special fiber of $\calC$, which then has two components.
We call these components $A$ and $E$.
The special fiber of the minimal proper regular model is obtained using a
sequence of blow-ups of the singular points; they are replaced by a chain of
$m_i-1$ curves of genus~0 and multiplicity~$1$, respectively, where
$v(\Delta)=m_1+m_2+m_3$.
Hence the special fiber of $\mathcal{C}^{\min}$ contains the two components $A$ and $E$, connected by three chains of curves of genus~0 that we call $B_1,\ldots,B_{m_1-1},$ $C_1,\ldots,C_{m_2-1}$ and $D_1,\ldots,D_{m_3-1}$, respectively, where $B_1,C_1$ and $D_1$ intersect $A$, as shown in Figure~\ref{picc3}. The reduction type is $[I_{m_1-m_2-m_3}]$.

\begin{figure}
  \begin{center}
    \includegraphics[width=\textwidth]{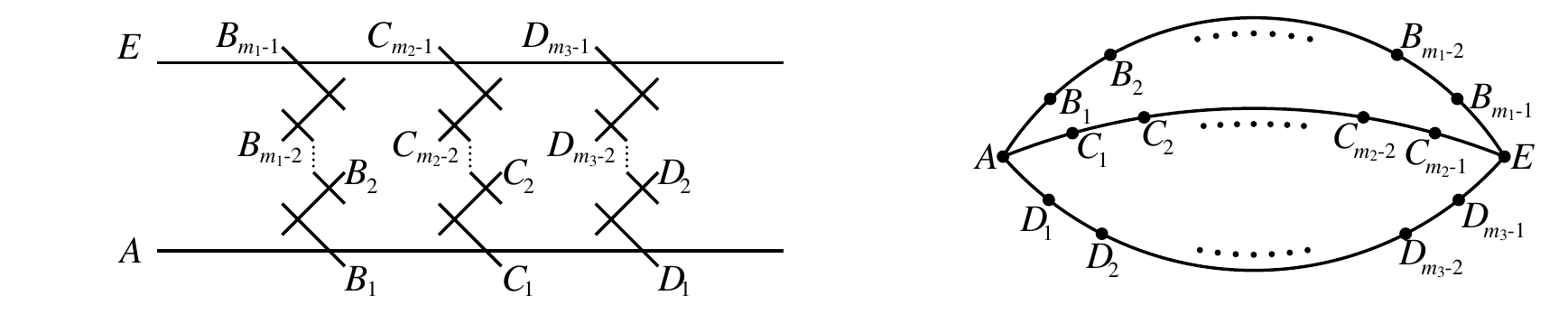}
    \caption{The special fiber of reduction type $[I_{m_1-m_2-m_3}]$ and its reduction graph}
    \label{picc3}
  \end{center}
\end{figure}

By~\cite{BLR}*{Prop.~9.6.10}, the group $\Phi(\bar{\frk})$ is
isomorphic to $\Z/d\Z\times\Z/n\Z$, where
\[ d = \gcd(m_1,m_2,m_3) \quad\text{and}\quad n = \frac{m_1 m_2 + m_1 m_3 + m_2 m_3}{d} \,. \]

We set $B_0 \colonequals C_0 \colonequals D_0 \colonequals A$
and $B_{m_1} \colonequals C_{m_2} \colonequals D_{m_3} \colonequals E$.
Then it is not hard to see that each element of~$\Phi(\bar{\frk})$ can be written in one of the forms
\[ [B_i - C_j], \qquad [C_j - D_l] \qquad \text{or} \qquad [D_l - B_i] \]
with $0 \le i \le m_1$, $0 \le j \le m_2$, $0 \le l \le m_3$.
The following result allows us to express $\mu(P)$ for any $P \in J(k)$ in terms of the component
$P$ maps to.

\begin{prop}\label{muchi3}
  Suppose that there are three nodes in the special fiber of~$\calC$;
  let $m_1$, $m_2$, $m_3$ and the notation for the components of the special fiber of~$\calC^{\min}$
  be as above. If $P$ maps to $[B_i-C_j]$ in the component group
  for some $0\le i\le m_1$ and $0\le j\le m_2$, then we have
  \[ \mu(P) = \frac{m_2 i (m_1 - i) + m_3 (i + j)(m_1 - i + m_2 - j) + m_1 j (m_2 - j)}%
                   {m_1 m_2 + m_1 m_3 + m_2 m_3}\,. \]
  The formulas for $[C_j-D_l]$ and $[D_l-B_i]$ are analogous.
\end{prop}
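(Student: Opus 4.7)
My plan is to apply Theorem~\ref{muresist}: since the special fiber $\calC_v$ consists of two components meeting in three nodes (type $A_1$ singularities), the model $\calC$ is semistable and its singularities are rational, so by Theorem~\ref{T:epsfac} the function~$\mu$ factors through the component group~$\Phi(\bar{\frk})$. Consequently $\mu(P) = r(B_i, C_j)$, where $r$ denotes the effective resistance in the reduction graph~$R(C)$, viewed as an electric network with unit resistance along every edge.

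The reduction graph is a ``theta graph'' with two hub vertices $A$ and $E$ joined by three internally disjoint paths of lengths $m_1$, $m_2$, $m_3$, on which $B_i$ and $C_j$ sit as interior vertices of the first and second path respectively. Since effective resistance is unchanged by series-combining intermediate degree-$2$ vertices, the computation reduces to the following five-resistor network on the four nodes $\{A, E, B_i, C_j\}$: $B_i$--$A$ of resistance $i$, $B_i$--$E$ of resistance $m_1-i$, $C_j$--$A$ of resistance $j$, $C_j$--$E$ of resistance $m_2-j$, and $A$--$E$ of resistance $m_3$ (from the collapsed third chain). I would then compute $r(B_i, C_j)$ in this network by solving the Kirchhoff system for the node potentials $V(A), V(E), V(B_i), V(C_j)$ when a unit current is injected at $B_i$ and extracted at $C_j$; equivalently, by following Remark~\ref{R:comp r} and constructing a function $g$ on the vertex set of $R(C)$, affine linear along each sub-chain, subject to the discrete Laplacian conditions at the four distinguished vertices.

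The real ``obstacle'' is just the bit of linear algebra needed to solve the resulting $4\times 4$ system and to massage the outcome into the stated rational expression with denominator $m_1 m_2 + m_1 m_3 + m_2 m_3$. A useful sanity check is the degenerate specialization $j=0$ (so $C_j = A$): the formula then collapses to $i\bigl[(m_1 - i)(m_2 + m_3) + m_2 m_3\bigr]/(m_1 m_2 + m_1 m_3 + m_2 m_3)$, which is exactly the resistance between $B_i$ and $A$ obtained by first parallel-combining the second and third chains into a single resistor of length $m_2 m_3/(m_2 + m_3)$ between $A$ and $E$, and then doing a standard series-parallel reduction; and the case $i = m_1$, $j = 0$ yields $m_1 m_2 m_3/(m_1 m_2 + m_1 m_3 + m_2 m_3)$, matching the parallel combination of three resistors of lengths $m_1, m_2, m_3$ between $A$ and $E$. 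Finally, the formulas for $[C_j - D_l]$ and $[D_l - B_i]$ are obtained by applying the $S_3$-symmetry that permutes the three chains.
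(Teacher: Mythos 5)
Your proposal is correct and follows essentially the same route as the paper: invoke semistability and rational singularities to see that $\mu$ factors through the component group, apply Theorem~\ref{muresist}, and then compute $r(B_i,C_j)$ on the theta graph. Your series-reduction of the three chains to a five-resistor network is just a repackaging of the paper's observation that the function $g$ of Remark~\ref{R:comp r} is piecewise linear along the segments $A B_1 \ldots B_i$, $B_i \ldots E$, $A C_1 \ldots C_j$, $C_j \ldots E$ and $A D_1 \ldots D_{m_3-1} E$, so the two arguments coincide.
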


\begin{proof}
The proof is analogous to those of Propositions~\ref{muchi} and~\ref{muchi2}.
To find $g$, use that it is piecewise linear on the segments $A B_1 \ldots B_i$,
$B_i \ldots B_{m_1-1} E$, $A C_1 \ldots C_j$, $C_j \ldots C_{m_2-1} E$ and
$A D_1 \ldots D_{m_3-1} E$ and the relations at the vertices $A$, $E$, $B_i$ and~$C_j$.
\end{proof}

\begin{rk} \label{eps3}
Using the relation $\eps(P) = 4 \mu(P) - \mu(2P)$, one can show by a somewhat tedious
computation involving a number of different cases that if the image of~$P$ in~$\Phi(\frk)$
is $[\Gamma_1 - \Gamma_2]$, where $\Gamma_1$ and~$\Gamma_2$ are components of the special
fiber of~$\calC^{\min}$, then $\eps(P)$ is the `distance' between $\Gamma_1$ and~$\Gamma_2$
in the reduction graph, where the `length' of the path between $B_i$ and~$B_j$ (say,
analogously for $C_i$, $C_j$ and $D_i$, $D_j$) is $\min\{2|i-j|, m_1\}$ and otherwise,
`lengths' are additive. In particular, if $\Phi(\frk) = \Phi(\bar{\frk})$, then
\[ \gamma = \max\{\eps(P) : P \in J(k)\} = \max\{m_i + m_j - \delta_{ij} : 1 \le i < j \le
3\}\,, \]
where $\delta_{ij} = 0$ if both $m_i$ and~$m_j$ are even, and $\delta_{ij} = 1$ otherwise.
\end{rk}

\begin{rk}\label{FindComp}
In order to use the results of this section to actually compute $\mu(P)$ for a given point
$P\in J(k)$, we need to be able to find the component of $\mathcal{J}_v$ that $P$
reduces to.
One approach is to find $P_1$ and $P_2\in C$ such that $P=[(P_1)-(P_2)]$ and find the
reductions of $P_1$ and $P_2$ to $\mathcal{C}_v^{\min}$.
Another approach is to use a transformation (possibly defined over an unramified extension of
$k$) to move the singular points to $\infty, (0,0)$ and $(1,0)$, respectively.
Then we can (possibly after applying another transformation) read off the component that $P$ maps
to directly from the Kummer coordinates of $P$.
\end{rk}

The discussion of this section shows that we get the following results on the
local height constant $\beta = \max\{\mu(P) : P \in J(k)\}$. Recall that
$\gamma = \max\{\eps(P) : P \in J(k)\}$ and that $\gamma/4 \le \beta \le \gamma/3$.
We will see that in many cases the lower bound is attained.

Let $P$ be a node on $\calC_v$; it is defined over a finite extension of~$\frk$.
We say that the node~$P$ is \emph{split} if the two tangent directions of the branches
at~$P$ are defined over every extension that $P$ is defined over, otherwise $P$ is \emph{non-split}.
We say that $P$ is \emph{even} if its contribution~$m_i$ to the valuation of the discriminant
is even, and \emph{odd} otherwise.

\begin{cor}\label{Case1Bds}
Suppose that $C/k$ is a smooth projective curve of genus~2 given by an integral
Weierstrass model $\calC$ such that there is a unique node
in the special fiber of~$\calC$ and let $m=v(\Delta)$. Then we have
\[ \beta = \frac{1}{2m}\left\lfloor\frac{m^2}{2}\right\rfloor
        \le \frac{v(\Delta)}{4} .
\]
if the node is split or even, and $\beta = 0$ otherwise.
\end{cor}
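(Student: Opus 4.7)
The plan is to combine Theorem~\ref{T:epsfac} with the explicit formula of Proposition~\ref{muchi}, and then to identify $\beta$ as the maximum of $i(m-i)/m$ over those $i$ for which $[B_i - A]$ is actually $\frk$-rational in the component group. Since every node is a rational singularity, the nodal-reduction hypothesis makes $\calC$ a scheme with rational singularities, so Theorem~\ref{T:epsfac} says that $\mu$ factors through $\Phi(\frk)$. Proposition~\ref{muchi} then yields
\[ \beta = \max\bigl\{ i(m-i)/m : 0 \le i \le m-1 \text{ and } [B_i - A] \in \Phi(\frk) \bigr\}, \]
and it remains to determine the subgroup $\Phi(\frk) \subseteq \Phi(\bar{\frk}) \cong \Z/m\Z$, i.e., the Frobenius action on the chain $A = B_0, B_1, \ldots, B_{m-1}, B_m = A$.

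I would split into the split and non-split cases. In the split case both tangent directions at the unique node of $\calC_v$ are $\frk$-rational, so Frobenius fixes the chain pointwise and $\Phi(\frk) = \Phi(\bar{\frk}) = \Z/m\Z$. In the non-split case the two branches are swapped by Frobenius; a local computation at the resolution of the $A_{m-1}$-singularity $uv = \pi^m$ shows that this swap forces the reversal $B_i \mapsto B_{m-i}$ of the whole chain, which on $\Z/m\Z$ acts as multiplication by~$-1$. Since $\frk$ is finite, $\Phi(\frk) = \Phi(\bar{\frk})^{\mathrm{Frob}}$, so in the non-split case $\Phi(\frk)$ equals $\{0\}$ if $m$ is odd and $\{0,\, m/2\}$ if $m$ is even.

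The final step is to maximize $i(m-i)/m$ over the admissible set of~$i$. When every~$i$ is admissible (split case), the maximum is attained at $i = \lfloor m/2\rfloor$ and equals $\lfloor m/2\rfloor \lceil m/2\rceil/m = \lfloor m^2/4\rfloor/m$, which one rewrites as $\lfloor m^2/2\rfloor/(2m)$. When $m$ is even and only $i \in \{0, m/2\}$ is admissible, $i = m/2$ still gives $(m/2)^2/m = m/4 = \lfloor m^2/2\rfloor/(2m)$. When $m$ is odd and only $i = 0$ is admissible, $\beta = 0$. These three subcases correspond exactly to ``split or even'' (the first two) versus ``non-split and odd'' (the third), giving the claimed dichotomy; and the bound $\beta \le m/4 = v(\Delta)/4$ follows from $\lfloor m^2/2\rfloor \le m^2/2$.

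The main obstacle I anticipate is the local justification that in the non-split case Frobenius really reverses the chain rather than acting more subtly. I expect this to go through as in the classical split/non-split analysis for $I_m$-reduction of elliptic curves: after a quadratic unramified extension trivializing the branches, the chain of $(-2)$-curves acquires a canonical orientation from the ordered pair of branches, and the nontrivial element of the Galois group of the extension---which interchanges the two branches---must act on the chain by reversing this orientation.
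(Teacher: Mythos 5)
Your proposal is correct and follows essentially the same route as the paper: the paper's proof simply invokes Proposition~\ref{muchi} together with the observation that $\Phi(\frk)$ is trivial when $m$ is odd and the node is non-split, which is exactly your maximization of $i(m-i)/m$ over the Frobenius-fixed classes. Your additional details (the chain-reversal action of Frobenius in the non-split case, the identification $\Phi(\frk)\cong J(k)/J_0(k)$ via the cited result of Bosch--Liu, and the floor-function arithmetic) just make explicit what the paper leaves implicit.
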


\begin{proof}
This follows from Proposition~\ref{muchi}, taking into account that
if $m$ is odd and the node is non-split, then the group $\Phi(\frk)$ is trivial.
\end{proof}

\begin{rk} \label{R:gamma4}
Using the relation $\eps(P) = 4 \mu(P) - \mu(2P)$, one can check that
\[ \eps(P) = 2 \min\{i, m-i\} \qquad \text{if $P$ maps to $[B_i - A]$ in~$\Phi(\frk)$.} \]
If $m$ is even (and $\beta > 0$), then $\beta = m/4 = \gamma/4$.
If $m$ is odd, then $\beta = (m - 1/m)/4$ and $\gamma = m-1$, so $\beta/\gamma = (1 + 1/m)/4$
approaches~$1/4$ as $m \to \infty$, but for $m = 3$ (the worst case), we have
$\beta = \gamma/3$.
\end{rk}

\begin{cor}\label{Case2Bds}
Suppose that $C/k$ is a smooth projective curve of genus~2
given by an integral
Weierstrass model $\calC$ such that there are exactly two nodes
in the special fiber of $\calC$.
Let $v(\Delta) = m_1 + m_2$ as above. Then we have
\[ \beta = \frac{1}{2m_1} \left\lfloor\frac{m_1^2}{2}\right\rfloor
              + \frac{1}{2m_2} \left\lfloor\frac{m_2^2}{2}\right\rfloor
        \le \frac{v(\Delta)}{4}
\]
if each of the nodes is split or even,
\[ \beta = \frac{1}{2m_i}\left\lfloor\frac{m_i^2}{2}\right\rfloor \]
if the node corresponding to~$m_i$ is split or even and the other node is non-split and odd,
and $\beta = 0$ if both nodes are non-split and odd.
\end{cor}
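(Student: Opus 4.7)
The plan is to combine the explicit formula of Proposition~\ref{muchi2} with a careful analysis of which elements of the geometric component group $\Phi(\bar{\frk})$ are actually $\frk$-rational. Since the special fiber of $\calC$ is reduced with only nodes as singularities, $\calC$ is semistable and hence has rational singularities, so Theorem~\ref{T:epsfac} tells us that $\mu$ factors through $\Phi(\frk)$. Therefore
\[
  \beta = \max_{[D] \in \Phi(\frk)} \mu([D]),
\]
and by Proposition~\ref{muchi2},
\[
  \mu([B_i - C_j]) = \frac{i(m_1-i)}{m_1} + \frac{j(m_2-j)}{m_2}.
\]
Since this function separates as a sum of two terms, one in $i$ and one in $j$, the corollary will follow as soon as we identify $\Phi(\frk)$ as a subgroup of $\Phi(\bar{\frk}) \cong \Z/m_1\Z \times \Z/m_2\Z$ and maximize each summand over its corresponding factor.

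The key step is determining the Galois action. Because the two nodes are individually $\frk$-rational, each chain of rational components ($B_1,\dots,B_{m_1-1}$ for the first node and $C_1,\dots,C_{m_2-1}$ for the second) is preserved by Frobenius, so the decomposition $\Phi(\bar{\frk}) = \langle [B_1-A]\rangle \oplus \langle [C_1-A]\rangle$ is Galois-stable. Exactly as in the single-node case underlying Corollary~\ref{Case1Bds}, Frobenius fixes the $i$-th chain pointwise when the $i$-th node is split, and reverses it (sending $B_k \mapsto B_{m_1-k}$, and analogously for the $C$-chain) when it is non-split; a direct check in $\Pic$ gives $[B_{m_1-k}-A] = -[B_k - A]$. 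Hence the Galois action on the $i$-th factor is trivial if the node is split and multiplication by $-1$ otherwise, so
\[
  \Phi(\frk) = F_1 \oplus F_2,
\]
where $F_i$ equals $\Z/m_i\Z$ if node $i$ is split, equals $\{0, m_i/2\}$ if it is non-split with $m_i$ even, and is trivial if it is non-split with $m_i$ odd.

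The final step is the elementary maximization of $j \mapsto j(m_i-j)/m_i$ over $F_i$. In the split case the maximum is attained at $j = \lfloor m_i/2 \rfloor$, giving $\lfloor m_i^2/2 \rfloor / (2m_i)$ (whether $m_i$ is odd or even); in the non-split even case the unique nonzero element $j = m_i/2$ yields $m_i/4$, which coincides with $\lfloor m_i^2/2 \rfloor / (2m_i)$ since $m_i$ is even; in the non-split odd case the maximum is $0$. Summing over the two factors yields the three formulas in the statement. The inequality $\beta \le v(\Delta)/4 = (m_1+m_2)/4$ is then immediate from $\lfloor m_i^2/2 \rfloor / (2m_i) \le m_i/4$ (with equality iff $m_i$ is even). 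The main obstacle is the second step: one has to justify carefully that when each node is individually $\frk$-rational the Galois action really does respect the product decomposition of $\Phi(\bar{\frk})$ and acts on each factor by $\pm 1$ according to the split/non-split dichotomy; once this is in place, the arithmetic reduces to the same kind of floor-function bookkeeping used to establish Corollary~\ref{Case1Bds}.
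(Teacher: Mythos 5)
Your proposal is correct and follows exactly the paper's (very terse) proof, which simply says the result follows from Proposition~\ref{muchi2} by taking into account the Frobenius action on~$\Phi(\bar{\frk})$; you have filled in precisely the details that proof leaves implicit, and the floor-function bookkeeping checks out. The only caveat --- shared by the paper's own one-line argument --- is that your Galois analysis tacitly assumes each node is individually $\frk$-rational rather than the two being conjugate over a quadratic extension, a case the statement's trichotomy also implicitly excludes.
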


\begin{proof}
This follows from Proposition~\ref{muchi2}, taking into account the action of Frobenius
on~$\Phi(\bar{\frk})$.
\end{proof}

If we have three nodes,
then it helps to take the field of definition of the nodes into account.

\begin{cor}\label{Case3Bds}
Suppose that $C/k$ is a smooth projective curve of genus~2
given by an integral Weierstrass model $\calC$ such that there are three nodes
in the special fiber of $\calC$.
We say that $\calC$ is split if the two components $A$ and~$E$
of the special fiber of~$\calC^{\min}$ are defined over~$\frk$, otherwise $\calC$ is non-split.
Let $v(\Delta) = m_1+m_2+m_3$ as above and set $M = m_1 m_2 + m_1 m_3 + m_2 m_3$.
\begin{enumerate}[\upshape (a)]
  \item If all nodes are $\frk$-rational, $\calC$ is split,
      and we have $m_1\ge m_3$ and $m_2\ge m_3$, then
      \[ \beta = \frac{1}{2M} \left(m_2\left\lfloor\frac{m_1^2}{2}\right\rfloor
                                      + m_3\left\lfloor\frac{(m_1+m_2)^2}{2}\right\rfloor
                                      + m_1\left\lfloor\frac{m_2^2}{2}\right\rfloor\right)
              \le \frac{m_1+m_2}{4} < \frac{v(\Delta)}{4} \,.
      \]
    \item If all nodes are $\frk$-rational, but $\calC$ is non-split, then
      \[ \beta = \max\{0\}
                    \cup \Bigl\{\frac{m_i+m_j}{4}
                                  : \text{$1 \le i < j \le 3$, $m_i$ and $m_j$
                                even}\Bigr\}\,.
      \]
\item If two of the nodes lie in a quadratic extension of $\frk$ and are
      conjugate over $\frk$ and one is $\frk$-rational, then
      \[ \beta = \begin{cases}
                    \dfrac{m_1}{M}\max\left\{
                      \left\lfloor\dfrac{m^2_1}{2}\right\rfloor + m_1m_3,
                      \left\lfloor\dfrac{m^2_3}{2}\right\rfloor
                        + m_1\left\lfloor\dfrac{m_3}{2}\right\rfloor\right\},
                        & \text{if $\calC$ is split,} \\[1.5ex]
                    \dfrac{m_1}{2},
                    & \text{if $\calC$ is non-split and $m_1$ is even,} \\[1.5ex]
                    0,  & \text{otherwise.}
                  \end{cases}
      \]
      where $m_3$ corresponds to the rational node (and $m_1 = m_2$).
\item If all nodes are defined over a cubic extension of $\frk$ and are conjugate
  over~$\frk$, then $m_1 = m_2 = m_3 = v(\Delta)/3$ and
      \[ \beta = \begin{cases}
          \dfrac{v(\Delta)}{9}, & \text{if $\calC$ is split,} \\[1.5ex]
                  0,            & \text{otherwise.}
                 \end{cases}
      \]
\end{enumerate}
\end{cor}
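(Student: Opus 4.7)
The plan is to apply Proposition~\ref{muchi3}, which gives a closed-form expression for $\mu(P)$ in terms of the class of~$P$ in~$\Phi(\bar{\frk})$, and then maximize this expression over the Frobenius-invariant classes, which form the subgroup $\Phi(\frk)$. The overall scheme is the same as in Corollaries~\ref{Case1Bds} and~\ref{Case2Bds}: identify the action of Frobenius on the components $A$, $E$, $B_i$, $C_j$, $D_l$ of the special fiber of~$\calC^{\min}$, compute $\Phi(\frk)\subseteq \Phi(\bar{\frk})$ as the subgroup of invariants, and then evaluate and optimize the formula of Proposition~\ref{muchi3} on this subgroup.

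First I would work out the Frobenius action in each of the four cases. In case~(a), Frobenius acts trivially, so $\Phi(\frk)=\Phi(\bar{\frk})$. In case~(b), Frobenius swaps $A$ with~$E$; since each node is $\frk$-rational, this forces each of the three chains to be reversed, giving $B_i\leftrightarrow B_{m_1-i}$ and analogously for the $C_j$ and~$D_l$, so the invariants are represented by classes like $[B_i-B_{m_1-i}]$, which only exist when $m_1$ is even. In case~(c), the pair of conjugate nodes forces two of the chains to be interchanged by Frobenius, which requires the corresponding $m_i$'s to be equal (call them $m_1=m_2$); if $\calC$ is split, $A$ and~$E$ are fixed individually, while if $\calC$ is non-split they are swapped. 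In case~(d), Frobenius acts as a 3-cycle on the chains, forcing $m_1=m_2=m_3=v(\Delta)/3$, and again one distinguishes the split and non-split sub-cases according to whether $A,E$ are fixed or swapped.

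Next I would evaluate the formula of Proposition~\ref{muchi3} on each invariant class and maximize. The function
\[
f(i,j)=m_2\, i(m_1-i)+m_3(i+j)(m_1-i+m_2-j)+m_1\, j(m_2-j)
\]
is concave quadratic in $(i,j)$, so its integer maximum on $[0,m_1]\times[0,m_2]$ is attained near the critical point; comparing the three analogous functions on the three ``quadrants'' $[B_i-C_j]$, $[C_j-D_l]$, $[D_l-B_i]$ and using the hypothesis $m_1,m_2\ge m_3$ in case~(a) singles out the correct quadrant and yields the stated expression with floor functions. For the upper bound $(m_1+m_2)/4$ in~(a), I would expand the resulting maximum and compare with $M=m_1m_2+m_1m_3+m_2m_3$ using elementary inequalities, noting the strict inequality $(m_1+m_2)/4<v(\Delta)/4$ from~$m_3>0$. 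In cases~(b)--(d) the invariance conditions cut the $(i,j)$-range down to a much smaller set (typically a single parameter subject to parity), and plugging into $f$ gives the claimed formulas directly; parity obstructions explain the ``otherwise~$0$'' clauses.

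The main obstacle will be the integer optimization of~$f(i,j)$ in case~(a): one must (i)~locate the integer maximizer, which depending on parities of the~$m_i$ is at one of a few candidate points near the continuous maximum and requires the floor notation, (ii)~verify that under $m_1,m_2\ge m_3$ the maximum lies on the $[B_i-C_j]$ quadrant rather than on the other two, and (iii)~carry out the algebraic simplification needed to obtain the bound~$(m_1+m_2)/4$. In cases~(c) and~(d) the main subtlety is keeping track of which classes survive the Galois action in the split versus non-split sub-case, in particular correctly identifying when a potential maximizer becomes Galois-conjugate to its negative and is therefore killed by the invariance condition.
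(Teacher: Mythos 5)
Your overall strategy is the paper's: evaluate the explicit formula of Proposition~\ref{muchi3} on the image of $J(k)$ in $\Phi(\bar{\frk})$, i.e.\ on the Frobenius-fixed classes, and maximize case by case according to the Galois action on the reduction graph. Part~(a) and the general scheme are fine. The genuine gap is in the Galois bookkeeping for the non-split configurations, which is precisely where the content of (b)--(d) lies. In case~(b) the chain reversals you describe make (a power of) Frobenius act as $-1$ on all of $\Phi(\bar{\frk})$, so $\Phi(\frk)$ consists exactly of the $2$-torsion classes; these are $[B_{m_1/2}-C_{m_2/2}]$ and its cyclic analogues, each requiring the two relevant $m_i$ to be even and each having $\mu=(m_i+m_j)/4$ by Proposition~\ref{muchi3}. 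Your proposed invariants ``$[B_i-B_{m_1-i}]$'' are \emph{anti}-invariant: Frobenius sends such a class to its negative, so it is fixed only if it happens to be $2$-torsion, which it usually is not. Concretely, for $(m_1,m_2,m_3)=(2,1,1)$ one has $\Phi(\bar{\frk})\cong\Z/5\Z$ with trivial $2$-torsion, so $\beta=0$ as the corollary states; your recipe would admit $[A-E]=[B_0-B_{m_1}]$ (since $m_1$ is even) and return $\mu([A-E])=m_1m_2m_3/M=2/5$, which is wrong.

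A second problem is your claim that in (b)--(d) the invariance conditions reduce to ``a single parameter subject to parity'' and that plugging in gives the formulas directly. In case~(c), split subcase, this is false: for $m_1=m_2$ odd the maximizer is a class $[B_i-C_j]$ with $i\neq j$ (namely $\{i,j\}=\{(m_1\pm1)/2\}$), while the ``diagonal'' classes $[B_i-C_i]$ are themselves sent to their negatives by Frobenius and are generally \emph{not} rational; one can check in the example $m_1=m_2=3$, $m_3=1$ that $[B_1-C_1]$ is not fixed but $[B_2-C_1]$ is, and the fixed subgroup has index $3$. So identifying $\Phi(\frk)$ here, and in particular showing that the maximizing class is attained by a point of $J(k)$, requires a genuine argument. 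The paper supplies it geometrically: since two of the nodes are conjugate, for $P=[(P_1)-(P_2)]\in J(k)$ the reductions of $P_1,P_2$ land on Frobenius-compatible components, so every $P\in J(k)$ maps to a class of the form $[B_i-C_j]$ or $[D_l-D_{l'}]$, and the maximum is then obtained by distinguishing $m_1\le m_3$ from $m_1>m_3$. Either this argument or an explicit computation of the fixed subgroup of $\Phi(\bar{\frk})$ (e.g.\ via the monodromy pairing) must be added to your plan; as written, it would misidentify $\Phi(\frk)$ in cases (b) and (c).
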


\begin{proof}
The proof of (a) follows easily from Proposition~\ref{muchi3}.

For the other cases, note that in the non-split case, some power of Frobenius acts as negation
on the component group~$\Phi(\bar{\frk})$, so the only elements of~$\Phi(\frk)$ are
elements of order~2 in~$\Phi(\bar{\frk})$, which correspond to $[B_{m_1/2} - C_{m_2/2}]$
if $m_1$ and~$m_2$ are even (where $\mu$ takes the value $(m_1+m_2)/4$), and similarly with
the obvious cyclic permutations.

In the situation of~(c), we must have $m_1 = m_2$. If $P=[(P_1)-(P_2)] \in J(k)$
and $P_1 \in C(\bar{k})$ maps to one
of the conjugate nodes, then $P_2$ must map to the other, so all $P\in J(k)$ must map to
a component of the form $[B_i-C_j]$ or $[D_i-D_j]$.
Now the result in the split case follows from a case distinction depending on whether $m_1 \le m_3$
or not. In the non-split case, the only element of order~2 that is defined over~$\frk$
is $[B_{m_1/2} - C_{m_1/2}]$ if it exists.

In the situation of~(d), the group $\Phi(\frk)$ is of order~3 (generated by $[E - A]$) in
the split case and trivial in the non-split case.
\end{proof}

Extending the valuation $v \colon k^\times \surj \Z$ to $\bar{v} \colon \bar{k}^\times \to \Q$,
we get extensions of $\eps$ and~$\mu$ to $J(\bar{k})$. Denote $\max \{\mu(P) : P \in J(\bar{k})\}$
by~$\bar{\beta}$ and $\max \{\eps(P) : P \in J(\bar{k})\}$ by~$\bar{\gamma}$.
Then by the discussion at the beginning of Section~\ref{nerong2} and the results above,
we find that
\[ \bar{\beta} = \frac{\bar{\gamma}}{4} = \frac{v(\Delta)}{4}\,, \]
when there are one or two nodes, and
\[ \frac{v(\Delta)}{6} \le \bar{\beta} = \frac{\bar{\gamma}}{4}
                       = \frac{v(\Delta) - \min\{m_1,m_2,m_3\}}{4} <
                       \frac{v(\Delta)}{4}\,,
\]
when there are three nodes. (Equality is achieved as soon as the Galois action
on~$R(C)$ is trivial and the ramification index is even.)


\section{Formulas and bounds for $\mu(P)$ in the cuspidal reduction case} \label{formulas2}

In this section we consider the case of a stably minimal Weierstrass model~$\calC$ such that
there are (one or two) points of multiplicity~$3$ on the special fiber.
These points are either both $\frk$-rational or they are defined over a quadratic extension of $\frk$ and
are conjugate over $\frk$.

In the notation of Namikawa and Ueno~\cite{NamiUeno}, the reduction type is of the form
$[\calK_1-\calK_2-l]$, where $l \ge 0$ and $\calK_j$ is an elliptic Kodaira type
for $j \in \{1,2\}$.
We can compute $\calK_1$, $\calK_2$ and $l$ as in~\cite{liuminimaux}*{\S6.1}.
By~\cite{liuminimaux}*{\S7}, we have
\[
  \Phi(\bar{\frk}) \cong \Phi_1(\bar{\frk}) \times \Phi_2(\bar{\frk})\,,
\]
where $\Phi_j$ is the component group of an elliptic curve with Kodaira type~$\calK_j$.
As in the previous section, we write $\Delta = \Delta(\calC)$ for the discriminant
of the model~$\calC$.

\begin{figure}[htb]
 \begin{center}
   \includegraphics[width=0.7\textwidth]{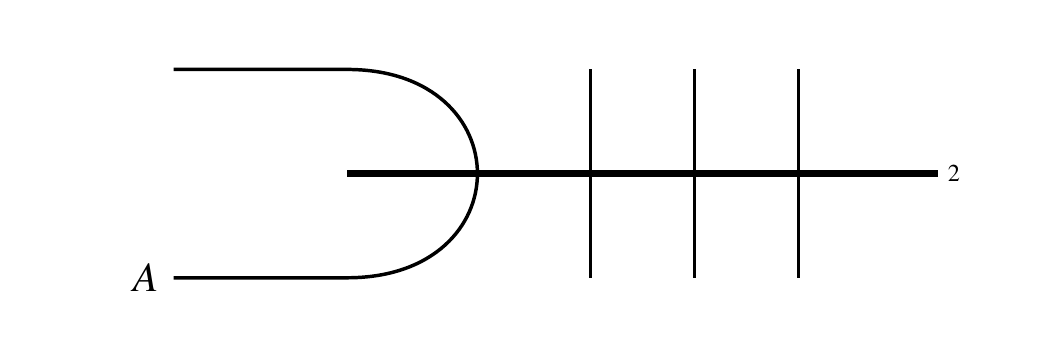}
   \caption{The special fiber of reduction type $[I_0-I^*_0-0]$}\label{picc41}
 \end{center}
\end{figure}

If $\calC$ is not regular, then we can compute
the minimal proper regular model $\calC^{\min}$ of $C$ from $\calC$
by a sequence of blow-ups in the singular point(s) of $\calC$, so the corresponding morphism
$\zeta:\calC^{\min}\to\calC$ is the minimal desingularization of $\calC$.

Suppose that $l>0$. Then the special fiber of~$\calC^{\min}$
consists of Kodaira types $\calK_1$ and~$\calK_2$, connected by a
chain of $l-1$ rational curves. See for example Figure~\ref{picss}.
The desingularization $\zeta$ contracts $\calK_2$ to one of the singular points; in this
case we say that this point {\em corresponds to $\calK_2$}.
If there is another singular point in $\calC_v(\bar{\frk})$, then it corresponds to
$\calK_1$, otherwise we must have $\calK_1 = I_0$.

Suppose now that $l=0$.
If both $\calK_1$ and~$\calK_2$ are good or
multiplicative, then we are in the situation $[I_{{m_1}-{m_2}-0}]$ for
some $m_1,m_2 \ge 0$, which we have discussed in the previous section.
So we may assume that at least one of the~$\calK_j$ is additive, say~$\calK_2$.
Then $\calC^{\min}_v$ looks like  Kodaira type~$\calK_2$, but with
one of the rational curves replaced by (see~\cite{NamiUeno})
\begin{itemize}
  \item a curve $A$ of genus~1 if $\calK_1 = I_0$ (see Figure~\ref{picc41} for the case
        $\calK_2 = I^*_{0}$);
  \item one of the rational components of~$\calK_1$, otherwise; the remainder of
        $\calK_1$ is then attached to this component.
\end{itemize}
We say that a singularity corresponds to one of the Kodaira types $\calK_1$ or
$\calK_2$ similarly to the case $l>0$.

\begin{lemma}\label{L:e1_e2}
  Suppose that the residue characteristic of~$k$ is not~2.
  Let $C$ be given by a stably minimal Weierstrass model with reduction type $[\calK_1-\calK_2-l]$.
  Then after at most a quadratic unramified extension of~$k$
  there is a stably minimal Weierstrass model
  \[ \calC \colon Y^2 = F(X,Z) = f_6 X^6 + f_5 X^5 Z + f_4 X^4 Z^2
                                    + X^3 Z^3 + f_2 X^2 Z^4 + f_1 X Z^5 + f_0 Z^6
  \]
  of~$C$, isomorphic to the given model of $C$, such that the elliptic curve with Weierstrass model
  \[ \calE_1 \colon Y^2 Z = X^3 + f_2 X^2 Z + f_1 X Z^2 + f_0 Z^3 \]
  has Kodaira type~$\calK_1$ and the elliptic curve with Weierstrass model
  \[ \calE_2 \colon Y^2 Z  = X^3 + f_4 X^2 Z + f_5 X Z^2 + f_6 Z^3 \]
  has Kodaira type~$\calK_2$.
\end{lemma}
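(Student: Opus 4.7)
The plan is to reduce the given equation to the claimed normal form by explicit coordinate transformations and then identify the Kodaira types of $\calE_1$ and $\calE_2$ via a local analysis of the resolution of the cuspidal singularities.

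First I complete the square in $Y$, which is possible since the residue characteristic is not $2$, to reduce to the case $H = 0$. By Lemma~\ref{L:charstabmin} the special fiber of $\calC$ is reduced with all geometric multiplicities at most $3$, so the points of multiplicity $3$ correspond to triple roots of $\tilde F$ in $\BP^1(\bar\frk)$. These roots are either $\frk$-rational or form a Galois-conjugate pair in the quadratic unramified extension of $\frk$, so after enlarging $k$ by an at most quadratic unramified extension they become $\frk$-rational. Via Hensel's lemma I lift them to $\O$-valued points of $\BP^1$ and apply a transformation in $\mathrm{GL}_2(\O)$ of the form~\eqref{tau} (with $e = 1$, $U = 0$) sending them to $(0:1)$ and $(1:0)$ respectively; when there is only one cuspidal point I send it to $(1:0)$ and leave the other free. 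The transformed equation is stably minimal again by Lemma~\ref{L:charstabmin}. After this step $\tilde f_3 \in \frk^\times$, since $\tilde F$ has $Z^3$, $X^3$, or $X^3 Z^3$ as a factor. A further combined scaling of $Z$ and $Y$ by units of $\O$ replaces each $f_i$ by $f_3^{2-i} f_i$; this normalizes $f_3$ to $1$, keeps all coefficients in $\O$ (since $f_3$ is already a unit), preserves stable minimality, and requires no further extension of $k$.

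It remains to identify $\calE_1$ and $\calE_2$. In the affine chart $Z = 1$, the local equation of $\calC$ near $(X, Y) = (0, 0)$ reads
\[ Y^2 = f_0 + f_1 X + f_2 X^2 + X^3 + X^4 \bigl(f_4 + f_5 X + f_6 X^2\bigr), \]
which agrees with the affine Weierstrass equation of $\calE_1$ up to terms of order at least $4$ in $X$. Since these extra terms lie in the cube of the maximal ideal of the local ring at $(X,Y) = (0,0)$ on the special fiber, they do not affect the formal isomorphism class of the singularity and hence do not affect its minimal resolution. Consequently the configuration of exceptional divisors produced by resolving $\calC$ locally at $(0:1)$ coincides with the Kodaira fiber of $\calE_1$, giving Kodaira type $\calK_1$. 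The symmetric analysis in the chart $X = 1$ handles $\calE_2$ and $\calK_2$. The length $l$ of the connecting chain of rational curves in $\calC^{\min}$ is then automatically consistent with the Namikawa-Ueno classification~\cite{NamiUeno} once the two Kodaira fibers have been identified.

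The main obstacle lies in the step asserting that higher-order terms in the local equations do not alter the resolution of the cuspidal singularity. A rigorous justification can be obtained either from the fact that minimal resolutions of surface singularities depend only on their formal isomorphism class, or, more concretely, by invoking Liu's algorithm from~\cite{liuminimaux} on the normalized equation: one reads off the reduction type directly from the coefficients and matches the numerical data term-by-term with the Kodaira types of $\calE_1$ and $\calE_2$ and the length~$l$.
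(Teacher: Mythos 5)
Your normalization step (sending the multiplicity-$3$ point(s) to $0$ and $\infty$ and scaling $f_3$ to a unit, then to $1$) matches the start of the paper's proof, but the decisive step is not valid: membership of the discarded terms in a power of the maximal ideal does not pin down the formal isomorphism class of the singularity, because the order of finite determinacy depends on the singularity, and here its depth is unbounded as $\calK_1,\calK_2,l$ vary (the valuations of $f_0,f_1,f_2$, resp.\ $f_4,f_5,f_6$, can be arbitrarily large, while the truncated coefficients are only known to have valuation $\ge 1$ -- and at a point opposite a node they need not even be small). Moreover, identifying the exceptional configuration of the local resolution of~$\calC$ with ``the Kodaira fiber of $\calE_i$'' tacitly assumes that $\calE_i$ is minimal and that the truncation is faithful to the genuine cubic factor of~$F$; neither is addressed. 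In fact the claim you are implicitly proving -- that \emph{every} stably minimal model in your normal form has $\calE_i$ of type $\calK_i$ -- is false. Take residue characteristic $\ge 5$ and the model
\[ \calC_0\colon Y^2=\pi^2Z^6+X^2Z^4+X^3Z^3+\pi^{10}X^4Z^2+\pi^{33}X^6\,, \]
which is of the form~\eqref{E:EEred} with $j=0$, built from minimal equations of types $I_2$ and $I_3$ and $l=5$, hence stably minimal of reduction type $[I_2-I_3-5]$. Substituting $Z\mapsto Z+\pi X$ gives an $\O$-isomorphic, hence still stably minimal, model that satisfies your normalization (the multiplicity-$3$ point stays at $(1:0)$, the node stays at $(0:1)$, and $f_3$ is a unit), with $f_4'=3\pi+6\pi^2+\cdots$, $f_5'=3\pi^2+4\pi^3+\cdots$, $f_6'=\pi^3+\pi^4+\cdots$. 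For the read-off curve $\calE_2'$ one computes $v(c_4)=3$, $v(c_6)=4$, $v(\Delta)=8$, so its Kodaira type is $IV^*\ne I_3=\calK_2$. So the conclusion genuinely requires a further adjustment of the model, not just your normalization.

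This is precisely what the paper's proof supplies and your sketch omits. The paper factors $F=F_1F_2$ by Hensel's lemma (with $F_2$ reducing to $Z^3$), first transforms the genus-$2$ model again, if necessary, so that the elliptic factor $Y^2Z=F_1(X,Z)$ is a \emph{minimal} Weierstrass equation (this keeps $\calC$ stably minimal), then compares the blow-up process of $\calC$ at the singular point with that of the factor, using that $F_2$ is a unit there, to conclude that the factor has type $\calK_1$; and only then does it pass to the truncation $\calE_1$, which requires the coefficients of $\calE_1$ and of the factor to agree modulo $\pi^{N_1+1}$ ($N_1$ the number of blow-ups). This is enforced by explicit valuation inequalities on the cross terms, achieved when necessary by one more transformation in $\GL_2(\O)$ -- exactly the quantitative control your argument lacks. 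Appealing to Liu's algorithm ``term by term'' does not close the gap either: which terms dominate in that algorithm depends on the same uncontrolled valuation comparisons, as the example above shows.
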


\begin{proof}
  After possibly making a quadratic unramified extension and applying a transformation,
  we can assume that there is a unique point $\infty \in\calC_v(\frk)$ at infinity
  on the special fiber and that it is a cusp, corresponding to $\calK_2$,
  see the discussion preceding the lemma.
  Moreover, we can assume that if
  there is another singular point in $\calC_v(\bar{\frk})$, then this point
  is~$P=(0,0)\in\calC_v(\frk)$ (in which case it must correspond to $\calK_1$).

  Because the residue characteristic is not~2, we may assume that $\calC$ has $H=0$
  and that $f_3$ is a unit.
  By Hensel's Lemma there is a factorization $F=F_1F_2$, where $F_2$ is a cubic form
  reducing to $Z^3$.
  Similarly, we may assume that $F_1$ reduces to
  $X^3$ if there is a cusp at $P$ and to $X^2 (X + aZ)$ with $a \neq 0$
  if there is a node at $P$; otherwise $F_1$ is squarefree.
  Consider the elliptic curves given by the Weierstrass models
  \[ \calD_1 \colon Y^2Z = F_1(X,Z)\quad\text{ and }\quad
     \calD_2 \colon Y^2Z = F_2(Z,X)\,. \]
  We first show that $\calD_1$ has Kodaira type~$\calK_1$ and $\calD_2$ has Kodaira
  type~$\calK_2$.

  If $\calD_1$ is not minimal, then we can apply a transformation to~$\calC$ which
  makes $\calD_1$ minimal.
  This decreases the valuation of the discriminant~$\Delta(\calD_1)$,
  but increases the valuation of~$\Delta(\calD_2)$ by the same amount.
  The resulting model is still stably minimal and the resulting $F_2$
  still reduces to~$Z^3$. Hence we may assume that $\calD_1$ is minimal.

  Let $Q=(0,0)\in\calD_{1,v}(\frk)$; then $\calD_1$ is smooth outside~$Q$.
  Note that $F_2$ is a unit in~$\O_{\calC,P}$, so that
  $P$ is a smooth point if and only if $Q$ is a smooth point,
  in which case $\calD_1$ has reduction type $I_0=\calK_1$.
  More generally, $\calC$ is regular at~$P$ if and only if $\calD_1$ is regular at~$Q$,
  and $P$ is a node (resp., a cusp) if and only if $Q$ is a node (resp., a cusp).
  Recall that $P$ corresponds to~$\calK_1$, so that $\calD_1$ has reduction type $I_1$
  (resp., $I\!I$) if and only if $\calK_1 = I_1$ (resp., $\calK_1=I\!I$).

  Now suppose that $\calC$ is not regular at~$P$ and $\calD_1$ is not regular at~$Q$.
  The minimal desingularization $\xi \colon \calC'\to\calC$ in~$P$ can be computed by a sequence
  of blow-ups, starting with the blow-up of~$\calC$ in~$P$.
  The preimage of~$P$ under the latter map is contained in the chart~$\calC^1$
  obtained by dividing the $x$- and $y$-coordinates
  by the uniformizing element~$\pi$.
  Similarly, in order to compute the minimal desingularization $\xi_1 \colon \calD_1'\to \calD_1$
  in~$Q$, we first blow up~$\calD_1$ in~$Q$; then the chart~$\calD_1^1$ obtained
  by dividing the $x$- and $y$-coordinates by~$\pi$ contains the preimage of~$Q$.
  But because $F_2$ reduces to~$Z^3$, the special fibers of $\calC^1$ and~$\calD_1^1$ are
  identical. This continues to hold after further blow-ups (if any are necessary), so
  we have $\xi^{-1}(P) = \xi_1^{-1}(Q)$. There are no exceptional components in these
  preimages, since we assumed that $\calD_1$ is minimal. Therefore $\calD_1'$ is in fact
  the minimal proper regular model of the elliptic curve defined by~$\calD_1$.
  Since the minimal desingularization of~$\calC'$ in the point~$\infty\in\calC'_v(\frk)$
  leads to~$\calC^{\min}$, and since $P$ corresponds to~$\calK_1$,
  we deduce that $\calD_1$ has Kodaira type~$\calK_1$.

  A similar argument (for which we first apply a transformation to make $\calD_2$ minimal)
  shows that $\calD_2$ has Kodaira type~$\calK_2$.
  To complete the proof of the lemma, we therefore only need to make sure that
  $\calE_i$ has the same reduction type as~$\calD_i$ for $i=1,2$.
  This is certainly satisfied if the coefficients of~$\calE_i$ and~$\calD_i$
  agree modulo~$\pi^{N_i+1}$, where $N_i$ is the number of blow-ups needed to construct the minimal
  desingularization of~$\calD_i$.
  Suppose that
  $F_1 = a_0 Z^3 + a_1 X Z^2 + a_2 X^2 Z + a_3 X^3$ and
  $F_2 = b_3 Z^3 + b_2 X Z^2 + b_1 X^2 Z + b_0 X^3$.
  Writing out the coefficients of~$F$ in terms of the coefficients of~$F_1$ and~$F_2$,
  we see that it suffices to have
  \[
    v(a_0b_2)>v(a_1),\;v(b_0a_2)>v(b_1),\;v(a_0b_1+a_2b_2)>v(a_2),\;v(a_1b_0+a_2b_1)>v(b_2)\,.
  \]
  If this is not satisfied, it can be achieved by acting on the given stably minimal Weierstrass model
  via a suitable element of~$\GL_2(\O)$ as in~\S\ref{S:lambda_Kummer}.
  Finally, we scale the variables to get $f_3=1$.
\end{proof}

\begin{rk}\label{R:e1_e2_c2}
  If the residue characteristic is~2, then it is not hard to see that one can also construct a stably minimal
  Weierstrass model $\calC$ and corresponding elliptic Weierstrass models $\calE_1$ and $\calE_2$ as in the
  lemma in a similar way. The construction is more cumbersome, since we cannot assume $H=0$.
\end{rk}

In view of Theorem~\ref{T:epsfac} we want a condition for $\calC$ to have rational singularities.

\begin{lemma}\label{C4GeoMin}
  The model $\calC$ has rational singularities if and only if $l = 0$.
\end{lemma}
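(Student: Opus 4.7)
The plan is to reduce the question to the well-understood case of Weierstrass models of elliptic curves: by \cite{Conrad}*{Corollary~8.4}, such a model has rational singularities if and only if it is minimal. The reduction uses Lemma~\ref{L:e1_e2} (respectively Remark~\ref{R:e1_e2_c2} in residue characteristic~2), which, after an at most unramified quadratic extension, presents~$\calC$ in a form $Y^2 = F(X,Z)$ producing two elliptic Weierstrass models $\calE_1,\calE_2$ with Kodaira types $\calK_1,\calK_2$; these govern the geometry of~$\calC$ at its cusps $(0{:}0{:}1)$ and $(1{:}0{:}0)$.

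The first step is to identify the completed local rings of~$\calC$ at its cusps with the corresponding completed local rings of $\calE_1$ or~$\calE_2$. Writing the affine equation at $(0{:}0{:}1)$ as
\[
    Y^2 = f_0 + f_1 X + f_2 X^2 + X^3 + f_4 X^4 + f_5 X^5 + f_6 X^6
\]
with $f_0,f_1,f_2\in(\pi)$, the higher-order correction $f_4 X^4 + f_5 X^5 + f_6 X^6$ is absorbed into the leading cubic by a formal substitution $X\mapsto X\cdot w(X)$ with $w\in\O[[X]]$ and $w(0)=1$, using Hensel-type arguments in the formal power series ring. A symmetric construction at infinity handles~$\calE_2$. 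Since rationality is a formal local property, $\calC$ has rational singularities if and only if both $\calE_1$ and $\calE_2$ do; equivalently, by \cite{Conrad}*{Corollary~8.4}, both $\calE_1$ and $\calE_2$ are minimal Weierstrass models.

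The second step is to check that $\calE_1$ and~$\calE_2$ are both minimal precisely when $l=0$. This follows from Liu's analysis in~\cite{liuminimaux}*{\S6.1,\S7}: the integer~$l$ in the reduction type $[\calK_1-\calK_2-l]$ measures the length of the chain of rational curves in~$\calC^{\min}_v$ connecting the two Kodaira configurations, and equals zero exactly when the two associated elliptic Weierstrass models are themselves minimal; when $l>0$, the discriminant of at least one of them exceeds the minimum by a positive multiple of~$12$.

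The main obstacle is making the formal local comparison in the first step fully rigorous, especially in residue characteristic~2 (where the cross-term $H(X,Z)Y$ cannot be eliminated) and when the coefficients $f_4,f_5,f_6$ are not units, so that the unit~$w$ must be constructed carefully as an element of~$\O[[X]]$ matching simultaneously the lower-order Weierstrass coefficients. A clean alternative would be to bypass the local-ring comparison by directly computing the arithmetic genus of the fundamental cycle of $\zeta\colon\calC^{\min}\to\calC$ and invoking Artin's criterion, which requires explicit intersection-number computations on~$\calC^{\min}_v$ in each possible Kodaira-type combination.
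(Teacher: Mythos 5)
Your overall strategy coincides with the paper's: reduce via Lemma~\ref{L:e1_e2} (and Remark~\ref{R:e1_e2_c2} in residue characteristic~2) to the two elliptic Weierstrass models $\calE_1,\calE_2$, invoke \cite{Conrad}*{Corollary~8.4} to translate rationality into minimality, and observe that both $\calE_i$ are minimal exactly when $l=0$. The gap is in your first step, and you flag it yourself: the claimed identification of completed local rings is not established. A substitution $X\mapsto X\,w(X)$ that normalizes the leading term $X^3(1+f_4X+f_5X^2+f_6X^3)$ to $X^3$ necessarily feeds back through the lower-order terms $f_1X$, $f_2X^2$ and reintroduces terms of degree $\ge 4$, so a single formal change of variable does not produce the truncated cubic with the \emph{same} coefficients $f_0,f_1,f_2$; one would need an iterative construction together with control of how the $\pi$-adic sizes of the coefficients change, and in residue characteristic~$2$ the cross term $H(X,Z)Y$ cannot be removed at all. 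As written, the central equivalence ``$\calC$ rational at its cusps $\iff$ $\calE_1,\calE_2$ rational'' is asserted but not proved.

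The paper closes exactly this gap, but not by a formal-local isomorphism: it uses Artin's criterion \cite{Artin2}*{Thm.~3}, by which rationality of the singular point is equivalent to the fundamental cycle of the exceptional fiber having arithmetic genus~$0$, hence depends only on the configuration of the exceptional fiber of the minimal desingularization. The key input is then the observation already made in the proof of Lemma~\ref{L:e1_e2}: because $F_2$ reduces to $Z^3$ (resp.\ $F_1$ to $X^3$), the successive blow-up charts of $\calC$ at the cusp and of $\calE_1$ at $(0,0)$ have \emph{identical} special fibers, so $\xi^{-1}(P)=\xi_1^{-1}(Q)$ on the nose; this argument works in residue characteristic~$2$ and needs no minimality of $\calE_1$. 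Thus no case-by-case intersection computation over the possible Kodaira combinations is needed, contrary to what your fallback suggests: the exceptional configurations literally agree, rationality transfers, and Conrad's result plus the elementary observation that $\calE_1,\calE_2$ are simultaneously minimal iff $l=0$ (your second step, which is fine and matches the paper's use of \cite{liuminimaux}) finishes the proof. To repair your write-up with least effort, replace the completed-local-ring claim by this configuration-matching argument.
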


\begin{proof}
  We may assume that $\calC$ is as in Lemma~\ref{L:e1_e2} or Remark~\ref{R:e1_e2_c2}.
  Then all points in $\calC_v(\bar{\frk})\setminus\{\infty, P\}$ are non-singular,
  where $\infty\in\calC_v(\frk)$ is the unique point at infinity, and $P=(0,0)\in \calC_v(\frk)$.
  If $\calC$ is regular in $P$, then $P$ is a rational singularity.
  If not, then, by~\cite{Artin2}*{Thm.~3}, $P$ is a rational singularity  if and only if the fundamental cycle of
  $\xi^{-1}(P)$ has arithmetic genus~0, where $\xi$ is any desingularization of $P$.
  In particular, the assertion that $P$ is a rational singularity depends only on the configuration of
  $\xi^{-1}(P)$, where $\xi \colon \calC' \to \calC$ is the minimal desingularization of $P$.
  Now let $\calE_{1}$ be as in Lemma~\ref{L:e1_e2} or Remark~\ref{R:e1_e2_c2}, and
  let $\xi_1 \colon \calE'_1 \to \calE_1$ denote the minimal desingularization of the singular
  point~$Q=(0,0)\in \calE_{1,v}(\frk)$; then
  the assertion that $Q$ is a rational singularity depends only on the configuration of
  $\xi_1^{-1}(Q)$.
  We have $\xi^{-1}(P) =\xi_1^{-1}(Q)$ as in the proof of Lemma~\ref{L:e1_e2}
  (this also works when $\Char{\frk}=2$ and does not require minimality of $\calE_1$).
  In particular, $P$ is a rational singularity if and only if $Q$ is a rational singularity.

  A similar argument proves the corresponding statement for~$\calE_2$.
  Hence $\calC$ has rational singularities if and only if both $\calE_1$ and $\calE_2$
  have rational singularities.
  By~\cite{Conrad}*{Corollary~8.4} a Weierstrass model of an
  elliptic curve has rational singularities if and only if it is minimal.
  But it is easy to see that $\calE_1$ and $\calE_2$ are
  both minimal if and only if~$l=0$.
\end{proof}

According to Lemma~\ref{C4GeoMin}, not all singularities of
the given stably minimal Weierstrass model $\calC$ are rational when $l> 0$.
The following example shows that in this situation $\eps(P) \ne 0$, and hence
$\mu(P) \ne 0$, can indeed occur for $P\in J_0(k)$.

\begin{ex}\label{CountEx}
  Let $p$ be an odd prime and let $C/\Q_p$ be given by
  \[ Y^2 = Z (X^2 + Z^2) (X^3 + p^5 X Z^2 + p^8 Z^3)\, . \]
  Let $P_1 = (0,p^4) \in C(\Q_p)$ and $P_2 = \iota(P_1)$. The
  reduction type is $[I_0-I\!I\!I-1]$ and hence $\#\Phi(\bar{\frk}) = 2$.
  It turns out that both $P_1$ and $P_2$
  map to the same component and so we have $P = [(P_1) - (P_2)]\in J_0(k)$.
  The image of~$P$ on the Kummer surface is of the form $(x_1:0:0:x_4)$, where
  $v(x_4)-v(x_1)=2$. We get $\eps(P)=\eps(2P)=6$ and $\mu(P)=\mu(2P)=2$.
\end{ex}

\begin{figure}[tb]
  \begin{center}
    \includegraphics[width=\textwidth]{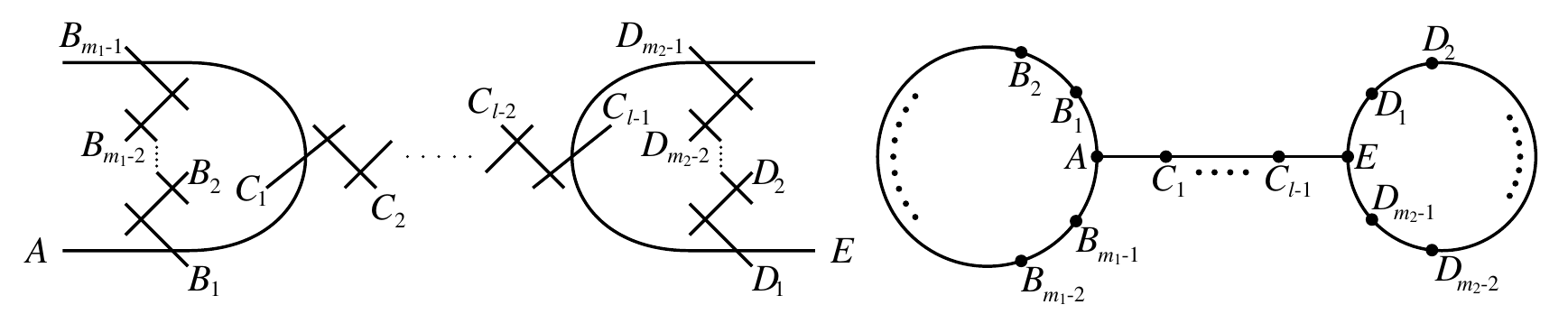}
    \caption{The special fiber of reduction type $[I_{m_1}-I_{m_2}-l]$
             and its reduction graph}\label{picss}
  \end{center}
\end{figure}

The case of semistable reduction, corresponding to reduction type $[I_{m_1}-I_{m_2}-l]$,
see Figure~\ref{picss}, deserves special attention. Here $l \ge 1$, by the discussion above.
Note that $m_1 = 0$ (or $m_2 = 0$) is possible; in that case $A$ (or~$E$) is a
curve of genus~$1$ and there are no components~$B_i$ (or~$D_i)$. If $m_1 = 1$ (or $m_2 = 1$),
then $A$ (or~$E$) is a nodal curve (and again there are no $B_i$ or~$D_i$).
After perhaps
an unramified quadratic extension, we can assume that all components in the `chain'
that connects the two polygons in the special fiber of~$\calC^{\min}$ are defined over~$\frk$.
There are then $l+1$ different (meaning pairwise non-isomorphic over~$\O$) minimal Weierstrass
models of the curve, compare the proof of Lemma~\ref{L:allminstab}.
Explicitly, these models can be taken to have the form
\begin{align} \nonumber
  \calC_j \colon Y^2 + (h_0 \pi^{3j} Z^3
       &+ h_1 \pi^j Z^2 X + h_2 \pi^{l-j} Z X^2 + h_3 \pi^{3(l-j)} X^3) Y \\
    &= f_0 \pi^{6j} Z^6 + f_1 \pi^{4j} X Z^5 + f_2 \pi^{2j} X^2 Z^4 + X^3 Z^3  \label{E:EEred} \\
    &\qquad{} + f_4 \pi^{2(l-j)} X^4 Z^2 + f_5 \pi^{4(l-j)} X^5 Z + f_6 \pi^{6(l-j)} X^6 \nonumber
\end{align}
for $j = 0,1,\ldots,l$, where
\begin{align*}
 y^2 + h_1 x y + h_0 y &= x^3 + f_2 x^2 + f_1 x + f_0  \qquad \text{and} \\
 y^2 + h_2 x y + h_3 y &= x^3 + f_4 x^2 + f_5 x + f_6
\end{align*}
are minimal Weierstrass equations of elliptic curves of reduction types $I_{m_1}$
and $I_{m_2}$, respectively.
Such a model corresponds to the vertex~$C_j$
of the reduction graph (where we set $C_0 = A$ and $C_l = E$); the corresponding
component of the special fiber of~$\calC^{\min}$ is the one that is visible in the
special fiber of~$\calC_j$. The valuation of the discriminant of~$\calC_j$ is
$m_1 + m_2 + 12 l$ and does not depend on~$j$.

A \emph{simple path} in~$R(C)$ is a subgraph that is a tree without vertices
of valency~$\ge 3$.
Let $P_1, P_2 \in C(k)$ reduce to components $\Gamma_1$ and~$\Gamma_2$ of the special
fiber of~$\calC^{\min}$, respectively. Consider the model~$\calC_j$ of~$C$. If
there is a simple path from $\Gamma_1$ to~$\Gamma_2$ in the reduction graph that
passes through~$C_j$, then we say that $\calC_j$ \emph{lies between} $P_1$ and~$P_2$.
We denote the $\mu$-function computed with respect to~$\calC_j$ by $\mu_j$.

\begin{prop} \label{P:EEred}
  Assume that $C$ has semistable reduction of type $[I_{m_1}-I_{m_2}-l]$.
  Let $P_1, P_2 \in C(k)$ be points reducing to components $\Gamma_1$ and~$\Gamma_2$ of the special
  fiber of~$\calC^{\min}$ and let $j \in \{0,1,\ldots,l\}$. Define $j_{\min}$
  and~$j_{\max}$ to be the smallest, respectively largest, $j' \in \{0,1,\ldots,l\}$
  such that $\calC_{j'}$ lies between $P_1$ and~$P_2$.
  Let $P = [(P_1)-(P_2)] \in J(k)$. Then
  \[ r(\Gamma_1, \Gamma_2) + j_{\max} - j_{\min}
       \le \mu_j(P)
       \le r(\Gamma_1, \Gamma_2) + |j - j_{\max}| + |j - j_{\min}| \,.
  \]
  If $\calC_j$ lies between $P_1$ and~$P_2$, then the inequalities are equalities.
\end{prop}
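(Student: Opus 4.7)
The plan is to apply Proposition~\ref{P:muresist} to the Weierstrass model $\calC_j$ and evaluate the two correction terms $\mu_{1,j}, \mu_{2,j}$ by transporting to other minimal models in the family $\{\calC_0,\dots,\calC_l\}$ where those terms vanish. After verifying the hypotheses of Proposition~\ref{P:muresist} for $\calC_j$---which requires care since $\calC_j$ has non-rational singularities when $l\ge 1$, so Theorem~\ref{T:epsfac} does not apply and Example~\ref{CountEx} warns of possible subtle behavior---the proposition yields
\[
\mu_j(P) \;=\; r(\Gamma_1,\Gamma_2) + \tfrac{1}{2}\bigl(\mu_{1,j}+\mu_{2,j}\bigr).
\]

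For each $i\in\{1,2\}$, let $a_i\in\{0,1,\dots,l\}$ be the chain index of $\Gamma_i$ when $\Gamma_i=C_{a_i}$ lies on the chain, and $a_i=0$ (respectively $a_i=l$) if $\Gamma_i$ lies in the $I_{m_1}$ (respectively $I_{m_2}$) polygon. A direct case analysis of the simple paths in the reduction graph---using that a path joining both polygons must traverse the entire chain, while a path within a single polygon at most touches its single attachment point to the chain---shows $\{a_1,a_2\}=\{j_{\min},j_{\max}\}$.

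The heart of the argument is to show $\mu_{i,j}=2|j-a_i|$. For this, I invoke Corollary~\ref{C:lambdatau} with the transformation $\tau=\tau_{a_i,j}\colon \calC_{a_i}\to\calC_j$ read off from~\eqref{E:EEred}, namely $(X,Y,Z)\mapsto (\pi^{a_i-j}X,\,Y,\,\pi^{j-a_i}Z)$; in the notation of~\eqref{tau} this is $A=\operatorname{diag}(\pi^{2(a_i-j)},1)$, $e=\pi^{3(a_i-j)}$, $U=0$, so that $v(\tau_{a_i,j})=0$. When $\Gamma_i$ is a chain component, $Q_i,Q_i'$ reduce to distinct smooth points of $\calC_{a_i}$ (the visible smooth component of its special fiber), hence $\mu_{a_i}([(Q_i)-(Q_i')])=0$ by Theorem~\ref{T:mu_U}; for $\Gamma_i$ in a polygon, the same vanishing is obtained from the nodal-reduction analysis of Section~\ref{formulas} applied to the polygon substructure. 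Corollary~\ref{C:lambdatau} then identifies $\mu_{i,j}$ with the valuation shift $v(x)-v(\tau_{a_i,j}(x))$ for normalized Kummer coordinates $x$ of $[(Q_i)-(Q_i')]$ on $\calC_{a_i}$. The explicit linear action of $\tau_{a_i,j}$ on Kummer coordinates from Section~\ref{S:lambda_Kummer}---sending $x_1\mapsto \pi^{2(j-a_i)}x_1$, $x_2\mapsto x_2$, $x_3\mapsto \pi^{2(a_i-j)}x_3$, $x_4\mapsto x_4+\pi^{6(j-a_i)}L$---combined with the fact that $v(x_1)=v(x_3)=0$ for such $P$ (the affine $x$-coordinates of $Q_i,Q_i'$ are units, since the smooth locus of $\calC_{a_i}$ lies in the affine chart) yields $v(\tau_{a_i,j}(x))=-2|j-a_i|$ and hence $\mu_{i,j}=2|j-a_i|$.

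Substituting into the formula from Proposition~\ref{P:muresist} gives
\[
\mu_j(P) \;=\; r(\Gamma_1,\Gamma_2) + |j-j_{\min}| + |j-j_{\max}|,
\]
which is the claimed upper bound; for $j\in[j_{\min},j_{\max}]$ the right-hand side simplifies to $r(\Gamma_1,\Gamma_2)+j_{\max}-j_{\min}$, giving equality in both inequalities. The lower bound for general $j$ follows from the triangle inequality $|j-j_{\min}|+|j-j_{\max}|\ge j_{\max}-j_{\min}$. The hardest step will be the valuation tracking under the Kummer coordinate transformation $\tau_{a_i,j}$, together with the verification that the hypotheses of Proposition~\ref{P:muresist} hold for the non-rational-singularity model $\calC_j$; the latter may require extending the arguments in the proof of Proposition~\ref{P:muresist} directly rather than invoking it as a black box.
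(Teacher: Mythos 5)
Your argument is fine in spirit for the case $j_{\min} \le j \le j_{\max}$ (it is essentially the paper's argument there: verify the hypotheses of Proposition~\ref{P:muresist} for $\calC_j$ and feed in $\mu_{1}=2|j-j_{\min}|$, $\mu_{2}=2|j-j_{\max}|$, these values being obtained by transporting Kummer coordinates between the models via the explicit transformation and tracking valuations). But the derivation of the general bounds is flawed: you apply Proposition~\ref{P:muresist} for \emph{every} $j$ and conclude the exact formula $\mu_j(P)=r(\Gamma_1,\Gamma_2)+|j-j_{\min}|+|j-j_{\max}|$ for all $j$. When $\calC_j$ does not lie between $P_1$ and~$P_2$, hypothesis~(i) of Proposition~\ref{P:muresist} fails: $\mu_j\bigl([(Q_1)-(Q_2)]\bigr)$ is then \emph{not} a function of the components $\Gamma_1,\Gamma_2$ alone. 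Concretely, take $\Gamma_1=\Gamma_2$ a component of the left polygon (so $j_{\min}=j_{\max}=0$) and $j=l$: if $P_1,P_2$ reduce to distinct points of that component, then indeed $\mu_l(P)=2l$, but if $P_1$ and $P_2$ are chosen deeply congruent (so that $P$ lies in the kernel of reduction for $\calC_l$), then $\mu_l(P)=0$, whereas your formula predicts $2l$. This is exactly why the statement only asserts inequalities outside the interval $[j_{\min},j_{\max}]$, and why the equality claim (and hence your ``triangle inequality'' deduction of the lower bound) cannot be rescued by merely re-verifying hypotheses; the paper handles $j>j_{\max}$ (and symmetrically $j<j_{\min}$) by a separate argument, transporting \emph{normalized} Kummer coordinates from $\calC_{j_{\max}}$ to $\calC_j$ by powers of the chain transformation and showing that the resulting change $v(x^{(j_{\max})})-v(x^{(j)})$ lies in $\{0,1,\ldots,2(j-j_{\max})\}$, which combined with the already-proved equality at $j_{\max}$ gives precisely the two claimed bounds.

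Two smaller points. First, your justification of the vanishing $\mu_{a_i}\bigl([(Q_i)-(Q_i')]\bigr)=0$ is too quick: Theorem~\ref{T:mu_U} does not by itself give $\eps=0$ for differences of points with distinct smooth reductions on a singular special fiber; one needs either the Picard-functor argument (Proposition~\ref{P:mu_vanishes}, available for the interior chain models, which do have the relevant classes in the image of~$\alpha$) or, at the boundary models $\calC_0,\calC_l$, an explicit check of the vanishing conditions for $\delta$ on the degenerate Kummer surface (the paper's analysis via the condition tables, including a separate treatment of components $B_i$, $D_i$ of the polygons, where your ``the $x$-coordinates are units'' claim is not automatic since such points reduce to a node of the Weierstrass model). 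Second, establishing that $\mu_j(P)$ depends only on $(\Gamma_1,\Gamma_2)$ even for $j$ \emph{inside} the interval requires an argument (the paper does this by showing $\mu_j$ is constant as $j$ moves through the in-between models and using Lemma~\ref{L:mu=0}); you correctly flag this as the hard step, but it needs to be carried out rather than assumed.
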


\begin{proof}
  First note that the last statement follows from the first, since
  $j_{\min} \le j \le j_{\max}$ implies $j_{\max} - j_{\min} = |j - j_{\max}| + |j - j_{\min}|$.

  Let $B_0 = B_{m_1} = A$ and $D_0 = D_{m_2} = E$.  We prove a number of lemmas.

  \begin{lemma} \label{L:boundary}
    If $j = j_{\max} = j_{\min} \in \{0, l\}$, then $\mu_j(P) = r(\Gamma_1, \Gamma_2)$.
  \end{lemma}

  \begin{proof}
    We assume that $j = j_{\max} = j_{\min} = l$; the other case is analogous.
    Then $\Gamma_1$ and~$\Gamma_2$ are both of the form~$D_i$, and we consider the model~$\calC_l$.
    We first claim that $\mu(P) = 0$ if $\Gamma_1 = \Gamma_2$, but the images
    of $P_1$ and~$P_2$ on~$\Gamma_1$ are distinct. This is clear if $\Gamma_1 = D_0 = E$,
    since in this case $P$ is in the image of~$\alpha$, compare
    Lemmas~\ref{L:mu_vanishes1} and~\ref{L:mu_vanishes2}. Otherwise, we note that
    the points with nonzero multiplicity on the special fiber of~$\calC_l$
    have multiplicities $1$, $2$ and~$3$.
    Transforming the equation over~$\O$ if necessary, we can assume that its reduction
    is case~7 in Table~1 of~\cite{StollH2} or (if the residue characteristic is~$2$)
    case~5 in Table~\ref{condmult} here.

    Recall that $\Gamma_1 = \Gamma_2 = D_i$, where we can assume $0 < i \le m_2/2$.
    Applying a transformation, we may assume that the points $P_1=(\xi_1:\eta_1:1)$ and
    $P_2=(\xi_2:\eta_2:1)$ both reduce to
    $(0:0:1)$ modulo $\pi$ and that $m_2 = \min\{v(f_0), 2v(f_1)\}$.
    First suppose that $i<m_2/2$.
    We then have $v(\xi_1) = v(\xi_2) = v(\xi_1-\xi_2) = i$.
    Normalizing the Kummer coordinates $x$ of $P$ so that $x_1=1$, we can check that $v(x_2)$ and
    $v(x_3)$ are positive, but that $v(x_4) = 0$.
    This follows because $\Gamma_1 = D_i = \Gamma_2$ implies that
    $v(f_2\xi_1\xi_2+2\eta_1\eta_2) = 2i$ if $\Char(\frk) \ne 2$ and $H=0$ and that
    $v(\xi_1\eta_2+\xi_2\eta_1)= 2i$ if $\Char(\frk) =2$.
    By a similar argument, the reduction of the
    image of~$P$ on the Kummer surface has non-vanishing last coordinate if $m_2$
    is even and $i=m_2/2$.
    According to the tables, this implies that $\eps(P) = 0$ and therefore
    also $\mu(P) = 0$.

    Now consider the case that $\Gamma_1$ and~$\Gamma_2$ do not necessarily coincide.
    The considerations above imply that the assumptions
    of Proposition~\ref{P:muresist} are satisfied with $\mu_1 = \mu_2 = 0$ (where we
    use Lemma~\ref{L:mu=0} for the first assumption); the
    proposition then establishes the claim.
  \end{proof}

  \begin{lemma} \label{L:mu=0_same_comp}
    Assume that $\Gamma_1 = \Gamma_2 = C_j$ with $0 < j < l$. Then $\mu_j(P) = 0$.
  \end{lemma}

  \begin{proof}
    In this case, $P$ is in the image of~$\alpha$, so the claim follows
    by Proposition~\ref{P:mu_vanishes}.
  \end{proof}

  Note that Lemmas~\ref{L:boundary} and~\ref{L:mu=0_same_comp} establish the claim
  of Proposition~\ref{P:EEred} in all cases such that $j = j_{\min} = j_{\max}$.

  \begin{lemma} \label{L:mu_constant}
    Assume that both $\calC_j$ and $\calC_{j+1}$ lie between $P_1$ and~$P_2$, where
    $0 \le j < l$. Then $\mu_j(P) = \mu_{j+1}(P)$.
  \end{lemma}

  \begin{proof}
    Let $\tau \colon (\xi : \eta : \zeta) \mapsto (\pi \xi : \eta : \pi^{-1} \zeta)$; then $\tau$ gives
    an isomorphism from the generic fiber of~$\calC_j$ to that of~$\calC_{j+1}$.
    The induced map on Kummer coordinates is
    \[ (x_1, x_2, x_3, x_4) \longmapsto (\pi^{-2} x_1, x_2, \pi^2 x_3, x_4)\,; \]
    we have $v(\tau) = 0$. Since both $\calC_j$ and~$\calC_{j+1}$ lie between
    $P_1$ and~$P_2$, assuming that $\Gamma_1$ is to the left and $\Gamma_2$ to the right
    of $C_j$ and~$C_{j+1}$, we must have that the $x$-coordinate of~$P_1$ on~$\calC_j$
    does not reduce to infinity, whereas that of~$P_2$ does. For normalized Kummer
  coordinates $x = (x_1, x_2, x_3, x_4)$ of~$P$ on the Kummer surface associated
    to~$\calC_j$, this implies $v(x_2) = 0$ (the point is not in the kernel of
    reduction, so $v(x_4) \ge \min\{v(x_1), v(x_2), v(x_3)\}$) and $v(x_1) > 0$.
    Comparing valuations in the equation of~$\calC_j$, we see that $P_2 = (1 : \eta : \zeta)$
    must have $v(\zeta) \ge 2$, which implies $v(x_1) \ge 2$. It follows that
    $v(\tau(x)) = 0 = v(x)$. By Corollary~\ref{C:lambdatau} we also have
    $\hat{\lambda}(\tau(x)) = \hat{\lambda}(x)$ (recall that $v(\tau) = 0$).
    Since
    \[ -v(x) - \mu_j(P) = \hat{\lambda}(x) = \hat{\lambda}(\tau(x))
                        = -v(\tau(x)) - \mu_{j+1}(P)\,,
    \]
    the claim follows.
  \end{proof}

  \begin{lemma} \label{L:mu_factors}
    If $\calC_j$ lies between $P_1$ and~$P_2$, then $\mu_j(P)$ depends
    only on $\Gamma_1$ and~$\Gamma_2$.
  \end{lemma}

  \begin{proof}
    Let $P_1', P_2' \in C(k)$ be points also mapping to $\Gamma_1$ and~$\Gamma_2$, respectively.
    We assume without loss of generality
    that $\Gamma_1$ is to the left of~$\Gamma_2$. By Lemmas~\ref{L:boundary}
    or~\ref{L:mu=0_same_comp}, we have that $\mu_{j_{\min}}([(P_1)-(P_1')]) = 0$
    and $\mu_{j_{\max}}([(P_2)-(P_2')]) = 0$. Using Lemmas~\ref{L:mu_constant} and~\ref{L:mu=0}, we obtain
    \begin{align*}
      \mu_j\bigl([(P_1')-(P_2')]\bigr)
         &= \mu_{j_{\min}}\bigl([(P_1')-(P_2')]\bigr) = \mu_{j_{\min}}\bigl([(P_1)-(P_2')]\bigr) \\
         &= \mu_{j_{\max}}\bigl([(P_1)-(P_2')]\bigr) = \mu_{j_{\max}}\bigl([(P_1)-(P_2)]\bigr)
          = \mu_j(P) \,.
       \qedhere
    \end{align*}
  \end{proof}

  \begin{lemma} \label{L:mu_changes}
    Let $P_1',\, P_2' \in C(k)$ be points mapping to distinct points on the same component
    of the special fiber of~$\calC^{\min}$ and let $P' = [(P'_1) - (P'_2)] \in J(k)$.
    Let $j_0$ be the unique index such that
    $\calC_{j_0}$ lies between $P_1'$ and~$P_2'$. Then $\mu_j(P') = 2 |j-j_0|$.
  \end{lemma}

  \begin{proof}
    By Lemmas~\ref{L:boundary}
    and~\ref{L:mu=0_same_comp}, we have $\mu_{j_0}(P') = 0$. Since
    the images of $P_1'$ and~$P_2'$ on the special fiber of~$\calC^{\min}$ are distinct,
    $P'$ is not in the kernel of reduction with respect to~$\calC_{j_0}$. If
    \[ x^{(j_0)} = (x_1^{(j_0)}, x_2^{(j_0)}, x_3^{(j_0)}, x_4^{(j_0)}) \]
    are normalized Kummer coordinates for $P'$ on the Kummer surface associated
    to~$\calC_{j_0}$, we therefore have
    \[ 0 = v(x^{(j_0)}) = \min\{v(x_1^{(j_0)}), v(x_2^{(j_0)}), v(x_3^{(j_0)})\}\,. \]
    Applying a suitable power of~$\tau$ (see the proof of Lemma~\ref{L:mu_constant}), we find that
    \[ x^{(j)} = (\pi^{2(j_0-j)} x_1^{(j_0)}, x_2^{(j_0)}, \pi^{2(j-j_0)} x_3^{(j_0)}, x_4^{(j_0)}) \]
    are (not necessarily normalized) Kummer coordinates for~$P'$ on the Kummer
    surface associated to~$\calC_j$. For definiteness, assume that $j > j_0$, the case
    $j = j_0$ being clear. Similarly to the proof of Lemma~\ref{L:mu_constant},
    we find that $0 = v(x^{(j_0)}) = v(x_1^{(j_0)})$, which implies that
    $v(x^{(j)}) = -2(j-j_0)$. In the same way as in the proof of Lemma~\ref{L:mu_constant},
    we deduce $\mu_j(P') = 2(j-j_0) = 2 |j-j_0|$.
  \end{proof}

  To continue the proof of the proposition, we now first consider the case that $\calC_j$
  lies between $P_1$ and~$P_2$. In this case, Lemmas \ref{L:mu_factors} and~\ref{L:mu_changes}
  show that the assumptions in Proposition~\ref{P:muresist} hold with
  $\mu_1 = 2|j - j_{\min}|$ and $\mu_2 = 2|j - j_{\max}|$ or conversely.
  So the statement follows from Proposition~\ref{P:muresist} and
  $|j - j_{\max}| + |j - j_{\min}| = j_{\max} - j_{\min}$.

  Now assume that $\calC_j$ does not lie between $P_1$ and~$P_2$.
  We assume for definiteness that $j > j_{\max}$.
  For normalized Kummer coordinates $x^{(j_{\max})}$ for~$P = [(P_1)-(P_2)]$
  on the Kummer surface associated to~$\calC_{j_{\max}}$,
  we have $v(x^{(j_{\max})}_2) \le \min \{v(x^{(j_{\max})}_1), v(x^{(j_{\max})}_3)\}$,
  compare the proof of Lemma~\ref{L:mu_constant} above.
  Then $x^{(j)} = \tau^{j-j_{\max}}(x^{(j_{\max})})$ are Kummer
  coordinates for~$[(P_1)-(P_2)]$ on the Kummer surface associated to~$\calC_j$, and we have
  \[ v(x^{(j_{\max})}) - 2 (j - j_{\max}) \le v(x^{(j)}) \le v(x^{(j_{\max})})\, . \]
  It follows that
  \begin{align*}
    \mu_j(P) &- \mu_{j_{\max}}(P) \\
       &= \bigl(-\hat{\lambda}(x^{(j)}) - v(x^{(j)})\bigr)
           - \bigl(-\hat{\lambda}(x^{(j_{\max})}) - v(x^{(j_{\max})})\bigr) \\
       &= v(x^{(j_{\max})}) - v(x^{(j)}) \in \{0, 1, \ldots, 2 (j - j_{\max})\} .
  \end{align*}
  As $\mu_{j_{\max}}(P) = r(\Gamma_1, \Gamma_2) + j_{\max} - j_{\min}$
  by the case already discussed, the result follows, and the proof
  of Proposition~\ref{P:EEred} is finished.
\end{proof}

\begin{cor} \label{C:EEbound}
  Let $\calC$ be a stably minimal Weierstrass model of~$C$ with discriminant~$\Delta$;
  assume that $C$ has reduction type $[I_{m_1} - I_{m_2} - l]$ with $l > 0$.
  As usual, let
  \[ \beta(\calC) = \max \{\mu(P) : P \in J(k)\} \qquad\text{and}\qquad
     \bar{\beta}(\calC) = \max \{\mu(P) : P \in J(\bar{k})\}\,,
  \]
  where $\mu$ is computed with respect to~$\calC$.
  Then we have
  \[ \beta(\calC) \le \bar{\beta}(\calC) = \frac{m_1 + m_2}{4} + 2l < \frac{v(\Delta)}{4}
     \qquad\text{and}\qquad \bar{\beta} \ge \frac{v(\Delta)}{6} \,.
  \]
\end{cor}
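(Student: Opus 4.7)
The plan is to apply Proposition~\ref{P:EEred} to differences $P=[(P_1)-(P_2)]$ and maximize the resulting upper bound over the metric reduction graph, after first reducing the computation of $\bar\beta(\calC)$ to such differences. The inequality $\beta(\calC) \le \bar\beta(\calC)$ is immediate, since the extension $\bar v$ of $v$ to $\bar k^\times$ restricts to $v$ on $k^\times$, so the two functions $\mu$ agree on $J(k)$. To convert an arbitrary $P \in J(\bar k)$ into a difference of two points, I would use that $C$ is a genus~$2$ hyperelliptic curve and hence acquires a Weierstrass point $W$ over $\bar k$; applying $(Q)+(\iota Q) \sim 2(W)$ with $Q = P_2$ converts the Abel--Jacobi representation $P = [(P_1)+(P_2)-2(W)]$ into $P = [(P_1)-(\iota P_2)]$.

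Next I would fix a finite extension $k'/k$ over which $P_1, P_2$ are defined. Since $\calC$ is stably minimal, it remains a minimal Weierstrass model over $k'$ and (up to a matching of indices coming from the base-change scaling in~\eqref{E:EEred}) is one of the models $\calC_{j'}$ in the chain of minimal models for $C_{k'}$; the metric reduction graph $R(C)$ is unchanged by the base change, and the formula of Proposition~\ref{P:EEred} extends naturally in the metric sense. Writing $\Gamma_1, \Gamma_2$ for the components of the geometric special fiber of $\calC^{\min}$ to which $P_1, P_2$ reduce, and letting $j$ be the index with $\calC = \calC_j$, the proposition yields
\[
    \mu_j(P) \le r(\Gamma_1,\Gamma_2) + |j - j_{\max}| + |j - j_{\min}|,
\]
where $R(C)$ consists of two cycles of circumferences $m_1$ and $m_2$ attached at $A=C_0$ and $E=C_l$ to the chain $C_0, C_1, \ldots, C_l$ of length~$l$.

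The combinatorial core of the proof is to verify that the right-hand side is bounded above by $(m_1+m_2)/4 + 2l$ for every configuration of $(\Gamma_1,\Gamma_2)$, by a case analysis on the positions of $\Gamma_1$ and $\Gamma_2$ in $R(C)$. If both lie in a single cycle $I_{m_\alpha}$, then $j_{\min}=j_{\max}$ equals $0$ or $l$ and $r \le m_\alpha/4$, so the bound follows from $2j \le 2l$ (or $2(l-j)\le 2l$). If $\Gamma_1 \in I_{m_1}\setminus\{A\}$ and $\Gamma_2 \in I_{m_2}\setminus\{E\}$, then every $\calC_{j'}$ lies between $P_1$ and $P_2$, so $j_{\min}=0$, $j_{\max}=l$, $|j-j_{\max}|+|j-j_{\min}|=l$, and $r \le m_1/4 + l + m_2/4$, which yields exactly $(m_1+m_2)/4 + 2l$. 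The remaining cases (one or both $\Gamma_i$ on the chain) are smaller because the chain contribution to $r$ is balanced by a corresponding decrease in $j_{\max}-j_{\min}$. To see that the bound is attained, I would choose $P_1, P_2$ (over a suitable ramified extension of $k$) reducing to the antipodes of $A$ and $E$ in the two cycles; then every $\calC_{j'}$ lies between them and the equality clause of Proposition~\ref{P:EEred} gives $\mu_j(P) = (m_1+m_2)/4 + 2l$ exactly.

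Finally, the comparisons with $v(\Delta)/4$ and $v(\Delta)/6$ reduce to the formula $v(\Delta) = m_1+m_2+12l$ recalled before~\eqref{E:EEred}: we have $v(\Delta)/4 - \bar\beta(\calC) = l > 0$ by the assumption $l > 0$, and $\bar\beta(\calC) - v(\Delta)/6 = (m_1+m_2)/12 \ge 0$. The main obstacle in this plan is the case analysis for the upper bound, particularly when $\Gamma_1$ or $\Gamma_2$ lies in the interior of the chain: there one must track simultaneously the exact values of $r(\Gamma_1,\Gamma_2)$ and $j_{\min}, j_{\max}$ to confirm that the chain contribution to the resistance is always traded off against a correspondingly smaller $|j-j_{\max}| + |j-j_{\min}|$.
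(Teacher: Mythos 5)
Your proposal is correct and follows essentially the same route as the paper: apply Proposition~\ref{P:EEred} to differences $[(P_1)-(P_2)]$ with $P_1,P_2\in C(\bar k)$, observe that the maximum is attained for $\Gamma_1=B_{m_1/2}$, $\Gamma_2=D_{m_2/2}$ (where every $\calC_{j'}$ lies between the points, giving $\mu_j(P)=r(B_{m_1/2},D_{m_2/2})+l=\tfrac{m_1+m_2}{4}+2l$), and conclude with $v(\Delta)=m_1+m_2+12l$ and $l>0$. Your write-up is merely more explicit than the paper's on two points it leaves implicit — reducing an arbitrary point of $J(\bar k)$ to such a difference via a Weierstrass point, and the case analysis showing all other configurations give smaller values — and your checks there are sound.
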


\begin{proof}
  The assumption on the reduction type implies that the model is equivalent
  to one of the form~\eqref{E:EEred}. Proposition~\ref{P:EEred} then gives upper
  bounds for $\mu([(P_1)-(P_2)])$, with $P_1, P_2 \in C(\bar{k})$,
  depending on the images $\Gamma_1$ and~$\Gamma_2$ of $P_1$ and~$P_2$ in the reduction graph.
  The maximizing case occurs for $\Gamma_1 = B_{m_1/2}$
  and $\Gamma_2 = D_{m_2/2}$, giving
  \[ \mu([(P_1)-(P_2)]) = r(B_{m_1/2}, D_{m_2/2}) + l
                    = \tfrac{1}{4} m_1 + l + \tfrac{1}{4} m_2 + l\, .
  \]
  For the remaining inequalities, recall that $v(\Delta) = m_1 + m_2 + 12 l$ and that $l > 0$.
\end{proof}

We state a technical lemma, which will be needed for the proof of Theorem~\ref{T:33bound} below.

\begin{lemma} \label{L:CE}
  Suppose that the residue characteristic of~$k$ is not~2.
  Consider a degenerate Weierstrass equation of the form
  \[ \calC \colon Y^2  = f_0 Z^6 + f_1 X Z^5 + f_2 X^2 Z^4 + X^3 Z^3 \]
  and let
  \[ \calE \colon y^2  = f_0 + f_1 x + f_2 x^2 + x^3 \]
  be an elliptic Weierstrass equation. If $Q_1 = (x_1, y_1)$ and $Q_2 = (x_2, y_2)$
  are points in~$\calE(k)$, then $P_1 = (x_1 : y_1 : 1)$ and $P_2 = (x_2 : y_2 : 1)$
  are points in~$\calC(k)$, and if $x_1, x_2 \in \O$, then
  $\mu_{\calC}([(P_1) - (P_2)]) \le \mu_{\calE}(Q_1 - Q_2)$.
\end{lemma}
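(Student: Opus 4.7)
The plan is to exploit the coincidence that the affine chart $Z=1$ of $\calC$ equals the affine chart of $\calE$, so $P_i = (x_i:y_i:1)\in\calC(k)$ corresponds to $Q_i=(x_i,y_i)\in\calE(k)$, together with the fact that $\calC$ has a cuspidal singularity at infinity whose normalization is~$\calE$. This yields a natural projection $\pi\colon J(\calC) \twoheadrightarrow \calE$ from the generalized Jacobian that commutes with doubling and with the $[-1]$\nobreakdash-involution, and hence descends to a rational map $\pi_{*}\colon \KS_\calC \dashrightarrow \KS_\calE$ intertwining $\delta_\calC$ and $\delta_\calE$.

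First I would compute the Kummer coordinates of $P = [(P_1)-(P_2)]$ explicitly.  Substituting $f_3 = 1$, $f_4 = f_5 = f_6 = 0$, and $H=0$ into the Cassels--Flynn/M\"uller formulas for the Kummer coordinates of the Mumford pair $\{P_1,\iota P_2\}$ gives
\[
  \kappa_\calC(P) = \Bigl(1\,:\,x_1+x_2\,:\,x_1x_2\,:\,\tfrac{F_0(x_1,x_2)+2y_1y_2}{(x_1-x_2)^2}\Bigr),
\]
with $F_0(x_1,x_2) = 2f_0 + f_1(x_1+x_2) + 2f_2 x_1x_2 + x_1x_2(x_1+x_2)$.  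An elementary manipulation using $y_i^2 = x_i^3 + f_2 x_i^2 + f_1 x_i + f_0$ (which holds because $Q_i\in\calE(k)$) rewrites the fourth coordinate as $\lambda^2 - f_2 - x_1 - x_2$ with $\lambda = (y_1+y_2)/(x_1-x_2)$, which by the elliptic addition formula on~$\calE$ equals $x(Q_1-Q_2)$.  Since $x_1,x_2\in\O$, the first three components are integral, so
\[
  v(\kappa_\calC(P)) = \min\{0,\,v(x(Q_1-Q_2))\} = v(\kappa_\calE(Q_1-Q_2))
\]
for the standard representative $\kappa_\calE(S) = (1,x(S))$ on the elliptic Kummer line, and consequently the local naive heights agree at~$v$.

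The heart of the proof is showing that this compatibility persists under iterated doubling.  Because $\pi_{*}$ commutes with doubling and is given on the image of pairs of smooth points by $(\kappa_1:\kappa_2:\kappa_3:\kappa_4) \mapsto (\kappa_1:\kappa_4)$ (by the computation above), a direct verification with the universal duplication polynomials specialized to~$\calC$ shows that for any affine representative $(\kappa_1^{(n)},\ldots,\kappa_4^{(n)}) = \delta_\calC^{\circ n}(\kappa_\calC(P))$, the pair $(\kappa_1^{(n)},\kappa_4^{(n)})$ coincides with $\delta_\calE^{\circ n}(\kappa_\calE(Q_1-Q_2))\in k^2$.  Since the minimum of four valuations bounds the minimum of two,
\[
  v\bigl(\delta_\calC^{\circ n}(\kappa_\calC(P))\bigr) \leq \min\bigl(v(\kappa_1^{(n)}),\,v(\kappa_4^{(n)})\bigr) = v\bigl(\delta_\calE^{\circ n}(\kappa_\calE(Q_1-Q_2))\bigr).
\]
Dividing by~$4^n$, letting $n\to\infty$, and applying Proposition~\ref{lhg2props}(iii) gives $\hat\lambda_\calC(\kappa_\calC(P)) \geq \hat\lambda_\calE(\kappa_\calE(Q_1-Q_2))$.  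Combined with the equality of local naive heights and the identity $\mu = -v(\cdot) - \hat\lambda(\cdot)$, this yields $\mu_\calC(P) \leq \mu_\calE(Q_1-Q_2)$.

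The main obstacle is establishing the exact equality $(\kappa_1^{(n)},\kappa_4^{(n)}) = \delta_\calE^{\circ n}(\kappa_\calE(Q_1-Q_2))$ needed in the last step: it reduces to a polynomial identity between the specialized genus-$2$ duplication polynomials $\delta_\calC^{(1)},\,\delta_\calC^{(4)}$ and the elliptic duplication polynomials on~$\calE$, which can be verified by direct computation with the formulas of~\cite{CasselsFlynn}*{Chapter~3} and~\cite{MuellerKummer}.  Any constant scaling factor that might appear between the two sides is absorbed in the limit, since its contribution to $\lim 4^{-n} v(\cdot)$ vanishes.
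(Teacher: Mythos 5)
Your proposal follows essentially the same route as the paper's proof: the explicit computation identifying $(\xi_4 : \xi_1)$ with $x(Q_1-Q_2)$, the directly verified compatibility of the first and fourth genus-2 duplication polynomials with the elliptic duplication pair, the use of integrality of $x_1, x_2$ to equate $\min\{v(\xi_1),\ldots,v(\xi_4)\}$ with $\min\{v(\xi_1),v(\xi_4)\}$, and passage to the limit via $\mu = -v - \hat{\lambda}$. One caveat: your closing remark that a constant scaling factor in the duplication identity would be \emph{absorbed in the limit} is not correct in general, since a per-step factor $c$ accumulates to $c^{(4^n-1)/3}$ and contributes $v(c)/3$ after normalizing by $4^{-n}$; this is harmless here only because the identity holds exactly (and, in residue characteristic $\neq 2$, any power of $2$ is a unit), so the fallback is unnecessary rather than load-bearing.
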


Here $\mu_{\calE}$ is the height correction function for the elliptic curve~$\calE$
and $\mu_{\calC}$ denotes the height correction function defined in the same way as~$\mu$
in the smooth case in terms of the equation~$\calC$.

\begin{proof}
  Let $\underline{\delta}_{\calC}
      = (\delta_{\calC,1}, \delta_{\calC,2}, \delta_{\calC,3}, \delta_{\calC,4})$
  be the duplication polynomials on the Kummer surface associated to~$\calC$,
  and let $\underline{\delta}_{\calE} = (\delta_{\calE,1}, \delta_{\calE_2})$
  be the duplication polynomials
  for the numerator and denominator of the $x$-coordinate associated to~$\calE$.
  Then a generic computation shows that, if $(\xi_1 : \xi_2 : \xi_3 : \xi_4)$
  is the image of $[(P_1)-(P_2)]$ on the Kummer surface, we have $(\xi_4 : \xi_1) = x(Q_1-Q_2)$.
  In addition, we find that (as polynomials in the~$\xi_j$)
  $\delta_{\calC,1}(\xi_1,\xi_2,\xi_3,\xi_4) = \delta_{\calE,2}(\xi_4,\xi_1)$
  and $\delta_{\calC,4}(\xi_1,\xi_2,\xi_3,\xi_4) = \delta_{\calE,1}(\xi_4,\xi_1)$.

  That $P_1, P_2 \in \calC(k)$ is obvious from the equations. For the last statement,
  we observe that $\min\{v(\xi_1), v(\xi_2), v(\xi_3), v(\xi_4)\} = \min\{v(\xi_1), v(\xi_4)\}$
  (this is where we use that $x_1$ and~$x_2$ are integral), which implies
  \begin{align*}
    \mu_{\calC}([(P_1)-(P_2)])
      &= \lim_{n \to \infty} 4^{-n} v\bigl(\underline{\delta}_{\calC}^{\circ n}(\underline{\xi})\bigr)
           - v(\underline{\xi}) \\
      &\le \lim_{n \to \infty} 4^{-n} v\bigl(\underline{\delta}_{\calE}^{\circ n}(\xi_4, \xi_1)\bigr)
           - \min\{v(\xi_1), v(\xi_4)\} \\
      &= \mu_{\calE}(Q_1 - Q_2) \,. \qedhere
  \end{align*}
\end{proof}

The following consequence is useful for practical purposes.
For simplicity, we state it for the case of  residue characteristic $\ne 2$,
but we expect that the statement remains true for residue characteristic~$2$.

\begin{thm} \label{T:33bound}
  Suppose that the residue characteristic of~$k$ is not~2.
  Let $\calC$ be a stably minimal Weierstrass model of~$C$ such that $C$ has reduction
  type $[\calK_1 - \calK_2 - l]$. Then
  \[ \beta(\calC) \le \beta(\calK_1) + \beta(\calK_2) + 2l \,, \]
  where $\beta(\calK)$ denotes the maximum of~$\mu$ for an elliptic curve of
  reduction type~$\calK$ (taking the action of Frobenius into account), see Table~1
  in~\cite{CrePriSik}.
\end{thm}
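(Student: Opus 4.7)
My plan is to reduce the assertion to the two elliptic ``halves'' of~$C$ made visible by Lemma~\ref{L:e1_e2}, handling the chain contribution~$2l$ via a family of stably minimal models analogous to the one used in the semistable case.

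\emph{Step 1 (Normalization).} Invoke Lemma~\ref{L:e1_e2} to replace $\calC$ by an isomorphic stably minimal model of the form
\[ Y^2 = f_6 X^6 + f_5 X^5 Z + f_4 X^4 Z^2 + X^3 Z^3 + f_2 X^2 Z^4 + f_1 X Z^5 + f_0 Z^6 \]
whose associated elliptic models $\calE_1$ and $\calE_2$ have Kodaira types $\calK_1$ and $\calK_2$ respectively. The two singularities of~$\calC_v$ (corresponding to $\calK_1$ and $\calK_2$) sit at $(X:Z)=(0:1)$ and $(X:Z)=(1:0)$.

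\emph{Step 2 (Base case $l=0$).} By Lemma~\ref{C4GeoMin}, $\calC$ has rational singularities, so Theorem~\ref{T:epsfac} yields that $\mu$ factors through $\Phi(\frk) \cong \Phi_1(\frk) \times \Phi_2(\frk)$, the decomposition being the one of~\cite{liuminimaux}. For a point $P \in J(k)$ mapping to $(\phi_1,\phi_2)$, I would write $P = [(P_1)-(P_2)]$ using $k$-rational points (possibly after an unramified base change, which does not affect any of the quantities involved) and split into cases according to which half each $P_i$ reduces to. When both reductions sit near the $\calK_1$-singularity at~$(0:1)$, Lemma~\ref{L:CE} bounds $\mu_{\calC}(P)$ directly by $\mu_{\calE_1}$ of the corresponding elliptic difference, which is at most $\beta(\calK_1)$. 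The symmetric application (after interchanging $X \leftrightarrow Z$) handles points near~$\infty$. The mixed case, where $P_1$ reduces near one singularity and $P_2$ near the other, is the one that produces a genuine sum: using the addition law on~$\KS$ together with property~(i) of the $B_{ij}$ (Section~\ref{genhts}), one reduces a mixed point to a sum $P = P' + P''$ where each summand is ``localised'' at one end; then Lemmas~\ref{L:mu eps rel} and~\ref{L:mu=0} plus the previous cases give $\mu_{\calC}(P) \le \beta(\calK_1) + \beta(\calK_2)$.

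\emph{Step 3 (Chain shifting for $l>0$).} When $l>0$, one has a one-parameter family of stably minimal Weierstrass models $\calC_0, \calC_1, \ldots, \calC_l$ corresponding to the $l+1$ vertices in the simple path of~$R(C)$ connecting the two ends, constructed exactly as in~\eqref{E:EEred} but with the outer polygons replaced by the two Kodaira configurations. Adjacent models are related by the scaling transformation $\tau \colon (X:Y:Z) \mapsto (\pi X : Y : \pi^{-1} Z)$ with $v(\tau)=0$ and induced action on Kummer coordinates $(x_1,x_2,x_3,x_4) \mapsto (\pi^{-2}x_1, x_2, \pi^2 x_3, x_4)$, so that Corollary~\ref{C:lambdatau} gives $|\mu_j(P) - \mu_{j+1}(P)| \le 2$ exactly as in Lemmas~\ref{L:mu_constant} and~\ref{L:mu_changes}. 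The two extreme models $\calC_0$ and $\calC_l$ have $l=0$ (the chain has been ``absorbed'' into them) and retain the reduction types $\calK_1$ and $\calK_2$ at their respective singular points, so Step~2 applies and yields $\beta(\calC_0), \beta(\calC_l) \le \beta(\calK_1) + \beta(\calK_2)$. Choosing an arbitrary intermediate model as the given one, the telescoping bound produces the additive $2l$.

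\emph{Main obstacle.} The principal difficulty is Step~2 in the mixed case: decomposing a point whose two constituents reduce to opposite singularities into a sum localised at each end, in a way compatible with~$\mu$. One needs to verify that such a decomposition exists in~$J(k)$ up to a correction in the kernel of reduction (where $\mu$ vanishes), and that the bound~$\mu_{\calC}(P) \le \mu_{\calE_1}(\cdot) + \mu_{\calE_2}(\cdot)$ survives the addition. The explicit duplication identities of Lemma~\ref{L:CE}, transported via the involution $X \leftrightarrow Z$ to cover~$\calE_2$ as well, are the key algebraic input, but their combination on a mixed point requires a careful computation with the biquadratic forms~$B_{ij}$.
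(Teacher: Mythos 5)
Your Step 2 mixed case — which you yourself flag as the main obstacle — is a genuine gap, and the tools you name cannot close it. If you split $P = P' + P''$ with $P'$ localised at the $\calK_1$-end and $P''$ at the $\calK_2$-end, the quasi-parallelogram law of Lemma~\ref{L:mu eps rel} gives
$\mu(P'+P'') = 2\mu(P') + 2\mu(P'') - \mu(P'-P'') - \eps(P',P'')$, and since all you know is $\mu(P'-P'') \ge 0$ and $\eps(P',P'') \ge 0$, this only yields $\mu(P) \le 2\mu(P') + 2\mu(P'')$, i.e.\ a bound $2\beta(\calK_1)+2\beta(\calK_2)$, twice what the theorem asserts; Lemma~\ref{L:mu=0} removes a summand only when its $\mu$-value is exactly zero, which is not the case here. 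The paper never manipulates the group law at all: it passes to a finite extension $k'$ over which the reduction is semistable, invokes Proposition~\ref{P:EEred}, whose formula $\mu_j(P) \le r(\Gamma_1,\Gamma_2) + |j-j_{\max}| + |j-j_{\min}|$ is additive along the reduction graph, and bounds the contribution of each end of the graph by the corresponding elliptic $\mu_{\calE_i}$ using Lemma~\ref{L:CE} in a limiting comparison (``letting $l$ tend to infinity''). Note also that Lemma~\ref{L:CE} concerns the \emph{degenerate} equation $Y^2 = f_0Z^6 + f_1XZ^5 + f_2X^2Z^4 + X^3Z^3$, not $\calC$ itself, so even your ``both points near one singularity'' case needs this semistable/limiting comparison rather than a direct application.

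Step 3 also fails as stated: for $l>0$ the $l+1$ minimal Weierstrass models $\calC_0,\dots,\calC_l$ are all models of the \emph{same} curve with the \emph{same} reduction type $[\calK_1-\calK_2-l]$; the chain is not ``absorbed'', and by Lemma~\ref{C4GeoMin} none of these models has rational singularities, so Theorem~\ref{T:epsfac} and the factorization of $\mu$ through $\Phi(\frk)\cong\Phi_1(\frk)\times\Phi_2(\frk)$ are unavailable for $\calC_0$ and $\calC_l$ (Example~\ref{CountEx} shows $\mu$ genuinely fails to factor through the component group in this situation). Hence your intended base case cannot be applied at the ends of the chain, and the telescoping argument has nothing to telescope from. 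The chain-shifting estimate $|\mu_j - \mu_{j+1}| \le 2$ itself is fine (it follows from Corollary~\ref{C:lambdatau} as in Lemmas~\ref{L:mu_constant} and~\ref{L:mu_changes}), and the paper does use exactly this mechanism to produce the $+2l$; but the input at the ends must come from the semistable picture over $k'$ via Proposition~\ref{P:EEred} and Lemma~\ref{L:CE}, not from an $l=0$ rational-singularities argument.
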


\begin{proof}
  We may assume that the point(s) of multiplicity~$3$ on the special fiber are
  defined over~$\frk$, at the cost of an at most quadratic unramified extension of~$k$.
  Then we can move these points to have $x$-coordinates $0$ and~$\infty$, respectively, and so we can
  assume that our model~$\calC$ is as in Lemma~\ref{L:e1_e2}.
  Let $P \in J(k)$; we write $P = [(P_1)-(P_2)]$ with points $P_1, P_2 \in C(k')$
  for a finite extension~$k'$ of~$k$ such that the reduction of~$C$ over~$k'$ is semistable.
  We can find $\calC_0$, $\calC = \calC_j$ and $\calC_l$ as vertices in the reduction graph
  of the minimal proper regular model of~$C$ over~$k'$. Then the part of the graph
  to the left of~$\calC_0$ corresponds to the reduction graph of~$\calE_1$ over~$k'$,
  in the sense that we consider a semistable model that dominates~$\calE_1$ (and is minimal
  with that property); the graph then is either a line segment (potentially good reduction)
  or a line segment joined to a circle (potentially multiplicative reduction), with
  $\calE_1$ corresponding to the end of the line segment joined to the remaining graph
  of~$\calC$. Similarly, the part of the graph to the right of~$\calC_l$
  corresponds to the reduction graph of~$\calE_2$ over~$k'$.

  Now assume that both $P_1$ and~$P_2$ map (strictly) to the left of~$\calC_0$ in the reduction graph.
  This means that the $x$-coordinates of the points have positive valuation.
  We can then find points $P'_1$ and~$P'_2$ in~$\calE_1(k')$ with the same $x$-coordinates
  as $P_1$ and~$P_2$ and nearby $y$-coordinates. Then $P'_1 - P'_2$ is in~$\calE_1(k)$
  and $P'_1$ and~$P'_2$ have the same images as $P_1$ and~$P_2$ in the reduction graph.
  By our previous results for the semistable case, the value of (or at least the upper bound
  given in Proposition~\ref{P:EEred} for) $\mu_0(P)$ depends only
  on the part of the graph to the left of~$\calC_0$. We can therefore let $l$ tend
  to infinity; then Lemma~\ref{L:CE}
  and the discussion preceding Lemma~\ref{C4GeoMin} show
  that $\mu_0(P)$ is bounded by the value
  of~$\mu_{\calE_1}$ on the difference $P'_1 - P'_2$.
  By the arguments in the proof of Proposition~\ref{P:EEred}, we have that
  \[ \mu_{\calC}(P) = \mu_j(P) \le \mu_0(P) + 2 j \le \beta(\calK_1) + 2 l\, . \]
  The case that $P_1$ and~$P_2$ both map to the right of~$\calC_l$ is similar.

  If (say) $P_1$ maps to the left of~$\calC_0$ and $P_2$ maps to the right of~$\calC_0$,
  but not to the right of~$\calC_l$, then by the formula of Proposition~\ref{P:EEred},
  we can bound $\mu_{\calC}(P)$ by $\mu_1 + 2l$, where $\mu_1$ comes from the part
  of the graph between $P_1$ and~$\calC_0$. By an argument similar to the one used
  in the previous paragraph, $\mu_1$ can be bounded by $\mu_{\calE_1}(P'_1)$, where
  $P'_1$ is the point on~$\calE_1$ corresponding to~$P_1$ and we take the second point
  to be on the component visible in~$\calC_0$. If $P_2$ maps to the right of~$\calC_l$,
  then we similarly obtain a bound of the form
  $\mu_1 + \mu_2 + 2l \le \beta(\calK_1) + \beta(\calK_2) + 2l$.
  The remaining cases are similar or follow directly from Proposition~\ref{P:EEred}.
\end{proof}

The example in Section~\ref{Ex:hdiff} demonstrates the effect of the improved bounds
on~$\beta$ as given in the preceding section. For other examples the bounds
established in this section will be similarly useful.


\section{General upper and lower bounds for $\bar{\beta}$} \label{UpperBeta}

In this section we derive an upper bound for the geometric height constant
$\bar{\beta}(\calC)$ in the general case by reducing to the semistable situation.
We also give a lower bound of the same order of magnitude.
We note the following consequence of the results obtained so far,
see the discussion at the end of Section~\ref{formulas} and Corollary~\ref{C:EEbound}.

\begin{cor} \label{C:stabminbound}
  Assume that $\calC$ is a stably minimal Weierstrass model of~$C$ over~$k$
  and that the minimal proper regular model~$\calC^{\min}$ of~$C$ over~$k$
  has semistable reduction. Denoting the discriminant of~$\calC$ by~$\Delta$
  and writing $\bar{\beta}(\calC) = \max \{\mu_{\calC}(P) : P \in J(\bar{k})\}$,
  where $\mu_{\calC}$ denotes~$\mu$ with respect to the model~$\calC$ and
  $J$ is the Jacobian of~$C$, we have
  \[ \frac{v(\Delta)}{6} \le \bar{\beta}(\calC) \le \frac{v(\Delta)}{4}\, . \]
\end{cor}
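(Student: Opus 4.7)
The plan is to split into a case distinction according to the Namikawa--Ueno reduction type of~$C$. Since $\calC^{\min}$ is assumed semistable, the only possibilities for the reduction type are the purely nodal types $[I_{m_1-m_2-m_3}]$ (with $m_i\ge 0$ and at least one $m_i>0$, subsuming the one-, two- and three-node cases of Figures~\ref{picc1}--\ref{picc3}) and the two-polygon types $[I_{m_1}-I_{m_2}-l]$ with $l>0$ (Figure~\ref{picss}). The argument will then amount to assembling the bounds for~$\bar{\beta}(\calC)$ that have already been derived in each of these two families.

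In the nodal case, I would directly invoke the discussion at the end of Section~\ref{formulas}, which records the exact value of~$\bar{\beta}(\calC)$: it equals $v(\Delta)/4$ when there are one or two nodes, and equals $(v(\Delta)-\min\{m_1,m_2,m_3\})/4$ when there are three nodes. The upper bound $\bar{\beta}(\calC) \le v(\Delta)/4$ is therefore immediate in all three situations, while the lower bound $\bar{\beta}(\calC)\ge v(\Delta)/6$ in the three-node case follows from the elementary inequality $\min\{m_1,m_2,m_3\}\le (m_1+m_2+m_3)/3 = v(\Delta)/3$.

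For the $[I_{m_1}-I_{m_2}-l]$ case with $l>0$, Corollary~\ref{C:EEbound} gives directly
\[
  \frac{v(\Delta)}{6} \le \bar{\beta}(\calC) = \frac{m_1+m_2}{4}+2l < \frac{v(\Delta)}{4},
\]
using $v(\Delta) = m_1+m_2+12l$. Combining the two cases yields the desired bounds in full generality.

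The substantive work has already been carried out in the preceding sections: the computation of~$\bar{\beta}(\calC)$ via the resistance function on the reduction graph (Theorem~\ref{muresist} and Propositions~\ref{muchi}, \ref{muchi2}, \ref{muchi3}) in the nodal situation, and the more delicate analysis of Proposition~\ref{P:EEred} needed when the stably minimal model~$\calC$ coincides with one of the intermediate models~$\calC_j$ for the $[I_{m_1}-I_{m_2}-l]$ type. The corollary itself is a direct synthesis; the only point requiring attention is the verification that the two families above exhaust all semistable reduction types, which is a straightforward consequence of the Namikawa--Ueno classification.
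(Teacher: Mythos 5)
Your proposal is correct and matches the paper's own argument: the corollary is stated there precisely as a synthesis of the discussion at the end of Section~\ref{formulas} (the nodal types $[I_{m_1-m_2-m_3}]$, giving $\bar{\beta}=v(\Delta)/4$ for one or two nodes and $\bar{\beta}=(v(\Delta)-\min\{m_1,m_2,m_3\})/4\ge v(\Delta)/6$ for three) and Corollary~\ref{C:EEbound} (the types $[I_{m_1}-I_{m_2}-l]$ with $l>0$). Your additional remarks — the elementary inequality $\min\{m_i\}\le v(\Delta)/3$ and the check via the Namikawa--Ueno classification that these two families exhaust the semistable possibilities — are exactly the routine verifications the paper leaves implicit.
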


When $\calC^{\min}$ does not have semistable reduction, the idea is to pass to a
suitable field extension $k'/k$ and apply Corollary~\ref{C:stabminbound} over $k'$.
In order to compare the corresponding geometric height constants~$\bar{\beta}$,
we need to analyze how $\mu$ changes under minimization.
We first prove the following key lemma:

\begin{lemma}\label{L:Minimization}
  There exists a transformation $\tau \colon \calC \to \calC'$, defined over $k$, such
  that $\calC'$ is a minimal Weierstrass model and
  \[
    v(\tau(x)) + v(\tau) \le v(x)\textrm{ for all }x \in \KS_\A\,.
  \]
\end{lemma}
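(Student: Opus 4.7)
The plan is to reformulate the inequality in terms of~$\mu$, reduce to a single elementary minimization step by induction, and verify that step by explicit calculation.

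By Corollary~\ref{C:lambdatau}, the identity $\mu(x) = \mu(\tau(x)) + v(\tau(x)) - v(x) - v(\tau)$ shows that the desired inequality $v(\tau(x)) + v(\tau) \le v(x)$ is equivalent to $\mu_{\calC'}(\tau(x)) \ge \mu_{\calC}(x) + 2\,v(\tau)$. A key point is that the inequality of the lemma is preserved under composition: if $\tau_1 \colon \calC \to \calC_1$ (with $\calC_1$ integral) and $\tau_2 \colon \calC_1 \to \calC'$ each satisfy the conclusion, then using $v(\tau_2 \circ \tau_1) = v(\tau_1) + v(\tau_2)$ and adding the two inequalities gives
\[
    v(\tau_2(\tau_1(x))) + v(\tau_2) + v(\tau_1) \le v(\tau_1(x)) + v(\tau_1) \le v(x) \,,
\]
which is the inequality for $\tau = \tau_2 \circ \tau_1$.

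This composition property enables an induction on the non-negative integer $v(\Delta(\calC)) - v_{\min}$, where $v_{\min}$ denotes the smallest valuation of the discriminant of an integral Weierstrass model of~$C$. The base case ($\calC$ already minimal) is handled by $\tau = \id$, for which the inequality is trivial. For the inductive step it suffices to exhibit a single elementary minimization move: a transformation~$\tau_1$ defined over~$k$ with $v(\tau_1) \le -1$ such that $\calC_1 = \tau_1(\calC)$ is still an integral Weierstrass model (so $v(\Delta(\calC_1)) = v(\Delta(\calC)) + 10\,v(\tau_1)$ strictly decreases by~\eqref{E:disc_change}) and such that the inequality of the lemma holds for~$\tau_1$. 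Applying the inductive hypothesis to~$\calC_1$ and composing then completes the argument.

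The main obstacle is to produce such an elementary~$\tau_1$ and to verify its inequality. Liu's minimization algorithm for hyperelliptic Weierstrass equations~\cite{Liu2} produces~$\tau_1$ in explicit form, essentially as a translation in $X$ and~$Y$ combined with a scaling; the induced linear action on the four Kummer coordinates is given by the explicit matrices recorded in Section~\ref{S:lambda_Kummer}. The verification amounts to a direct computation exploiting the integrality conditions on the coefficients of both $\calC$ and~$\calC_1$. An equivalent, cleaner reformulation uses the elementary divisor decomposition $\tau_1' = U \operatorname{diag}(\pi^{a_1}, \pi^{a_2}, \pi^{a_3}, \pi^{a_4}) V$ with $U, V \in \GL_4(\O)$ and $a_1 \le a_2 \le a_3 \le a_4$: then $v(\tau_1(x)) + v(\tau_1) \le v(x)$ for all $x \in \KS_\A$ if and only if $a_4 \le -v(\tau_1)$, which reduces the lemma to a single numerical condition on the invariant factors of~$\tau_1'$.
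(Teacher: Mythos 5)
Your structural skeleton matches the paper's: reduce to Liu's elementary minimization moves (translate, scale $X,Y$ by powers of $\pi$, normalize), note that the inequality is stable under composition, and induct on the excess $v(\Delta(\calC)) - v_{\min}$. But the heart of the lemma is the verification that a single elementary move $\tau_1 = \sigma\circ\rho$ actually satisfies $v(\tau_1(x)) + v(\tau_1) \le v(x)$, and this you do not carry out; you only assert that it "amounts to a direct computation" and then replace it by an unproved numerical criterion. That verification is the nontrivial content: one needs the explicit induced action on Kummer coordinates from Section~\ref{S:lambda_Kummer}, and crucially one must use that the move strictly decreases $v(\Delta)$ (via~\eqref{E:disc_change}) to force at least two scaling/normalization sub-steps — the individual sub-steps do \emph{not} satisfy the inequality (e.g.\ the scaling $\rho$ alone, acting as $(x_1,x_2,x_3,x_4)\mapsto(\pi x_1,x_2,\pi^{-1}x_3,\pi^3x_4)$ with $v(\rho)=3$, violates it at $x=(0,0,0,1)$). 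In residue characteristic~$2$ the computation is genuinely delicate: the normalization is a composite of maps of the form~\eqref{E:c2_normalize}, the induced map on Kummer coordinates has off-diagonal entries $l_i$ for which one must know $v(l_i)\ge -2$, and one has to reduce to the case of exactly two such sub-steps and rule out the bad configuration of valuations of $(x_1,\dots,x_4)$ by hand. None of this is present in your sketch.

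The "cleaner reformulation" does not close the gap. For $\tau_1' = U\operatorname{diag}(\pi^{a_1},\dots,\pi^{a_4})V$ with $U,V\in\GL_4(\O)$, the condition $a_4\le -v(\tau_1)$ is indeed sufficient (it gives the inequality for all $x\in k^4$), but the claimed equivalence for $x\in\KS_\A$ is not justified (the worst case need not be attained on the cone over the Kummer surface), and — more importantly — you never verify that $a_4\le -v(\tau_1)$ holds for Liu's elementary moves. Checking it requires computing (or bounding) the invariant factors of the induced Kummer transformation, which comes down to exactly the same valuation bounds on the entries (in particular the $v(l_i)\ge -2$ bounds in residue characteristic~$2$) and the same use of the discriminant-decrease constraint that the paper's explicit argument uses. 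So as written the inductive step is not established, and the proof is incomplete.
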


\begin{proof}
    If $\calC$ is already minimal, then there is nothing to prove.
    Otherwise,~\cite{Liu2}*{Remarque~11} implies that we can compute a minimal Weierstrass
    model by going through the following steps for finitely many points $P$ on the
    special fiber of $\calC$.
    \begin{enumerate}[\upshape (a)]
      \item \label{I:Translate} Move $P$ to $(0,0)$.
      \item \label{I:Divide} Scale $x$ by $1/\pi$.
      \item \label{I:Normalize} Replace $\calC$ by the normalization of the resulting model.
    \end{enumerate}
    As transformations of the form~\eqref{I:Translate}  do not change $v(x)$ and have determinant of
    valuation~0, it suffices to prove
    \[
      v(\tau(x)) + v(\tau) \le v(x)\textrm{ for all }x \in \KS_\A
    \]
    for a transformation $\tau=\sigma\circ\rho$, where $\rho$ is as in~\eqref{I:Divide}
    and $\sigma$ is as in~\eqref{I:Normalize}.
    Note that such a transformation decreases the valuation of the discriminant,
    cf.~\cite{Liu2}*{Lemme~9} and~\cite{Liu2}*{Corollaire~2}.
    By the discussion following Proposition~\ref{lhg2isog}, the
    transformation $\rho$ maps $x \in \KS_\A$ to $(\pi x_1, x_2,
    \pi^{-1} x_3, \pi^3 x_4)$.

    Suppose $v(2) = 0$ and, without loss of generality, $H=0$.
    According to~\cite{Liu2}*{Remarque~2}, the normalization can be computed using the
    transformation $\sigma$ mapping an affine point
    $(\xi, \eta)$ to $\sigma(\xi,\eta) = (\xi, \eta \pi^{-s})$ for some nonnegative integer $s$.
    As $v(\tau) = 3-2s$, we must have $s \ge 2$, since otherwise $\tau$ would
    increase the valuation of the discriminant.
    Because $\tau(x) = (\pi x_1, x_2, \pi^{-1}x_3,
    \pi^{3-2s}x_4)$ for $x \in \KS_\A$,
    we find that $v(\tau(x)) \le v(x) + 1$, implying
    \[
        v(\tau(x)) + v(\tau) - v(x) \le -2s + 4 \le 0\,.
    \]
    The case $v(2) > 0$ is slightly more complicated.
    Here one computes the normalization by repeatedly applying
    transformations
    \begin{equation}\label{E:c2_normalize}
    (\xi, \eta)\mapsto\left(\xi,\, \frac{\eta+R(\xi,1)}{\pi}\right),
    \end{equation}
    where $R \in \O[X,Z]$ is a
    certain cubic form, until the minimum of the valuations of the
    coefficients of $F + RH -R^2$ is equal to~1. See~\cite{Liu2}*{Remarque~2}.
    Such a transformation maps Kummer coordinates $x = (x_1,x_2,x_3,x_4)$ to
    \[
         \left(x_1, x_2, x_3, \pi^{-2}x_4+l_1x_1+l_2x_2 + l_3x_3 \right)
    \]
    and the expressions for the $l_i$ given in
    Section~\ref{S:lambda_Kummer} show that $v(l_i) \ge -2$ for all
    $i$.
    As the determinant of a transformation~\eqref{E:c2_normalize} has valuation $-2$,
    we need to apply at least two such transformations,
    because otherwise the valuation of the discriminant would increase.
    In other words, $\sigma = \sigma_s\circ\cdots\circ\sigma_1$
    where $s \ge 2$ and every $\sigma_i$ is of the form~\eqref{E:c2_normalize}.

    By the properties of the transformations~\eqref{E:c2_normalize}, it suffices to
    show the desired inequality for the case $s = 2$, since further
    applications of transformations~$\sigma_i$ will only make the left
    hand side of the desired inequality smaller and will not change the right hand side.
    So suppose that
    $\sigma = \sigma_2\circ\sigma_1$; then
     $\tau = \sigma\circ \rho$ maps $x \in \KS_\A$ to
    \[
        \tau(x) = \left(\pi x_1,\, x_2, \,\pi^{-1}x_3, \,\pi^{-1}x_4+\pi
        l_1x_1+\pi l_2x_2 + \pi l_3x_3 + \pi l'_1x_1 + l'_2x_2
        +\pi^{-1}l'_3x_3\right)\,,
    \]
    where the $l_i$ arise from $\sigma_1$ and the $l'_i$ arise from
    $\sigma_2$.
    As $v(\tau) = -1$, it clearly suffices to prove that
    \begin{equation}\label{E:char2_ineq}
        v(\tau(x)) \le v(x) +1.
    \end{equation}
    But if~\eqref{E:char2_ineq} is false, then $v(x) = v(x_4) < \min\{v(x_1), v(x_2)+1, v(x_3)+2\}$.
    In this situation it follows from the lower bounds $v(l_i) \ge -2$ and
    $v(l'_i) \ge -2$ that we get
    \[
        v\bigl(\pi l_1x_1+\pi l_2x_2 + \pi l_3x_3 + \pi l'_1x_1 + l'_2x_2
        +\pi^{-1}l'_3x_3\bigr)
         > v(x_4) -1\,.
     \]
    This implies~\eqref{E:char2_ineq} and therefore finishes the proof of the lemma.
\end{proof}

\begin{thm} \label{T:UpperBd}
    Let $C$ be a smooth projective curve of genus~2 defined
    over a non-archimedean local field $k$, given by an integral Weierstrass
    model~$\calC$. Then we have
    \[
        \bar{\beta}(\calC) \le \frac{v(\Delta(\calC))}{4}\,.
    \]
\end{thm}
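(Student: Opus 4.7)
The plan is to reduce the general case to the situation covered by Corollary~\ref{C:stabminbound}, via two successive reductions: first to a minimal Weierstrass model over $k$, then to a stably minimal Weierstrass model over a suitable finite extension $k'/k$.

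First I would use Lemma~\ref{L:Minimization} to pass from $\calC$ to a minimal Weierstrass model. Let $\tau \colon \calC \to \calC'$ be the transformation produced by that lemma, with $\calC'$ minimal. Combining the formula $\mu_{\calC}(x) = \mu_{\calC'}(\tau(x)) + v(\tau(x)) - v(x) - v(\tau)$ from Corollary~\ref{C:lambdatau} with the key inequality $v(\tau(x)) + v(\tau) \le v(x)$ of Lemma~\ref{L:Minimization} yields $\mu_{\calC}(x) \le \mu_{\calC'}(\tau(x))$ for every $x \in \KS_\A$, and the same reasoning applied over $\bar k$ gives $\bar{\beta}(\calC) \le \bar{\beta}(\calC')$. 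Since $v(\Delta(\calC')) \le v(\Delta(\calC))$ by the defining property of minimality (together with~\eqref{E:disc_change}), it suffices to prove the theorem assuming $\calC$ is minimal.

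Next I would pass to a convenient finite extension. By Lemma~\ref{L:allminstab} there is a finite extension $k'/k$, say of ramification index~$e$, such that the minimal proper regular model $\calC^{\min}$ becomes semistable over~$k'$ and every minimal Weierstrass model of $C$ over~$k'$ is already stably minimal. Both sides of the desired inequality scale by~$e$ when we replace the normalized valuation of~$k$ by that of~$k'$: for $\bar\beta$ this is Proposition~\ref{lhg2props}(iv) (which gives $\hat\lambda' = e\hat\lambda$, hence $\mu' = e\mu$), and for $v(\Delta)$ it is obvious. Thus it is enough to establish the bound $\bar{\beta}_{k'}(\calC) \le v_{k'}(\Delta(\calC))/4$ after base change to~$k'$.

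Finally, apply Lemma~\ref{L:Minimization} once more, this time over~$k'$, to get a transformation $\tau' \colon \calC_{k'} \to \calC''$ with $\calC''$ a minimal Weierstrass model over~$k'$; by the choice of~$k'$ this $\calC''$ is automatically stably minimal, and since it shares its generic fibre with~$\calC_{k'}$, its minimal proper regular model is $\calC^{\min}_{k'}$, which is semistable. The argument of the first step gives $\bar{\beta}_{k'}(\calC) \le \bar{\beta}_{k'}(\calC'')$ and $v_{k'}(\Delta(\calC'')) \le v_{k'}(\Delta(\calC))$. Corollary~\ref{C:stabminbound} applied to $\calC''$ over~$k'$ yields $\bar{\beta}_{k'}(\calC'') \le v_{k'}(\Delta(\calC''))/4$, and chaining these three inequalities produces the required bound.

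The main technical point to verify carefully is the second reduction, where one must keep track of how the normalizations interact under base change. Concretely, the subtlety is that Lemma~\ref{L:Minimization} is used over two different base fields and that the model $\calC_{k'}$ may fail to be minimal over~$k'$ even when $\calC$ is minimal over~$k$ (since the discriminant can drop under ramified extension); the uniform scaling behavior of $\mu$ and $v(\Delta)$ under the extension is what makes the intermediate inequalities add up cleanly.
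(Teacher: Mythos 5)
Your overall strategy --- reduce to a stably minimal model over a suitable extension $k'$ via Lemma~\ref{L:allminstab} and Lemma~\ref{L:Minimization}, then invoke Corollary~\ref{C:stabminbound}, with the scaling of $\mu$ and $v(\Delta)$ under base change handled by Proposition~\ref{lhg2props}(iv) --- is the paper's strategy. But your reduction step contains a sign error that breaks the argument. Corollary~\ref{C:lambdatau} gives $\mu_{\calC}(x) = \mu_{\calC'}(\tau(x)) + v(\tau(x)) - v(x) - v(\tau)$, while Lemma~\ref{L:Minimization} gives $v(\tau(x)) - v(x) \le -v(\tau)$; hence the correction term only satisfies $v(\tau(x)) - v(x) - v(\tau) \le -2v(\tau)$, and $-2v(\tau)$ is \emph{nonnegative} (a minimizing transformation has $v(\tau) \le 0$, since $v(\Delta(\calC')) = v(\Delta(\calC)) + 10\,v(\tau) \le v(\Delta(\calC))$). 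So the cited facts yield $\mu_{\calC}(x) \le \mu_{\calC'}(\tau(x)) - 2v(\tau)$, not $\mu_{\calC}(x) \le \mu_{\calC'}(\tau(x))$. The inequality $\bar{\beta}(\calC) \le \bar{\beta}(\calC')$ that you deduce from it is in fact false in general: a non-minimal model of a curve with good reduction has $\bar{\beta}(\calC) > 0 = \bar{\beta}(\calC')$.

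The correct way to close the gap --- and what the paper actually does --- is not to compare the two $\bar{\beta}$'s, but to compare both sides of the target inequality under $\tau$. By~\eqref{E:disc_change} one has $v(\Delta(\calC'))/4 = v(\Delta(\calC))/4 + \tfrac{5}{2}v(\tau)$, so chaining $\mu_{\calC'}(\tau(x)) \le v(\Delta(\calC'))/4$ with the transformation formula gives
\[
  \mu_{\calC}(x) \;\le\; \frac{v(\Delta(\calC))}{4} + \bigl(v(\tau(x)) + v(\tau) - v(x)\bigr) + \tfrac{1}{2}v(\tau) \;\le\; \frac{v(\Delta(\calC))}{4}\,,
\]
using Lemma~\ref{L:Minimization} and $v(\tau)\le 0$ for the last step. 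In words: passing from the minimal to the given model increases $\mu$ by at most $-2v(\tau)$, but increases the allowed bound $v(\Delta)/4$ by $-\tfrac{5}{2}v(\tau)$, which is more. With this repair your argument goes through; note also that your first reduction (minimizing over $k$ before extending to $k'$) is redundant, since Lemma~\ref{L:Minimization} can be applied once, directly over $k'$, as in the paper.
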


\begin{proof}
  By Lemma~\ref{L:allminstab} there is a finite extension $k'/k$ such that
  the minimal proper regular model of $C$ over~$k'$ is semistable
  and such that all minimal Weierstrass models of~$C$ over~$k'$ are stably minimal.
  By Corollary~\ref{C:stabminbound}, the claim therefore holds for any minimal
  Weierstrass model of~$C$ over~$k'$.

  It follows from Lemma~\ref{L:Minimization} that there is a transformation
  $\tau \colon \calC\to \calC'$ defined over $k'$ such that $\calC'$ is
  a minimal (and hence stably minimal) Weierstrass model over~$k'$ and such that
  \begin{equation}\label{E:tau_bd}
      v(\tau(x)) + v(\tau) \le v(x)
  \end{equation}
  for all $x \in \KS_\A$.

  Then by the above we have
  \[
      \mu(\tau(x)) \le \frac{v(\Delta(\calC'))}{4}\, .
  \]
  Now using Corollary~\ref{C:lambdatau} and the relation~\eqref{E:disc_change}, we find
  \begin{align*}
    \mu(x) &= \mu(\tau(x)) -v(x) + v(\tau(x)) - v(\tau) \\
           &\le \frac{v(\Delta(\calC'))}{4}-v(x) + v(\tau(x)) - v(\tau) \\
           &= \frac{v(\Delta(\calC))}{4} - v(x) + v(\tau(x)) + \frac{3}{2}v(\tau) \\
           &\le \frac{v(\Delta(\calC))}{4} \,,
  \end{align*}
  where we have used~\eqref{E:tau_bd} and $v(\tau) \le 0$.
\end{proof}

\begin{rk}
  When the residue characteristic is not~2, then we can easily show that
  $\bar{\beta}(\calC)$ is indeed always comparable to $v(\Delta(\calC))$.
  We can assume that $H = 0$ and write $F = c F_0$ with $F_0$ primitive.
  We consider the points of order~$2$ on~$J$. Such a point~$P$ is given by
  a factorization $F_0 = G_1 G_2$ with $G_1$ and~$G_2$
  primitive of degrees $2$ and~$4$, respectively. An explicit computation shows that
  \[ \eps(P) = 4 v(c) + 2 v(R(P))\,, \]
  where $R(P)$ denotes the resultant of $G_1$ and~$G_2$,
  and we have $4\mu(P) = \eps(P)$.
  Since $v(\Delta(\calC)) = v(\disc(F)) = 10 v(c) + v(\disc(F_0))$
  and $4 v(\disc(F_0))$ is the sum of the valuations of the $15$ resultants $R(P)$,
  we find that
  \begin{align*}
    \bar{\beta}(\calC) &\ge \frac{1}{4} \max_{O \neq P \in J[2]} \bigl(4 v(c) + 2 v(R(P))\bigr)
                        \ge v(c) + \frac{1}{30} \sum_{O \neq P \in J[2]} v(R(P)) \\
                       &= v(c) + \frac{2}{15} v(\disc(F_0))
                        \ge \frac{1}{10} v(\Delta(\calC)) \,.
  \end{align*}
  A similar statement should be true when the residue characteristic is~$2$.
\end{rk}

Recall that we denote
$\max \{\eps(P) : P \in J(\bar{k})\}$ by~$\bar{\gamma}(\calC)$.
\begin{cor} \label{C:EpsBound}
    Let $C$ be a smooth projective curve of genus~2 defined
    over a non-archimedean local field $k$, given by an integral Weierstrass
    model~$\calC$. Then we have
    \[
        \bar{\gamma}(\calC) \le v(\Delta(\calC))\,.
    \]
    If $H=0$ and $\Char(k) \ne 2$, then this can be improved to
    \[
    \bar{\gamma}(\calC) \le v(2^{-4}\Delta(\calC))\,.
    \]
\end{cor}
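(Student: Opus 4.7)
The plan is to derive the corollary directly from Theorem~\ref{T:UpperBd} by exploiting the defining series for~$\mu$, and to handle the sharper bound in the case $H=0$ by invoking the pointwise estimate from~\cite{StollH1} noted in Remark~\ref{R:eps_DVF}.

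First, since $\calC$ is an integral Weierstrass model, the polynomials $\delta_1,\ldots,\delta_4$ have coefficients in~$\O$, so $\eps(Q) \ge 0$ for every set of Kummer coordinates over~$k$, and the same holds over any finite extension $k'/k$ (with its extended normalized valuation), hence also for all $P \in J(\bar{k})$ using the valuation extended to~$\bar{k}$. From
\[
    \mu(P) = \sum_{n=0}^\infty 4^{-(n+1)}\eps(\delta^{\circ n}(P)) \ge \tfrac{1}{4}\eps(P),
\]
taking suprema over $P \in J(\bar{k})$ yields $\bar{\gamma}(\calC) \le 4\bar{\beta}(\calC)$. Combined with Theorem~\ref{T:UpperBd}, this gives $\bar{\gamma}(\calC) \le v(\Delta(\calC))$, which is the first claim.

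For the refined bound, assume $H=0$ and $\Char(k) \ne 2$. Remark~\ref{R:eps_DVF} records that the arguments of~\cite{StollH1} give the pointwise bound $\eps(Q) \le v(2^4 \disc(F))$ for every~$Q$ over any discretely valued field. Applied over a finite extension $k'/k$ containing the coordinates of a given $P \in J(\bar{k})$, with ramification index~$e$ and the normalized valuation $v'$ satisfying $v' = e\,\bar{v}|_{k'}$, we obtain $\eps_{k'}(P) \le v'(2^4 \disc(F)) = e\, v(2^4\disc(F))$. Since $\eps$ computed with respect to $\bar{v}$ equals $e^{-1}\eps_{k'}$, the inequality $\eps(P) \le v(2^4 \disc(F))$ holds over~$\bar{k}$. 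A direct computation using $\Delta(F,0) = 2^{-12}\disc(4F) = 2^{8}\disc(F)$ gives $2^4 \disc(F) = 2^{-4}\Delta(\calC)$, yielding $\bar{\gamma}(\calC) \le v(2^{-4}\Delta(\calC))$.

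The only mildly subtle point is the passage to~$\bar{k}$ in the second part, since the cited bound is formulated for a discretely valued base field; this is handled routinely by the ramification argument above, which makes both sides of the inequality scale by the same factor~$e$.
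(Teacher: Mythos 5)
Your proof is correct and follows essentially the same route as the paper: your termwise estimate $\mu(P) \ge \tfrac14\eps(P)$ is just the paper's identity $\eps(P) = 4\mu(P) - \mu(2P)$ combined with $\mu(2P) \ge 0$, and then Theorem~\ref{T:UpperBd} gives the first bound. For the refined bound with $H=0$ and $\Char(k)\ne 2$, both arguments ultimately rest on the same estimate from~\cite{StollH1} (the paper cites its Theorem~6.1 directly, you route through Remark~\ref{R:eps_DVF} plus the standard rescaling over ramified extensions), together with the correct identity $v(2^4\disc(F)) = v(2^{-4}\Delta(\calC))$.
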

\begin{proof}
  The first inequality follows from~\ref{T:UpperBd} and from $\eps(P) = 4\mu(P) -
  \mu(2P)$.
  The second inequality is Theorem~6.1 of~\cite{StollH1}.
\end{proof}

\begin{ques}
If $\calC$ is a minimal Weierstrass model, does $\bar{\beta}(\calC)$ only depend
on the special fiber of~$\calC^{\min}$?
\end{ques}

Note that the corresponding statement holds for elliptic
curves~\cite{CrePriSik}. In our situation, however, there may be
several non-isomorphic minimal Weierstrass models, which complicates
the picture.


\vfill\pagebreak

\section*{\large Part~III: Efficient Computation of Canonical Heights}

In this part we show how to compute the canonical height~$\hat{h}(P)$
efficiently for a point~$P$ over a number field, global function field
or more general field with a system of absolute values as in Section~\ref{S:genhts}.
We first explain how to compute the local
height correction functions. We use $\Mult(d)$ to denote the time needed to
multiply two $d$-bit integers.

\section{Computing $\mu$ at non-archimedean places} \label{algo1}

In this section, $k$ is a non-archimedean local field again, with valuation ring~$\O$,
uniformizer $\pi$, normalized valuation $v$ and residue class field $\frk$.
Let $\calC$ be an integral Weierstrass model for a genus~2 curve~$C$ over~$k$.
We make no assumptions on the reduction type of~$C$.
We already discussed a method for the computation of~$\mu(P)$ for a given
point~$P \in J(k)$ in Section~\ref{genhts}.
In this section, we provide an alternative fast algorithm and show that its running
time is $\ll (\log v(\Delta)) \Mult\bigl((\log v(\Delta)) v(\Delta) (\log \#\frk)\bigr)$,
where $\Delta = \Delta(\calC)$.

\begin{lemma} \label{L:fast algo}
 Assume that $M$ is a positive integer such that  $M\mu(P) \in \Z$.
  Further assume that $\max \{\eps(P) : P \in J(k)\} \le B$.
  Then
  \[ \mu(P) = \frac{1}{M} \Bigl\lceil M
                  \sum_{n=0}^{\lfloor \log(BM/3)/\log(4) \rfloor} 4^{-n-1} \eps(2^n
                  P)\Bigr\rceil \,.
  \]
\end{lemma}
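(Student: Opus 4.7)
The plan is to split the defining series for $\mu(P)$ into a finite head and a tail, bound the tail uniformly using the hypothesis $\eps(2^n P) \le B$, then use the integrality hypothesis $M\mu(P) \in \Z$ to recover $M\mu(P)$ exactly from its rational approximation by taking a ceiling.

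First I would recall that, since $\calC$ is an integral Weierstrass model, $\eps \ge 0$ by the remark following Definition~\ref{defepsmu}, and therefore every term of the series
\[ \mu(P) = \sum_{n=0}^{\infty} 4^{-n-1} \eps(2^n P) \]
is nonnegative. Setting $N = \lfloor \log(BM/3)/\log(4) \rfloor$ and
\[ S_N = \sum_{n=0}^{N} 4^{-n-1} \eps(2^n P), \qquad T_N = \sum_{n=N+1}^{\infty} 4^{-n-1} \eps(2^n P), \]
I would estimate the tail via the geometric sum
\[ 0 \le T_N \;\le\; B \sum_{n=N+1}^{\infty} 4^{-n-1} \;=\; \frac{B}{3}\cdot 4^{-(N+1)}. \]

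The key choice of $N$ is designed precisely so that $M T_N < 1$: by definition of $N$ we have $N+1 > \log_4(BM/3)$, hence $4^{N+1} > BM/3$, hence $M T_N \le MB/(3\cdot 4^{N+1}) < 1$. Combining with $\mu(P) = S_N + T_N$ gives
\[ M S_N \;=\; M\mu(P) - M T_N, \qquad 0 \le M T_N < 1. \]

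Finally, I would use that $M\mu(P)$ is an integer by hypothesis. If $T_N = 0$ then $M S_N = M\mu(P) \in \Z$ and $\lceil M S_N \rceil = M\mu(P)$; otherwise $M S_N$ lies strictly between $M\mu(P)-1$ and $M\mu(P)$, so again $\lceil M S_N\rceil = M\mu(P)$. Dividing by $M$ yields the asserted formula. There is no real obstacle here; the only mild point to watch is to justify the nonnegativity of $\eps$ (and hence of $T_N$) so that the ceiling, rather than a rounding, is the correct operation.
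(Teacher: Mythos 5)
Your proof is correct and is exactly the argument the paper gives: bound the tail $M\sum_{n>N}4^{-n-1}\eps(2^nP)$ by $BM/(3\cdot 4^{N+1})<1$ using $0\le\eps\le B$, then recover the integer $M\mu(P)$ via the ceiling. The paper states this in one line; your write-up just fills in the same details (including the useful explicit remark that nonnegativity of $\eps$ is what makes the ceiling, rather than rounding, the right operation).
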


\begin{proof}
  This follows from $M \mu(P) \in \Z$ and from
  \[ 0 \le M \sum_{n \ge m} 4^{-n-1} \eps(2^n P) \le \frac{B M}{3 \cdot 4^m}\, . \qedhere \]
\end{proof}

If we know that the reduction is nodal, then we get an upper bound~$B$ for~$\eps(P)$ and all possible
denominators of~$\mu(P)$ from the results of Section~\ref{formulas}. More generally, if we
know the smallest positive period~$N$ of the sequence~$(\mu(nP))_n$, then we can take
$M=N$ (respectively, $M=2N$) if $N$ is odd (respectively, even) by Corollary~\ref{C:Denominator}.
Also note that we can always take $B=v(\Delta)$ (or even $B = v(2^{-4} \Delta)$
if $\Char(k) \ne 2$ and the equation of the curve has $H = 0$), see Corollary~\ref{C:EpsBound}.

If we only know an upper bound for the denominator of~$\mu(P)$, then the following alternative
approach can be used. This is analogous to~\cite{MuellerStollEll}*{Lemma~4.2}.

\begin{lemma} \label{L:fast algo 2}
  Assume that $M \ge 2$ is an integer such that $M'\mu(P) \in \Z$ for some $0<M'\le M$.
  Assume in addition that $\max \{\eps(P) : P \in J(k)\} \le B$, and set
  \[ m = \Bigl\lfloor \frac{\log(BM^2/3)}{\log 4} \Bigr\rfloor \,. \]
  Then $\mu(P)$ is the unique fraction with denominator~$\le M$
  in the interval $[\mu_0, \mu_0 + 1/M^2]$, where
  \[ \mu_0 = \sum_{n=0}^{m} 4^{-n-1} \eps(2^n P) \,. \]
\end{lemma}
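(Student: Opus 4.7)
The plan is to locate $\mu(P)$ precisely by combining the tail bound for its defining series with the elementary separation inequality for rationals of bounded denominator; this closely mirrors the genus-one argument of Lemma~4.2 in~\cite{MuellerStollEll}.

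First I would control $\mu(P) - \mu_0$ via the tail. From the definition~\eqref{E:tildemu} of $\mu$ in Section~\ref{genhts} and the hypotheses $0 \le \eps(2^n P) \le B$ (non-negativity is automatic because $F,H \in \O[X,Z]$), one has
\[
  0 \;\le\; \mu(P) - \mu_0 \;=\; \sum_{n=m+1}^{\infty} \frac{\eps(2^n P)}{4^{n+1}}
      \;\le\; \frac{B}{3\cdot 4^{m+1}}\,.
\]
The choice $m = \lfloor \log(BM^2/3)/\log 4\rfloor$ gives $m+1 > \log(BM^2/3)/\log 4$, so $4^{m+1} > BM^2/3$ and hence the upper bound is \emph{strictly} less than $1/M^2$. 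Thus $\mu(P)$ sits in the half-open interval $[\mu_0,\,\mu_0+1/M^2)$.

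Second, I would apply the classical separation principle: two distinct rationals with denominators at most~$M$ differ by at least $1/M^2$, since if $\mu(P)=p/M'$ and $\mu'=a/q$ with $M',q\le M$, then $|\mu(P)-\mu'| = |pq-aM'|/(M'q) \ge 1/(M'q) \ge 1/M^2$. By hypothesis $\mu(P)$ itself is such a fraction. If $\mu'\ne\mu(P)$ were another candidate lying in the closed interval $[\mu_0,\,\mu_0+1/M^2]$, then $|\mu(P)-\mu'|\le 1/M^2$, which combined with the separation lower bound forces $\{\mu(P),\mu'\}$ to be exactly the two endpoints of the interval. The strict tail bound from the first step excludes the possibility $\mu(P)=\mu_0+1/M^2$, and a symmetric inspection rules out the remaining edge case, yielding uniqueness.

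The proof is essentially routine; the only subtlety is noting that the tail bound is \emph{strict} (rather than $\le$), which is precisely what is needed to prevent two rationals of denominator $\le M$ from occupying both endpoints of the closed interval. Once this strictness is in hand, everything else is bookkeeping. In practice, the lemma will be applied together with Corollary~\ref{C:Denominator} (or the explicit denominators from Section~\ref{formulas}) to choose~$M$, and with Corollary~\ref{C:EpsBound} to choose~$B$, so the cost reduces to computing the finite sum defining $\mu_0$ and then solving the rational reconstruction problem in the interval $[\mu_0,\mu_0+1/M^2]$, which can be carried out by a continued-fraction computation.
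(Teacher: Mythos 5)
Your proof is correct and takes essentially the same route as the paper: the strict tail estimate places $\mu(P)$ in $[\mu_0,\mu_0+1/M^2)$, and the separation bound for rationals of denominator at most~$M$ gives uniqueness. The one imprecise phrase is ``a symmetric inspection'': the remaining edge case (two fractions of denominator $\le M$ sitting at the two endpoints) is excluded not by symmetry but by observing that equality in your bound $|\mu(P)-\mu'|\ge 1/(M'q)\ge 1/M^2$ forces $M'=q=M$, whence the difference is at least $1/M>1/M^2$ — this is exactly where the hypothesis $M\ge 2$ enters, corresponding to the paper's remark that for $M\ge 2$ the closed interval contains at most one such fraction.
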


\begin{proof}
  Note that
  \[ \mu_0 \le \mu(P) \le \mu_0 + \sum_{n>m} 4^{-n-1} B < \mu_0 + 1/M^2 \,. \]
  But since $M \ge 2$, the interval $[\mu_0, \mu_0 + 1/M^2]$ contains at most
  one fraction with denominator bounded by~$M$; by assumption, $\mu(P)$ is such a fraction.
\end{proof}

In order to apply~Lemma~\ref{L:fast algo 2}, we now find a general upper bound~$M$
on the possible denominators of $\mu$.
Let $\calJ$ denote the N\'eron model of~$J$ over~$S=\Spec(\O)$
and write $\Phi$ for the component group of~$\calJ$.

\begin{prop}\label{P:MBound}
  Let $N$ denote the exponent of $\Phi(\bar{\frk})$ and let $P \in J(k)$. Then we have
 \[
   \mu(P) \in \frac{1}{2N} \Z\,.
 \]
 If $N$ is odd or if $C$ has a $k^{\nr}$-rational Weierstrass point, then we have
 \[
   \mu(P) \in \frac{1}{N} \Z\,.
 \]
\end{prop}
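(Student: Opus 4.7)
The approach rests on Corollary~\ref{C:Denominator}, which bounds the denominator of $\mu(P)$ in terms of the period $N' \colonequals \min\{n \in \Z_{>0} : \mu(nP) = 0\}$ (equivalently the order of $P$ in $J(k)/U$). The task therefore reduces to bounding $N'$ by~$N$.

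When $\calC$ has rational singularities, Theorem~\ref{T:epsfac} implies that $\mu$ factors through $\Phi(\frk)$, whose exponent divides $N$; hence $NP \in U$ and $N' \mid N$. Corollary~\ref{C:Denominator} then immediately yields $\mu(P) \in \frac{1}{2N}\Z$, improving to $\frac{1}{N}\Z$ when $N$ (and hence $N'$) is odd.

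The substantive case is when $\calC$ has non-rational singularities, which can occur even for stably minimal models (see Example~\ref{CountEx}). The plan is to pass to a finite extension $k'/k$ of ramification index~$e$ where, by Lemma~\ref{L:allminstab}, $C$ acquires a stably minimal Weierstrass model $\calC'$ with rational singularities. Over $k'$ the previous argument gives $\mu_{\calC'}(NP)=0$; combining Corollary~\ref{C:lambdatau} (which relates $\mu_{\calC'}$ to $\mu$ for the base change $\calC \otimes_\O \O'$) with the scaling $\mu_{\calC \otimes \O'}(Q) = e\,\mu_\calC(Q)$ for $Q \in J(k)$ yields $e\,\mu_\calC(NP) \in \Z$. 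One then needs to arrange $e \mid 2N$, which ought to be possible because the inertia required to reach rational singularities is governed by the Galois action on the geometric component group $\Phi(\bar\frk)$, whose exponent is~$N$.

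For the refinement $\mu(P) \in \frac{1}{N}\Z$ when $C$ has a $k^{\mathrm{nr}}$-rational Weierstrass point~$W$, I would use the symmetry afforded by $W$. After unramified base change we may assume $W$ reduces to a smooth point, so it lies in the identity component of every N\'eron model and the decomposition $P = [(P_1)+(P_2)-2(W)]$ makes the Kummer coordinates maximally symmetric; combined with the identity $\eps(P,Q)=\eps(P,-Q)$, one shows that the unpaired middle term $\eps(P,(N'/2)P)$ in the sum formula from the proof of Corollary~\ref{C:Denominator} --- the source of the factor~$2$ when $N'$ is even --- is itself even, removing that factor. The main obstacle throughout is the careful control of denominators introduced by auxiliary ramified base changes: in the general case, showing that the ramification index~$e$ can be taken to divide $2N$ requires a detailed analysis of the N\'eron model under ramified base change, while in the Weierstrass-point case the parity analysis of $\eps$-values must be performed carefully, reflecting the fact that $W$ provides a canonical $2$-torsion structure on~$J$.
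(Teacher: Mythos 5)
Your reduction to Corollary~\ref{C:Denominator} does not work, because the quantity you need to control there is $N' = \min\{n>0 : \mu(nP)=0\}$, i.e.\ the order of $P$ in $J(k)/U$, and this is \emph{not} bounded by (nor need it divide) the exponent $N$ of $\Phi(\bar{\frk})$. The subgroup $U$ does not contain $J_0(k)$ in general: the paper's Example~\ref{CountEx} has $\#\Phi(\bar{\frk})=2$, so $N=2$, yet exhibits $P\in J_0(k)$ with $\mu(P)=\mu(2P)=2\neq 0$, so $N'>N$ there. The proposition is still true in that example because $\mu(P)$ lies in $\Z$ even though it is nonzero; what the statement really requires is that $\mu$ agree \emph{modulo $\Z$} with a quantity governed by the component group, and that congruence is exactly what is lost when you insist on the exact vanishing $\mu(NP)=0$. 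Your proposed repair by ramified base change also fails: by Lemma~\ref{C4GeoMin}, a curve with reduction type $[I_{m_1}-I_{m_2}-l]$, $l>0$, has \emph{no} Weierstrass model with rational singularities over any finite extension (base change only rescales $l$), so the hypothesis "over $k'$ the stably minimal model has rational singularities" is unavailable precisely in the problematic cases; and even where it holds, the conclusion $e\,\mu_{\calC}(NP)\in\Z$ with $e$ the ramification index is not what is needed, and the ramification required for semistable reduction is dictated by the reduction type, not by $N$, so there is no reason one can take $e\mid 2N$. The sketch for the Weierstrass-point refinement (parity of the middle term $\eps(P,(N'/2)P)$) inherits these problems and is not substantiated.

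For comparison, the paper argues entirely through N\'eron's theory of local heights: by Lemma~\ref{L:Uchida}, $\hat{\lambda}_i=\hat{\lambda}\circ\frac{\kappa}{\kappa_i}$ is a N\'eron function for $D_i$, so $\mu(P)\equiv\hat{\lambda}_i(P)\equiv j\bigl(D_i,(P)-(O)\bigr)\pmod{\Z}$, where $j$ is N\'eron's bilinear pairing; N\'eron's Proposition~III.2 (with $\#\Phi(\bar{\frk})$ replaced by its exponent) shows the $j$-pairing takes values in $\frac{1}{2N}\Z$, and in $\frac{1}{N}\Z$ when $N$ is odd. The Weierstrass-point case follows because $D_i$ is linearly equivalent over $k^{\nr}$ to $2\Theta_{P_0}$, so bilinearity of $j$ in the first variable absorbs the factor~$2$. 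If you want a proof along your lines, you would need a substitute for the step "$\mu(NP)=0$", namely a congruence $\mu(Q)\equiv 0\pmod{\Z}$ for all $Q\in J_0(k)$ together with a mod-$\Z$ version of the quadraticity argument; that is essentially re-deriving the N\'eron-pairing statement.
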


\begin{proof}
  Let $i \in\{1,\ldots,4\}$ be such that $\kappa_i(P) \ne 0$.
  Recall from Lemma~\ref{L:Uchida} that the function
  $\hat{\lambda}_i= \hat{\lambda}\circ\frac{\kappa}{\kappa_i}$
  is a N\'eron function with respect to the divisor~$D_i$.
  As $P \notin \supp D_i$, we find
  \[ \mu(P) \equiv\hat{\lambda}(x) \equiv \hat{\lambda}_i(P) \pmod{\Z} \]
  for any set of Kummer coordinates~$x$ for~$P$.
  It follows from the results of~\cite{Neron} and~\cite{LangFund}*{\S11.5} that
  \[
  \hat{\lambda}_i(P) \equiv j(D_i, (P) - (O)) \pmod{\Z}\,,
  \]
  where $j(\;,\;)$ denotes N\'eron's bilinear $j$-pairing, defined in~\cite{Neron}*{\S III.3}.

  By~\cite{Neron}*{Prop.~III.2}, the values of the $j$-pairing  lie in
  $\frac{1}{2N'}\Z$, where  $N' = \#\Phi(\bar{\frk})$
  It is easy to see that we can replace $N'$ by the exponent~$N$ in the proof
  of~\cite{Neron}*{Prop.~III.2}, so the first statement of the proposition follows.

  For the second statement, note that the $j$-pairing takes values in $\frac{1}{N}\Z$ if
  $N$ is odd, again by~\cite{Neron}*{Prop.~III.2} and its proof.
  If $C$ has a $k^{\nr}$-rational Weierstrass point
  $P_0$, then the divisor $D_i$ is linearly equivalent over $k^{\nr}$ to
  $2\Theta_{P_0}$, where $\Theta_{P_0}$ is the theta divisor with respect to $P_0$.
  The N\'eron model does not change under unramified extensions, and $\mu(P)
  \bmod{\Z}$ does not depend on the Weierstrass model of $C$ by Corollary~\ref{C:lambdatau}.
  Hence we can assume that $i=1$ and $D_1 = 2\Theta_{P_0}$, so the linearity of the $j$-pairing
  in the first variable proves the claim.
\end{proof}

\begin{rk}\label{R:exc_types}
  In the notation of Namikawa-Ueno~\cite{NamiUeno}, the only reduction types for which
  Proposition~\ref{P:MBound} does not show that $\mu(P) \in 1/N\Z$ (where $N$ is the
  exponent of~$\Phi(\bar{\frk})$), are
  $[2I\!I\!I-l]$ and $[2I\!I\!I^*-l]$ for $l\ge0$; $[2I_n^*-l]$ for $n,l \ge 0$; and
  $[2I_n-l]$ for $n>0$ even and $l\ge 0$.
  We have not found an example where $\mu(P) \notin 1/N\Z$.
\end{rk}

We can compute the group~$\Phi(\bar{\frk})$ in practice
using~\cite{BLR}*{\S9.6}.
For this we need to know the intersection matrix of the special fiber of a
regular model of~$C$ over~$S$.
This is implemented in~{\sf Magma}, but can be rather slow.
If the residue characteristic is not~2, then we can apply Liu's
algorithm~\cite{liuminimaux} to compute the reduction type and read off~$\Phi(\bar{\frk})$.

In general, an upper bound for the exponent of~$\Phi(\bar{\frk})$
suffices to apply Lemma~\ref{L:fast algo 2}.
We give a bound which only depends on the valuation of the discriminant
$\Delta = \Delta(\calC)$.

\begin{lemma}\label{L:tam_bound}
  The exponent of $\Phi(\bar{\frk})$ is bounded from above by
  \[
    M := \max\left\{2, \left\lfloor\frac{v(\Delta)^2}{3}\right\rfloor\right\}\,.
  \]
  Moreover, the denominator of $\mu(P)$ is bounded from above by $M$ for all $P \in J(k)$.
\end{lemma}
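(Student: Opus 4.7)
I would prove the two assertions in tandem, with the exponent bound being the primary target; the denominator bound then follows from Proposition~\ref{P:MBound} (plus Corollary~\ref{C:Denominator} for a sharper analysis when needed), using the slack in $M$ to absorb the factor of~$2$ that appears in the even-exponent clause.

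First reductions: since $\Phi(\bar{\frk})$ depends only on~$J$ and on~$k^{\nr}$, and since by Corollary~\ref{C:lambdatau} the denominator of~$\mu(P)$ is independent of the choice of integral Weierstrass model, I may pass to the maximal unramified extension of~$k$ and assume $\calC$ is minimal. I would then split by whether the minimal proper regular model is semistable.

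In the semistable case, the possible reduction types and their component groups are enumerated in Sections~\ref{formulas}--\ref{formulas2}. The binding case is $[I_{m_1-m_2-m_3}]$: here $\Phi(\bar{\frk}) \cong \Z/d\Z \times \Z/n\Z$ with $d = \gcd(m_1,m_2,m_3)$ and $n = (m_1m_2+m_1m_3+m_2m_3)/d$; since $d \mid n$ (each $m_i$ is divisible by~$d$, so $d^2$ divides each term of $m_1m_2+m_1m_3+m_2m_3$), the exponent of $\Phi(\bar{\frk})$ equals~$n$, and the elementary AM--GM inequality
\[
m_1m_2 + m_1m_3 + m_2m_3 \le \tfrac{1}{3}(m_1+m_2+m_3)^2
\]
gives $n \le v(\Delta)^2/3$. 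The remaining semistable types ($[I_{m-0-0}]$, $[I_{m_1-m_2-0}]$ and $[I_{m_1}-I_{m_2}-l]$) all yield strictly smaller exponents, while the small cases $v(\Delta) \le 2$ (where $M = 2$) are handled by inspection.

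In the non-semistable case, Lemma~\ref{L:allminstab} furnishes a totally ramified extension $k'/k$ of ramification index~$e$ over which $C$ becomes semistable, together with a stably minimal model $\calC'$ of $C/k'$ satisfying $v_{k'}(\Delta(\calC')) \le e\,v(\Delta)$. The semistable bound applies over~$k'$, and the comparison of $\Phi(\bar{\frk})$ with $\Phi_{k'}(\bar{\frk'})$ via the base-change morphism of N\'eron models transfers the bound back to~$k$, using that in each non-semistable Namikawa--Ueno type enough of $v(\Delta)$ comes from the additive part of the degeneration to absorb any loss coming from base change.

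The main obstacle is this last step: a naive estimate over $k'$ would lose a factor of $e^2$, since $v_{k'}(\Delta(\calC')) \le e\,v(\Delta)$ only gives the bound $e^2 v(\Delta)^2/3$ over~$k'$. In practice I would bypass this by a direct case analysis along the Namikawa--Ueno list for genus~$2$: using Proposition~\ref{P:IgusaSemistable} together with the formulas of~\cite{liuminimaux} to read off both $v(\Delta)$ and the exponent of $\Phi(\bar{\frk})$ in each reduction type, and verifying the inequality case by case. The considerable slack left over from AM--GM in the semistable case guarantees that the non-semistable cases (whose component groups are built from elliptic Kodaira types attached along a bridge of length~$l$) all comply.
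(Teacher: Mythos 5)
Your final fallback is exactly the paper's proof: a case-by-case verification over the Namikawa--Ueno reduction types, reading off $\Phi(\bar{\frk})$ from Liu's list in \cite{liuminimaux}*{\S8} and invoking Proposition~\ref{P:MBound} for the denominator statement. The semistable AM--GM computation and the (rightly abandoned) base-change reduction are extra scaffolding around what is in substance the same case analysis, so the proposal is correct and essentially matches the paper.
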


\begin{proof}
  This follows from a case-by-case analysis, using the
  list of groups $\Phi(\bar{\frk})$ from~\cite{liuminimaux}*{\S8} for all reduction types
  in~\cite{NamiUeno}, and Proposition~\ref{P:MBound}.
\end{proof}

\begin{rk} \label{R:den_bound}
  By going through all reduction types, it is possible to obtain better upper bounds
  for the denominator $M'$ of $\mu(P)$ from the Igusa invariants discussed in Section~\ref{igusa}.
  First note that if the special fiber of $\calC$ is non-reduced, then we have
  \begin{enumerate}[(i)]
    \item $M' \le 4$ if $v(\Delta) \le 12$;
    \item $M' \le \max\{12, v(\Delta) -15\}$ otherwise.
  \end{enumerate}
  Suppose that $\calC$ is reduced; then, by Proposition~\ref{P:IgusaInv},
  we can use the Igusa invariants of the special fiber
  to distinguish between the multiplicities of its singularities.
  \begin{enumerate}[\upshape (i)]
    \item If all points on the special fiber of $\mathcal{C}$ have multiplicity at most~2, then we can
    bound $M'$ using Proposition~\ref{P:IgusaSemistable} (i--iii) and
  Propositions~\ref{muchi},~\ref{muchi2},~\ref{muchi3}.
  \item If there is a point of multiplicity~3 on the special fiber, then we have
    \begin{enumerate}[$\bullet$]
      \item $M' \le \min\{6, v(\Delta)+1\}$ if $v(\Delta) \le 10$;
      \item $M' \le 12$, if $v(\Delta) \le 20$;
      \item $M' \le \left\lfloor \frac{(v(\Delta) - 12)^2}{4}\right\rfloor$ otherwise.
    \end{enumerate}
      \item If there is a point of multiplicity $\ge 4$ on the special fiber, then we have
    \begin{enumerate}[$\bullet$]
      \item $M' \le 3v(\Delta) - 10$ if $v(\Delta) \le 10$;
      \item $M'\le4v(\Delta) - 20$ if $v(\Delta) > 10$ and the model is minimal;
      \item $M'\le \left\lfloor\frac{(v(\Delta)-10)^2}{3}\right\rfloor$ if the model is not
        minimal.
    \end{enumerate}
  \end{enumerate}
\end{rk}

The results of this section lead to an efficient algorithm for the computation
of~$\mu(P)$, which is analogous to Algorithm~4.4 of~\cite{MuellerStollEll}.
We assume that the coefficients of $F$ and~$H$ and the coordinates of~$P$ are
given to sufficient $v$-adic precision (in practice, they will be given exactly
as elements of a number field or function field).

\begin{enumerate}[1.]
  \item If $\Char(k) \ne 2$ and $H=0$, set $B \colonequals v(2^{-4} \Delta)$.
        Otherwise, set $B \colonequals v(\Delta)$.
  \item Set $M \colonequals \max\left\{2, \lfloor v(\Delta)^2/3\rfloor\right\}$.
  \item Set $m \colonequals \lfloor \log(BM^2/3)/\log(4) \rfloor$.
  \item Set $\mu_0 \colonequals 0$. Let $x$ be normalized
        Kummer coordinates for~$P$ with $(m+1)B+1$ $v$-adic digits of precision.
  \item For $n \colonequals 0$ to~$m$ do:
        \begin{enumerate}[a.]
          \item Compute $x' \colonequals \delta(x)$ (to $(m+1)B+1$ $v$-adic digits of precision).
          \item If $v(x') = 0$, then return $\mu_0$.
          \item Set $\mu_0 \colonequals \mu_0 + 4^{-n-1} v(x')$.
          \item Set $x \colonequals \pi^{-v(x')} x'$
        \end{enumerate}
  \item Return the unique fraction with denominator at most~$M$
        in the interval between $\mu_0$ and~$\mu_0 + 1/M^2$.
\end{enumerate}
The fraction in the final step can be computed easily, for instance using continued fractions.

For the  complexity analysis in the following proposition, we assume that elements of~$\O$
are represented as truncated power series in~$\pi$, whose coefficients
are taken from a complete set of representatives for the residue classes.
Operations on these coefficients can be performed in time $\ll \Mult(\log \#\frk)$.

\begin{prop} \label{P:fast algo}
  The algorithm above computes $\mu(P)$. Its running time is
  \[ \ll (\log v(\Delta)) \Mult\bigl((\log v(\Delta)) v(\Delta) (\log \#\frk)\bigr) \]
  as $v(\Delta) \to \infty$, with an absolute implied constant.
\end{prop}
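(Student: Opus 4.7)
The plan is as follows. For correctness, I would prove by induction that at the start of iteration $n$ the variable $\mu_0$ equals $\sum_{i=0}^{n-1} 4^{-i-1}\eps(2^i P)$ and $x$ is a set of normalized Kummer coordinates for $2^n P$; this uses that $\delta$ applied to normalized Kummer coordinates for $Q$ yields, after rescaling by $\pi^{-v(\delta(x))}$, normalized Kummer coordinates for $2Q$, and that the valuation shift is exactly $\eps(Q)$. If $v(x') = 0$ at some iteration, then $\eps(2^n P) = 0$, so Theorem~\ref{T:mu_U} yields $2^n P \in U$; since $U$ is a subgroup, $2^i P \in U$ and $\eps(2^i P) = 0$ for all $i \ge n$, whence $\mu(P) = \mu_0$ and the early return is correct. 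Otherwise, after the $m+1$ iterations we have $\mu_0 = \sum_{n=0}^m 4^{-n-1}\eps(2^n P)$, and Lemma~\ref{L:fast algo 2}, combined with the denominator bound $M$ of Lemma~\ref{L:tam_bound} and the bound $B$ for~$\eps$ from Corollary~\ref{C:EpsBound}, identifies $\mu(P)$ as the unique fraction of denominator $\le M$ in $[\mu_0, \mu_0 + 1/M^2]$, which is what the last step returns (for instance via the continued-fraction expansion of $\mu_0$).

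For the running time, note that $B = O(v(\Delta))$ and $M = O(v(\Delta)^2)$ give $m + 1 = O(\log v(\Delta))$ iterations and a working precision of $L = (m+1)B + 1 = O((\log v(\Delta))\, v(\Delta))$ $v$-adic digits. An element of $\O/\pi^L$ is stored as a truncated power series of length $L$ over $\frk$, so the product of two such elements reduces by Kronecker substitution to multiplying two integers of $O(L \log \#\frk)$ bits, costing $\Mult(L \log \#\frk)$. Each iteration performs only a constant number of such multiplications (to evaluate the four quartic forms $\delta_1, \ldots, \delta_4$), together with a valuation read-off and a $\pi$-adic digit shift, both of which are linear in $L \log \#\frk$; hence each iteration takes $O(\Mult(L \log \#\frk))$ time. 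Summing over the $O(\log v(\Delta))$ iterations, and absorbing the final continued-fraction step (which manipulates only numbers of bit-size $O(\log M)$) into the error term, yields the claimed bound $O((\log v(\Delta))\, \Mult((\log v(\Delta))\, v(\Delta)\, \log \#\frk))$.

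The main technical point I expect to need care is the precision bookkeeping. Because the coefficients of $\delta$ lie in $\O$, polynomial evaluation preserves $\pi$-adic precision, so if $x$ is known modulo $\pi^p$ then so is $x'$. The valuation $v(x') \le B$ can therefore be read off exactly as long as $p > B$, and the only loss of precision comes from the normalization, which strips $v(x') \le B$ digits. Starting from precision $(m+1)B + 1$ guarantees that at least $B + 1$ digits survive through all $m+1$ passes, which is just enough; verifying this invariant is the delicate part of the proof, but once it is in place the rest of the argument is essentially routine.
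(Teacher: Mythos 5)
Your proposal is correct and follows essentially the same route as the paper's proof: correctness via Theorem~\ref{T:mu_U} for the early return and Lemma~\ref{L:fast algo 2} with the bounds from Corollary~\ref{C:EpsBound} and Lemma~\ref{L:tam_bound}, plus the observation that the starting precision $(m+1)B+1$ leaves enough $v$-adic digits (more than $B$ entering each pass) to read off each valuation exactly, and the same cost accounting $m \ll \log v(\Delta)$, $B \ll v(\Delta)$ giving $\ll (\log v(\Delta))\,\Mult\bigl((\log v(\Delta))\,v(\Delta)\,(\log\#\frk)\bigr)$. The only differences are cosmetic: you make the induction invariant and the power-series multiplication model (Kronecker substitution) explicit, which the paper leaves implicit.
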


\begin{proof}
  The following proof is analogous to the proof of~\cite{MuellerStollEll}*{Prop.~4.5}.
  Corollary~\ref{C:EpsBound} shows that $B$ is a suitable upper bound for~$\eps$
  and Lemma~\ref{L:tam_bound} shows that $M$ is an upper bound for the denominator of~$\mu$.
  Because $M\ge 2$, the loop in step~5 computes the sum in Lemma~\ref{L:fast algo 2}.
  Note that when $v(x')=0$ in step~5b, then $\mu(P) = \mu_0$ by Theorem~\ref{T:mu_U}.
  At each duplication step, the precision loss is~$\eps(2^n P) \le B$, so that with our
  choice of starting precision, after the
  $m+1$~steps in the loop the resulting~$x$ still has at least one digit
  of precision. This proves the correctness of the algorithm.

  Clearly the running time of the algorithm is dominated by the running time of the loop in step~5.
  Step~5a consists of a fixed number of additions and multiplications of elements of $\O$
  which are given to a precision of $(m+1)B + 1$ digits.
  Because steps~5b--5d take negligible time compared to step~5a, each pass through the loop
  takes \[ \ll \Mult\bigl(((m+1)B + 1) (\log \#\frk)\bigr) \]
  operations, leading to a total running time that is
  \begin{align*}
    &\ll (m+1) \Mult\bigl(((m+1)B + 1) (\log \#\frk)\bigr) \\
    &\ll m \Mult(m B (\log \#\frk)) \\
    &\ll (\log v(\Delta)) \Mult\bigl((\log v(\Delta)) v(\Delta) (\log \#\frk)\bigr)
  \end{align*}
  as $v(\Delta) \to \infty$.
  Here we use that $B \ll v(\Delta)$ and $M \ll v(\Delta)^2$, so that $m \ll \log v(\Delta)$.
\end{proof}

\begin{rk}\label{R:better_bounds}
  In step~2, we can use Remark~\ref{R:den_bound} to compute a sharper upper bound
  for the denominator of~$\mu$.
  See also the discussion following Remark~\ref{R:exc_types}.
  Of course, if we want to find $\mu(P)$ for several points $P$, the quantities $M,\,
  B$ and $m$ only have to be computed once.
\end{rk}

\begin{rk}\label{R:DVF}
  We can compute $\mu(P)$ using the algorithm above in more general situations.
  Suppose that $k$ is any discretely valued field with valuation ring~$\O$
  and uniformizer~$\pi$.
  In that case, the sequence $(\mu(nP))_n$ might not have a finite period, so the method
  for the computation of~$\mu(P)$ discussed in Section~\ref{genhts} might not be
  applicable.
  However, Lemma~\ref{L:fast algo}, Lemma~\ref{L:fast algo 2},
  Proposition~\ref{P:MBound} and Lemma~\ref{L:tam_bound} remain valid.
  If $\Char(k) \ne 2$ and if $H=0$, then we have the upper bound
  $\eps(P) \le v(2^{-4}\Delta)$ (cf.~Remark~\ref{R:eps_DVF}), so the algorithm
  above can be used and Proposition~\ref{P:fast algo} remains valid as well,
  in the sense that the computation can be done using $\ll \log v(\Delta)$
  operations with elements of $\O/\pi^n\O$, where $n \ll v(\Delta) \log v(\Delta)$.
  In the remaining cases, we can compute an upper bound~$B$ on~$\eps$ as in
  Remark~\ref{R:eps_DVF}, and we can apply the algorithm with this choice of~$B$.
\end{rk}


\section{Computing $\mu$ at archimedean places} \label{HCinf}

In this section, $k$ is an archimedean local field, so $k = \R$ or $k = \C$.
We assume that the curve~$C$ is given by a Weierstrass equation~$\calC$ with $H = 0$.
In the following, $\log_+ x = \max\{0, \log x\}$.

Let $x \in k^4$ be a set of Kummer coordinates.
Recall that
\[ \tilde\eps(x) = - [k : \R] \left(\log \|\delta(x)\|_\infty - 4 \log \|x\|_\infty\right) \]
and
\[ \tilde{\mu}(x) = \sum_{n=0}^\infty 4^{-n-1} \tilde{\eps}(\delta^{\circ n}(x)) \,. \]
We easily obtain
a lower bound for~$\tilde{\eps}$ using the standard estimate for $\|\delta(x)\|_\infty$.
Since the coefficients of the duplication polynomials~$\delta_j$ are universal polynomials
of degree at most~$4$ in the coefficients of~$F$, this gives
\[ -\tilde{\eps} \ll 1 + \log_+ \|F\|_\infty \,, \]
where $\|F\|_\infty$ is the maximum norm of the coefficient vector of~$F$.
We recall that
the method described in Section~7 of~\cite{StollH1}, leading to equation~(7.1) there,
provides an upper bound~$\tilde\gamma$ for $\tilde\eps$ that can be explicitly computed
for any given Weierstrass equation~$\calC$ of the curve (provided $H=0$). It is given by
\begin{align*}
  \tilde{\gamma} &=
  \log \max_i \left(\sum_{\{S,S'\}} |a_{i,\{S,S'\}}| \sqrt{\sum_{j=1}^4 |b_{\{S,S'\},j}|}\right)^2 \\
     &\le \log 400 + 2 \log \max_{i, \{S,S'\}} |a_{i,\{S,S'\}}|
                   + \log \max_{\{S,S'\}, j} |b_{\{S,S'\},j}|
\end{align*}
with certain numbers $a_{i,\{S,S'\}}$, $b_{\{S,S'\},j}$, where $i,j \in \{1,2,3,4\}$
and $\{S,S'\}$ runs through the ten partitions of the set of roots of~$F$ into two sets
of three. Using the formulas in~\cite{StollH1}*{\S10} and Mignotte's bound
(see for example~\cite{GG}*{Cor.~6.33}), we see that
\[ \log \max_{\{S,S'\}, j} |b_{\{S,S'\},j}| \ll 1 + \log_+ \|F\|_\infty \]
and
\[ \log \max_{i, \{S,S'\}} |a_{i,\{S,S'\}}|
     \ll 1 + \log_+ \|F\|_\infty + \log_+ \max_{\{S,S'\}} |R(S,S')|^{-1} \,,
\]
where $R(S,S')$ is the resultant of the two factors $G$, $G'$ of~$F$ corresponding to the
partition of the roots. Using Mignotte's bound again, we find that
\[ |R(S,S')|^{-1} = \frac{\sqrt{|\disc G|\,|\disc G'|}}{\sqrt{|\disc F|}}
                  \ll \|F\|_\infty^2 |\Delta(\calC)|^{-1/2} \,,
\]
leading finally to the estimate
\[ |\tilde{\eps}| \ll  1 + \log_+ \|F\|_\infty + \log_+ |\Delta(\calC)|^{-1}  \equalscolon s(F)\,. \]
If $|\tilde{\eps}(x)| \le \tilde{\eta}$ for all $x \in \KS_{\A}$, then we have
\[ \left|\sum_{n \ge N} 4^{-n-1} \tilde{\eps}(\delta^{\circ n}(x))\right|
     \le \frac{\tilde{\eta}}{3} 4^{-N} \,,
\]
so we need to sum the first
\[ N = \left\lceil \frac{d}{2} + \frac{\log (\tilde{\eta}/3)}{\log 4} \right\rceil
     \ll d + \log s(F)
\]
terms to obtain an accuracy of~$2^{-d}$. Comparing the largest term in any of
the~$\delta_j$ and the lower bound on~$\|\delta(x)\|_\infty$, we obtain a
bound~$\tilde\theta$ on the loss of relative precision (in terms of bits)
in the computation of~$\delta(x)$; we have
$\tilde\theta \ll s(F)$.
To achieve the desired precision at the end, we therefore need to compute
with an initial precision of
\[ d + 1 + N \tilde\theta \ll (d + \log s(F)) s(F) \]
bits. The time needed for each duplication is then
\[ \ll \Mult\bigl((d + \log s(F)) s(F)\bigr) \,. \]
A logarithm can be computed to $d$~bits of precision in time $\ll (\log d) \Mult(d)$
by one of several quadratically converging algorithms, see for
example~\cite{Borwein}*{Chapter~7}, so we obtain the following result.

\begin{prop} \label{P:arch}
  Given Kummer coordinates~$x$ of a point~$P$ in~$J(k)$ (or~$\KS(k)$) to sufficient
  precision, we can compute $\tilde\mu(P)$ to an accuracy of~$d$ bits in time
  \[ \ll \bigl(d + \log s(F)\bigr) (\log d) \Mult\bigl((d + \log s(F)) s(F)\bigr) \,, \]
  where
  \[ s(F) = 1 + \log_+ \|F\|_\infty + \log_+ |\Delta(\calC)|^{-1} \,. \]
\end{prop}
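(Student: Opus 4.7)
The plan is to truncate the defining series
\[
\tilde\mu(P) = \sum_{n=0}^\infty 4^{-n-1} \tilde\eps(\delta^{\circ n}(x))
\]
at a suitable index $N$, bound the tail using the a priori estimate on $|\tilde\eps|$ recorded just before the proposition, and then carefully track both the number of duplications required and the working precision needed so that accumulated rounding error does not spoil the final $d$-bit accuracy. Concretely, the bound $|\tilde\eps(x)| \ll s(F)$, obtained by combining the explicit upper bound $\tilde\gamma$ of~\cite{StollH1}*{\S7} with Mignotte's estimate for the inverse resultants $|R(S,S')|^{-1} \ll \|F\|_\infty^2 |\Delta(\calC)|^{-1/2}$, gives the geometric tail bound
\[
\Bigl|\sum_{n\ge N} 4^{-n-1} \tilde\eps(\delta^{\circ n}(x))\Bigr|
  \;\le\; \tfrac{1}{3}\, s(F)\cdot 4^{-N},
\]
which is smaller than $2^{-d-1}$ as soon as $N \ll d + \log s(F)$.

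Next, I would analyze the loss of precision in the iterated duplication $x \mapsto \delta(x) \mapsto \delta^{\circ 2}(x) \mapsto \cdots$. Comparing the size of the individual monomials appearing in the degree-four polynomials $\delta_j$ against the lower bound on $\|\delta(x)\|_\infty$ yields a per-step loss $\tilde\theta \ll s(F)$ of bits of relative precision. Hence a working precision of $d + 1 + N\tilde\theta \ll (d + \log s(F))\, s(F)$ bits is sufficient for every arithmetic operation, and each of the $N$ duplications then costs $\ll \Mult\bigl((d + \log s(F))\, s(F)\bigr)$.

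Finally, to recover $\tilde\eps(\delta^{\circ n}(x))$ from the output of the $n$-th duplication one needs a single logarithm evaluated at the same precision, computable in time $\ll (\log d)\, \Mult\bigl((d + \log s(F))\, s(F)\bigr)$ by one of the quadratically-convergent AGM-type algorithms of~\cite{Borwein}*{Ch.~7}. Summing over the $N \ll d + \log s(F)$ terms shows that this logarithm cost dominates the duplication cost by a factor of $\log d$ and reproduces exactly the bound in the statement. The only step that requires genuine care is the precision bookkeeping: one must verify that the $N$-fold cumulative loss of $\tilde\theta$ bits per iteration truly fits inside the initial precision budget, but this is immediate once $\tilde\theta \ll s(F)$ has been established uniformly in~$n$.
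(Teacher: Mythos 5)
Your proposal follows the paper's own proof essentially verbatim: truncation after $N \ll d + \log s(F)$ terms justified by the two-sided bound $|\tilde{\eps}| \ll s(F)$ (itself obtained from the explicit bound $\tilde\gamma$ of~\cite{StollH1} together with Mignotte's estimate for the resultants), a per-duplication loss of $\tilde\theta \ll s(F)$ bits forcing a working precision of $\ll (d + \log s(F))\,s(F)$ bits, and quadratically convergent logarithm evaluations supplying the factor $\log d$. The argument is correct and there is nothing substantive to add.
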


In the applications $k$ will be the completion of a number field at a real
or complex place. If the number field is $\Q$ and the given equation~$\calC$ of~$C$ is integral,
then $|\Delta(\calC)| \ge 1$ and we have $s(F) = 1 + \log \|F\|_\infty = 1 + h(F)$, where $h(F)$
denotes the (logarithmic) height of the coefficient vector of~$F$ as a point
in affine space. In general, we have the estimate
(denoting the value of~$s(F)$ for a place~$v$ by~$s_v(F)$)
\begin{align*}
  \sum_{v \mid \infty} s_v(F)
     &\le [K : \Q] + \sum_{v \mid \infty} \log_+ \|F\|_v
              + \sum_{v \mid \infty} \log_+ |\Delta(\calC)|_v^{-1} \\
     &\le [K : \Q] + h(F) + h(\Delta(\calC)) \ll h(F)
\end{align*}
for $h(F)$ large.
This implies that we can compute the infinite part of the height correction function
in time
\[ \ll \bigl(d + \log h(F)\bigr) (\log d) \Mult\bigl((d + \log h(F)) h(F)\bigr) \,, \]
which is polynomial in~$d$ and~$h(F)$.


\section{Computing the canonical height of rational points} \label{S:cch}

The first algorithm for computing the canonical height on a genus~2 Jacobian over~$\Q$
was introduced by Flynn and Smart~\cite{FlynnSmart}.
It does not require any integer factorization, but can be impractical even for
simple examples, see the discussion in~\cite{StollH2}*{\S1}.
A more practical algorithm was introduced by the second author in~\cite{StollH2}; here the
local height correction functions are computed separately, so some integer factorization is
required. Uchida~\cite{UchiCano} later introduced a similar algorithm.
De Jong and the first author~\cite{deJongMueller} used division polynomials for a different
approach.

Building on the Arakelov-theoretic Hodge index theorem for arithmetic surfaces due to
Faltings and Hriljac, Holmes~\cite{Holmes} and the first author~\cite{MuellerArak}
independently developed algorithms for the computation of canonical heights of points on
Jacobians of hyperelliptic curves of arbitrary genus over global fields.
While these algorithms can be used to compute canonical heights for
genus as large as~10 (see~\cite{MuellerArak}*{Example~6.2}), they are much slower than the
algorithm from~\cite{StollH2} when the genus is~2.

In this section
we now combine the results of Sections \ref{algo1} and~\ref{HCinf} into an efficient
algorithm for computing the canonical height of a point on the Jacobian of
a curve of genus~2 over a global field~$K$.

When $K$ is a function field, then there are no archimedean places and factorization
is reasonably cheap. So in this case, the best approach seems to be to first
find the places~$v$ of~$K$ such that $\mu_v(P)$ is possibly non-zero (this includes
the places at which the given equation of the curve is non-integral) and then
compute the corrections~$\mu_v(P)$ for each place separately as in the algorithm
of Proposition~\ref{P:fast algo}, if necessary changing first to an integral
model and correcting for the transformation afterwards.
In fact this approach can be used whenever $K$ is a field with a set of absolute values that
satisfy the product formula, because the algorithm before Proposition~\ref{P:fast algo}
is applicable over any discretely valued field, see Remark~\ref{R:DVF}.
This includes function fields such as $\Q(t)$ and $\C(t)$.

If $K$ is a number field, then we compute the contribution from the archimedean
places as described in Section~\ref{HCinf}.
The finite part of our algorithm is analogous to our quasi-linear algorithm for the
computation of the finite part of the canonical height of a point on an elliptic
curve in~\cite{MuellerStollEll}; see Proposition~\ref{P:nofact} below.
For simplicity, we take $K$
to be~$\Q$ in the following. We write $\eps_p$ and~$\mu_p$ for the local height
correction functions
over~$\Q_p$ as given by Definition~\ref{defepsmu} and~$\tilde{\mu}_\infty$ for the
local height correction function over~$\R$ as defined in equation~\eqref{E:tildemu}.

We assume that our curve is given by a model $\calC \colon Y^2 = F(X,Z)$
with $F \in \Z[X,Z]$, and we set $\Delta = \Delta(\calC)$.
Our goal is to devise an algorithm for the computation of~$\hat{h}(P)$
that runs in time polynomial in $\log \|F\|_\infty$, $h(P)$ and the required precision~$d$
(measured in bits after the binary dot). We note that $h(P)$ can be computed in time
\[ {} \ll \log(h(P) + d) \Mult(h(P) + d)\,, \]
since it is just a logarithm. By Proposition~\ref{P:arch},
the height correction function~$\tilde{\mu}_\infty(P)$ can be computed in polynomial time.
So we only have to find an efficient algorithm for the computation of
the `finite part'
$\tilde{\mu}^{\text{f}}(P) \colonequals \sum_p \mu_p(P) \log p$ of the height correction.

Fix $P \in J(\Q)$.
We call a set $x$ of Kummer coordinates for $P$ {\em primitive} if $x \in \Z^4$
and $\gcd(x) = 1$.
We set $g_n = \gcd(\delta(x^{(n)}))$, where $x^{(n)}$ is a
primitive set of Kummer coordinates for~$2^n P$.
Then
\[ \tilde{\mu}^{\text{f}}(P) = \sum_{n=0}^\infty 4^{-n-1} \log g_n \,. \]
We also know by~\cite{StollH1}
that $g_n$ divides $D = |\Delta|/2^4 = 2^4 |\disc(F)|$, which implies
that $\log g_n \le \log D$ for all $n$. To achieve a precision of~$2^{-d}$,
it is therefore enough to take the sum up to
\[ n = m \colonequals \left\lfloor \frac{d}{2} + \log\frac{\log D}{3} \right\rfloor
     \ll d + \log\log D \ll d + \log\log \|F\|_\infty \,.
\]
Since at each duplication step, we have to divide by~$g_n$ to obtain primitive
coordinates again, it suffices to do the computation modulo~$D^{m+2}$. This leads
to the following algorithm.
\begin{enumerate}[1.]
  \item Let $D = |\Delta|/16$ and set
        $m \colonequals \lfloor d/2 + \log\log D - \log 3 \rfloor$.
  \item Let $x$ be primitive Kummer coordinates for~$P$.
  \item Set $\mu \colonequals 0$.
  \item For $n \colonequals 0$ to $m$ do:
        \begin{enumerate}[a.]
          \item Compute $x' \colonequals \delta(x) \bmod D^{m+2}$.
          \item Set $g_n \colonequals \gcd(D, \gcd(x'))$ and $x \colonequals x'/g_n$.
          \item Set $\mu \colonequals \mu + 4^{-n-1} \log g_n$ (to $d$ bits of precision).
        \end{enumerate}
  \item Return $\tilde{\mu}^{\text{f}}(P) \approx \mu$.
\end{enumerate}

\begin{prop}\label{P:mu_approx}
  This algorithm computes $\tilde{\mu}^{\text{\rm f}}(P)$ to $d$~bits of precision in time
  \[ \ll (d + \log\log D)\log (d + \log\log D)\Mult\bigl((d + \log\log D) \log D\bigr) + h(P) \,. \]
\end{prop}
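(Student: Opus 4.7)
The plan is to separate the argument into correctness and complexity. For correctness, I need to verify (a) that carrying out the entire loop with values reduced modulo $D^{m+2}$ does not alter the computed $g_n$ or corrupt the stored intermediate coordinates, and (b) that truncating the series at $n=m$ incurs an error of at most $2^{-d}$.

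The crucial input for (a) is the result from~\cite{StollH1} that, for a primitive set $x^{(n)}$ of Kummer coordinates of $2^n P$, one has $g_n := \gcd(\delta(x^{(n)})) \mid D$. Two consequences follow. First, in step~4b the expression $\gcd(D,\gcd(x'))$ recovers the true $g_n$, since $\gcd(\delta(x^{(n)})) \mid D$ and nothing is lost by reducing $\delta(x)$ modulo $D^{m+2}$. Second, an induction on $n$ shows that after iteration $n$ the stored vector equals the true primitive Kummer coordinates of $2^{n+1}P$ modulo $N_n := D^{m+2}/(g_0 \cdots g_n)$; because each $g_i \mid D$, one has $N_n \ge D^{m+1-n}$, which remains divisible by $D$ throughout $n \le m$, ensuring that every successive $g_{n+1}$ is extracted exactly. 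For (b), the crude bound $0 \le \log g_n \le \log D$ gives a tail $\le (\log D)/(3 \cdot 4^{m+1})$, which is at most $2^{-d}$ by the choice of $m$.

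For the complexity, note first that $m+1 \ll d + \log\log D$. Each pass through the loop performs a bounded number of arithmetic operations on integers stored modulo $D^{m+2}$, whose bit-length is $O((d+\log\log D)\log D)$; this costs $\Mult((d+\log\log D)\log D)$ per iteration and $(d+\log\log D)\,\Mult((d+\log\log D)\log D)$ in total. Each of the $m+1$ logarithms $\log g_n$, with $g_n \le D$, can be computed to $d$ bits in $O(\log(d+\log\log D))\,\Mult(d+\log\log D)$ time by AGM-type algorithms, see~\cite{Borwein}*{Ch.~7}; summing the per-iteration costs with this factor accounts for the $\log(d+\log\log D)$ in the stated bound. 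Reductions modulo $D$ and the gcd operations fit within the same budget. The additive $h(P)$ term covers the cost of reading $P$ and producing its initial primitive Kummer coordinates.

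The main technical delicacy is (a): tracking how the successive divisions by $g_n$ eat into the available modulus, and checking that sufficient precision still remains at the last iteration. The uniform divisibility $g_n \mid D$ — rather than a merely $n$-dependent bound — is precisely what allows $D^{m+2}$ to be fixed as the working modulus from the outset and so bypasses any factorization of~$\Delta$ entirely; once this is in place the remaining estimates are routine.
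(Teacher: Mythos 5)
Your correctness argument is essentially the paper's (which simply refers back to the discussion preceding the algorithm): the identity $\tilde{\mu}^{\text{f}}(P)=\sum_{n}4^{-n-1}\log g_n$, the divisibility $g_n\mid D$ from \cite{StollH1}, the tail estimate dictating the choice of~$m$, and the observation that the successive divisions by the~$g_n$ are what make the fixed working modulus $D^{m+2}$ sufficient. Your induction on the remaining modulus $D^{m+2}/(g_0\cdots g_n)\ge D^{m+1-n}$ is a correct and welcome elaboration of a point the paper leaves implicit, and your treatment of the truncation error and of the additive $h(P)$ term agrees with the paper.

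The gap is in the complexity accounting. You attribute the factor $\log(d+\log\log D)$ to the AGM evaluations of the logarithms $\log g_n$ and dismiss the gcd computations with the unsupported assertion that they ``fit within the same budget''. They do not, as budgeted: step~4b takes a gcd involving integers of bit-length $\approx(m+2)\log D\approx(d+\log\log D)\log D$, and fast gcd of $N$-bit integers costs $\ll \Mult(N)\log N$, i.e.\ carries an extra logarithmic factor beyond your per-iteration allowance of $\Mult\bigl((d+\log\log D)\log D\bigr)$; nor is it dominated by your logarithm term, whose $\Mult$-argument is only $d+\log\log D$ rather than $(d+\log\log D)\log D$. In the paper's proof this gcd is precisely the dominant step and the source of the $\log(d+\log\log D)$ factor in the stated bound, while the logarithm evaluations (each $\ll\log(d+\log D)\Mult(d+\log D)$) are dominated by it. To close the gap you must either account for the gcd's extra $\log$ factor explicitly, as the paper does, or substantiate the shortcut you only hint at with ``reductions modulo $D$'': first reduce the four entries of $x'$ modulo $D$ (one division each, $\ll\Mult\bigl((d+\log\log D)\log D\bigr)$) and only then take gcds of $\log D$-bit integers at cost $\ll\Mult(\log D)\log\log D$, which is indeed within your budget. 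As written, the one assertion you do not justify is exactly the step that produces the factor your proof is supposed to explain.
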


\begin{proof}
  The discussion preceding the algorithm shows that it is correct.
  The duplication in step~4a  can be computed in time
  $ \ll \Mult((m+2) \log D) \ll  \Mult\bigl((d + \log\log D) \log D\bigr)$,
  while the gcd in step~4b can be computed in time
  \[ {} \ll \Mult((m+2) \log D)\log\bigl((m+2) \log D\bigr)
        \ll \log(d + \log \log D) \Mult\bigl((d + \log\log D) \log D\bigr) \,;
  \]
  the division is even faster, since $g_n$ is small.
  The computation of the logarithm takes time $\ll \log(d + \log D) \Mult(d + \log D)$;
  this is dominated by the time for computing the gcd. This gives a time complexity of
  \[ {} \ll (d + \log\log D) \log(d + \log\log D) \Mult\bigl((d + \log\log D) \log D\bigr) + h(P) \,, \]
  where the last term comes from processing the input~$x$.
\end{proof}

Note that $\log D \ll \log \|F\|_\infty$, so this bound is similar to (and even
better by a factor of~$\log d$ than) the complexity for computing $\tilde{\mu}_\infty(P)$.

\begin{rk} \label{R:nf1}
  We note that an alternative way to proceed is to compute
  $x' = \delta^{\circ(m+1)}(x)$ mod~$D^{m+2}$ (without dividing out gcd's in between)
  and then use $\mu = 4^{-m-1} \log \gcd(x')$. The advantage of the algorithm
  above is that we can actually work mod~$D^{m+2-n}$, which makes the computation
  more efficient. The advantage of the alternative is that it can also be used
  when working over a number field with non-trivial class group (replacing
  $\log \gcd(x')$ by the logarithm of the ideal norm of the ideal generated by~$x'$).
  The resulting complexity is similar, with the implied constant depending
  on the base field.
\end{rk}

We now show that we can in fact do quite a bit better than this,
by using the strategy already employed in~\cite{MuellerStollEll}.
Note that $\tilde{\mu}^{\text{f}}(P)$ is a rational linear combination
of logarithms of positive integers. We can compute
such a representation exactly and efficiently by the following algorithm.
We again assume that $x$ is a set of primitive Kummer coordinates for~$P$.

\begin{enumerate}[1.]
  \item Set $x' \colonequals \delta(x)$, $g_0 \colonequals \gcd(x')$ and $x \colonequals x'/g_0$.
  \item Set $D \colonequals \gcd(2^4 \disc(F), g_0^\infty)$ and
        $B \colonequals \lfloor \log D/\log 2 \rfloor$.
  \item If $B \le 1$, return~0. \\
        Otherwise, set $M \colonequals \max\{2,  \lfloor (B+4)^2/3 \rfloor\}$
        and $m \colonequals \lfloor \log(B^3M^2/3)/\log 4 \rfloor$.
  \item For $n \colonequals 1$ to $m$ do:
        \begin{enumerate}[a.]
          \item Compute $x' \colonequals \delta(x) \bmod D^{m+1} g_0$.
          \item Set $g_n \colonequals \gcd(D,\gcd(x'))$ and $x \colonequals x'/g_n$.
        \end{enumerate}
      \item Using the algorithm in~\cite{dcba2} (or in~\cite{dcba}), compute
        a sequence $(q_1, \ldots, q_r)$ of pairwise coprime positive integers
        such that each~$g_n$ (for $n = 0, \ldots, m$) is a product of powers
        of the~$q_i$: $g_n = \prod_{i=1}^r q_i^{e_{i,n}}$.
  \item For $i \colonequals 1$ to $r$ do:
        \begin{enumerate}[a.]
          \item Compute $a \colonequals \sum_{n=0}^{m} 4^{-n-1} e_{i,n}$.
          \item Let $\mu_i$ be the simplest fraction between $a$ and $a+1/(B^2M^2)$.
        \end{enumerate}
  \item Return $\sum_{i=1}^r \mu_i \log q_i$ (a formal linear combination of logarithms).
\end{enumerate}

\begin{prop} \label{P:nofact}
  The preceding algorithm computes $\tilde{\mu}^{\text{\rm f}}(P)$ in time
  \[ {} \ll (\log\log D)^2 \Mult\bigl((\log\log D) (\log D)\bigr) + \Mult(h(P))(\log h(P)) \,. \]
\end{prop}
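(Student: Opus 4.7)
The plan is to establish correctness first, then bound the running time by analysing each step separately.

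For correctness, the key observation is that every prime dividing some $g_n$ must already divide $g_0$. Indeed, by Theorem~\ref{T:mu_U}, $\eps_p(P)=0$ is equivalent to $\mu_p(P)=0$, and the latter forces $\eps_p(2^n P)=0$ for all~$n\ge 0$; so if $p\nmid g_0$ then $v_p(g_n)=0$ for all $n$. Combined with the inequality $v_p(g_n)\le v_p(2^4\disc(F))$ from~\cite{StollH1}, this shows $g_n\mid D$ for every $n\ge 0$, where $D=\gcd(2^4\disc(F),g_0^\infty)$. Hence the truncation $x'\equiv\delta(x)\pmod{D^{m+1}g_0}$ in step~4a still allows us to recover the exact values of the $g_n$: the factor $D^{m+1}$ is larger than any product $g_0 g_1\cdots g_m$ we divide out, while the extra~$g_0$ leaves enough precision for the divisions to land in $\Z$. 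Next, by Lemma~\ref{L:tam_bound} together with $v_p(\Delta)\le v_p(D)+4\le B+4$ for every prime $p\mid D$, the denominator of $\mu_p(P)$ is at most $M=\max\{2,\lfloor(B+4)^2/3\rfloor\}$.

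For the final identification step, write the coprime factorisation $g_n=\prod_i q_i^{e_{i,n}}$ (produced in step~5 by the algorithm of~\cite{dcba2}); for each prime $p\mid D$ there is a unique index $i$ with $p\mid q_i$, and setting $c_{i,p}=v_p(q_i)$ we have $v_p(g_n)=c_{i,p}e_{i,n}$, whence $\mu_p(P)=c_{i,p}\sum_n 4^{-n-1}e_{i,n}=c_{i,p}\mu_i^{\text{exact}}$. Thus each $\mu_i^{\text{exact}}$ lies in $\tfrac{1}{M}\Z$. The sum $a$ computed in step~6a satisfies
\[
0\le \mu_i^{\text{exact}}-a\le \sum_{n>m}4^{-n-1}B \;\le\; \frac{B}{3\cdot 4^{m+1}}<\frac{1}{B^2M^2}
\]
by our choice of $m$. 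Any two distinct rationals with denominators $\le M$ differ by at least $1/M^2>1/(B^2M^2)$, so there is a unique such fraction in $[a,a+1/(B^2M^2)]$, namely $\mu_i^{\text{exact}}$; this is the simplest fraction returned in step~6b (all other rationals in the interval have denominator $>M$). Summing $\mu_i\log q_i$ over~$i$ then reproduces $\sum_p\mu_p(P)\log p=\tilde{\mu}^{\text{f}}(P)$.

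For the complexity bound, note that $B\ll\log D$ and $M\ll(\log D)^2$, so $m\ll\log(B^3 M^2)\ll\log B\ll\log\log D$. Each execution of step~4a is a fixed number of additions and multiplications modulo an integer of bit-length $\ll(m+1)\log D\ll(\log\log D)(\log D)$, costing $\ll\Mult((\log\log D)(\log D))$; the gcd in step~4b is dominated by this after a preliminary reduction of $x'$ modulo~$D$. Over $m\ll\log\log D$ iterations this yields the bound $\ll(\log\log D)^2\Mult((\log\log D)(\log D))$ once we account for the logarithmic overhead of the half-gcd. Bernstein's coprime factorisation~\cite{dcba2} on $m+1$ integers of total bit-length $\ll m\log D$ runs in time quasi-linear in this size and so is absorbed into the same bound, as are the $r\le\log D$ simplest-fraction computations in step~6 (each of size $\ll\log\log D$). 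The only remaining cost is the processing of the input point~$P$, which consists of one evaluation of the degree-$4$ polynomials~$\delta_i$ on integers of size $h(P)$ followed by a gcd, contributing $\Mult(h(P))\log h(P)$.

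The subtle part of the argument is calibrating the three quantities $B$, $M$, and~$m$ so that (i)~the truncation tail of $\mu_i^{\text{exact}}$ is strictly smaller than the separation $1/M^2$ between admissible rationals, while (ii)~the total working precision $(m+1)\log D$ stays in the range $(\log\log D)(\log D)$. The fact that $\mu_p(P)$ has denominator $\ll v_p(\Delta)^2$ rather than growing linearly in $v_p(\Delta)$ (which is what Lemma~\ref{L:tam_bound} provides, using the results of Section~\ref{kermu}) is what makes this balance work, and is the essential ingredient that turns a naive $\Mult(v_p(\Delta)(\log D))$-style cost into the quasi-linear $\Mult((\log\log D)(\log D))$-style cost in the statement.
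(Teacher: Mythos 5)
Your overall strategy is the same as the paper's: show that the truncated computation recovers the exact $g_n$ for $n\le m$, bound denominators via Lemma~\ref{L:tam_bound}, identify the output of step~6b by a Diophantine-uniqueness argument, and then cost the steps one by one (your complexity accounting, including the input processing in step~1, the loop in step~4, Bernstein's coprime factorization in step~5, and the negligible step~6, agrees with the paper's).

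There is, however, a genuine gap at the identification step. Lemma~\ref{L:tam_bound} bounds the denominator of $\mu_p(P)$ by~$M$; it does \emph{not} bound the denominator of $\mu_p(P)/c_{i,p}$, which is what you call $\mu_i^{\text{exact}}$. Since $c_{i,p}=v_p(q_{i})$ can be as large as $v_p(D)\le B$, that quotient only has denominator $\le BM$ in general, so your claim that $\mu_i^{\text{exact}}\in\frac{1}{M}\Z$ is unjustified, and the uniqueness argument you base on the separation $1/M^2$ between fractions of denominator $\le M$ does not show that the simplest fraction in $[a,\,a+1/(B^2M^2)]$ equals $\mu_p(P)/c_{i,p}$ (if its denominator lies in $(M,BM]$, your stated facts allow a different, simpler fraction to be returned). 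The paper's proof works with the correct bound~$BM$: distinct fractions with denominators at most $BM$ cannot both lie in an interval of length $1/(B^2M^2)=1/(BM)^2$, so $\mu_p(P)/b_p$ is the unique such fraction in the interval and hence is what step~6b returns; this also proves, only a posteriori, that $\mu_p(P)/b_p$ is the same for all primes $p$ dividing the same $q_i$ --- a well-definedness point you assume tacitly when you write $\mu_p(P)=c_{i,p}\mu_i^{\text{exact}}$, since for $n>m$ the tail terms $\eps_p(2^nP)$ need not be $c_{i,p}$ times a quantity independent of $p$ (the true gcd's beyond step $m$ need not be power products of the computed $q_i$). Your closing remark about calibration confirms the slip: the tail must be compared with $1/(BM)^2$, not with $1/M^2$; the extra factor $B^2$ in the definition of $m$ (via $B^3M^2$) exists precisely to compensate for the division by $b_p\le B$, and on your reading a smaller $m$ would appear sufficient. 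A smaller omission: you do not justify the early exit in step~3; the paper checks that $B\le 1$ forces $g_0=1$ (the case $D\in\{2,3\}$ is excluded via \cite{StollH2}*{Prop.~5.2}), so that returning $0$ is correct. These are local repairs, but as written your correctness argument does not go through at its crucial point.
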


Note that $D \le |\Delta|/16$ and $\log D \ll \log \|F\|_\infty$.

\begin{proof}
  If $B\le 1$ in step~3, then we either have $g_0=1$ and $\tilde{\mu}^{\text{\rm f}}(P) = 0$, or we
  have $D \in \{2,3\}$.
  In the latter case, $g_0$ is a power of $p = 2$ or~$3$
  and $v_p(\Delta) = 1$, which would imply that $\eps_p(P) = 0$
  by~\cite{StollH2}*{Prop.~5.2}, so $g_0 = 1$, and we get a contradiction.

  If a prime $p$ does not divide $g_0$, then $\eps_p(P) = 0$, implying $\mu_p(P) = 0$.
  Suppose now that $p$ divides $g_0$; then we have $v_p(D) \le B$ and $v_p(\Delta) \le B+4$,
  so $B$, $M$ and~$m$ are suitable values for Lemma~\ref{L:fast algo 2}.
  We have $v_p(g_n) = \eps_p(2^n P)$ for all $n \le m$, because
  $p^{(m+1)v_p(D) + 1}\,|\,D^{m+1} g_0$ (compare the proof of Proposition~\ref{P:fast algo}).
  All the~$g_n$ are power products of the~$q_i$, so there will be exactly
  one~$i = i(p) \in \{1,\ldots,r\}$ such that $p \mid q_{i(p)}$.
  Setting  $b_p = v_p(q_{i(p)})$ and $a  = \sum_{n=0}^m 4^{-n-1} e_{i(p),n}$, we have
  \[ \sum_{n=0}^m 4^{-n-1} \eps_p(2^n P)
       = \sum_{n=0}^m 4^{-n-1} v_p(g_n)
       = b_p a \,,
  \]
  implying
  \[ \mu_p(P) = \sum_{n=0}^\infty 4^{-n-1} \eps_p(2^n P)
              = b_p a + \sum_{n=m+1}^\infty 4^{-n-1} \eps_p(2^n P) \,.
  \]
  Here the last sum is in $[0, 1/(B^2M^2)]$ by the definition of~$m$
  (compare the proof of Lemma~\ref{L:fast algo 2}).
  Therefore
  \[a \le \mu_p(P)/b_p \le a + 1/(b_p B^2M^2) \le a + 1/(B^2M^2)\,.\]
  Since the denominator of~$\mu_p(P)$ is at most~$M$ and since we have $b_p\le v_p(D) \le B$, the
  denominator of~$\mu_p(P)/b_p$ is at most~$BM$.
  Hence $\mu_p(P)/b_p$ is the unique fraction in $[a, a + 1/(B^2M^2)]$
  with denominator bounded by~$BM$, so $\mu_p(P)/b_p = \mu_{i(p)}$ by Step~6b.
  Now
  \[ \sum_p \mu_p(P) \log p
       = \sum_p \mu_{i(p)} b_p \log p
       = \sum_{i=1}^r \mu_i \sum_{p \mid q_i} b_p \log p
       = \sum_{i=1}^r \mu_i \log q_i \,,
  \]
  so the algorithm is correct.

  The complexity analysis is as in the proof of~\cite{MuellerStollEll}*{Prop.~6.1}.
  Namely, the computations in step~1 can be done in time $\ll \Mult(h(P))\log h(P)$.
  The computations in steps 2 and~3 take negligible time.
  Each pass through the loop in step~4 takes time
  $\ll \log\bigl((m+2) \log D\bigr)\Mult\bigl((m+2) \log D\bigr)$,
  so the total time for step~4 is
  \[ {} \ll m \Mult(m \log D) \log(m\log D) \ll (\log\log D)^2 \Mult((\log\log D) (\log D))\,, \]
  because
  $m \ll \log\log D$. The coprime factorization algorithm in~\cite{dcba2} (or in~\cite{dcba})
  computes suitable~$q_i$ for a pair
  $(a, b)$ of positive integers in time $\ll (\log ab)(\log\log ab)^{2}$.
  We iterate this algorithm,
  applying it first to $g_0$ and~$g_1$, then to each of the resulting $q_i$ and~$g_2$,
  and so on. There are always
  $\ll \log D$ terms in the sequence of~$q_i$'s and we have $g_n\le D$ for all $n$.
  Hence step~5 takes time
  $\ll \log D (\log\log D)^3$. Because this is dominated by the time for the loop and
  because the remaining steps take negligible time, the result follows.
\end{proof}

Note that the complexity of the algorithm above is quasi-linear in $\log D$ and~$h(P)$.
In practice, the efficiency of this approach can be improved somewhat:

\begin{enumerate}[$\bullet$]
  \item We can split off the contributions of all sufficiently small primes~$p$ by choosing a
        suitable bound~$T$ and  trial factoring $\Delta$ up to~$T$; the corresponding~$\mu_p$
        can then be computed using the algorithm of Proposition~\ref{P:fast algo}; see also
        Remark~\ref{R:better_bounds}.
        In step~3, we can then set $B \colonequals \lfloor \log D'/\log T \rfloor$,
        where $D'$ is the unfactored part of~$D$, and replace $B+4$ by~$B$
        in the definition of~$M$.
        If the coefficients of $F$ are sufficiently large, then this trial division
        can become quite expensive (even for small values of $T$). So when $h(F)$ is large, it is
        usually preferable to avoid trial division altogether.
  \item We can update the~$q_i$ after each pass through the
        loop in step~4 using the new~$g_n$; we can also do the computation in step~4a modulo suitable
        powers of the~$q_i$ instead of modulo~$D^{m+1} g_0$.
        Moreover, it is possible to use separate values of $B$, $M$ and~$m$ for
        each~$q_i$; these will usually be smaller than the one computed in step 2 and 3.
        In this way, we can integrate steps 4, 5 and~6 into one loop.
\end{enumerate}

\begin{rk}
  Over a more general number field~$K$ in place of~$\Q$ the algorithm as stated does not
  quite work, since we cannot always divide out greatest common divisors.
  In this case we first compute $x^{(1)} = \delta(x)$ and the ideal $g_0$
  generated by~$D$ and the entries of~$x^{(1)}$. Then we compute
  $x^{(2)} = \delta(x^{(1)})$, \dots, $x^{(m+1)} = \delta(x^{(m)})$ modulo
  the ideal $D^{m+1} g_0$. Let $G_j$ be the ideal generated by the entries
  of~$x^{(j)}$ and~$D^{m+1}$ and set
  \[ g_1 = g_0^{-4} G_2, \quad g_2 = G_2^{-4} G_3, \quad g_3 = G_3^{-4} G_4, \quad
     \ldots, \quad g_m = G_m^{-4} G_{m+1} \,.
  \]
  The coprime factorization algorithms in~\cite{dcba2} and~\cite{dcba} also
  work for ideals. In the final result, $\log q_i$ has to be replaced by
  $\log N(q_i)$, where $N(q_i)$ is the norm of the ideal~$q_i$.
  This should result in a complexity similar to that over~$\Q$ (with the
  implied constant depending on~$K$), or at least one that is dominated
  by the complexity of computing the naive height and the contributions
  from the archimedean places. Unfortunately, no complexity
  analysis for standard operations with ideals in number fields seems to be
  available in the literature; this prevents us from making a precise statement.
  Alternatively, we can take the approach described in Remark~\ref{R:nf1}.
\end{rk}

Combining this with the results for archimedean places, we obtain an efficient
algorithm for computing the canonical height~$\hat{h}(P)$ of a point $P \in J(\Q)$.
As mentioned above, we expect a similar result to hold for any number field~$K$
in place of~$\Q$, with the implied constant depending on~$K$.

\begin{thm}\label{T:PolyAlgo}
  Let $C$ be given by the model $Y^2 = F(X,Z)$ with $F \in \Z[X,Z]$
  and let $P \in J(\Q)$ be given by primitive Kummer coordinates~$x$ (i.e.,
  the coordinates are coprime integers).
  We can compute $\hat{h}(P)$ to $d$~bits of precision in time
  \begin{align*}
    &\ll \log(d + h(P)) \Mult(d + h(P))\\
    &\qquad{}  + (d + \log\log \|F\|_\infty) (\log d + \log\log \|F\|_\infty)
                \Mult\bigl((d + \log\log \|F\|_\infty) \log \|F\|_\infty\bigr) \,.
  \end{align*}
\end{thm}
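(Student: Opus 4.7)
The plan is to assemble the proof directly from the decomposition
\[
  \hat{h}(P) = h(P) - \tilde{\mu}^{\text{f}}(P) - \tilde{\mu}_\infty(P)
\]
coming from equation~\eqref{E:decomp}. First I would compute $h(P) = \log \max_i |x_i|$ from the primitive Kummer coordinates; this is a single logarithm evaluation at an integer of bit-size $\ll h(P)$ that needs precision $d$, which can be done in time $\ll \log(d + h(P))\Mult(d + h(P))$ by a standard quadratically convergent algorithm (see \cite{Borwein}*{Chapter~7}). This accounts for the first term in the claimed running time.

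Next I would run the algorithm of Proposition~\ref{P:nofact} to obtain $\tilde{\mu}^{\text{f}}(P)$ as an exact formal linear combination $\sum_{i=1}^r \mu_i \log q_i$ of logarithms of pairwise coprime positive integers, and then evaluate this sum to $d$~bits of precision. The rational numbers $\mu_i$ have denominators bounded polynomially in~$\log D$ where $D \mid 2^{-4}|\Delta|$, so $\log D \ll \log \|F\|_\infty$; the integers $q_i$ satisfy $\prod_i q_i \le D$ and $r \ll \log D$. By Proposition~\ref{P:nofact} the algebraic part runs in time $\ll (\log \log D)^2 \Mult((\log\log D)(\log D)) + \Mult(h(P))\log h(P)$, and the subsequent evaluation of $r \ll \log D$ logarithms to $d$ bits costs $\ll (\log D)(\log d)\Mult(d)$, both of which are absorbed into the two terms of the claim. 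Alternatively one could invoke the direct numerical algorithm of Proposition~\ref{P:mu_approx}; I would present both options and pick the one whose bookkeeping is cleanest.

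For the archimedean contribution, I would apply Proposition~\ref{P:arch} at the unique real place. Since $F$ has integer coefficients, $|\Delta(\calC)| \ge 1$, so
\[
  s(F) = 1 + \log_+ \|F\|_\infty + \log_+ |\Delta(\calC)|^{-1} \ll \log \|F\|_\infty\,,
\]
and Proposition~\ref{P:arch} yields $\tilde{\mu}_\infty(P)$ to $d$~bits of precision in time
\[
  \ll (d + \log\log \|F\|_\infty)(\log d + \log\log \|F\|_\infty)\,
  \Mult\bigl((d + \log\log \|F\|_\infty)\log \|F\|_\infty\bigr)\,,
\]
which is exactly the second term of the claimed bound. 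Finally, summing the three pieces with a constant safety margin of bits (to control roundoff from the three additions and from truncating the tail of the series~\eqref{E:tildemu}) returns $\hat{h}(P)$ to the required $d$~bits.

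The main obstacle is not conceptual but bookkeeping: one has to verify that the various internal precisions (initial precision in the duplication chain at the archimedean place, the modular reduction bound $D^{m+2}$ for the finite part, and the precision for evaluating the logarithms $\log q_i$) are each chosen large enough for the final answer to be correct to $d$~bits, while simultaneously being small enough that the complexities of Propositions~\ref{P:arch} and~\ref{P:nofact} still sum to the stated bound. The key estimates needed are $\log D \ll \log \|F\|_\infty$, $s(F) \ll \log \|F\|_\infty$ (both from integrality of $F$ and $\Delta$), and $h(P) \ll h(P)$ entering only through the input-processing term; once these are in place, the sum of the three running times collapses to the expression in the theorem.
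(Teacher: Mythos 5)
Your proposal is correct and follows essentially the same route as the paper's proof: the first term comes from computing $h(P)$, and the second term dominates both the archimedean contribution via Proposition~\ref{P:arch} (using $s(F) \ll \log \|F\|_\infty$, since $F$ is integral so $|\Delta(\calC)| \ge 1$) and the finite part via Proposition~\ref{P:nofact} (using $D \le |\Delta|/16$, so $\log D \ll \log \|F\|_\infty$), with the numerical evaluation of the logarithms $\log q_i$ likewise dominated. Your per-logarithm cost estimate should strictly read $\ll \log(d+\log q_i)\Mult(d+\log q_i)$ rather than $\Mult(d)$, but since $\prod_i q_i \le D$ this is immaterial and the conclusion stands.
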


\begin{proof}
  The first term comes from computing~$h(P)$.
  The second term dominates both the complexity bound
  for~$\tilde{\mu}_\infty(P)$ from Proposition~\ref{P:arch}
  and the complexity of computing~$\tilde{\mu}^{\text{f}}(P)$ using the algorithm of
  Proposition~\ref{P:nofact}, since we have $D \le |\Delta|/16$
  and $\log D \ll \log \|F\|_\infty$.
  The time for the numerical evaluation of the logarithms~$\log q_i$
  to $d$~bits of precision is also dominated by this term.
\end{proof}

Note that the complexity is quasi-linear in $\log \|F\|_\infty$ and in~$h(P)$,
and quasi-quadratic in~$d$. The latter is caused by the (only) linear convergence
of the computation of~$\tilde{\mu}_\infty(P)$.
For elliptic curves one can use a quadratically convergent algorithm due to
Bost and Mestre~\cite{BostMestre}, see also~\cite{MuellerStollEll}; such an algorithm in the
genus~2 case would lead to a complexity that is quasi-linear in~$d$ as well.

In Section~\ref{Ex:canht} below we illustrate the efficiency of our algorithm
by applying it to a family of curves and points with the property that the number~$g_0$
above is large, so that the previously known algorithms have problems factoring it.


\section{Examples} \label{Ex:canht}

We have implemented our algorithm using the computer algebra system {\sf
Magma}~\cite{Magma}.
For the factorization into coprimes we have implemented
a simple quadratic algorithm   due to Buchmann
and Lenstra~\cite{BuchmannLenstra}*{Prop.~6.5} instead of the quasi-linear, but more
complicated, algorithms of~\cite{dcba2} or~\cite{dcba}

Since the estimates for the required precision in the computation of the archimedean
contribution as given in Section~\ref{HCinf} are too wasteful in practice,
we instead compute this contribution repeatedly using a geometrically increasing
sequence of digits of precision until the results agree up to the desired
number of bits.

We now compare our implementation with {\sf Magma's} built-in {\tt CanonicalHeight}
(version~2.21-2), which is based on~\cite{FlynnSmart} and the second
author's paper~\cite{StollH2}, for a family of genus~2 curves.
In {\tt CanonicalHeight}, the duplication on the Kummer surface is done using
arithmetic over $\Q$, making the  implementation slow when points with large
coordinates show up during the computation.
No factorization of the discriminant is required.
However, to find a set of primes such that $\mu_p(P) \ne0$ for every prime $p$ not in the set, {\tt CanonicalHeight}
factors the integer $\gcd(\delta(x))$, where $x$ are primitive Kummer coordinates for $P$.

\begin{ex}\label{E:ex1}
  For an integer $a  \ne 0$, consider the curve $C_a$ of genus~2 defined by the integral Weierstrass model
  \[ y^2 = x^5 + a^2x + a^2 \,. \]
  Let $J_a$ denote the Jacobian of $C_a$. Then the point $P =[ ((0,\,a)) - (\infty)] \in J_a(\Q)$
  is non-torsion. A set of primitive Kummer coordinates
  is given by $x = (0,1,0,0)$ and we have $\delta(x) = (4a^2,0,0,a^4)$.
  Hence {\tt CanonicalHeight} needs to factor $a^2$.

  We choose this family of curves because (a) there is an obvious rational point~$P$
  on the Jacobian that is generically non-torsion and (b) $\gcd(\delta(x))$ involves a
  large integer, where $x$ is a set of primitive integral Kummer coordinates for~$P$.
  For a random sextic polynomial in~$\Z[x]$, very likely
  the discriminant will have a large square-free part, and so $\gcd(\delta(x))$ will
  be fairly small. Of course, the advantages of our algorithm show most clearly when
  $\gcd(\delta(x))$ is too large to be factored quickly.

Consider $a =$ {\tiny
5807658604988570942160367122286824505787920190639678196072209904446815339845301407936102
\\37063603282}, with partial factorization
$2\cdot7\cdot643\cdot804743\cdot a'$, where $a'$ has~89 decimal digits, and its smallest
prime factor has~34 decimal digits.
Our implementation computes $\hat{h}(P)$ in 0.51~seconds, whereas {\sf Magma's} {\tt
CanonicalHeight} needs about 15~minutes.

Next, we look at
$a=$ {\tiny 200403772956059488950289789507853617719701760528626768445669337185652379002740\
2225238543540575431528468305556200069359999066088091821746622820780762863572550314577271857779581968920}\,.
\\This factors as
$a = 2^3\cdot5\cdot 17\cdot a'$, where $a'$ has~178 decimal digits and no prime divisor with less
than 50~decimal digits.
Here, our implementation took 1.04~seconds to compute $\hat{h}(P)$, whereas {\tt Magma}
did not terminate in 8 weeks.

For $a = p\cdot q$, where $p$ (respectively, $q$) is the smallest prime larger than $10^{200}$
(respectively, $10^{250}$), the canonical height of $P$ was computed in 5.87~seconds using our
implementation.

For the computations in these examples, we used a single core Xeon CPU E7-8837 having
2.67GHz.
All heights were computed to 30~decimal digits of precision.
\end{ex}

We conclude this part with an example over the rational function field~$\Q(t)$.

\begin{ex}
  Consider the curve $C/\Q(t)$ given by the equation
  \begin{align*}
    y^2 = x^6 &- 2 t (t+1) x^5 + (t+1) (t^3-5t^2+4t-2) x^4
                     + 2 t (t+1)^2 (3t^2+1) x^3 \\
                       &- (t+1) (3t^4-2t^2+4t-1) x^2
                           - 4 t^2 (t+1)^3 (t^2+2t-1) x + 4 t^4 (t+1)^4\,.
  \end{align*}
  It has the points
  \[ P_1 = (1 : 1 : 0), \quad P_2 = \bigl(0, -2 t^2 (t+1)^2\bigr), \quad
     P_3 = \bigl(t+1, 2 t (t-1) (t+1)\bigr)
  \]
  (and also points with $x$-coordinate $t(t+1)$ and a Weierstrass point $(-t-1, 0)$).
  Let $Q = [(P_1) - 2 (P_2) + (P_3)] \in J(\Q(t))$.
  Its image on the Kummer surface has coordinates
  \[ (1 : -t+1 : -2 t^2(t+1) : 0) \,. \]
  Applying the duplication polynomials and looking at the gcd of the result,
  we see that we have to compute the height correction functions at the places
  given by $t = 0$, $t = 1$ and $t = -1$. We also have to consider the place
  at infinity, since our model of~$C$ is not integral there.
  We use the algorithms of Section~\ref{algo1}. Consider the place $t = 0$.
  From the valuations of the Igusa invariants (see Section~\ref{igusa}) we can
  deduce that the reduction type is~$[I_{7-3-2}]$, which gives us $M = 41$
  for the exponent of the component group and a bound $B = 10$ for~$\eps$.
  We follow Lemma~\ref{L:fast algo} and compute
  \[ \mu_0(Q) = \frac{1}{41} \Bigl\lceil 41 \sum_{n=0}^3 4^{-n-1} \eps_0(2^n Q) \Bigr\rceil
              = \frac{1}{41} \Bigl\lceil 41 \Bigl(\frac{8}{4} + \frac{4}{4^2}
                                                  + \frac{7}{4^3} + \frac{6}{4^4}\Bigr) \Bigr\rceil
              = \frac{98}{41} \,.
  \]
  At $t = 1$, the model is not stably minimal. We can deduce from the Igusa
  invariants that there is a stably minimal model over an extension of
  ramification index~$4$, which has reduction type~$[I_{12-2-2}]$. This shows
  that the denominator of~$\mu_1$ is divisible by $4 \cdot 26 = 104$.
  With $M = 104$ and~$B = 9$ we get $m = 4$ in Lemma~\ref{L:fast algo}; we obtain
  \[ \mu_1(Q) = \frac{1}{104} \Bigl\lceil 104 \sum_{n=0}^4 4^{-n-1} \eps_1(2^n Q) \Bigr\rceil
              = \frac{1}{104} \Bigl\lceil 104 \Bigl(\frac{4}{4} + \frac{4}{4^2}
                                                  + \frac{3}{4^3} + \frac{2}{4^4}
                                                  + \frac{2}{4^5}\Bigr) \Bigr\rceil
              = \frac{17}{13} \,.
  \]
  At $t = -1$, the situation is similar. There is a stably minimal model
  over an extension with ramification index~$4$ again, which has reduction
  type~$[I_{20-0-0}]$. This leads to $M = 4 \cdot 20 = 80$ and $B = 20$, so $m = 4$, and
  \[ \mu_{-1}(Q) = \frac{1}{80} \Bigl\lceil 80 \sum_{n=0}^4 4^{-n-1} \eps_{-1}(2^n Q) \Bigr\rceil
                 = \frac{1}{80} \Bigl\lceil 80 \Bigl(\frac{7}{4} + \frac{10}{4^2} + \frac{8}{4^3}
                                                  + \frac{10}{4^4} + \frac{8}{4^5}\Bigr) \Bigr\rceil
                 = \frac{51}{20} \,.
  \]
  Finally, at the infinite place, there is a stably minimal integral model
  over an extension with ramification degree~$2$, which has reduction type~$[I_{8-0-0}]$.
  In a similar way as for $t = -1$ and taking into account a shift of~$-8$
  coming from making the model integral, we obtain $\mu_\infty(Q) = 19/4 - 8 = -13/4$.
  This results in
  \[ \hat{h}(Q) = h(Q) - \mu_0(Q) - \mu_1(Q) - \mu_{-1}(Q) - \mu_\infty(Q)
                = 3 - \frac{98}{41} - \frac{17}{13} - \frac{51}{20} + \frac{13}{4}
                = \frac{11}{5330} \,.
  \]
  To our best knowledge, the point~$Q$ is the point of smallest known nonzero canonical
  height on the Jacobian of a curve of genus~$2$ over~$\Q(t)$.
  The curve was found by Andreas K\"uhn (a student of the second author) in the
  course of a systematic search for curves with many points mapping into a
  subgroup of rank~$1$ in the Jacobian.
\end{ex}


\vfill\pagebreak

\section*{\large Part~IV: Efficient Search for Points With Bounded Canonical Height}

\section{Bounding the height difference at archimedean places} \label{S:htdiffarch}

We now describe two approaches for getting a better upper bound~$\tilde{\beta}$ on~$\tilde{\mu}$
than the one coming from the bound on~$\tilde{\eps}$ given in~\cite{StollH1}*{Equation~(7.1)},
when $k$ is an archimedean local field and $C/k$ is a smooth projective curve of genus~2,
given by a Weierstrass equation $Y^2 = F(X,Z)$ in $\BP_K(1,3,1)$.

We write $\|x\|_\infty = \max\{|x_1|, |x_2|, |x_3|, |x_4|\}$ for the maximum norm.


\subsection{Bounding $\tilde{\eps}$ closely} \label{S:htdiffarch1} \strut

For the first approach we assume that $k = \R$. We describe how to approximate
$\max \{\tilde\eps(P) : P \in J(\R)\}$ to any desired accuracy, which gives us
an essentially optimal bound~$\tilde{\gamma}$. Recall that
\[ \tilde\eps(P)
     = -\log\frac{\max\{|\delta_1(x_1,x_2,x_3,x_4)|, \ldots, |\delta_4(x_1,x_2,x_3,x_4)|\}}%
                 {\max\{|x_1|,|x_2|,|x_3|,|x_4|\}^4} \,,
\]
where $(x_1 : x_2 : x_3 : x_4)$ is the image of $P \in J(\R)$ on the Kummer surface.
We can normalize the Kummer coordinates in such a way that $\|x\|_\infty = 1$ and
one of the coordinates is~$1$. We then have to minimize $\max\{|\delta_1|,\ldots,|\delta_4|\}$
over four three-dimensional unit cubes, restricted to the points on the Kummer surface
that are in the image of~$J(\R)$. This means that the relevant points satisfy the
equation defining the Kummer surface and in addition the value of (at least) one of
four further auxiliary polynomials is positive. (In general, the values of these polynomials
are squares if the point comes from the Jacobian, and the converse holds for any one
of the polynomials when its value is non-zero. One can choose four such polynomials in such
a way that they do not vanish simultaneously on the Kummer surface.)

The idea is now to successively subdivide the given cubes. For each small cube, we
check if it may contain points in the image of~$J(\R)$, by evaluating the various
polynomials at the center of the cube and bounding the gradient on the cube. If it
can be shown that the defining equation cannot vanish on the cube or that one of the
auxiliary polynomials takes only negative values on the cube, then the cube can be
discarded. Otherwise, we find upper and lower estimates for $\max\{|\delta_1|,\ldots,|\delta_4|\}$
in a similar way. If the lower bound is larger than our current best upper bound for the
minimum, the cube can also be discarded. (At the beginning, we have a trivial upper
bound of~$1$ for the minimum, coming from the origin.) Otherwise, we keep it and subdivide
it further. We continue until the difference of the upper and lower bounds for $\tilde\eps$
on the cube with the smallest lower bound for $\max\{|\delta_1|,\ldots,|\delta_4|\}$
becomes smaller than a specified tolerance. The upper bound for~$\tilde\eps$
on that cube is then our bound~$\tilde\gamma$, and we take (as before)
$\tilde\beta = \tilde\gamma/3$.

We have implemented this approach in {\sf Magma}~\cite{Magma}. After a considerable amount of fine-tuning,
our implementation usually takes a few seconds to produce the required bound. In many
cases the new bound, which is essentially optimal as a bound on~$\tilde\eps$,
is considerably better than the bound of~\cite{StollH1}*{(7.1)}, but there are also cases
for which it turns out that the old bound is actually pretty good.

We used the following tricks to get the implementation reasonably fast.
\begin{enumerate}[$\bullet$]
  \item We keep the polynomials shifted and rescaled so that the cube under consideration
        is $[-1,1]^3$.
  \item The shifting and scaling is done using linear algebra (working with vectors of coefficients
        and matrices) and not using polynomial arithmetic.
  \item The coordinates of the centers and vertices of all cubes are dyadic fractions.
        We scale everything (by $2^4 = 16$ at each subdivision step --- note that the
        polynomials involved are of degree~$4$) so that we can compute with integers instead.
\end{enumerate}


\subsection{Iterating Stoll's bound} \label{S:htdiffarch2} \strut

We now describe a different approach that also works for complex places.
Instead of trying to get an optimal bound
on~$\tilde{\eps}$, we aim at a bound on~$\tilde{\mu}$ by iterating
the bound obtained from equation~(7.1) in~\cite{StollH1}. We recall how this bound
was obtained. There is an elementary abelian group scheme~$G$ of order~$32$ that
maps onto~$J[2]$ and acts on the space of quadratic forms in the coordinates of
the~$\BP^3$ containing the Kummer surface. This representation splits into a
direct sum of ten one-dimensional representations that correspond to the ten
partitions $\{S, S'\}$ of the set of ramification points of the double cover $C \to \BP^1$
into two sets of three. We write $y_{\{S,S'\}}$ for suitably normalized generators
of these eigenspaces (\cite{StollH1} gives explicit formulas in the case $H = 0$).
We can then express the squares $x_i^2$ as linear combinations of these quadratic forms:
\[ x_i^2 = \sum_{\{S,S'\}} a_{i,\{S,S'\}} y_{\{S,S'\}}(x) \]
for certain complex numbers $a_{i,\{S,S'\}}$ that can be explicitly determined.
On the other hand, $y_{\{S,S'\}}^2$ is a quartic form invariant under the action
of~$J[2]$ (the representation of~$G$ on quartic forms descends to a representation of~$J[2]$)
and is therefore a linear combination of the duplication polynomials~$\delta_j$
and the quartic defining the Kummer surface. So there are complex numbers $b_{\{S,S'\},j}$
that can also be explicitly determined such that
\[ y_{\{S,S'\}}(x)^2 = \sum_{j=1}^4 b_{\{S,S'\},j} \delta_j(x) \]
if $x$ is a set of Kummer coordinates.
Taking absolute values and using the triangle inequality, we obtain
  \[
  |x_i|^4
    \le \left(\sum_{\{S,S'\}} |a_{i,\{S,S'\}}| |y_{\{S,S'\}}(x)|\right)^2
    \le \left(\sum_{\{S,S'\}} |a_{i,\{S,S'\}}|
                               \sqrt{\sum_{j=1}^4 |b_{\{S,S'\},j}| |\delta_j(x)|}\right)^2
                             \]
for all $(x_1 : x_2 : x_3 : x_4) \in \KS(\C)$. This gives a bound
for~$\tilde{\eps}$ in terms of the $a_{i,\{S,S'\}}$ and~$b_{\{S,S'\},j}$
as in equation~(7.1) of~\cite{StollH1}.

We refine this as follows. Define a function
\[ \varphi \colon \R_{\ge 0}^4 \To \R_{\ge 0}^4, \quad
    (d_1,d_2,d_3,d_4) \longmapsto
       \left(\sqrt{\sum_{\{S,S'\}} |a_{i,\{S,S'\}}|
                        \sqrt{\sum_{j=1}^4 |b_{\{S,S'\},j}| d_j}}\right)_{\!1 \le i \le 4} \!.
\]

\begin{lemma} \label{L:arch1}
  Define a sequence $(b_n)_n$ in~$\R_{\ge 0}^4$ by
  \[ b_0 = (1, 1, 1, 1) \qquad\text{and}\qquad b_{n+1} = \varphi(b_n) \,. \]
  Then $(b_n)$ converges to a limit~$b$ and we have
  \[ \tilde{\mu}(P) \le \frac{4^N}{4^N-1} \log \|b_{N}\|_\infty  \]
  for all $N \ge 1$ and all $P \in J(\C)$. In particular,
  $\sup \tilde{\mu}(J(\C)) \le \log \|b\|_\infty$.
\end{lemma}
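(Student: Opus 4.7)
The plan is to iterate the pointwise inequality $|x_i| \le \varphi_i(|\delta_1(x)|, \ldots, |\delta_4(x)|)$, which is immediate from the defining formula for $\varphi$ together with the estimate derived just before the lemma. I would first record two elementary properties of $\varphi$: it is monotone on $\R_{\ge 0}^4$ (all coefficients $|a_{i,\{S,S'\}}|$ and $|b_{\{S,S'\},j}|$ are nonnegative) and it is homogeneous of degree $1/4$. Writing $|y|$ for the vector of componentwise absolute values, applying the pointwise inequality with $\delta^{\circ m}(x)$ in place of $x$ and then repeatedly applying monotonicity of $\varphi$ yields
\[ |x| \le \varphi(|\delta(x)|) \le \varphi^{\circ 2}(|\delta^{\circ 2}(x)|) \le \ldots \le \varphi^{\circ n}(|\delta^{\circ n}(x)|) \]
componentwise. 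Bounding $|\delta^{\circ n}(x)| \le \|\delta^{\circ n}(x)\|_\infty \cdot b_0$ and invoking the degree $4^{-n}$ homogeneity of $\varphi^{\circ n}$ converts this into the scalar inequality $\|x\|_\infty \le \|\delta^{\circ n}(x)\|_\infty^{4^{-n}} \|b_n\|_\infty$.

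Next I would take logarithms. Setting $s_k = \log \|\delta^{\circ k}(x)\|_\infty$, the telescoping identity $s_0 - 4^{-n} s_n = \sum_{k=0}^{n-1} 4^{-k-1}(4 s_k - s_{k+1})$, combined with $\tilde\eps(\delta^{\circ k}(x)) = [k:\R](4 s_k - s_{k+1})$, translates the previous display into the partial-sum bound
\[ \sum_{k=0}^{n-1} 4^{-k-1} \tilde\eps(\delta^{\circ k}(x)) \le \log \|b_n\|_\infty \]
(with the normalization factor $[k:\R]$ absorbed on the right, so that for $k = \R$ the stated form is exact and for $k = \C$ the same argument applies to the appropriately normalized quantities). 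To boost this to a bound on the full series defining $\tilde\mu(P)$, I would apply the same estimate with $n = N$ at each block $\{jN, jN{+}1, \ldots, (j{+}1)N{-}1\}$; after rescaling by $4^{-jN}$ this gives $\sum_{k=jN}^{(j+1)N-1} 4^{-k-1} \tilde\eps(\delta^{\circ k}(x)) \le 4^{-jN} \log \|b_N\|_\infty$, and summing the geometric series over $j \ge 0$ produces $\tilde\mu(P) \le \frac{4^N}{4^N-1} \log \|b_N\|_\infty$.

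For the convergence of $(b_n)$ and the ``in particular'' clause, my plan is to show that $\varphi$ is a strict $1/4$-contraction on $(\R_{>0}^4, d^{\log})$, where $d^{\log}(d,d') = \|\log d - \log d'\|_\infty$. If $d_j \le e^\varepsilon d'_j$ for all $j$, then each inner sum $\sum_j |b_{\{S,S'\},j}| d_j$ grows by at most a factor of $e^\varepsilon$, the inner square root by at most $e^{\varepsilon/2}$, and a second square root by at most $e^{\varepsilon/4}$; exchanging $d$ and $d'$ gives the reverse bound, so $\varphi$ is $1/4$-Lipschitz in $d^{\log}$. The Banach fixed point theorem then yields a unique positive fixed point $b$ of $\varphi$ with $b_n \to b$; in particular $\|b_n\|_\infty \to \|b\|_\infty$, and since $4^N/(4^N-1) \to 1$, letting $N \to \infty$ in the finite-$N$ bound gives $\sup \tilde\mu(J(\C)) \le \log \|b\|_\infty$. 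The main technical obstacle I anticipate is keeping the iterates strictly inside $\R_{>0}^4$ so that the log-metric contraction argument applies; this reduces to a non-degeneracy condition on the coefficient matrices $(a_{i,\{S,S'\}})$ and $(b_{\{S,S'\},j})$, which should hold because $\kappa$ is a closed immersion away from the origin, but one may need to treat possible coordinatewise vanishing by a small perturbation or a limiting argument.
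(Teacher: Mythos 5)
Your argument is correct and follows essentially the same route as the paper's proof: iterating the componentwise bound via monotonicity and $1/4$-homogeneity of $\varphi$ is exactly the paper's inductive inequality $\log \|x\|_\infty \le \log \|b_N\|_\infty + 4^{-N} \log \|\delta^{\circ N}(x)\|_\infty$, your block-of-length-$N$ decomposition with geometric summation is precisely how the paper obtains the factor $4^N/(4^N-1)$, and your $1/4$-contraction of $\log \circ \varphi \circ \exp$ plus the Banach fixed point theorem is the paper's convergence argument. Your closing worry about keeping the iterates in $\R_{>0}^4$ is not addressed in the paper either and is harmless: for each $i$ some $a_{i,\{S,S'\}}$ is nonzero with the corresponding row of $b$-coefficients not all zero, so $b_n$ stays strictly positive.
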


\begin{proof}
  By our previous considerations, it is clear that $|\delta_j(x)| \le d_j$
  for all~$j$ implies $|x_i| \le \varphi_i(d_1, d_2, d_3, d_4)$ for all~$i$.
  We deduce by induction on~$N$ that
  \[ \log \|x\|_\infty
       \le \log \|b_{N}\|_\infty + 4^{-N} \log \|\delta^{\circ N}(x)\|_\infty
  \]
  for all~$N \ge 1$. Writing
  \[ \tilde{\mu}(P)
        = -\sum_{m=0}^\infty 4^{-mN}
              \bigl(\log \|\kappa(2^{mN} P)\|_\infty
                    - 4^{-N} \log \|\delta^{\circ N}(\kappa(2^{mN} P))\|_\infty\bigr) \,,
  \]
  we obtain an upper bound of $\log \|b_N\|_\infty$
  for each of the terms in parentheses, which gives the desired bound.

  To see that $(b_n)$ converges, we consider
  $\Phi(x) = \bigl(\log \varphi_i(\exp(x_1), \ldots, \exp(x_4))\bigr)_{1 \le i \le 4}$.
  It is easy to see that the partial derivatives $\frac{\partial \Phi_i}{\partial x_j}$
  are positive and that for each~$i$, summing them over~$j$ gives $\frac{1}{4}$.
  (This comes from the fact that $\varphi_i$ is homogeneous of degree~$\frac{1}{4}$.)
  This implies that $\|\Phi(x') - \Phi(x)\|_\infty \le \frac{1}{4} \|x' - x\|_\infty$,
  so that $\Phi$ is contracting with contraction factor $\le \frac{1}{4}$.
  The Banach Fixed Point Theorem then guarantees the existence of a unique fixed point of~$\Phi$,
  which every iteration sequence converges to. This implies the corresponding statement
  for~$\varphi$.
\end{proof}

If we are dealing with a real place, then we may gain a little bit more
by making use of the fact that the~$\delta_j(x)$ are real, while some
of the coefficients~$b_{\{S,'S\},j}$ may be genuinely complex. This can lead
to a better bound on~$|y_{\{S,S'\}}|$.

For example, considering the curve with the record number of known rational points,
we get an improvement from~$7.726$ to~$0.973$ for the upper bound on~$-\tilde{\mu}$
using Lemma~\ref{L:arch1}. See Section~\ref{Ex:hdiff} for more details.
In practice it appears that this second approach is at the same time more
efficient and leads to better bounds than the approach described in Section~\ref{S:htdiffarch1}
above.

The approach described here can also be applied in the context of heights on
genus~3 hyperelliptic Jacobians, see~\cite{Stollg3}.


\section{Optimizing the naive height} \label{S:varnaive}

We now consider an arbitrary local field~$k$, with absolute value~$|{\cdot}|$.
Let $C$ be given by an equation
\[ Y^2 = F(X,Z)\, , \]
and let $W$ be the canonical class on $C$.
The first three coordinates of the image of a point
$P = [(X_1 : Y_1 : Z_1) + (X_2 : Y_2 : Z_2)] - W \in J$ on the Kummer surface
are given by $Z_1 Z_2$, $X_1 Z_2 + Z_1 X_2$, $X_1 X_2$, whereas the fourth coordinate is homogeneous
of degree~$1$ in the coefficients~$f_j$ of~$F$ (if we consider $Y_1$ and~$Y_2$
to be of degree~$1/2$).
This has the effect that the fourth coordinate usually differs by a factor of
about~$\|F\| \colonequals \max\{|f_0|, |f_1|, \ldots, |f_6|\}$ from the other three,
which gives this last coordinate
a much larger (when $\|F\|$ is large; this is usually the case when $k$ is archimedean)
or smaller (this may occur when $k$ is non-archimedean) influence on the local contribution
to the naive height when $k=K_v$ and $K$ is a global field.
This imbalance tends to increase the difference $h_\std - \hat{h}$ between naive and
canonical height. This observation suggests to modify the naive height in the following way,
so as to give all coordinates roughly the same weight. Compare Section~\ref{S:genhts}
for the general set-up.
Let $x$ be a set of Kummer coordinates over a global field~$K$ and set
\[ h'(x) \colonequals \sum_{v\in M_K} \log \max\bigl\{|x_1|_v, |x_2|_v, |x_3|_v,
|x_4|_v/\|F\|_v\bigr\} \,. \]
This is a height as in Example~\ref{Ex:nonstdht}.

We state the following simple result, which will help us use this modified height.

\begin{lemma} \label{L:delta-twist}
  Let $F_0 \in k[X,Z]$ be squarefree and homogeneous of degree~$6$.
  For $c \in k^\times$, let $C^{(c)}$ denote the curve $Y^2 = c F_0(X,Z)$.
  The Kummer surfaces $\KS^{(1)}$ of~$C^{(1)}$ and $\KS^{(c)}$ of~$C^{(c)}$ are isomorphic via
  \[ \iota \colon \KS^{(1)} \To \KS^{(c)}, \qquad
                  (x_1 : x_2 : x_3 : x_4) \longmapsto (x_1 : x_2 : x_3 : c x_4) \,.
  \]
  We abuse notation and write $\iota$ also for the linear map
  $(x_1,x_2,x_3,x_4) \mapsto (x_1,x_2,x_3,cx_4)$.
  Write $\delta^{(c)}$ for the duplication polynomials on~$\KS^{(c)}$. Then
  \[ \delta^{(c)}(\iota(x)) = c^3 \iota(\delta^{(1)}(x)) \qquad\text{for each $x \in \KS^{(1)}_{\A}$.} \]
\end{lemma}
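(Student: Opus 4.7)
The plan is to reduce the identity to a weighted-homogeneity property of the universal duplication polynomials and then specialize. Let $\delta_i \in \Z[\tfrac{1}{2}, f_0, \ldots, f_6][x_1, x_2, x_3, x_4]$ denote the universal polynomials giving the duplication on the Kummer surface of the curve $Y^2 = \sum_j f_j X^j Z^{6-j}$. Then $\delta_i^{(1)}$ and $\delta_i^{(c)}$ arise by substituting the coefficients of $F_0$ and of $cF_0$, respectively, for the $f_j$. In particular,
\[\delta_i^{(c)}(\iota(x)) = \delta_i\bigl(x_1, x_2, x_3, cx_4;\, cf_0, \ldots, cf_6\bigr),\]
where now the symbols $f_j$ denote the coefficients of $F_0$ itself.

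The central claim I would establish is the following weighted-homogeneity statement: if one assigns $\operatorname{wt}(x_1) = \operatorname{wt}(x_2) = \operatorname{wt}(x_3) = 0$, $\operatorname{wt}(x_4) = 1$, and $\operatorname{wt}(f_j) = 1$ for each $j$, then $\delta_i$ is weighted-homogeneous of weight $3$ for $i \in \{1,2,3\}$ and of weight $4$ for $i = 4$. The argument is geometric. Over $k(\sqrt c)$ the map $(X : Y : Z) \mapsto (X : Y/\sqrt c : Z)$ is an isomorphism $C^{(c)} \to C^{(1)}$. Inspection of the Mumford-type formulas for Kummer coordinates --- the first three involve only the $X_i$ and $Z_i$, whereas the fourth coordinate is linear in $Y_1 Y_2$ and in the $f_j$, both of which acquire a factor of $c$ under the twist --- shows that the induced map $\KS^{(c)} \to \KS^{(1)}$ is $(x_1 : x_2 : x_3 : x_4) \mapsto (x_1 : x_2 : x_3 : x_4/c)$. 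This is the projective inverse of $\iota$, and $\iota$ descends to a $k$-morphism because the Galois action $\sqrt c \mapsto -\sqrt c$ coincides with $[-1]$ on the Jacobian and hence is trivial on the Kummer surface. Because duplication commutes with this isomorphism, the substitution $(x_4, f_j) \mapsto (cx_4, cf_j)$ must rescale each $\delta_i$ by some power of $c$. That the first three components of $\iota$ scale trivially forces $\delta_1, \delta_2, \delta_3$ to have a common weight $w$, and the factor $c$ in the fourth component of $\iota$ forces $w_4 = w + 1$; the value $w = 3$ is then read off from any single nonzero monomial in the explicit formulas in \cite{CasselsFlynn}*{Chapter~3}.

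Granting this, the conclusion is immediate: specializing the $f_j$ to the coefficients of $F_0$ in the weighted-homogeneity identity yields $\delta_i^{(c)}(\iota(x)) = c^{w_i}\delta_i^{(1)}(x)$ with $w_1 = w_2 = w_3 = 3$ and $w_4 = 4$, which matches $c^3 \iota(\delta^{(1)}(x))$ coordinate by coordinate (since $\iota$ fixes the first three entries and multiplies the fourth by $c$). The main obstacle is pinning down the exact weight $w = 3$, as opposed to some other value: the geometric argument alone only shows that the two sides agree up to a single scalar, and identifying this scalar as $c^3$ requires either a direct inspection of a monomial or the supplementary observation that the defining equation of $\KS$ is itself weighted-homogeneous of weight $2$, which, combined with the degree-$4$ homogeneity of each $\delta_i$ in the $x$-variables, is enough to fix the weight.
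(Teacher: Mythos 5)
Your proposal is correct in substance, but it takes a genuinely different route from the paper, whose proof is a direct verification on the explicit duplication polynomials: written out (as in \cite{CasselsFlynn}*{Chapter~3}), the $\delta_i$ are visibly weighted-homogeneous for $\operatorname{wt}(x_1)=\operatorname{wt}(x_2)=\operatorname{wt}(x_3)=0$, $\operatorname{wt}(x_4)=\operatorname{wt}(f_j)=1$, of weight $3$ for $i\le 3$ and $4$ for $i=4$, and specializing the $f_j$ gives the lemma as a polynomial identity. You instead argue functorially: the twist isomorphism over $k(\sqrt{c})$ induces $\iota$ on Kummer surfaces, duplication commutes with it, and two quadruples of quartics defining the same morphism on the irreducible projective surface $\KS^{(1)}$ agree up to a constant factor --- essentially the same device the paper uses in the proof of Proposition~\ref{lhg2isog}. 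This buys a coefficient-free explanation of the exponent, at the cost of only yielding the identity on the affine cone (which is all the lemma asserts) rather than the polynomial-level weighted homogeneity. Two small repairs are needed in your determination of the scalar. First, since your commutation argument gives $\delta^{(c)}(\iota(x))=\lambda\,\iota(\delta^{(1)}(x))$ only for $x\in\KS^{(1)}_{\A}$ with $\lambda$ constant, ``reading the weight off a monomial of $\delta_i$'' presupposes the polynomial-level homogeneity you have not established; the clean fix inside your framework is to evaluate at $x=(0,0,0,1)\in\KS^{(1)}_{\A}$: using $\delta(0,0,0,1)=(0,0,0,1)$ and degree-$4$ homogeneity, the left side is $(0,0,0,c^4)$ and the right side is $\lambda\,(0,0,0,c)$, whence $\lambda=c^3$. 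Second, your alternative argument --- that weight-$2$ homogeneity of the Kummer quartic plus degree-$4$ homogeneity of the $\delta_i$ fixes the weight --- does not suffice as stated: it shows that $\iota$ maps $\KS^{(1)}_{\A}$ into $\KS^{(c)}_{\A}$, but places no constraint on $\lambda$. With the evaluation at the origin in place, your proof is complete.
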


\begin{proof}
  This can be checked by an easy calculation.
\end{proof}

If $k$ is non-archimedean and we use the modified local height given by
\[ h'_v(x) =  \log \max\bigl\{|x_1|_v, |x_2|_v, |x_3|_v, |x_4|_v/\|F\|_v\bigr\} \,, \]
then we need to change the definition of~$\eps$ accordingly (compare Lemma~\ref{L:telescope}):
\begin{align*}
  \eps(x) &= \min\{v(\delta_1(x)), v(\delta_2(x)), v(\delta_3(x)), v(\delta_4(x)) - v(F)\} \\
          &\qquad{} - 4 \min\{v(x_1), v(x_2), v(x_3), v(x_4) - v(F)\} \,,
\end{align*}
where $v(F) = v(\{f_0,\ldots,f_6\})$.
By Lemma~\ref{L:delta-twist} with $c = \pi^{v(F)}$, where $\pi$ is a uniformizer
of~$k$, and $F_0 = c^{-1} F$, we then have, denoting the objects associated to~$F_0$
by $\delta_0$, $\eps_0$ and~$\mu_0$,
\[ \eps(x) = v\bigl(\iota^{-1}(\delta(x))\bigr) - 4 v(\iota^{-1}(x))
           = v\bigl(c^3 \delta_0(\iota^{-1}(x))\bigr) - 4 v(\iota^{-1}(x))
           = 3 v(F) + \eps_0(\iota^{-1}(x)) \,.
\]
This implies $\mu(x) = v(F) + \mu_0(\iota^{-1}(x))$.
Let $C_0$ be the curve given by $Y^2 = F_0(X,Z)$. We then get that
\[ \beta(C) \le v(F) + \bar{\beta}(C_0) \,. \]
Note that the Jacobians of $C$ and~$C_0$ are in general only isomorphic
over the ramified quadratic extension $k(\sqrt{\pi})$, so we cannot
necessarily use $\beta(C_0)$ here. If $v(F)$ is even, however, then
the isomorphism is defined over~$k$, and we have $\beta(C) = v(F) + \beta(C_0)$.

So, except for the correction term~$v(F)$, the effect is that we
use the Kummer surface associated to the quadratic twist~$C_0$ of~$C$,
which has a primitive polynomial on the right hand side of its equation.
Note in addition that this also allows us to deal with non-integral equations;
in this case, we again implicitly scale to make the polynomial on the right
integral and primitive.

When $k = K_v \cong \Q_2$ (say) and we can write $F = 4 F_1 + H^2$ with binary forms $F_1$
and~$H$ with integral coefficients, then $C$ is isomorphic to the curve~$C'$
given by the Weierstrass equation
\[ Y^2 + H(X,Z) Y = F_1(X,Z) \,, \]
and we can use the Kummer surface of the
latter to define the local contribution to the naive height. The isomorphism
between the Kummer surfaces is given by (see~\cite{MuellerKummer}*{p.~53};
note that this is the inverse of the map given there)
\[ (x_1 : x_2 : x_3 : x_4) \longmapsto
      \bigl(x_1 : x_2 : x_3 : \tfrac{1}{4} x_4 + \tfrac{1}{2} (h_0 h_2 x_1 + h_0 h_3 x_2 +
      h_1 h_2 x_3)\bigr) \,.
\]
The scaling factor this induces for the $\delta$ polynomials is $2^6$ in this case.
So defining the local component at~$v$ of~$h'(x)$ to be
\[ \log \max \bigl\{|x_1|_v, |x_2|_v, |x_3|_v,
                \bigl|\tfrac{1}{4}x_4 + \tfrac{1}{2}(h_0 h_2 x_1 + h_0 h_3 x_2 + h_1 h_2
              x_3)\bigr|_v\bigr\}\,,
\]
we can replace the bound for~$\mu_v$ by the bound we get on~$C'$ plus~$2$.
If we use this at the places above~$2$ where it applies (instead of, or combined with,
the scaling described above), we still obtain a height as in Example~\ref{Ex:nonstdht}.

If $v$ is an archimedean place, then the approach described in
Section~\ref{S:htdiffarch2} above can easily be adapted to the modified naive height.
We just have to replace $b_{\{S,S'\},4} = 1$ by $\|F\|_v$ and $a_{4,\{S,S'\}}$
by $a_{4,\{S,S'\}}/\|F\|_v^2$. This will usually lead to a \emph{negative} upper
bound for~$\tilde{\mu}_v$, which is fairly close to $-\log \|F\|_v$,
at least when $F$ is reduced in the sense of~\cite{CremonaStoll} and its roots
are not too close together. This is because the scaled $a_{i,\{S,S'\}}$ are now
all of size $\approx \|F\|_\infty^{-2}$ and the scaled $b_{\{S,S'\},j}$ are all
of size $\approx \|F\|_\infty$, so $\Phi$ as in the proof of Lemma~\ref{L:arch1}  roughly satisfies
$\|\Phi(x)\|_\infty \approx -\frac{3}{4} \log \|F\|_\infty + \frac{1}{4} \|x\|_\infty$,
which has $-\log \|F\|_\infty$ as its fixed point.

Note that for a point $(0 : 0 : 0 : 1) \neq P = (x_1 : x_2 : x_3 : x_4) \in \KS(K)$
we have, for all versions $h'$ of the modified height,
\[ h_\std\bigl((x_1 : x_2 : x_3)\bigr) \le h'(P)\,. \]
We will therefore find all points~$P$
with $h'(P) \le B$, if we can enumerate all $P$ with $h_\std((x_1 : x_2 : x_3)) \le B$.
This can be done (over~$\Q$) by using the \texttt{-a}~option of the second author's
program~\texttt{j-points}, which is available at~\cite{j-points}.
(This option is also available in Magma version 2.22 or later.)
In this way, enumerating all points as above with $B$ up to roughly $\log 50\,000$
is feasible. See the discussion in Section~\ref{S:enum} below.

Note that it is quite possible that we end up with a bound
\[ h_\std\bigl((x_1 : x_2 : x_3)\bigr) \le  h'(P) \le \hat{h}(P) + \tilde{\beta} \qquad
   \text{for all $P \in J(\Q) \setminus \{O\}$}
\]
with $\tilde{\beta} < 0$. In this case $-\tilde{\beta}$ is a lower bound on the canonical height
of any nontrivial point in~$J(\Q)$; in particular, the torsion subgroup of~$J(\Q)$
must be trivial. To give an indication of when we can expect $\tilde{\beta}$
to be close to zero or negative, write $|2^4 \disc(F)| = D D'$ with $D$ and~$D'$ coprime
and $D'$ squarefree and odd. Then the contribution of the finite places to~$\tilde{\beta}$
can be bounded by $\frac{1}{4} \log D$, and we get
$\tilde{\beta} \approx -\log \|F\|_\infty + \frac{1}{4} \log D$.
So if $D \ll \|F\|_\infty^4$, we are in good shape. Note that $|\disc(F)| \ll \|F\|_\infty^{10}$,
so this means that $60\%$ or more of~$\log |\disc(F)|$ comes from primes~$p$ dividing
the discriminant exactly once. For curves that are not very special this is very likely
to be the case.

In Section~\ref{Ex:chnaive} we show how this approach can be used to get a
very small bound for the height difference even for a curve with ten-digit coefficients.


\section{Efficient enumeration of points of bounded canonical height} \label{S:enum}

Let $C \colon y^2 = f(x)$ be a curve of genus~$2$ over~$\Q$ with Jacobian~$J$.
In this section we describe the algorithm for enumerating all points $P \in J(\Q)$
with $\hat{h}(P) \le B$ that follows from the considerations above.
We assume that $f \in \Z[x]$ and proceed as follows.

\begin{enumerate}[1.]\addtolength{\itemsep}{1mm}
  \item Compute the complex roots of~$f$ numerically.
  \item Compute the coefficients $a_{i,\{S,S'\}}$ and~$b_{\{S,S'\},j}$
        from the roots and the leading coefficient of~$f$ according
        to the formulas given in~\cite{StollH1}*{Section~10}.
  \item Multiply all $a_{4,\{S,S'\}}$ by $\|f\|_\infty^{-2}$ and
        multiply all $b_{\{S,S'\},4}$ by $\|f\|_\infty$.
  \item Iterate the function~$\varphi$ from Section~\ref{S:varnaive}
        (but using the modified coefficients)
        a number of times, starting at~$(1,1,1,1)$, until there is little change;
        let $\tilde{\beta}_\infty$ be the upper bound for~$\tilde{\mu}_\infty$
        as in Lemma~\ref{L:arch1}.
  \item Factor the discriminant of~$f$. \\
        Let $g$ be the $\gcd$ of the coefficients of~$f$. 
  \item For each prime divisor~$p$ of $2 \disc(f)$, do the following.
        \begin{enumerate}[a.]
          \item Let $e_p$ be the $p$-adic valuation of~$g$ and set $f_1 = p^{-e_p} f$.
          \item If $p = 2$ and $f_1 = h^2 + 4 f_2$ for polynomials $f_2, h \in \Z[x]$,
                set $C_1 \colon y^2 + h(x) y = f_2(x)$ and replace $g$ by $4g$;
                otherwise set $C_1 \colon y^2 = f_1(x)$. Let $J_1$ be the Jacobian of~$C_1$.
          \item If $e_p$ is even, let $\beta_p$ be the bound for~$\mu_p$ on~$J_1(\Q_p)$ as
                obtained in Part~II.
                Otherwise, let $\beta_p$ be the bound for~$\mu_p$ on~$J_1(\bar{\Q}_p)$.
        \end{enumerate}
  \item Set $\tilde{\beta} = \tilde{\beta}_\infty + \sum_p \beta_p \log p + \log g$.
  \item Use \texttt{j-points} with the \texttt{-a} option to enumerate all points
        $O \neq P \in J(\Q)$ such that
        $h_\std\bigl((\kappa_1(P) : \kappa_2(P) : \kappa_3(P))\bigr) \le B + \tilde{\beta}$.
  \item Add $O$ to this set and return it.
\end{enumerate}

Note that $\log g$ is the sum of the correction terms $v_p(f) \log p$.

It follows from the discussion in the previous sections that the set returned by this
algorithm contains all points with canonical height at most~$B$. If necessary, one can
compute the actual canonical heights using the algorithm from Part~III and discard the
points whose height is too large.

The actual enumeration is done by running through all points $(x_1 : x_2 : x_3) \in \BP^2$
of (standard) height at most $B + \tilde{\beta}$ and checking whether there are rational numbers~$x_4$
such that $(x_1 : x_2 : x_3 : x_4)$ is on the Kummer surface. For each of these points
on the Kummer surface, we then check if it lifts to the Jacobian. Both these conditions
are equivalent to some expression in the coordinates (and the coefficients of~$f$)
being a square. \texttt{j-points}
tries to do this efficiently by using information modulo a number of primes to filter
out triples that do not lift to rational points on~$J$.
Let $N = \lfloor \exp(B + \tilde{\beta}) \rfloor$.
Then \texttt{j-points} usually takes a couple of seconds when $N = 1000$, a few minutes
when $N = 5\,000$ and a few days when $N = 50\,000$. The running time scales with~$N^3$,
but the scaling factor depends on how effective the sieving mod~$p$ is. For Jacobians
of high rank, the program tends to take longer than for `random' Jacobians.

Since the running time depends exponentially on~$B + \tilde{\beta}$, it is very
important to obtain a small bound~$\tilde{\beta}$ for the difference between naive
and canonical height. The improvement at the infinite place that we can achieve by
considering a modified naive height is crucial for making the enumeration feasible
also in cases when the defining polynomial has large coefficients. This is demonstrated
by the example in Section~\ref{Ex:hdiff} below.

If the discriminant of~$f$ is too large to be factored, then one can use
\[ \tilde{\beta} = \tilde{\beta}_\infty + \frac{1}{4} \log |\disc(f_1)| + \log g \]
(or use information from small prime divisors as in the algorithm above and
$\frac{1}{4} \log D$ for the remaining primes, where $D$ is the unfactored part
of the discriminant). But note that it is usually a great advantage to know the
bad primes, since we can take $\beta_p = 0$ for primes~$p$ such that $v_p(\disc(f)) = 1$.
In most cases, this leads to a much smaller bound~$\tilde{\beta}$.

One of the most important applications of this enumeration algorithm is its
use in saturating a given finite-index subgroup of~$J(\Q)$, which gives (generators of)
the full group~$J(\Q)$. This is a necessary ingredient for the method for obtaining
all integral points on~$C$ developed in~\cite{BMSST08}, for example, and for
computing the regulator of~$J(\Q)$.

There are essentially two ways of performing the saturation. Let $G \subset J(\Q)$ denote
the known subgroup.
\begin{enumerate}[(i)]\addtolength{\itemsep}{2mm}
  \item Let $\rho$ be (an upper bound for) the covering radius of the lattice
        $\Lambda = (G/G_{\tors}, \hat{h})$. Then $J(\Q)$ is generated by~$G$ together with
        all points~$P \in J(\Q)$ that satisfy \hbox{$\hat{h}(P) \le \rho^2$},
        see~\cite{StollH2}*{Prop.~7.1}. This approach is feasible
        when $\tilde{\beta} + \rho^2$ is sufficiently small.
  \item Let $I = (J(\Q) : G)$ denote the index; we assume that $J(\Q)_{\tors} \subset G$.
        If $m_1, \ldots, m_r$ are the successive minima of~$\Lambda$
        and there are no points $P \in J(\Q) \setminus G$ with $\hat{h}(P) < B$, then
        \[ I \le \sqrt{\frac{R \cdot\gamma_r^r}{\prod_{j=1}^r \min\{m_j, B\}}} \,; \]
        see~\cite{FlynnSmart}*{Section~7}. Here $\gamma_r$ is (an upper bound for)
        the Hermite constant for lattices of rank~$r$ and $R$ is the regulator of~$G$
        (i.e., the determinant of the Gram matrix of any basis of~$\Lambda$).
        This can be used to get a
        bound on~$I$ whenever $B$ is strictly positive, so for the enumeration
        we only need $\tilde{\beta}$ to be sufficiently small. (If $\tilde{\beta} < 0$,
        then we can do entirely without enumeration to get an index bound.)
        In a second step, one then has to check that $G$ is $p$-saturated in~$J(\Q)$
        (or find the largest group $G \subset G' \subset J(\Q)$ with $(G' : G)$
        a power of~$p$) for all primes~$p$ up to the index bound. This can be done
        by considering the intersection of the kernels of the maps
        $J(\Q)/p J(\Q) \to J(\F_q)/p J(\F_q)$ for a set of good primes~$q$ (such that
        the group on the right is nontrivial). If this intersection is trivial,
        then $G$ is $p$-saturated; otherwise it tells us where to look for points
        that are potentially divisible by~$p$.
        Since the index bound gets smaller with increasing~$B$ (as long as $B < m_r$),
        it makes sense to pick $B$ in such a way as to balance the time spent in
        the two steps of this approach.
\end{enumerate}


\section{Example} \label{Ex:hdiff} \label{Ex:chnaive}

As an example that demonstrates the use of our nearly optimal upper bound
for the difference $h - \hat{h}$ between naive and canonical height (which is
based on the optimal bounds for the~$\mu_p$ obtained in Sections \ref{formulas},
\ref{formulas2} and~\ref{UpperBeta} and the variation of the naive height discussed
in Section~\ref{S:varnaive}), we consider the curve
\begin{align*}
  C \colon y^2 &= 82342800 x^6 - 470135160 x^5 + 52485681 x^4 \\
               &\qquad{} + 2396040466 x^3 + 567207969 x^2 - 985905640 x + 247747600 \,.
\end{align*}
This curve is of interest, since it holds the current record for the largest
number of known rational points (which is~$642$ for this curve), see~\cite{record}.
A 2-descent on its Jacobian~$J$ (assuming GRH) as described in~\cite{Stoll2desc}
and implemented in Magma gives an upper bound of~$22$ for the
rank of~$J(\Q)$, and the differences of the known rational points generate a group
of rank~$22$. The latter statement can be checked by computing the determinant~$R$
of the height pairing matrix of the $22$~points in~$J(\Q)$ listed in
Table~\ref{TableGens}, which is
fairly fast using the algorithm for computing canonical heights described in
Section~\ref{S:cch}. The points are given in Mumford representation $(a(x), b(x))$,
which stands for $[(\theta_1, b(\theta_1)) + (\theta_2, b(\theta_2))] - W$,
where $\theta_1$, $\theta_2$ are the two roots of~$a(x)$ and $W$ is the canonical class.
Not all of these points are differences of rational points, but they are linear
combinations of such differences.

We can easily check that $J(\Q)$ has trivial torsion subgroup
by computing the order of~$J(\F_p)$ for a few good primes~$p$.

\begin{table}[htb]
\hrulefill
\begin{align*}
  (x^2 + x, 18868 x + 15740), &\quad (x^2 - \tfrac{1}{3} x, \tfrac{216800}{3} x - 15740), \\
  (x^2 + \tfrac{2}{3} x - \tfrac{1}{3}, \tfrac{11747}{3} x + \tfrac{21131}{3}), &\quad
  (x^2 + 5 x + 4, 276256 x + 273128), \\
  (x^2 + \tfrac{4}{3} x - \tfrac{5}{9}, 16315 x + \tfrac{26195}{9}), &\quad
  (x^2 + \tfrac{53}{12} x + \tfrac{5}{3}, \tfrac{1433669}{6} x + \tfrac{371650}{3}), \\
  (x^2 - 3 x - 4, 34104 x + 30976), &\quad (x^2 - 4 x - 5, 65987 x + 69115), \\
  (x^2 + \tfrac{8}{5} x + \tfrac{3}{5}, 67671 x + 64543), &\quad
  (x^2 - 5 x - 6, \tfrac{883626}{7} x + \tfrac{905522}{7}), \\
  (x^2 - \tfrac{3}{4} x - \tfrac{7}{4}, 31875 x + 35003), &\quad
  (x^2 + \tfrac{5}{7} x - \tfrac{2}{7}, \tfrac{432898}{49} x + \tfrac{279626}{49}), \\
  (x^2 + \tfrac{29}{6} x - \tfrac{178}{9}, \tfrac{3014179}{6} x - \tfrac{10824742}{9}), &\quad
  (x^2 + \tfrac{19}{84} x - \tfrac{65}{84}, \tfrac{4287373}{294} x + \tfrac{5207005}{294}), \\
  (x^2 + \tfrac{97}{42} x - \tfrac{37}{42}, \tfrac{23742013}{294} x - \tfrac{5459431}{294}), &\quad
  (x^2 - \tfrac{5}{11} x, \tfrac{1089388}{121} x - 15740), \\
  (x^2 + \tfrac{325}{84} x - \tfrac{11}{21}, \tfrac{30014567}{147} x - \tfrac{2230444}{147}), &\quad
  (x^2 - \tfrac{683}{140} x - \tfrac{279}{140}, \tfrac{45519013}{490} x + \tfrac{5478709}{490}), \\
  (x^2 - \tfrac{91}{769} x - \tfrac{584}{769},
   \tfrac{6911886712}{591361} x + \tfrac{16665656516}{591361}), &\quad
  (x^2 - \tfrac{259}{96} x + \tfrac{163}{72}, \tfrac{52305719}{768} x - \tfrac{13101271}{576}), \\
  (x^2 - \tfrac{3073}{2307} x - \tfrac{1252}{769},
   \tfrac{54505985456}{1774083} x + \tfrac{25990632928}{591361}), &\quad
  (x^2 - \tfrac{137}{51} x + \tfrac{40}{51}, \tfrac{47131040}{867} x - \tfrac{8471860}{867})
\end{align*}
\hrulefill
\caption{Generators of the known part of $J(\Q)$.} \label{TableGens}
\end{table}

The discriminant of~$C$ factors as
\begin{align*}
  \Delta &=  2^{47} \cdot 3^5 \cdot 5^9 \cdot 11^2 \cdot 13^2 \cdot 17^6 \cdot 19^4 \cdot 23^2
              \cdot 41^4 \cdot 73^3 \\
         &\qquad {}\cdot 2707 \cdot 43579 \cdot 108217976921 \cdot 8723283517315751077 \,.
\end{align*}
The results of~\cites{StollH1,StollH2} lead to a bound of
\begin{align*}
  \frac{1}{3} &\bigl(43 \log 2 + 3 \log 3 + 9 \log 5 + 2 \log 11 + 2 \log 13 \\
              &\qquad{} + 6 \log 17 + 4 \log 19 + 2 \log 23 + 4 \log 41 + 3 \log 73\bigr)
      \approx 40.1
\end{align*}
for the contribution of the finite places to the height difference bound. When trying
to get a better bound (for~$\gamma_p$) by essentially doing an exhaustive search over
the $p$-adic points of the Kummer surface, Magma gets stuck at~$p=2$ for a long while,
but eventually finishes with a contribution of~$26.434$ from the finite places
and a total bound of~$34.163$. This contribution turns out to be $(\gamma_p/3) \log p$
in all cases except for $p = 73$, where it is $\frac{2}{3} \log 73$ instead of~$\frac{1}{3} \log 73$.
Our new results from this paper give bounds on the local contributions
as shown in Table~\ref{TableBeta}.
$\Phi_p$ is the component group ($\eps$ and~$\mu$ factor through it in all cases)
and `gain' gives the gain in the bound on the height difference obtained by
using the optimal bound on~$\mu$ versus the
bound~$\gamma/3$, where $\gamma$ is the maximum of the values of~$\eps$.

\begin{table}[htb]
\[ \renewcommand{\arraystretch}{1.2}
   \begin{array}{|r|c|c|c|c|c|} \hline
      p & \text{reduction type} & \Phi_p & \beta_p        & \gamma_p/3  & \text{gain} \\\hline
      2 & [I_{10-9-8}]          & \Z/242\Z & 2 + 1145/242 & 26/3        & 1.341 \\
      3 & [I_0-IV-0]            & \Z/3\Z & 2/3            & 2/3         & 0.000 \\
      5 & [I_{4-3-2}]           & \Z/26\Z & 22/13         & 2           & 0.495 \\
     11 & [I_{2-0-0}]           & \Z/2\Z & 1/2            & 2/3         & 0.400 \\
     13 & [I_{2-0-0}]           & \Z/2\Z & 1/2            & 2/3         & 0.427 \\
     17 & [I_{2-2-2}]           & \Z/2\Z \times \Z/6\Z & 1 & 4/3        & 0.944 \\
     19 & [I_{2-1-1}]           & \Z/5\Z & 3/5            & 2/3         & 0.196 \\
     23 & [I_{2-0-0}]           & \Z/2\Z & 1/2            & 2/3         & 0.523 \\
     41 & [I_{2-1-1}]           & \Z/5\Z & 3/5            & 2/3         & 0.248 \\
     73 & [I_{1-1-1}]           & \Z/3\Z & 1/3            & 1/3         & 0.000 \\\hline
    \end{array}
\]
\caption{Bounds for $\beta_p$.} \label{TableBeta}
\end{table}

This now gives a bound of $\approx 20.429$ for the contribution of the finite
places. The optimization of the naive height does not give any improvement
at the odd finite places, since the polynomial~$f$ defining the curve is primitive.
On the other hand, we note that $f$ is congruent
to a square mod~$4$, so we could use the Kummer surface of the curve
$y^2 + (x^2 + x) y = f_1(x)$ (where $f(x) = 4 f_1(x) + (x^2 + x)^2$)
for the local height at~$2$, but this results in no improvement,
since we have already used a minimal model to get our bound.

Now we consider the contribution of the infinite place. The bound obtained
from~\cite{StollH1}*{(7.1)} is~$7.726$. Using Lemma~\ref{L:arch1} with~$N = 10$
improves this to~$0.973$; increasing~$N$ further gives no significant improvement.
However, modifying the local height at the infinite place by scaling
the contribution of the fourth coordinate by~$\|f\|_\infty^{-1}$ reduces this
bound drastically to $\tilde{\mu}_\infty \le -19.25654$
(compare this to $-\log \|f\|_\infty \approx -21.59708$).
This finally gives
\[ h'(P) \le \hat{h}(P) + 1.17273 \]
for our modified naive height~$h'$.

So if we enumerate all points $P \in J(\Q)$ with $h'(P) \le \log N$ and do not find
points that are not in the known subgroup~$G$, then we obtain a bound for
the index $I = (J(\Q) : G)$ as follows (see the discussion at the end
of Section~\ref{S:enum}).
\[ I \le \sqrt{\frac{R \cdot\gamma_{22}^{22}}%
                    {\prod_{j=1}^{22} \min\{m_j, \log N  - 1.17273\}}} \,, \]
where $R$ is the regulator of~$G$ and $m_1, m_2, \ldots, m_{22}$ are the
successive minima of the lattice~$(G, \hat{h})$, which are
\begin{gather*}
  8.5276, \; 8.5668, \; 8.5956, \; 8.8594, \; 9.0256, \; 9.0776, \; 9.1426, \; 9.1753, \\
  9.4456, \; 9.7428, \; 9.7747, \; 9.9047, \; 9.9465, \; 9.9611, \; 9.9704, \; 10.1408, \\
  10.3472, \; 10.3784, \; 10.5284, \; 10.5356, \; 10.6318, \; 10.9287 \,.
\end{gather*}
With $N = 10\,000$ we obtain $I \le 6842$, with $N = 20\,000$ we get $I \le 2835$
and with $N \ge {178\,245}$ we obtain the best possible bound $I \le 900$.
We checked that there are no unknown points~$P$ with $\kappa(P) = (x_1 : x_2 : x_3 : x_4)$
such that $h_\std((x_1 : x_2 : x_3)) \le \log 20\,000$ and verified that the index is
not divisible by any prime $p \le {2835}$. The first computation took about two days
on a single core, the second less than half a day. This implies the following.

\begin{prop} \label{P:record}
  Assume the Generalized Riemann Hypothesis. Let
  \begin{align*}
    C \colon y^2 &= 82342800 x^6 - 470135160 x^5 + 52485681 x^4 \\
                &\qquad{} + 2396040466 x^3 + 567207969 x^2 - 985905640 x + 247747600
  \end{align*}
  and denote by~$J$ the Jacobian of~$C$. Then $J(\Q)$ is a free abelian group
  of rank~$22$, freely generated by the points listed in Table~\ref{TableGens}.
  In particular, $J(\Q)$ is generated by the differences of rational points on~$C$.
\end{prop}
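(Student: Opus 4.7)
The plan is to combine the rank upper bound, the generation of a full-rank subgroup by the points in Table~\ref{TableGens}, and a saturation argument based on the efficient enumeration algorithm from Section~\ref{S:enum}.

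First, I would establish that $J(\Q)_{\tors}$ is trivial (computing $\#J(\F_p)$ for a couple of good primes of good reduction forces the torsion order to divide a small integer coprime to anything that could occur, and all candidate torsion can be ruled out by looking at $J(\F_p)$ for $p = 7$, $p = 29$, say). Next, a $2$-descent on~$J$, implemented as in~\cite{Stoll2desc}, assuming GRH for the analytic number theory inputs, shows $\operatorname{rk} J(\Q) \le 22$. On the other hand, the subgroup $G \subset J(\Q)$ generated by the $22$ points of Table~\ref{TableGens} has rank exactly~$22$: one verifies this by computing the Gram matrix of~$\hat{h}$ on these points using the algorithm of Section~\ref{S:cch} and checking that its determinant~$R$ is nonzero. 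Combining the two bounds, $\operatorname{rk} J(\Q) = 22$, and since the torsion is trivial, $G$ has finite index $I = (J(\Q) : G)$.

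The main obstacle is to show $I = 1$. For this I would use the second of the two saturation strategies outlined at the end of Section~\ref{S:enum}. The bound~$\tilde{\beta}$ on $h' - \hat{h}$ for the modified naive height~$h'$ computed in Section~\ref{Ex:hdiff} is $\tilde{\beta} = 1.17273$: this uses the explicit local bounds from Table~\ref{TableBeta} (obtained via Sections~\ref{formulas}, \ref{formulas2} and~\ref{UpperBeta}) at the finite primes dividing~$\Delta$, combined with the iterated archimedean bound of Lemma~\ref{L:arch1} applied to the scaled coefficients reflecting the modified naive height. I would then run the enumeration described in Section~\ref{S:enum} (via \texttt{j-points} with the \texttt{-a} option) to verify that every $P \in J(\Q)$ with $h'(P) \le \log 20\,000$ lies in~$G$. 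Plugging $N = 20\,000$, the successive minima $m_1, \ldots, m_{22}$ of~$(G, \hat{h})$ listed in Section~\ref{Ex:hdiff}, and the regulator~$R$ into the Flynn--Smart formula
\[
  I \le \sqrt{\frac{R \cdot \gamma_{22}^{22}}{\prod_{j=1}^{22} \min\{m_j,\, \log N - 1.17273\}}}
\]
yields $I \le 2835$.

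It then remains to exclude every prime $p \le 2835$ from dividing~$I$. For each such~$p$, I would pick a small collection of primes $q$ of good reduction such that $p \mid \#J(\F_q)$, and compute the intersection of the kernels of the maps $G/pG \hookrightarrow J(\Q)/pJ(\Q) \to J(\F_q)/pJ(\F_q)$; whenever this intersection is trivial, $G$ is $p$-saturated. In the remaining cases, one explicitly tests whether a candidate element of $J(\Q)/pJ(\Q)$ not in the image of $G/pG$ is $p$-divisible in $J(\Q)$ by attempting to halve it. The feasibility of this whole step rests on the fact that our improved bound pushed~$N$ down to the range where \texttt{j-points} runs in well under a day; with the older bounds from~\cites{StollH1,StollH2}, the required value of~$N$ would be astronomically larger and the computation hopeless. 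Once $p$-saturation is verified for all $p \le 2835$, we conclude $I = 1$, hence $G = J(\Q)$; combined with trivial torsion this gives the free generating set of Table~\ref{TableGens}. The final assertion that $J(\Q)$ is generated by differences of rational points on~$C$ then follows by exhibiting each listed generator as a $\Z$-linear combination of such differences, using the list of $642$~known rational points on~$C$ from~\cite{record}.
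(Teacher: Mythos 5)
Your proposal follows essentially the same route as the paper: GRH-conditional 2-descent for the rank bound, nonvanishing regulator of the 22 listed points, trivial torsion from $J(\F_p)$, the height-difference bound $\tilde{\beta} \approx 1.17273$ from the modified naive height and the local bounds, enumeration up to $N = 20\,000$ giving $I \le 2835$ via the Flynn--Smart formula, and $p$-saturation checks for all primes up to that bound. This matches the argument given in Section~\ref{Ex:chnaive}, so the proposal is correct.
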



\begin{bibdiv}
\begin{biblist}


\bib{Artin2}{article}{
   author={Artin, Michael},
   title={On isolated rational singularities of surfaces},
   journal={Amer. J. Math.},
   volume={88},
   date={1966},
   pages={129--136},
   issn={0002-9327},
}

\bib{Artin}{article}{
   author={Artin, M.},
   title={Lipman's proof of resolution of singularities for surfaces},
   conference={
      title={Arithmetic geometry},
      address={Storrs, Conn.},
      date={1984},
   },
   book={
      publisher={Springer, New York},
   },
   date={1986},
   pages={267--287},
}

\bib{BPV}{book}{
  author={Barth, W.},
  author={Peters, C.},
  author={Van de Ven, A.},
  title={Compact complex surfaces},
  series={Ergebnisse der Mathematik und ihrer Grenzgebiete (3) [Results in
  Mathematics and Related Areas (3)]},
  volume={4},
  publisher={Springer-Verlag, Berlin},
  date={1984},
  pages={x+304},
  isbn={3-540-12172-2},
  review={\MR{749574}},
  doi={10.1007/978-3-642-96754-2},
}

\bib{dcba2}{misc}{
  author={Daniel J. Bernstein},
  year={2004},
  title={Research announcement: Faster factorization into coprimes},
  note={Preprint}
}

\bib{dcba}{article}{
  year={2005},
  author={Daniel J. Bernstein},
  title={Factoring into coprimes in essentially linear time},
  journal={Journal of Algorithms},
  volume={54},
  pages={1--30},
}

\bib{BombieriGubler}{book}{
   author={Bombieri, Enrico},
   author={Gubler, Walter},
   title={Heights in Diophantine geometry},
   series={New Mathematical Monographs},
   volume={4},
   publisher={Cambridge University Press, Cambridge},
   date={2006},
   pages={xvi+652},
   isbn={978-0-521-84615-8},
   isbn={0-521-84615-3},
}

\bib{Borwein}{book}{
   author={Borwein, Jonathan M.},
   author={Borwein, Peter B.},
   title={Pi and the AGM},
   series={Canadian Mathematical Society Series of Monographs and Advanced Texts, 4},
   note={A study in analytic number theory and computational complexity;
         Reprint of the 1987 original;
         A Wiley-Interscience Publication},
   publisher={John Wiley \& Sons, Inc., New York},
   date={1998},
   pages={xvi+414},
   isbn={0-471-31515-X},
}

\bib{BoschLiu}{article}{
   author={Bosch, Siegfried},
   author={Liu, Qing},
   title={Rational points of the group of components of a N\'eron model},
   journal={Manuscripta Math.},
   volume={98},
   date={1999},
   number={3},
   pages={275--293},
}

\bib{BLR}{book}{
   author={Bosch, Siegfried},
   author={L{\"u}tkebohmert, Werner},
   author={Raynaud, Michel},
   title={N\'eron models},
   series={Ergebnisse der Mathematik und ihrer Grenzgebiete (3) [Results in
   Mathematics and Related Areas (3)]},
   volume={21},
   publisher={Springer-Verlag, Berlin},
   date={1990},
   pages={x+325},
   isbn={3-540-50587-3},
}

\bib{Magma}{article}{
   author={Bosma, Wieb},
   author={Cannon, John},
   author={Playoust, Catherine},
   title={The Magma algebra system. I. The user language},
   note={Computational algebra and number theory (London, 1993)},
   journal={J. Symbolic Comput.},
   volume={24},
   date={1997},
   number={3-4},
   pages={235--265},
   issn={0747-7171},
   url={See also the Magma home page at http://magma.maths.usyd.edu.au/magma/},
}

\bib{BostMestre}{misc}{
  author={Bost, Jean-Beno{\^{\i}}t},
  author={Mestre, Jean-Fran{\c{c}}ois},
  year={1993},
  title={Calcul de la hauteur archim\'edienne des points d'une courbe elliptique par un
         algorithme quadratiquement convergent et application au calcul de la capacit\'e de
         l'union de deux intervalles},
  note={Unpublished Manuscript}
}

\bib{MWsieve}{article}{
   author={Bruin, Nils},
   author={Stoll, Michael},
   title={The Mordell-Weil sieve: proving non-existence of rational points
   on curves},
   journal={LMS J. Comput. Math.},
   volume={13},
   date={2010},
   pages={272--306},
   issn={1461-1570},
}

\bib{BuchmannLenstra}{article}{
   author={Buchmann, J. A.},
   author={Lenstra, H. W., Jr.},
   title={Approximating rings of integers in number fields},
   language={English, with English and French summaries},
   journal={J. Th\'eor. Nombres Bordeaux},
   volume={6},
   date={1994},
   number={2},
   pages={221--260},
   issn={1246-7405},
}

\bib{BMSST08}{article}{
   author={Bugeaud, Yann},
   author={Mignotte, Maurice},
   author={Siksek, Samir},
   author={Stoll, Michael},
   author={Tengely, Szabolcs},
   title={Integral points on hyperelliptic curves},
   journal={Algebra Number Theory},
   volume={2},
   date={2008},
   number={8},
   pages={859--885},
   issn={1937-0652},
}

\bib{CasselsFlynn}{book}{
   author={Cassels, J. W. S.},
   author={Flynn, E. V.},
   title={Prolegomena to a middlebrow arithmetic of curves of genus $2$},
   series={London Mathematical Society Lecture Note Series},
   volume={230},
   publisher={Cambridge University Press, Cambridge},
   date={1996},
   pages={xiv+219},
   isbn={0-521-48370-0},
}


\bib{Cinkir}{article}{
   author={Cinkir, Zubeyir},
   title={Zhang's conjecture and the effective Bogomolov conjecture over
   function fields},
   journal={Invent. Math.},
   volume={183},
   date={2011},
   number={3},
   pages={517--562},
   issn={0020-9910},
}

\bib{Conrad}{misc}{
   author={Conrad, B.},
   title={Minimal models for elliptic curves},
   status={unpublished manuscript},
   date={2005},
   note={Available at \url{http://math.stanford.edu/~conrad/papers/minimalmodel.pdf}},
}

\bib{CrePriSik}{article}{
   author={Cremona, J. E.},
   author={Prickett, M.},
   author={Siksek, Samir},
   title={Height difference bounds for elliptic curves over number fields},
   journal={J. Number Theory},
   volume={116},
   date={2006},
   number={1},
   pages={42--68},
   issn={0022-314X},
}

\bib{DeligneMumford1969}{article}{
   author={Deligne, P.},
   author={Mumford, D.},
   title={The irreducibility of the space of curves of given genus},
   journal={Inst. Hautes \'Etudes Sci. Publ. Math.},
   number={36},
   date={1969},
   pages={75--109},
   issn={0073-8301},
}

\bib{FLSSSW}{article}{
   author={Flynn, E. Victor},
   author={Lepr{\'e}vost, Franck},
   author={Schaefer, Edward F.},
   author={Stein, William A.},
   author={Stoll, Michael},
   author={Wetherell, Joseph L.},
   title={Empirical evidence for the Birch and Swinnerton-Dyer conjectures
   for modular Jacobians of genus 2 curves},
   journal={Math. Comp.},
   volume={70},
   date={2001},
   number={236},
   pages={1675--1697 (electronic)},
   issn={0025-5718},
}

\bib{FlynnSmart}{article}{
   author={Flynn, E. V.},
   author={Smart, N. P.},
   title={Canonical heights on the Jacobians of curves of genus $2$ and the
   infinite descent},
   journal={Acta Arith.},
   volume={79},
   date={1997},
   number={4},
   pages={333--352},
   issn={0065-1036},
}

\bib{GG}{book}{
   author={von zur Gathen, Joachim},
   author={Gerhard, J{\"u}rgen},
   title={Modern computer algebra},
   publisher={Cambridge University Press, New York},
   date={1999},
   pages={xiv+753},
   isbn={0-521-64176-4},
}

\bib{GreuelKroening}{article}{
  author={Greuel, G.-M.},
  author={Kr{\"o}ning, H.},
  title={Simple singularities in positive characteristic},
  journal={Math. Z.},
  volume={203},
  date={1990},
  number={2},
  pages={339--354},
  issn={0025-5874},
  review={\MR{1033443}},
  doi={10.1007/BF02570742},
}

\bib{Heinz}{article}{
   author={Heinz, Niels},
   title={Admissible metrics for line bundles on curves and abelian
   varieties over non-Archimedean local fields},
   journal={Arch. Math. (Basel)},
   volume={82},
   date={2004},
   number={2},
   pages={128--139},
   issn={0003-889X},
}

\bib{HindrySilverman}{book}{
   author={Hindry, Marc},
   author={Silverman, Joseph H.},
   title={Diophantine geometry},
   series={Graduate Texts in Mathematics},
   volume={201},
   note={An introduction},
   publisher={Springer-Verlag, New York},
   date={2000},
   pages={xiv+558},
   isbn={0-387-98975-7},
   isbn={0-387-98981-1},
}

\bib{Holmes}{article}{
   author={Holmes, David},
   title={Computing N\'eron-Tate heights of points on hyperelliptic
   Jacobians},
   journal={J. Number Theory},
   volume={132},
   date={2012},
   number={6},
   pages={1295--1305},
   issn={0022-314X},
}

\bib{Holmes14}{article}{
   author={Holmes, David},
   title={An Arakelov-theoretic approach to na\"\i ve heights on
   hyperelliptic Jacobians},
   journal={New York J. Math.},
   volume={20},
   date={2014},
   pages={927--957},
   issn={1076-9803},
   review={\MR{3272917}},
}

\bib{Igusa}{article}{
   author={Igusa, Jun-ichi},
   title={Arithmetic variety of moduli for genus two},
   journal={Ann. of Math. (2)},
   volume={72},
   date={1960},
   pages={612--649},
   issn={0003-486X},
}

\bib{deJongMueller}{article}{
   author = {de Jong, Robin},
   author = {M{\"u}ller, J. Steffen},
   title = {Canonical heights and division polynomials},
   journal = {Math. Proc. Cambridge Philos. Soc.},
   volume = {157},
   year = {2014},
   number = {2},
   pages = {357--373},
}

\bib{LangFund}{book}{
   author={Lang, Serge},
   title={Fundamentals of Diophantine geometry},
   publisher={Springer-Verlag, New York},
   date={1983},
   pages={xviii+370},
   isbn={0-387-90837-4},
}

\bib{Liustable}{article}{
   author={Liu, Qing},
   title={Courbes stables de genre $2$ et leur sch\'ema de modules},
   language={French},
   journal={Math. Ann.},
   volume={295},
   date={1993},
   number={2},
   pages={201--222},
   issn={0025-5831},
}

\bib{liuminimaux}{article}{
   author={Liu, Qing},
   title={Mod\`eles minimaux des courbes de genre deux},
   language={French},
   journal={J. Reine Angew. Math.},
   volume={453},
   date={1994},
   pages={137--164},
   issn={0075-4102},
}

\bib{Liu2}{article}{
   author={Liu, Qing},
   title={Mod\`eles entiers des courbes hyperelliptiques sur un corps de
   valuation discr\`ete},
   language={French, with English summary},
   journal={Trans. Amer. Math. Soc.},
   volume={348},
   date={1996},
   number={11},
   pages={4577--4610},
   issn={0002-9947},
}

\bib{LiuBook}{book}{
   author={Liu, Qing},
   title={Algebraic geometry and arithmetic curves},
   series={Oxford Graduate Texts in Mathematics},
   volume={6},
   note={Translated from the French by Reinie Ern\'e;
   Oxford Science Publications},
   publisher={Oxford University Press, Oxford},
   date={2002},
   pages={xvi+576},
   isbn={0-19-850284-2},
}

\bib{Mestre}{article}{
   author={Mestre, Jean-Fran{\c{c}}ois},
   title={Construction de courbes de genre $2$ \`a partir de leurs modules},
   language={French},
   conference={
      title={Effective methods in algebraic geometry},
      address={Castiglioncello},
      date={1990},
   },
   book={
      series={Progr. Math.},
      volume={94},
      publisher={Birkh\"auser Boston, Boston, MA},
   },
   date={1991},
   pages={313--334},
}

\bib{MuellerKummer}{article}{
   author={M{\"u}ller, J. Steffen},
   title={Explicit Kummer surface formulas for arbitrary characteristic},
   journal={LMS J. Comput. Math.},
   volume={13},
   date={2010},
   pages={47--64},
   issn={1461-1570},
}


\bib{MuellerArak}{article}{
   author={M{\"u}ller, J. Steffen},
   title={Computing canonical heights using arithmetic intersection theory},
   journal={Math. Comp.},
   volume={83},
   date={2014},
   number={285},
   pages={311--336},
   issn={0025-5718},
}

\bib{MuellerStollEll}{misc}{
   author={M{\"u}ller, J. Steffen},
   author={Stoll, Michael},
   title={Computing canonical heights on elliptic curves in quasi-linear time},
   note={Preprint, arXiv:1509.08748v2 [math.NT]},
   date={2015-12-22},
}

\bib{NamiUeno}{article}{
   author={Namikawa, Yukihiko},
   author={Ueno, Kenji},
   title={The complete classification of fibres in pencils of curves of
   genus two},
   journal={Manuscripta Math.},
   volume={9},
   date={1973},
   pages={143--186},
   issn={0025-2611},
}

\bib{Neron}{article}{
   author={N{\'e}ron, A.},
   title={Quasi-fonctions et hauteurs sur les vari\'et\'es ab\'eliennes},
   language={French},
   journal={Ann. of Math. (2)},
   volume={82},
   date={1965},
   pages={249--331},
   issn={0003-486X},
}

\bib{SilvermanHeights}{article}{
   author={Silverman, Joseph H.},
   title={Computing heights on elliptic curves},
   journal={Math. Comp.},
   volume={51},
   date={1988},
   number={183},
   pages={339--358},
   issn={0025-5718},
}

\bib{ATAEC}{book}{
   author={Silverman, Joseph H.},
   title={Advanced topics in the arithmetic of elliptic curves},
   series={Graduate Texts in Mathematics},
   volume={151},
   publisher={Springer-Verlag, New York},
   date={1994},
   pages={xiv+525},
   isbn={0-387-94328-5},
}

\bib{StollH1}{article}{
   author={Stoll, Michael},
   title={On the height constant for curves of genus two},
   journal={Acta Arith.},
   volume={90},
   date={1999},
   number={2},
   pages={183--201},
   issn={0065-1036},
}

\bib{Stoll2desc}{article}{
   author={Stoll, Michael},
   title={Implementing 2-descent for Jacobians of hyperelliptic curves},
   journal={Acta Arith.},
   volume={98},
   date={2001},
   number={3},
   pages={245--277},
   issn={0065-1036},
}

\bib{StollH2}{article}{
   author={Stoll, Michael},
   title={On the height constant for curves of genus two. II},
   journal={Acta Arith.},
   volume={104},
   date={2002},
   number={2},
   pages={165--182},
   issn={0065-1036},
}

\bib{Stollg3}{misc}{
   author={Stoll, Michael},
   title={An Explicit Theory of Heights for Hyperelliptic Jacobians of Genus Three},
   note={Preprint, http://www.mathe2.uni-bayreuth.de/stoll/schrift.html\#AG51},
   date={2014-05-15},
}

\bib{j-points}{misc}{
  author={Stoll, Michael},
  title={\texttt{j-points}, a program for searching rational points on genus~2 Jacobians},
  note={\newline http://www.mathe2.uni-bayreuth.de/stoll/programs/index.html},
}

\bib{record}{misc}{
  author={Stoll, Michael},
  title={A genus 2 curve with at least 642 rational points},
  note={\newline http://www.mathe2.uni-bayreuth.de/stoll/recordcurve.html},
}

\bib{CremonaStoll}{article}{
   author={Stoll, Michael},
   author={Cremona, John E.},
   title={On the reduction theory of binary forms},
   journal={J. Reine Angew. Math.},
   volume={565},
   date={2003},
   pages={79--99},
   issn={0075-4102},
}

\bib{UchiCano}{article}{
   author={Uchida, Yukihiro},
   title={Canonical local heights and multiplication formulas for the
   Jacobians of curves of genus 2},
   journal={Acta Arith.},
   volume={149},
   date={2011},
   number={2},
   pages={111--130},
   issn={0065-1036},
}

\bib{Zarhin}{article}{
   author={Zarkhin, Yu. G.},
   title={Local heights and N\'eron pairings},
   language={Russian},
   journal={Trudy Mat. Inst. Steklov.},
   volume={208},
   date={1995},
   number={Teor. Chisel, Algebra i Algebr. Geom.},
   pages={111--127},
   issn={0371-9685},
   translation={journal={Proc. Steklov Inst. Math.},
                volume={208},
                pages={100-114},
                date={1995},
               },
}

\bib{Zhang}{article}{
   author={Zhang, Shouwu},
   title={Admissible pairing on a curve},
   journal={Invent. Math.},
   volume={112},
   date={1993},
   number={1},
   pages={171--193},
   issn={0020-9910},
}


\end{biblist}
\end{bibdiv}

\end{document}